\newtheorem{Th}{Theorem}[section]
\newtheorem{Prop}{Proposition}[section]
\newtheorem{Co}{Corollary}[section]
\newtheorem{Lm}{Lemma}[section]
\newtheorem{Tha}{Theorem}[section]
\newtheorem{Lma}{Lemma}[section]
\newtheorem{Propa}{Proposition}[section]
\theoremstyle{definition}
\newtheorem{Dfi}{Definition}[section]
\newtheorem{Rm}{Remark}[section]
\newtheorem{Rma}{Remark}[section]
\numberwithin{equation}{section}
\def\La{\Lambda}
\def\La{\Lambda}
\def\ti{\tilde}
\def\lf{\left}
\def\rg{\right}
\def\al{\alpha}
\def\la{\lambda}
\def\ep{\varepsilon}
\def\eps{\varepsilon}
\def\ov{\overline}
\def\Om{\Omega}
\def\om{\omega}
\def\p{\partial}
\def\ds{\displaystyle}
\DeclareMathOperator{\dist}{dist}
\newcommand{\n}{\mathbb{N}}
\newcommand{\g}{\mathfrak{g}}
\newcommand{\z}{\mathbb{Z}}
\renewcommand{\r}{\mathbb{R}}
\renewcommand{\c}{\mathbb{C}}
\renewcommand{\b}{\mathbb{B}}
\newcommand{\s}{\mathbb{S}}
\newcommand{\A}{\mathfrak{a}}
\newcommand{\opwedge}{\mathlarger{\mathlarger{\wedge}}}
\renewcommand{\le}{\leqslant}
\renewcommand{\ge}{\geqslant}
\renewcommand{\setminus}{\smallsetminus}
\renewcommand{\H}{\mathscr{H}}
\renewcommand{\L}{\mathcal{L}}
\newcommand{\be}{\begin{equation}}
\newcommand{\ee}{\end{equation}}
\newcommand{\bes}{\begin{equation*}}
\newcommand{\ees}{\end{equation*}}
\newcommand{\sep}{0mm}
\renewcommand{\H}{\mathscr{H}}
\renewcommand{\L}{\mathcal{L}}
\newcommand{\R}{\mathbb{R}}
\newcommand{\N}{\mathbb{N}}
\newcommand{\Z}{\mathbb{Z}}
\DeclareMathOperator{\vol}{vol}
\DeclareMathOperator{\ad}{Ad}
\newcommand\reallywidehat[1]{%
	\savestack{\tmpbox}{\stretchto{%
			\scaleto{%
				\scalerel*[\widthof{\ensuremath{#1}}]{\kern-.6pt\bigwedge\kern-.6pt}%
				{\rule[-\textheight/2]{1ex}{\textheight}}
			}{\textheight}%
		}{0.5ex}}%
	\stackon[1pt]{#1}{\tmpbox}%
}
\title[]{Coulomb Gauges and Regularity for Stationary Weak Yang--Mills Connections in Supercritical Dimension} 
\author[R. Caniato]{Riccardo Caniato}
\address{ \mbox{\it{(R. Caniato)} } California Institute of Technology, Department of Mathematics, 1200 E California Blvd, MC 253-37, CA 91125, United States of America}
\email{rcaniato@caltech.edu}
\author[T. Rivi\`ere]{Tristan Rivi\`ere}
\address{\mbox{\it{(T. Rivi\`ere)} } ETh Z\"urich, Department of Mathematics, R\"amistrasse 101, Z\"urich 8092, Switzerland}
\email{tristan.riviere@math.ethz.ch}
\date{\today}
\begin{document}
\begin{abstract}
    \noindent
    We prove that stationary Yang--Mills fields in dimensions 5 belonging to the variational class of \textit{weak connections} are smooth away from a closed singular set $S$ of vanishing 1-dimensional Hausdorff measure. Our proof is based on an $\eps$-regularity theorem, which generalizes to this class of weak connections the existing previous $\eps$-regularity results by G. Tian for smooth connections, by Y. Meyer and the second author for Sobolev and approximable connections, and by T. Tao and G. Tian for admissible connections (which are weak limits of smooth Yang--Mills fields). On the path towards establishing $\eps$-regularity, a pivotal step is the construction of controlled Coulomb gauges for general weak connections under small Morrey norm assumptions.
\end{abstract}
\allowdisplaybreaks
\maketitle
\begin{center}
    \textbf{ MSC (2020): 58E15 (primary)}; 49Q15, 49Q20, 53C65, 81T13 (secondary).
\end{center}
\tableofcontents
\section{Introduction}
The study of the variations of the Yang--Mills Lagrangian has known a spectacular development since the very first analytical works by K. Uhlenbeck, which have been central in producing new invariants of differential structures on topological $4$-manifolds. In particular, S. Donaldson proved his celebrated result on the existence of non-smoothable topological $4$-manifolds by studying properties of the the moduli space of the anti-self-dual instantons\footnotemark{} over such manifolds (see \cite{donaldson2}, \cite{donaldson}). Examples of these manifolds had already been constructed by Freedman in \cite{Freedman}. Moreover, Taubes proved the existence of uncountably many fake $\R^4$'s by means of gauge theoretic methods in \cite{Taubes}. For a complete discussion of these topics, we refer the reader to \cite{DK} or \cite{FU}. 

Due to the successful use of the Yang--Mills energy in dimension 4, it is natural to explore its behavior in higher dimensions, specifically in the \textit{supercritical} regime. In fact, S. Donaldson and R. Thomas outlined a research program in this direction in \cite{donaldson-thomas}, \cite{thomas}. However, analyzing the Yang--Mills Lagrangian becomes increasingly challenging in dimensions higher than 4, where we are led to consider\textit{singular} solutions, which naturally emerge in this more complicated context.

Locally, the Yang--Mills Lagrangian is defined as follows. Let $A$ be a 1-form on the flat $n$-dimensional unit ball ${\b}^n$ taking values into the Lie Algebra $\g$ of a compact matrix Lie group $G$. The \textit{Yang--Mills energy} of this ``connection $1$-form'' is given by
\begin{align}\label{r-I.1}
    \mbox{YM}(A):=\int_{\b^n}|dA+A\wedge A|^2\,d\L^n,
\end{align}
\footnotetext{\,Instantons in dimension 4 are very special critical points of the Yang--Mills functional solving a first order PDE, in a similar fashion as holomorphic functions  are very special critical points of the Dirichlet energy in dimension 2.}
where $\L^n$ denotes the \textit{Lebesgue measure} on $\r^n$, $A\wedge A$ is the $\g$-valued 2-form given by
\begin{align}
    A\wedge A(X,Y):=[A(X),A(Y)] \qquad\forall\, X,Y\in\r^n
\end{align}
is the Lie Algebra bracket on $\g$. The curvature $F_A:=dA+A\wedge A$ is the sum of a \textit{linear operation} on $A$ (i.e. $\,dA$) and a \textit{bilinear} one (i.e. $A\wedge A$). One of the main difficulties in the analysis of the Yang--Mills functional is to understand which one of the two is ``taking over'' the other along sequences with uniformly bounded Yang--Mills energy. As we will see shortly, the general answer to this question strongly depends on the dimension $n$ of the base manifold.

The Yang--Mills Lagrangian is conformally invariant in dimension 4. This makes the 4-dimensional case critical from a purely analytic perspective, as we are explaining in the forthcoming paragraphs. The Yang--Mills Lagrangian is invariant under the following \textit{gauge change operation}:
\begin{align}\label{r-I.2}
    \mbox{YM}(A^g)=\mbox{YM}(A),\quad\mbox{ where }\quad A^g:=g^{-1}\,dg+g^{-1}\,A\,g
\end{align}
for any choice of map $g$ from $\b^n$ into $G$. These maps are called \textit{local gauge transformations} or simply \textit{local gauges} and they realize the so called {\it local gauge group}. The equality between the Yang--Mills energy of $A$ and the Yang--Mills energy of $A^g$ is a direct consequence of the following identity\footnote{\,The action of the local gauge group on a given connection --- i.e. an equivariant horizontal plane distribution in the principal $G$-bundle which is represented in a local trivialization by a $\g$ valued 1-forms $A$ in the base --- is converted into the adjoint action of $G$ on $\g$ at the level of the curvature which is itself measuring the lack of integrability of this plane distribution (recall that the curvature is the vertical projection of the bracket of horizontal lifts of vector fields in the base).}
\begin{align}\label{r-I.2-a}
    F_{A^g}=g^{-1}F_Ag \quad\Rightarrow\quad |F_{A^g}|=|F_A|.
\end{align}
This huge invariance group is both a source of difficulties and a big advantage in studying this Lagrangian. 

Our starting point consists in considering the space of connection forms modulo this gauge group action. One of the main challenges in the field consists in proving the existence of a gauge $g$ in which the connection form $A^g$ is ``optimally'' controlled\footnote{The adjective ``optimally'' refers to the smallest possible classical function space that $A^g$ belongs to.} by its Yang--Mills energy $\text{YM}(A^g)=\text{YM}(A)$. For instance, in order to make the functional \text{YM} as much coercive as possible, a reasonable quest suggested by the abelian case ($G=U(1)$) in electromagnetism consists in looking for the  {\it Coulomb condition} to be fulfilled. This amounts to finding a local gauge $g$ such that
\begin{align}\label{r-I.3}
    \begin{cases}
        d^\ast A^g=0 & \mbox{ in }\b^n\\
        \langle A^g, x\rangle=0 & \mbox{ on }\p\b^n.
    \end{cases}
\end{align}
This condition is equivalent to the following non-linear elliptic PDE
\begin{align}\label{r-I.4}
    \begin{cases}
        -\text{div}\lf(g^{-1}\nabla g\rg)=\mbox{div}(g^{-1}Ag)\ & \mbox{ in }\b^n\\
        -g^{-1}\p_\nu g= g^{-1}\lf<A,x\rg> g & \mbox{ on }\p\b^n.
    \end{cases}
\end{align}
We shall come back to the difficulty of solving \eqref{r-I.4} later in this introduction but, assuming such a $g$ has been obtained, we control
\begin{align}\label{r-I.5}
    C^{-1}\|A^g\|^2_{W^{1,2}(\b^n)}\le \int_{\b^4}|dA^g|^2+|d^\ast A^g|^2\ d\L^n\le  \mbox{YM}(A)+\int_{\b^n}|A^g\wedge A^g|^2\ d\L^n.
\end{align}
From the Sobolev embedding theorem, for $n>2$ we have\footnote{For $n=2$, we have $$W^{1,2}(\b^n)\hookrightarrow \bigcap_{p=1}^{+\infty}L^{p}(\b^n).$$} 
\begin{align}\label{r-I.6}
    W^{1,2}(\b^n)\hookrightarrow L^{\frac{2n}{n-2}}(\b^n).
\end{align}
In dimension $n=3,4$,  H\"older inequality implies that $ L^{\frac{2n}{n-2}  }(\b^n) \hookrightarrow L^4(\b^n)$. Hence, for $n\le 4$ we obtain the bound
\begin{align}\label{r-I.7}
    \int_{\b^n}|A^g\wedge A^g|^2\ d\L^n\le \|A^g\|_{L^4(\b^n)}^4\le C\|A^g\|^4_{W^{1,2}(\b^n)},
\end{align}
and finally
\begin{align}\label{r-I.8}
    \|A^g\|^2_{W^{1,2}(\b^n)}\le C\,\mbox{YM}(A)+\ C\|A^g\|^4_{W^{1,2}(\b^n)}.
\end{align}
Assuming now that a smallness condition of the form $C\|A^g\|^2_{W^{1,2}(\b^n)}\le 2^{-1}$ is known, one gets in return the ``a priori'' estimate
\begin{align}\label{r-I.9}
    \|A^g\|^2_{W^{1,2}(\b^n)}\le C\,\mbox{YM}(A).
\end{align}
One of the main achievements of \cite{uhlenbeck-connections-with-lp} is to convert the a priori estimate \eqref{r-I.9} into an existence result for \eqref{r-I.4} such that \eqref{r-I.9} eventually holds, provided $\mbox{YM}(A)$ is small enough. 

This achievement is not straightforward at all in dimension $n\ge 4$. A first attempt would be to use the variational nature of the problem. Indeed, equation \eqref{r-I.4} happens to be the Euler--Lagrange equation of
\begin{align}\label{r-I.10}
    \|A^g\|_{L^2(\b^n)}^2=\int_{\b^4}|dg+Ag|^2\ d\L^n.
\end{align}
Hence, independently of dimension, a solution to \eqref{r-I.4} can be obtained by a direct minimization of \eqref{r-I.10} and by applying the fundamental principles of the calculus of variations. Nevertheless, this variational strategy is hitting a serious regularity issue in the sense that, for a generic $A\in L^2(\b^n)$, a minimizer $g$ of \eqref{r-I.10} is a priori only in $W^{1,2}$ and in general \eqref{r-I.9} is not satisfied by any of these minimizers\footnote{Even for a smooth data $A$, minimizers to the Dirichlet energy for maps from $\b^4$ into $G$ are known to be at most in $W^{2,(\frac{3}{2},\infty)}(\b^4,G)$ (see \cite{schoen-uhlenbeck}) and are certainly not automatically in $W^{2,2}(\b^4,G)$, which must be the case if $A^g$ is in $W^{1,2}$.} 

Here, another dichotomy appears within the low dimensions $n\le 4$. For \underline{$n<4$}, because of the Sobolev embedding
\begin{align}
    W^{2,2}(\b^n)\hookrightarrow C^0(\b^n),
\end{align}
the group multiplication
\begin{align}\label{ad-1}
\begin{split}
    \mathcal{M}: W^{2,2}(\b^n,G)^2&\rightarrow W^{2,2}(\b^n,G)\\[2mm]
    (g,h)&\mapsto gh
\end{split}
\end{align}
is smooth between the two Banach manifolds $W^{2,2}(\b^n,G)^2$ and $W^{2,2}(\b^n,G)$. This allows to implement an argument based on the local inversion theorem in order to prove that any $A\in W^{1,2}(\wedge^1\b^n\otimes\g)$ satisfying $\operatorname{YM}(A)<\ep$ admits a local gauge $g\in W^{2,2}(\b^n,G)$ solving \eqref{r-I.4} and \eqref{r-I.9}. 

Coming now to the critical dimension 4, the group multiplication map $\mathcal M$ is still well-defined, since the group $G$ is assumed to be compact, but $\mathcal M$ ceases to be continuous. Moreover, $W^{2,2}(\b^n,G)$ looses its natural Banach manifold structure. These facts prevent implementing the strategy involving the direct use of the local inversion theorem. K. Uhlenbeck instead developed a very clever continuity argument leading to a $W^{1,2}$ controlled representative satisfying \eqref{r-I.9}, under small $\operatorname{YM}$-energy assumption. 

To summarize, the  balance between the linear part $dA$ and the bilinear part $A\wedge A$ of the curvature form can be settled in favor of the linear and more regularizing part $dA$ as long as $\text{YM}(A)$ is small enough and \underbar{up to dimension $4$}. This enables to implement classical variational strategies for $\mbox{YM}$ within the framework of \underbar{Sobolev connections}\footnote{\label{f}A Sobolev $W^{k,p}$-connection on a smooth principal $G$-bundle $\pi:P\rightarrow M^4$ is given by a collection of $W^{k,p}$ $\g$-valued 1-forms $A_U$ on each open set $U$ over which $P$ is trivial (i.e. $\pi^{-1}(U)\simeq U\times G $ as principal $G$-bundle isomorphism) and related to each other by the classical gauge  equivalence relations
\begin{align}
A_V:=g^{-1}_{UV} d g_{UV}+g^{-1}_{UV}\,A_U\, g_{UV}\quad \mbox{ on }U\cap V
\end{align}
where $g_{UV}$ are the $G$-valued transition functions defining the smooth bundle $P$ (see for instance \cite{FU}).} on \underbar{smooth principal $G$-bundles} (see e.g. \cite{sedlacek} for minimization procedures).

In dimension larger than $4$, the Sobolev embedding $W^{1,2}\hookrightarrow L^4$ does not hold anymore. This fundamental fact compromises Uhlenbeck's procedure to extract controlled gauges. Even worse, one can produce a sequence of smooth $\mathfrak{su}(2)$-valued 1 forms $\{A_k\}_{k\in\n}$ on $\b^5$ such that
\begin{align}\label{r-I.11}
    \begin{cases}
    A_k\rightharpoonup A_\infty & \mbox{ weakly in }L^2(\b^5)\\
    dA_k+A_k\wedge A_k\rightharpoonup F_\infty & \mbox{ weakly in }L^2(\b^5)\\
    d^\ast A_k=0 & \mbox{ in }\b^5.
\end{cases}
\end{align}
and
\begin{align}\label{r-I.12}
    \operatorname{Spt}\ d\lf(\operatorname{tr}(F_\infty\wedge F_{\infty})\rg)=\ov{\b^5}.
\end{align}
Assume that there exists $B\in W^{1,2}(\wedge^1\b^5\otimes\mathfrak{su}(2))$ such that
\begin{align}\label{r-I.13}
    F_\infty=dB+B\wedge B=F_B.
\end{align}
Then, for any smooth function $\varphi$ compactly supported in $\b^5$ the coarea formula gives
\begin{align}\label{r-I.14}
    \int_{B^5} d\varphi\wedge \mbox{tr}(F_B\wedge F_B)=\int_{-\infty}^{+\infty}d\L^1(s)\int_{\varphi^{-1}(s)}\mbox{tr}(F_B\wedge F_B).
\end{align}
Thanks to Sard's theorem and Fubini's theorem, for $\H^1$-a.e. $s\in\r$, $\varphi^{-1}(s)$ is a smooth closed 4-dimensional manifold and the restriction of $B$ to this submanifold is in $W^{1,2}$. On such a 4-dimensional manifold,  by a straightforward strong approximation procedure for $B$ in $W^{1,2}(\wedge^1\varphi^{-1}(s)\otimes\mathfrak{su}(2))$, we can derive the following identity from the classical expression of the \textit{transgression form} for the second Chern class:
\begin{align}\label{r-I.15}
    \mbox{tr}(F_B\wedge F_B)=d\lf[\mbox{tr}\lf(B\wedge dB+\frac{1}{3}\,B\wedge [B,B] \rg)\rg].
\end{align}
This implies that
\begin{align}\label{r-I.16}
    \int_{\varphi^{-1}(s)}\mbox{tr}(F_B\wedge F_B)=0 \quad\mbox{ for $\H^1$-a.e. }s\in\r.
\end{align}
Combining \eqref{r-I.14} and \eqref{r-I.16}, we get
\begin{align}\label{r-I.17}
    d\lf(\mbox{tr}(F_B\wedge F_B)\rg)=0\quad\mbox{ in }{\mathcal D}'(\b^5).
\end{align}
This fact is contradicting \eqref{r-I.12} and we conclude that the weak limit of the smooth curvatures $F_{A_k}$ on $\b^5$ cannot be, even locally, the curvature of a Sobolev $W^{1,2}$-connection. For sequences of smooth Yang--Mills fields (smooth critical points of YM) with uniformly bounded energy, the possibility for their weak limits\footnote{These objects are called \textit{admissible Yang--Mills connections} in \cite{tian}.} not to satisfy \eqref{r-I.17} is not excluded at all. In fact, we believe that it is possible to produce such sequences where \eqref{r-I.17} is violated.

\medskip

These facts left the  variational geometric analysis community in some perplexity and the following question arose naturally:

\medskip

\centerline{\textit{What is the space of weak limits of curvatures of smooth Yang--Mills connections}}

\centerline{\textit{in supercritical dimension?}}

\medskip 

The above considerations are excluding the space of the curvatures of Sobolev connections, which has been the unique framework adopted so far to approach the variational issues related to the Yang--Mills lagrangian in subcritical and critical dimension. 

The work in \cite{PR1} was motivated by this question and brought an answer to it in the abelian case. In this framework, a $2$-form $F$ on a closed oriented connected surface $\Sigma$ is the curvature of some complex line bundle $E$ over $\Sigma$ if and only if
\begin{align}\label{r-I.18}
    \int_{\Sigma}F\in2\pi{\Z}\ \quad\mbox{ and then }\quad c_1(E)=\int_{\Sigma} F.
\end{align}
The authors introduced on $\b^3$ the space of \textit{weak $L^p$-curvatures}
\begin{align}
    \mathcal{F}^p_{\Z}:=\lf\{F\in L^p(\b^3)\ :\ \forall\, \varphi\in\operatorname{Lip}_c(\b^3)\ \mbox{ and for $\H^1$-a.e. } s\in\r,\ \int_{\varphi^{-1}(s)} F\in 2\pi{\Z}\rg\}.
\end{align}
The main result in \cite{PR1} establishes that for every $p>1$ the space $\mathcal{F}^p_{\Z}$ is sequentially closed for the weak convergence in $L^p$. This statement and some complementary results were made more precise and extended to higher dimensions in \cite{caniato} and \cite{caniato-gaia}.

Inspired by the abelian case, M. Petrache and the second author introduced the space of \textit{weak connections} on a $n$-dimensional manifold $N^n$. The first main idea consists first, for the dimensions $n\le 4$, in ``wrapping together'' the space of principal $G$-connections for any possible principal $G$-bundles in a single definition. We define
\be
\label{ad-2}
{\mathbb A}_G(N^n):=\lf\{ \begin{array}{c}
\nabla:=(U_i,A_i)_{i\in I}\ ; \ (U_i)_{i\in I}\mbox{ realizes an open cover of $N^n$}\\[3mm]
A_i\in W^{1,2}(\wedge^1 U_i\otimes\g),\ 
\forall\ i\ne j\ \ \exists\, g_{ij}\in W^{2,2}(U_i\cap U_j, G)\ \mbox{ s. t. }\\[3mm]
A_j=A_i^{g_{ij}}:=g_{ij}^{-1}d{g_{ij}}+g_{ij}^{-1}\,A_i\,g_{ij}\quad\mbox{ on }U_i\cap U_j\\[3mm]
\forall\ i\ne j\ne k\quad g_{ij}\,g_{jk}\,g_{ki}\equiv\operatorname{id}_G\quad\mbox{ on }U_i\cap U_j\cap U_k\\[2mm]
|F_\nabla|\in L^2(N^n)
\end{array} 
\rg\}.
\ee 
This is the definition of Sobolev connections considered in the classical analytical works on gauge theory since the early  eighties (\cite{DK}, \cite{FU}). In order to implement analysis arguments, we need to find some natural generalization of Sobolev connections to higher dimension that enjoys a good closure property. Aiming to do so, it is tempting to work with a single form to represent the connection. The problem of constructing a global gauge to any Sobolev connection in dimension at most 4 was considered for the first time in \cite{petrache-riviere-global-gauges}. 
\begin{Th} 
\label{glob-gau} \cite{petrache-riviere-global-gauges} Let $\nabla\in{\mathbb A}_{\operatorname{SU}(2)}(N^n)$ be a Sobolev connection of some principal $\operatorname{SU}(2)$-bundle over a closed oriented Riemannian manifold $N^n$  of dimension $n\le 4$. Then there exists $A\in L^{4,\infty}(\wedge^1N^n\otimes {\mathfrak{su}(2)})$ such that, about every point, there exists locally a $W^{1,(4,\infty)}$ trivialization in which
\begin{align}
   \nabla=d+A, 
\end{align}
where $L^{4,\infty}$ is the weak Marcinkiewicz space 
\begin{align}
    L^{4,\infty}(N^n):\lf\{f \mbox{ is measurbale and } \ \sup_{\lambda>0}\lambda\,\lf| \lf\{ x\in N^n\ ;\ |f(x)|>\la\rg\}\rg|^{\frac{1}{4}}<+\infty\rg\}\
\end{align}
with $|\cdot|$ being the measure induced by the volume form of $N^n$.
\end{Th}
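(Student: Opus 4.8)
This is a topological statement in disguise, and the Lorentz exponent $L^{4,\infty}$ is dictated by the scaling of a single point singularity (a map behaving like $|x|^{-1}$ sits exactly in $L^{4,\infty}(\b^4)$ but in no smaller rearrangement-invariant space). The plan is to reduce the construction of a global gauge to the topological triviality of the bundle away from one point, and then to model the residual topology by an explicit $0$-homogeneous map. I would first record the topology of $G=\operatorname{SU}(2)\cong\mathbb{S}^3$: it is $2$-connected, so every principal $\operatorname{SU}(2)$-bundle over a CW complex of dimension $\le 3$ is smoothly trivial, while over a closed connected oriented $N^4$ the isomorphism class of such a bundle $P$ is determined by $c_2(P)\in H^4(N^4;\Z)\cong\Z$. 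Hence for $n\le 3$ the bundle underlying $\nabla$ is globally trivial, and for $n=4$, since $N^4\setminus\{p\}$ deformation retracts onto a $3$-complex, the restricted bundle $P|_{N^4\setminus\{p\}}$ is smoothly trivial for any point $p$. In all cases it is then enough to produce a single trivialization $\tau$ of $P$ over $N^n\setminus\{p\}$ of class $W^{2,2}_{\mathrm{loc}}$: writing $A:=\tau^{\ast}\nabla$ through the defining local $W^{1,2}$ gauges $A_i$ and $W^{2,2}$ transitions $g_{ij}$, one gets an $\mathfrak{su}(2)$-valued $1$-form of class $W^{1,2}_{\mathrm{loc}}(N^n\setminus\{p\})\subset L^4_{\mathrm{loc}}\subset L^{4,\infty}_{\mathrm{loc}}$ there (by the Sobolev embedding, dimension $\le 4$), while $\tau$ itself is $W^{2,2}_{\mathrm{loc}}\subset W^{1,(4,\infty)}_{\mathrm{loc}}$ there. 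The whole problem is thereby concentrated at the single point $p$ (and is vacuous when $n\le 3$).

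To construct $\tau$ over $N^n\setminus\{p\}$, I would start from the defining atlas $(U_i,A_i,g_{ij})$ of $\nabla$, refine the cover so that all $U_i$ and their intersections are balls contained in $N^n\setminus\{p\}$, and solve the cocycle problem: find $h_i\in W^{2,2}(U_i,\operatorname{SU}(2))$ with $g_{ij}=h_ih_j^{-1}$. In dimension $\le 4$ a $W^{2,2}$ map into $\operatorname{SU}(2)$ lies in $\mathrm{VMO}$, so Brezis--Nirenberg degree theory is available and the topological type of the cocycle $(g_{ij})$ coincides with that of the underlying smooth bundle, which is trivial over $N^n\setminus\{p\}$; one can then untwist $(g_{ij})$ by a standard partition-of-unity/patching argument, carried out in $W^{2,2}\cap\mathrm{VMO}$, using that multiplication and inversion act continuously there into the compact group, and modulo the usual density results for Sobolev maps into compact Lie groups. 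This produces $\tau$, hence $A\in W^{1,2}_{\mathrm{loc}}(N^n\setminus\{p\})$.

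It remains to analyse $A$ near $p$. Fix a smooth trivialization $\tau_0$ of $P$ over a small geodesic ball around $p$, identified with $\b^4\subset\mathbb{H}$, and put $A_0:=\tau_0^{\ast}\nabla\in W^{1,2}(\b^4)\subset L^4(\b^4)$. On $\b^4\setminus\{0\}$ one has $\tau=\tau_0\cdot k$ with $k\in W^{2,2}_{\mathrm{loc}}(\b^4\setminus\{0\},\operatorname{SU}(2))$, and $k$ must have degree $c_2$ on small spheres about $0$: otherwise $\tau$ would extend across $p$, making $P$ globally trivial. Since the trivialization over $N^n\setminus\{p\}$ can be altered near $p$ by any $W^{2,2}_{\mathrm{loc}}$ gauge on a punctured neighbourhood, and the only constraint on the singular part of $k$ is its degree, I would normalize $\tau$ so that near $p$ it coincides with the $0$-homogeneous model $\tau_0\cdot\omega$, where $\omega(x):=\psi(x/|x|)$ for a fixed smooth $\psi\colon\mathbb{S}^3\to\operatorname{SU}(2)=\mathbb{S}^3$ of degree $c_2$; the matching of $\omega|_{\mathbb{S}^3}$ with the exterior trivialization is by construction a $W^{2,2}(\mathbb{S}^3)$ map homotopic to the identity, which extends inward in $W^{2,2}$. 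Then
\[
A=k^{-1}\,dk+k^{-1}A_0\,k \ \text{ on } \ \b^4\setminus\{0\},\qquad |d\omega(x)|\le\frac{C}{|x|},
\]
so $\omega^{-1}d\omega\in L^{4,\infty}(\b^4)$ (because $|\{\,|x|^{-1}>\lambda\,\}|=c\,\lambda^{-4}$) and $\omega\in W^{1,(4,\infty)}(\b^4,\operatorname{SU}(2))$, while $k^{-1}A_0k\in L^4(\b^4)\subset L^{4,\infty}(\b^4)$ since $\operatorname{SU}(2)$ is bounded; hence $A\in L^{4,\infty}$ across $p$ as well, and $\tau_0\cdot\omega\cdot(\text{correction})$ is the required $W^{1,(4,\infty)}$ trivialization on a full neighbourhood of $p$. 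Gluing the pieces with a partition of unity yields the global $L^{4,\infty}$ bound and finishes the argument.

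\textbf{Main obstacle.} The analytic estimates --- the $|x|^{-1}$ bound, membership in $L^{4,\infty}$, products with bounded $\operatorname{SU}(2)$-valued maps --- are routine; the genuine difficulty is in dimension $4$. One has to upgrade the purely topological triviality of $P|_{N^4\setminus\{p\}}$ to an honest $W^{2,2}_{\mathrm{loc}}$ (not merely continuous, nor merely $W^{1,2}$) global trivialization off $p$ --- and since $W^{2,2}\not\hookrightarrow C^0$ in dimension $4$, the Čech-cocycle manipulations must be run through $\mathrm{VMO}$-degree theory --- and one has to perform the surgery near $p$ so that the singular part of the extracted gauge is precisely the homogeneous model, hence exactly in $L^{4,\infty}$ and in no smaller space. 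A secondary point is that a general atlas may first force several singular points, which one then merges into the single $p$ by transporting the Chern charge along paths, using the connectedness of $N^n$.
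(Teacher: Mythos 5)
The paper does not prove this statement; it is imported verbatim from \cite{petrache-riviere-global-gauges}, so the comparison has to be made with the proof there. Your strategy is sound and lands on the same analytic heart of the matter --- the only obstruction to a global $W^{1,2}$ gauge is the second Chern number, and the borderline object carrying one unit of topological charge is the $0$-homogeneous map $x\mapsto\psi(x/|x|)$, which sits exactly in $W^{1,(4,\infty)}(\b^4,\operatorname{SU}(2))$ and in no smaller space. But the organization is genuinely different. Petrache and Rivi\`ere do not concentrate the charge at a single point and do not exhibit the homogeneous model explicitly: they first prove an \emph{optimal extension theorem} (any $\operatorname{SU}(2)$-valued boundary datum on $\s^3$ in the relevant trace class extends to $W^{1,(4,\infty)}(\b^4,\operatorname{SU}(2))$ \emph{regardless of its degree} --- this is the ancestor of Proposition \ref{p-ext} quoted in Appendix B), then trivialize the bundle over the $3$-skeleton of a decomposition of $N^4$ and push the trivialization into each $4$-cell with that extension theorem as a black box. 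Your route buys an explicit normal form for the singularity and makes the sharpness of $L^{4,\infty}$ transparent; their route avoids the two surgeries you must perform --- merging several point singularities into one and re-gauging near $p$ so that the singular part is exactly the homogeneous model --- at the price of proving the extension theorem once and for all.

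Two steps in your outline deserve to be flagged, though neither is a fatal gap. First, resolving the cocycle $(g_{ij})$ over $N^4\setminus\{p\}$ in $W^{2,2}$ happens precisely in the critical dimension where, as this paper emphasizes, multiplication on $W^{2,2}(\cdot,G)$ is well defined (via $W^{2,2}\cap L^\infty$) but \emph{not continuous}; the patching cannot be run as a naive local-inversion argument and must go through Uhlenbeck's continuity method or the $C^0$-approximability of $W^{2,2}$ cocycles (cf.\ \cite{Uh3}, which is exactly how the analogous step is handled in the proof of Lemma \ref{Lemma: weak connection}). Second, your claim that the degree-zero comparison map between $\tau$ and $\tau_0\cdot\omega$ on an annulus ``extends inward in $W^{2,2}$'' needs the statement that a null-homotopic (in the VMO sense) $W^{2,2}(\s^3,\operatorname{SU}(2))$ map admits a manifold-valued $W^{2,2}$ extension to $\b^4$; this is true because $\operatorname{SU}(2)$ is $2$-connected, but it is exactly the kind of borderline extension fact one should not wave at in dimension four. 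With those two points made precise, your argument is a correct alternative proof.
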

Representing a smooth connection on a non trivial bundle by a global 1-form is obviously impossible if one does not give up regularity. In fact, for instance, if the bundle is a non trivial $\operatorname{SU}(2)$-bundle over the 4-sphere, one can prove that no smooth connection $\nabla$ has a global representative in $L^4$. Hence, the regularity $L^{4,\infty}$ is optimal in that sense\footnote{ In \cite{petrache-riviere-global-gauges}, the authors were asking the question whether a global gauge in the optimal space $L^{4,\infty}(\wedge^1 N^n\otimes {\mathfrak{su}(2)})$ and satisfying  simultaneously the Coulomb condition $d^\ast A=0$. A partial answer to this question which is still open as such is given in \cite{WY}.}.

Theorem \ref{glob-gau} is leading naturally to the following definition
\begin{align}\label{ad-3}
    A_G(N^n):=\lf\{ A\in L^2(\wedge^1N^n\otimes\g) \mbox{ : } F_A\in L^2(N^n) \mbox{ and }\ \exists \mbox{ locally } g\in W^{1,2} \mbox{ s.t. }A^g\in W^{1,2}\rg\}.
\end{align}
Thanks to this theorem, for $n\le 4$ we have
\begin{align}\label{ad-4}
    A_{\operatorname{SU}(2)}(N^n)={\mathbb A}_{\operatorname{SU}(2)}(N^n)
\end{align}
Finally, again for $n\le 4$, one can consider the following apparently weaker definition
\begin{align}\label{weak-co-L4}
    \A_G(N^n):=\lf\{ A\in L^2(\wedge^1N^n\otimes\g) \mbox{ : } F_A\in L^2(N^n) \mbox{ and } \exists \mbox{ locally } g\in W^{1,2} \mbox{ s.t. } A^g\in L^4\rg\}.
\end{align}
This definition more flexible than the definition of $A_G$, because it extends to spaces $X^n$ which are bi-Lipschitz homeomorphic to a smooth Riemannian oriented closed manifold $N^n$ and $n\le 4$. Moreover, we prove in the Appendix A of the present paper that for any Riemannian manifold of dimension less or equal than $4$ and for any compact Lie group $G$
\begin{align}\label{ad-5}
    A_G(N^n)=\A_G(N^n).
\end{align}
In \cite{Riv}, a proof of the sequential weak closure of $\A_G(M^4)$ under Yang--Mills energy control is given for every closed oriented 4-dimensional Riemannian manifold $N^4$. The proof is based on the analysis mostly developed by K. Uhlenbeck during the 80s in \cite{uhlenbeck-connections-with-lp}, \cite{Uh2} and \cite{Uh3}, combined with some more recent arguments involving the use of interpolation spaces introduced in this context by the second author in \cite{Riv2}.

Coming now to the  supercritical dimensions, as underlined earlier one would also wish to produce a class of ``objects'' containing smooth connections and which is weakly sequentially closed under Yang--Mills energy control exclusively. In order to do so, the main idea in \cite{PR1} is to propose an inductive definition by mean of generic slicing.  Let $n>4$ and denote by ${\mathcal L}(N^n)$ the space of Lipschitz functions whose level sets are almost always bi-Lipschitz equivalent to a smooth manifold. We introduce the following space:
\begin{align}
    \label{weak-co-high}
    \A_G(N^n):=\lf\{ 
    \begin{array}{c}
    A\in L^2(\wedge^1 N^n\otimes\g) \mbox{ : } F_A\in L^2(N^n)\\[3mm]
    \,\mbox{ s.t. }\, \forall\, \varphi\in {\mathcal L}(N^n)
    \,\mbox{ and for $\H^1$-a.e. } s\in\r\\[3mm]
    \iota_{\varphi^{-1}(s)}^\ast A\in \A_G(\varphi^{-1}(s))
    \end{array}
    \rg\},
\end{align}
where $\iota_{\varphi^{-1}(s)}$ is the canonical inclusion of $\varphi^{-1}(s)$ in $M^m$.

\medskip
In \cite{petrache-riviere-na}, a proof of the  sequential weak closure of $\A_G(M^5)$ under Yang--Mills energy control is proposed\footnote{The proof of the sequential weak closure in \cite{petrache-riviere-na} is based on the Proposition 2.1 in the same paper. As discussed in Remark \ref{rm-PR}, \cite[Proposition 2.1]{petrache-riviere-na} has the missing term $\|A\|_{L^2}^2$ on the right-hand-side of the inequality (2.2). This was first noticed by S. Sil. Our Proposition \ref{Proposition: harmonic extension on the good cubes} below is a suitable replacement of \cite[Proposition 2.1]{petrache-riviere-na} as explained in section II. We give a complete and detailed proof of the sequential weak closure of $\A_G(M^5)$ under controlled Yang--Mills energy in \cite{CaRi1}}. More precisely, the authors show that
\begin{align}
\label{r-I.19}
\begin{cases}
\,\,\forall\ \{A_k\}_{k\in\n}\subset\A_G(N^5)\,\mbox{ s.t. }\, \displaystyle{\limsup_{k\rightarrow +\infty}\mbox{YM}(A_k)<+\infty}\\[3mm]
\,\,\exists\,\{A_{k_n}\}_{n\in {\N}},\,\,A\in\A_G(N^5)\,\mbox{ and }\,\{g_n\}_{n\in\n}\subset W^{1,2}(N^5,G)\\[3mm]
\,\,\mbox{s.t. }A_{k_n}^{g_n}\rightharpoonup A\ \mbox{ in }L^2(N^5)\,\mbox{ and }\,\mbox{YM}(A)\le\displaystyle{\liminf_{k\rightarrow +\infty}\mbox{YM}(A_{k_n})}.
\end{cases}
\end{align}
The proof of \eqref{r-I.19} uses a strong approximation property of elements in $\A_G(\b^5)$ by connection forms which are smooth away from finitely many points, modulo gauge transformations. More specifically, in \cite{petrache-riviere-na} the authors show that for any $A\in\A_G(\b^5)$ there exists a sequence of $\{A_k\}_{k\in\n}\subset\A_G(\b^5)$ such that each $A_k$ is gauge equivalent to a connection form in $\b^5$ which is smooth away from finitely many points and
\begin{align}
\label{r-I.20}
\begin{cases}
    \, A_k\rightarrow A & \mbox{ strongly in }L^2(\b^5),\\[2mm]
    \, F_{A_k}\rightharpoonup F_A & \mbox{ weakly in }L^2(\b^5),\\[2mm]
    \,\mbox{tr}\lf(F_{A_k}\wedge F_{A_k}\rg) \rightharpoonup\mbox{tr}\lf(F_{A}\wedge F_{A}\rg) & \mbox{ weakly in }{\mathcal D}'(\b^5).
\end{cases}
\end{align}
The space $\mathcal F_G(\b^5)$ of smooth connections away from isolated points is the smallest space such that the strong approximation property \eqref{r-I.20} holds true. This makes ${\mathcal F}_G(\b^5)$ a natural subspace in $\A_G(\b^5)$, in the same way as the space $R^\infty(\b^3,\s^2)$ of maps in the Sobolev space $W^{1,2}(\b^3,\s^2)$ that are smooth away from finitely many isolated topological singularities is the smallest subspace in $W^{1,2}(\b^3,\s^2)$ being sequentially dense with respect to the $W^{1,2}$-norm (see \cite[Theorem 4]{bethuel-zheng}).

Because of the sequential weak closure property \eqref{r-I.19}, $\A_G(N^5)$ is a space in which variational problems related to the Yang--Mills lagrangian on $N^5$ are well-posed. We can then define the notion of weak Yang--Mills connections.
\begin{Dfi}[Weak Yang--Mills connections]
    Let $G$ be a compact matrix Lie group. We say that $A\in\A_G(\b^5)$ is a \textit{weak Yang--Mills connection} on $\b^5$ if 
    \begin{align}\label{ym}
        d_A^*F_A=0 \quad\Longleftrightarrow \quad \sum_{i=1}^5\p_{x_i}(F_A)_{ij}+[A_i,(F_A)_{ij}]\ \quad\forall \ j=1\cdots 5\quad\mbox{ distributionally on } \b^5,
    \end{align}
    i.e.
    \begin{align}
        \int_{\b^5}\lf<F_A,d_A\varphi\rg>=0 \qquad\forall\,\varphi\in C_c^{\infty}(\wedge^1\b^5\otimes\g),
    \end{align}
where we have
\begin{align*}
   d_A\varphi:=d\varphi+[A\wedge\varphi]=d\varphi+A\wedge\varphi+\varphi\wedge A, \qquad\forall\varphi\in C_c^{\infty}(\wedge^1\b^5\otimes\g).
\end{align*}
and $\langle\,\cdot\,,\,\cdot\,\rangle$ denotes the scalar product on $\wedge^2\b^5\otimes{\mathfrak g}$. 
\end{Dfi}
Among weak Yang--Mills connections,  we shall be particularly interested with the ones that satisfy the following \textit{stationarity condition} (which is automatically satisfied by smooth solutions to \eqref{ym} or by YM-energy minimizers for instance).
\begin{Dfi}[Stationary weak Yang--Mills connections]
    Let $G$ be a compact matrix Lie group. We say that a weak Yang--Mills connection $A\in\A_G(\b^5)$ on $\b^5$ is stationary if 
    \begin{align}\label{stationarity condition-intro}
        \left.\frac{d}{dt}\right|_{t=0}\operatorname{YM}(\Phi_t^*A)=0,
    \end{align}
    for every smooth $1$-parameter group of diffeomorphisms $\Phi_t$ of $\b^5$ with compact support.
\end{Dfi}
%
%
If $A$ is a stationary Yang--Mills connection, by standard methods it can be shown that the following \textit{monotonicity property} holds true: for every given $x\in \b^5$, the function
\begin{align}\label{monotonicity-intro}
    \big(0,\dist(x,\partial\b^5)\big)\ni\rho\to\frac{e^{c\Lambda\rho}}{\rho}\int_{B_{\rho}(x)}\lvert F_A\rvert^2\, d\L^5
\end{align}
is non-decreasing, where $c>0$ is a universal constant and $\Lambda$ depends on $B_1(x)$. In particular, we have 
\begin{align*}
 \sup_{\substack{x\in B_{\frac{1}{2}}(0),\\0<\rho<\frac{1}{4}}}\frac{1}{\rho}\int_{B_\rho(x)}|F_A|^2\ d\L^5 \le C\ \int_{B_1^5(0)}|F_A|^2\ d\L^5,
\end{align*}
for some constant $C>0$ independent on $A$. We then naturally introduce the following spaces which are known as Morrey-Sobolev space for any domain $\Om\subset \R^n$
\begin{align}
    M^0_{p,q}(\Om):=\Bigg\{f\in L^p(\Omega) \mbox{ : }  \lvert f\rvert_{M^0_{p,q}(\Om)}^p:=\sup_{\substack{x\in\Om,\\\rho>0}}\frac{1}{\rho^{n-pq}}\int_{B_\rho(x)\cap\Om}|f|^p\ d{\mathcal L}^n\Bigg\}.
\end{align}
It is strongly motivated by the analysis of weak stationary Yang--Mills Fields to ask whether Uhlenbeck's Coulomb gauge extraction extends in the higher dimension 5 to weak connections having a curvature with small Morrey $M^0_{2,2}$-norm. The following theorem answers positively to this question and is one of the main results of the present work.
\begin{Th}
\label{th-main}
Let $G$ be a compact matrix Lie group. There exists $\ep_G>0$ such that for every weak connection $A\in\mathfrak{a}_G(\b^5)$ satisfying
\begin{align}\label{ad-6}
    \lvert F_A\rvert_{M^0_{2,2}(\b^5)}^2=\sup_{\substack{x\in\b^5,\\\rho>0}}\frac{1}{\rho}\int_{B_\rho(x)\cap\b^5}\lvert F_A\rvert^2\ d\L^5<\ep_G
\end{align}
there exists $g\in W^{1,2}(\b^5,G)$ such that
\begin{align}\label{ad-7}
    d^\ast A^g=0,
\end{align}
and
\begin{align}\label{ad-8}
    \rvert\nabla A^g\rvert_{M^0_{2,2}(\b^5)}^2=\sup_{\substack{x\in\b^5,\\\rho>0}}\frac{1}{\rho}\int_{B_\rho(x)\cap\b^5}\sum_{i=1}^5|\p_{x_i}A^g|^2\ d{\mathcal L}^5\le C_G\lvert F_A\rvert_{M^0_{2,2}(\b^5)}^2,
\end{align}
where $C_G>0$ is a constant depending only on $G$.
\end{Th}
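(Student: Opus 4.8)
The plan is to follow Uhlenbeck's continuity-method scheme adapted to the Morrey setting, but with the crucial modification that we must work with \emph{weak} connections in $\mathfrak{a}_G(\b^5)$, so the starting connection $A$ is only in $L^2$ with $F_A\in L^2$, not in any Sobolev class a priori. First I would set up the continuity argument by interpolating between the trivial connection and the given $A$: for $t\in[0,1]$ consider $A_t := tA$ (or a suitable path inside $\mathfrak{a}_G(\b^5)$) and define the set $U\subset[0,1]$ of parameters $t$ for which there exists a gauge $g_t\in W^{1,2}(\b^5,G)$ achieving the Coulomb condition $d^\ast A_t^{g_t}=0$ together with the Morrey estimate \eqref{ad-8} with a fixed constant. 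The goal is to show $U$ is nonempty, open, and closed, hence $U=[0,1]$, giving the conclusion at $t=1$.

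The heart of the matter, and what I expect to be the main obstacle, is the \textbf{a priori estimate} — the Morrey analogue of \eqref{r-I.9}. Once we are in a Coulomb gauge, $A^g$ satisfies the elliptic system $d^\ast A^g=0$, $dA^g = F_{A^g} - A^g\wedge A^g$, and one must bootstrap the $M^0_{2,2}$ control of $\nabla A^g$ from the $M^0_{2,2}$ smallness of $F_{A^g}=g^{-1}F_Ag$ (whose Morrey norm equals that of $F_A$ by \eqref{r-I.2-a}). The difficulty is that the Sobolev embedding $W^{1,2}\hookrightarrow L^4$ fails in dimension $5$, so the quadratic term $A^g\wedge A^g$ cannot be absorbed by naive Sobolev–Hölder as in \eqref{r-I.7}. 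Instead I would use Morrey–Campanato estimates for the Hodge system on balls: for the Coulomb system, $\lvert \nabla A^g\rvert_{M^0_{2,2}(B_\rho)}\lesssim \lvert F_{A^g}\rvert_{M^0_{2,2}(B_\rho)} + \lvert A^g\wedge A^g\rvert_{M^0_{2,1}(B_\rho)}$, combined with a Morrey-space product estimate of the schematic form $\lvert A^g\wedge A^g\rvert_{M^0_{2,1}} \lesssim \lvert A^g\rvert_{M^0_{4,1}}^2$ and a Morrey–Poincaré/Sobolev inequality $\lvert A^g\rvert_{M^0_{4,1}} \lesssim \lvert\nabla A^g\rvert_{M^0_{2,2}}$ valid in dimension $5$ under the Coulomb constraint. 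Here I anticipate having to invoke a suitable replacement for the flawed \cite[Proposition 2.1]{petrache-riviere-na} — precisely the harmonic-comparison estimate the excerpt advertises as Proposition \ref{Proposition: harmonic extension on the good cubes} — to handle the low-frequency part of $A^g$, since the bare $\|A^g\|_{L^2}^2$ term in the naive estimate must be re-expressed in Morrey-controllable terms using the Coulomb gauge and the boundary condition $\langle A^g,x\rangle = 0$. Absorbing the quadratic term then requires the smallness hypothesis $\lvert F_A\rvert_{M^0_{2,2}}^2<\ep_G$ exactly as $C\|A^g\|^2_{W^{1,2}}\le 1/2$ was used to pass from \eqref{r-I.8} to \eqref{r-I.9}.

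For \textbf{openness} of $U$, I would argue that near a parameter $t_0\in U$ with a good gauge $g_{t_0}$ one can solve the Coulomb PDE \eqref{r-I.4} for nearby $t$ by a fixed-point/implicit-function argument in the Banach space of $g$ with $g^{-1}dg\in M^0_{2,2}$; the a priori estimate guarantees that the fixed-point map is a contraction on a small ball, so the new gauge inherits \eqref{ad-8}. The subtlety is that, in dimension $5$, the space $W^{2,2}(\b^5,G)$ is not a Banach manifold and group multiplication is not continuous there, so as in Uhlenbeck's and Rivière's treatments one works directly at the level of the $\g$-valued forms and the Coulomb equation rather than at the level of maps into $G$, using the Morrey–elliptic estimates to control the iteration. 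For \textbf{closedness}, if $t_j\to t_\infty$ with $t_j\in U$, the uniform estimate \eqref{ad-8} gives a uniform $M^0_{2,2}$ bound on $\nabla A_{t_j}^{g_{t_j}}$, hence (after passing to a subsequence) weak convergence of $A_{t_j}^{g_{t_j}}$ and of the gauges $g_{t_j}$ in $W^{1,2}$, lower semicontinuity of the Morrey norm, and stability of the Coulomb condition $d^\ast A^g=0$ and of the boundary condition under this convergence; one checks the limit gauge is still in $W^{1,2}(\b^5,G)$ (using $|g_{t_j}|\le$ const since $G$ is compact, plus the gradient bound), so $t_\infty\in U$. Finally, \textbf{nonemptiness}: $t=0$ gives $A_0=0$, trivially Coulomb with the estimate, so $0\in U$. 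Assembling the three properties yields $U=[0,1]$ and in particular the desired gauge at $t=1$, completing the proof of Theorem \ref{th-main}.
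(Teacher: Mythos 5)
There is a genuine gap: your proposal runs Uhlenbeck's continuity method directly on the weak connection $A$, but this bypasses the actual difficulty of the theorem, which is that $A\in\mathfrak{a}_G(\b^5)$ is only an $L^2$ $1$-form with $L^2$ curvature, defined through generic slicing, with no a priori global Sobolev (or even $L^4$) representative. Two concrete problems. First, the path $A_t=tA$ does not stay in the admissible class: since $F_{tA}=t\,dA+t^2A\wedge A\neq tF_A$ in the nonabelian case, the smallness of $\lvert F_{A_t}\rvert_{M^0_{2,2}}$ is not inherited from that of $\lvert F_A\rvert_{M^0_{2,2}}$, and it is not clear that $tA$ remains in $\mathfrak{a}_G(\b^5)$ at all; the scaling path $A_t(x)=tA(tx)$ fixes the curvature monotonicity but still leaves the second problem. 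Second, the openness step requires solving the Coulomb equation \eqref{r-I.4} near a known solution by an implicit-function/fixed-point argument, and the closedness step requires stability of the gauge condition under weak limits; both presuppose that the connections along the path live in a function space where these nonlinear maps are well defined and $C^1$ (e.g. $W^{1,p}$ with $p$ above the critical threshold, or at least strong approximability by smooth connections with controlled Morrey norm). For a general element of $\mathfrak{a}_G(\b^5)$ this is exactly what is unknown — indeed the existence of \emph{any} controlled global $W^{1,2}$ gauge is part of the conclusion, not a tool available along the way. Your invocation of Proposition \ref{Proposition: harmonic extension on the good cubes} as a "harmonic-comparison estimate for the low-frequency part of $A^g$" misreads its role: it is an extension-from-the-boundary-of-a-cube building block, not an interior elliptic estimate for the Coulomb system.

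The paper's actual route is different and the continuity-method content you sketch is essentially delegated to the cited result of \cite{meyer-riviere}, which already establishes the Morrey-controlled Coulomb gauge for connections that are strong $L^2$ limits of smooth connections with uniformly small $M^0_{2,2}$ curvature norm. The new work, occupying Sections II--V, is to show that \emph{every} weak connection satisfying \eqref{ad-6} is such a limit: one chooses admissible cubic covers on which the curvature has small $L^2$ trace, distinguishes good cubes (where the connection itself is also small on the boundary) from bad ones, replaces the connection inside each cube by the harmonic-type extensions of Corollaries \ref{Corollary: extension in the interior of good cubes} and \ref{Corollary: extension in the interior of bad cubes}, and iterates this twice (Theorems \ref{Theorem: approximation under controlled traces of the curvatures} and \ref{Theorem: smooth approximation under controlled Morrey norm}) to produce smooth approximants $A_i$ with $\lvert F_{A_i}\rvert_{M^0_{2,2}}\le C_G\lvert F_A\rvert_{M^0_{2,2}}$ and $A_i\to A$ in $L^2$. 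Theorem \ref{th-main} then follows by applying \cite{meyer-riviere} to each $A_i$ and passing to the limit in the gauges, as in Theorem \ref{Theorem: Coulomb gauge extraction}. Your proposal contains no substitute for this approximation step, which is the heart of the proof.
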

This result has been conjectured to hold in \cite{petrache-riviere-na}. Such a Coulomb gauge extraction theorem has been first established in \cite{tian} for smooth connections and in \cite{meyer-riviere} under the assumption that the connection can be approximated strongly by smooth connections with curvatures having small Morrey norm \eqref{ad-6}. Later on, the same statement was proved in a particular case assuming that the connection is a weak limit of smooth Yang--Mills fields (see \cite{tao-tian}). Such weak limits are smooth away from a closed codimension 4 rectifiable set and and referred to as \textit{admissible Yang--Mills connections}. We also remark that energy identities and bubbling analysis for Yang--Mills fields in supercritical dimension are due to the subsequent works of the second author and A. Naber--D. Valtorta, in \cite{riviere-energy-quantization} and \cite{naber-valtorta} respectively.

In \cite{tao-tian}, the authors prove a strong approximability property of admissible Yang--Mills connections by smooth connections with small Morrey norm (see \cite[Proposition 4.4]{tao-tian}). In \cite[Theorem 34]{swarnendu}, the author shows the existence of Morrey norm controlled local Coulomb gauges in supercritical dimension by exploiting an approximation procedure, in the same spirit as in \cite{meyer-riviere}. However, approximating connections in the (stronger) Morrey norm requires additional assumptions\footnote{In particular, in \cite[Theorem 34]{swarnendu} the author exploits a ``vanishing Morrey norm'' condition.} which are not available in our context. 

The main achievement of the present paper is to prove that {\bf any weak connection satisfying \eqref{ad-6}} (which includes all the previous cases) can be approximated by smooth connections with small Morrey norms.

Combining Theorem \ref{th-main} and the main result in \cite{meyer-riviere} we can derive the following $\eps$-regularity statement by using the same arguments presented in \cite[Section 4]{meyer-riviere}. 
\begin{Th}[$\eps$-regularity]\label{Theorem: eps-regularity-intro}
    Let $G$ be a compact matrix Lie group. There exists $\eps_G\in (0,1)$ such that for every stationary weak Yang--Mills $A\in\A_G(\b^5)$ satisfying 
    \begin{align*}
        \operatorname{YM}(A)=\int_{\b^5}\lvert F_A\rvert^2\, d\L^5<\eps_G
    \end{align*}
    there exist $g\in W^{1,2}(B_{\frac{1}{2}}(0),G)$ such that $A^g\in C^{\infty}(B_{\frac{1}{2}}(0))$.
\end{Th}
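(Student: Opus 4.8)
\textbf{Proof proposal for the $\eps$-regularity theorem (Theorem \ref{Theorem: eps-regularity-intro}).}

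The plan is to combine the Coulomb gauge extraction of Theorem \ref{th-main} with the regularity theory for stationary Yang--Mills connections in a good gauge, exactly as in \cite[Section 4]{meyer-riviere}. First I would exploit the monotonicity formula \eqref{monotonicity-intro}: since $A$ is stationary, the smallness hypothesis $\operatorname{YM}(A)<\eps_G$ together with monotonicity upgrades to a \emph{uniform} smallness of the rescaled energy on all small balls, i.e. it controls the Morrey norm $\lvert F_A\rvert_{M^0_{2,2}(B_{3/4}(0))}^2$ by $C\operatorname{YM}(A)<C\eps_G$. Choosing $\eps_G$ small enough that $C\eps_G<\ep_G$ (the threshold from Theorem \ref{th-main}), we are in a position to apply Theorem \ref{th-main} on balls $B_\rho(x)$ with $x\in B_{1/2}(0)$ and $\rho$ small: rescaling each such ball to the unit ball and invoking Theorem \ref{th-main} produces local Coulomb gauges $g_{x,\rho}\in W^{1,2}(B_\rho(x),G)$ in which $d^\ast A^{g_{x,\rho}}=0$ and $\lvert\nabla A^{g_{x,\rho}}\rvert_{M^0_{2,2}}^2\le C_G\lvert F_A\rvert_{M^0_{2,2}}^2$.

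Next I would argue that in such a Coulomb gauge the Yang--Mills system becomes an elliptic system with quadratically small (in Morrey norm) nonlinearity. Indeed $d^\ast A^g=0$ and $d_{A^g}^\ast F_{A^g}=0$ (stationarity is gauge invariant, so the gauged connection is still weak Yang--Mills) together give, schematically, $-\Delta A^g = A^g\ast\nabla A^g + A^g\ast A^g\ast A^g + \nabla(A^g\ast A^g)$ type terms, where all the right-hand side terms are controlled by the small Morrey norm of $\nabla A^g$. This is precisely the structure handled in \cite{meyer-riviere}: a Morrey-space bootstrap (Adams--Morrey embedding, Riesz potential estimates on Morrey spaces, and the fact that the nonlinearity gains a small constant factor) then propagates regularity, yielding $A^g\in M^0_{p,2}$ for some $p>2$, hence Hölder continuity of $A^g$, and then $C^\infty$ regularity by standard elliptic bootstrapping of the now-subcritical system. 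The local gauges $g_{x,\rho}$ can be patched (or one simply works in a single Coulomb gauge on $B_{1/2}(0)$ obtained by applying Theorem \ref{th-main} once on $B_{3/4}(0)$ after the monotonicity rescaling) to produce a single $g\in W^{1,2}(B_{1/2}(0),G)$ with $A^g\in C^\infty(B_{1/2}(0))$.

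The main obstacle — and the reason this theorem is stated as a \emph{combination} rather than reproved from scratch — is having a valid Coulomb gauge at the level of generality of weak connections in $\A_G(\b^5)$; that is exactly the content of Theorem \ref{th-main}, whose proof (via approximation of an arbitrary weak connection satisfying \eqref{ad-6} by smooth connections with small Morrey norm) is the genuinely new and hard part of the paper. Granting Theorem \ref{th-main}, the remaining steps are the monotonicity-to-Morrey upgrade (standard for stationary solutions) and the elliptic Morrey bootstrap, both of which are already carried out in \cite{meyer-riviere}; a secondary technical point to check is that the regularity argument of \cite{meyer-riviere}, originally run under a strong-approximability hypothesis, applies verbatim once Theorem \ref{th-main} supplies the controlled gauge, so that no approximation is needed in this final step beyond what Theorem \ref{th-main} already provides.
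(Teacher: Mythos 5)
Your proposal is correct and follows essentially the same route as the paper: stationarity plus the monotonicity formula upgrades the small Yang--Mills energy to a small Morrey $M^0_{2,2}$-norm of $F_A$, Theorem \ref{th-main} (in the paper's Section VI incarnation, Theorem \ref{Theorem: Coulomb gauge extraction}) then supplies a single controlled Coulomb gauge on the half-ball, and the elliptic Morrey-space bootstrap of \cite[Section 4]{meyer-riviere} applied to the gauged (still weak Yang--Mills, by gauge invariance of stationarity) connection yields smoothness. You also correctly locate the genuinely new content in the gauge extraction for arbitrary weak connections rather than in this final combination step.
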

Finally, standard covering arguments give the following bound on the singular set of stationary weak Yang--Mills connections, which is the main result of the present paper. 
\begin{Th}\label{Theorem: regularity for stationary weak Yang-Mills connections-intro}
     Let $G$ be a compact matrix Lie group and let $A\in\A_G(\b^5)$ be a stationary weak Yang--Mills connection on $\b^5$. Then
     \begin{align*}
         \H^1(\operatorname{Sing}(A))=0,
     \end{align*}
     where $\H^1$ is the $1$-dimensional Hausdorff measure in $\r^5$ and $\operatorname{Sing}(A)\subset\b^5$ is the singular set of $A$, given by $\operatorname{Sing}(A):=\b^5\smallsetminus\operatorname{Reg}(A)$ where
     \begin{align*}
         \operatorname{Reg}(A):=\lf\{x\in\b^5 \mbox{ s.t. } \exists \,\rho>0,\ g\in W^{1,2}(B_{\rho}(x),G) \mbox{ s.t. } A^g\in C^{\infty}(B_{\rho}(x))\rg\}.
     \end{align*}
\end{Th}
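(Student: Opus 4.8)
The plan is to deduce Theorem \ref{Theorem: regularity for stationary weak Yang-Mills connections-intro} from the $\eps$-regularity statement (Theorem \ref{Theorem: eps-regularity-intro}) via a standard covering argument built on the monotonicity formula \eqref{monotonicity-intro}. First I would set up the relevant density function. Fix a stationary weak Yang--Mills connection $A\in\A_G(\b^5)$. For a point $x\in\b^5$ and $0<\rho<\dist(x,\p\b^5)$ set
\begin{align*}
\Theta(x,\rho):=\frac{e^{c\Lambda\rho}}{\rho}\int_{B_\rho(x)}|F_A|^2\,d\L^5,
\end{align*}
which by the monotonicity property is non-decreasing in $\rho$, so the density $\Theta(x):=\lim_{\rho\to 0^+}\Theta(x,\rho)$ exists. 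The key observation is that if $\Theta(x)<\eps_G$ then, by monotonicity, $\frac{1}{\rho}\int_{B_\rho(x)}|F_A|^2\,d\L^5<\eps_G$ for all sufficiently small $\rho$; after rescaling $B_\rho(x)$ to the unit ball (which preserves both the stationarity and the weak connection structure, the Yang--Mills energy being scale invariant in the Morrey sense used here) Theorem \ref{Theorem: eps-regularity-intro} applies and produces a local gauge $g\in W^{1,2}$ with $A^g$ smooth near $x$. Hence
\begin{align*}
\operatorname{Sing}(A)\subseteq\Sigma:=\lf\{x\in\b^5 \mbox{ : } \Theta(x)\ge\eps_G\rg\},
\end{align*}
and it suffices to prove $\H^1(\Sigma)=0$.

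Next I would establish the two standard structural facts about $\Sigma$. Upper semicontinuity: if $x_k\to x$ in $\b^5$, then for fixed small $\rho$ the map $y\mapsto\Theta(y,\rho)$ is continuous (the integral $\int_{B_\rho(y)}|F_A|^2$ depends continuously on $y$ since $|F_A|^2\in L^1$), so $\Theta(x,\rho)=\lim_k\Theta(x_k,\rho)\ge\limsup_k\Theta(x_k)$, and letting $\rho\to 0$ gives $\Theta(x)\ge\limsup_k\Theta(x_k)$. Consequently $\Sigma$ is relatively closed in $\b^5$, which gives the ``closed singular set'' assertion. The second fact is the small-density bound: since $F_A\in L^2$, the rescaled energies $\frac{1}{\rho}\int_{B_\rho(x)}|F_A|^2$ are uniformly bounded on compact subsets by the Morrey-type consequence of monotonicity noted after \eqref{monotonicity-intro}, so in particular $\Theta$ is bounded.

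Then comes the measure-theoretic core, which I expect to be the main obstacle to write carefully. The claim $\H^1(\Sigma)=0$ follows from a Vitali-type covering argument combined with the absolute continuity of $\mu:=|F_A|^2\,d\L^5$ with respect to $\L^5$. Concretely, for each $x\in\Sigma$ and each small $\rho$ one has $\mu(B_\rho(x))\ge e^{-c\Lambda\rho}\eps_G\,\rho\ge\tfrac{\eps_G}{2}\rho$; by the Vitali covering lemma, given $\delta>0$ one extracts from $\{B_\rho(x):x\in\Sigma,\ \rho<\delta\}$ a countable disjoint subfamily $\{B_{\rho_i}(x_i)\}$ whose fivefold dilates cover $\Sigma$, whence
\begin{align*}
\H^1_{10\delta}(\Sigma)\le\sum_i 10\rho_i\le\frac{20}{\eps_G}\sum_i\mu(B_{\rho_i}(x_i))=\frac{20}{\eps_G}\,\mu\lf(\bigsqcup_i B_{\rho_i}(x_i)\rg)\le\frac{20}{\eps_G}\,\mu\lf(\lf\{y\in\b^5 \mbox{ : } \dist(y,\Sigma)<\delta\rg\}\rg).
\end{align*}
Letting $\delta\to 0$, the set on the right decreases to $\Sigma$ (using that $\Sigma$ is closed), and the key point is that $\mu(\Sigma)=0$: indeed $\mu\ll\L^5$ and $\L^5(\Sigma)=0$, the latter because any set carrying positive $\H^1$-almost-everywhere lower density bound $\mu(B_\rho)\gtrsim\rho$ with $\mu\ll\L^5$ must be $\L^5$-null (a one-dimensional lower Ahlfors bound forces vanishing $5$-dimensional Lebesgue measure — or more simply, $\H^1(\Sigma)<\infty$ would already give $\L^5(\Sigma)=0$, and we bootstrap). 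Therefore $\H^1(\Sigma)=0$. The only delicate bookkeeping is to run this on an exhaustion of $\b^5$ by compactly contained balls so that the monotonicity formula is available with a uniform constant $\Lambda$ on each piece, and then take a countable union; since a countable union of $\H^1$-null sets is $\H^1$-null, this yields $\H^1(\operatorname{Sing}(A))\le\H^1(\Sigma)=0$, completing the proof.
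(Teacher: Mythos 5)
Your proposal is correct and follows exactly the route the paper intends: the paper derives this theorem from Theorem \ref{Theorem: eps-regularity-intro} and the monotonicity formula \eqref{monotonicity-intro} via the standard density/Vitali covering argument (citing \cite[Proposition 9.21]{giaquinta-martinazzi}), which is precisely what you have written out — including the correct scaling $\frac{1}{\rho}\int_{B_\rho(x)}|F_A|^2$ for the rescaled Yang--Mills energy and the bootstrap $\H^1(\Sigma)<\infty\Rightarrow\L^5(\Sigma)=0\Rightarrow\mu(\Sigma)=0\Rightarrow\H^1(\Sigma)=0$.
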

\subsection*{Organization of the paper} 
As mentioned above, the proof of the main Theorem \ref{th-main} consists in approximating strongly in $L^2$ every weak connection $A$ (i.e. every $A\in\mathfrak{a}_G(\b^5)$) whose curvature has a small Morrey $M^0_{2,2}$-norm by a sequence of smooth $\g$-valued 1-forms $A_j$ having small Morrey $M^0_{2,2}$-norms. 

Section II is devoted to the construction of the building blocks. It contains two main results, explaining how to extend in an ``optimal way'' inside a cube a given connection at its boundary. We propose two ways of extending this connection. First, we assume some smallness condition on the $L^2$-norms both of the curvature and of the connection at the boundary (Corollary \ref{Corollary: extension in the interior of good cubes}). Then, we assume smallness of the $L^2$-norm of the curvature only (Corollary \ref{Corollary: extension in the interior of bad cubes}). 

In Section III we introduce some terminology. We say that a cube is \textit{good} if on its boundary both the $L^2$-norms of the curvature and of the connection are small. We call a cube \textit{bad} if on its boundary just the $L^2$-norm of the curvature is small.\footnote{This terminology in the dichotomy between good and bad cubes is reminiscent of the one introduced in the framework of the strong approximation of  weak connections in $\b^5$ (\cite{petrache-riviere-na} and \cite{CaRi1}). Nevertheless, the meaning that we associate to it is different, in the sense that assuming the smallness condition of the Morrey norm allows us to decompose the domain into cubes at the boundary of which the $L^2$-norm of the curvature is always small. The dichotomy between good and bad cubes is made on the base of the smallness of $L^2$-norm of the connection only.} Then, in a second step, by the mean of the coarea formula and the mean value theorem we prove the existence of a so called \textit{admissible covers} by small cubes of comparable sizes, so that the $L^2$-norm of the curvature is small on the boundary of each of the cubes. 

Section IV is devoted to what is called the ``first smoothification''. In the first smoothification, we replace the initial connection in every cube of a chosen admissible cover by mean of the extensions introduced in the previous section. When the cube is good, we use Corollary \ref{Corollary: extension in the interior of good cubes} whilst, if the cube is bad, we exploit by Corollary \ref{Corollary: extension in the interior of bad cubes}. The main result in Section IV is Theorem \ref{Theorem: approximation under controlled traces of the curvatures}. The consequence of the ``first smoothification'' is that the newly obtained connection forms $A_{i,\La}$ converging strongly in the $L^2$-norm to $A$ as $i\rightarrow +\infty$ and $\La\rightarrow +\infty$, still having a small Morrey norm of the curvature, enjoys the following property: every trace on a generic cube of size comparable to the size of the admissible covering satisfies the small $L^2$-condition. 

In Section V we shall proceed to the ``second  smoothification'', that is, the $L^2$ strong approximation of the initial weak connection $A$ by a sequence $A_i$ of smooth connections whose curvature has small Morrey $M^0_{2,2}$-norm (Theorem \ref{Theorem: smooth approximation under controlled Morrey norm}). The proof of the second smoothification goes as follows. Starting from the approximating sequences $A_{i,\La}$ given by the first smoothification, we take a grid of size comparable to the size of the admissible cover associated to $A_{i,\La}$ and we apply the replacement results of Section II (the building blocks Corollary \ref{Corollary: extension in the interior of good cubes} and Corollary \ref{Corollary: extension in the interior of bad cubes}) on each of the disjoint cubes of the grid iteratively in such a way that, in the procedure, each cube to be replaced is facing at least one cube which has not been replaced yet. 

In Section VI, we combine the approximation given by the second smoothification with the main result in \cite{meyer-riviere} in order to prove our main theorem (Theorem \ref{th-main}).

\subsection*{Acknowledgements}
The authors express their gratitude to Mircea Petrache and Swarnendu Sil for the useful preliminary discussions about these topics. Additionally, the first author would like to extend their appreciation to Federica Cecchetto, Sofia Fatigoni, Federico Franceschini, Filippo Gaia, Alessandro Liberatore and Martijn S. S. L. Oei for the insightful conversations and thoughtful support during the writing of this paper. A large part of this work has been written while the two authors were members of the Simons Laufer Mathematical Sciences Institute. They would like to thank the Institute for the perfect working conditions and the hospitality.
%
%
\section{The building blocks for the approximation theorems}\label{section: the building blocks for the approximation theorems}
\subsection{Extension of weak connections}\label{subsection: extension of weak connections}
In this subsection we build the fundamental statements that will be used in Section \ref{section: approximation of weak connections under Morrey norm control} in order to prove the strong $L^2$-approximation theorems for weak connections (Theorem \ref{Theorem: approximation under controlled traces of the curvatures} and Theorem \ref{Theorem: smooth approximation under controlled Morrey norm}) under Morrey norm control. In particular, we will need Corollaries \ref{Corollary: extension in the interior of good cubes} and \ref{Corollary: extension in the interior of bad cubes} to extend weak connections from the boundary of $5$-cubes to their interior. If we can assume the $L^2$-smallness of both the connection and its curvature on the boundary of the cube, then we will use Corollary \ref{Corollary: extension in the interior of good cubes}. In case we can only assume the  $L^2$-smallness of the curvature of the connection on the boundary, we will exploit Corollary \ref{Corollary: extension in the interior of bad cubes}.
\subsubsection{Extension under \texorpdfstring{$L^2$}{Z}-smallness of the connection and its curvature}
\allowdisplaybreaks
To ease the reading, throughout this subsection we will denote by $``d_{\r^5}"$ and $``d_{\s^4}"$ the standard differential of $k$-forms respectively on $\r^5$ and on the round sphere $\s^4$. More precisely we have
\[
d_{\s^4}:=\iota_{\s^4}^\ast d_{\r^5},
\]
where $\iota_{\s^4}$ is the canonical embedding of $\s^4$ into $\r^5$. The following proposition is one of the building blocks of our approximation procedure.

\begin{Prop}[Harmonic extension under smallness condition on $F_A$ and $A$]\label{Proposition: harmonic extension on the good cubes}
Let $G$ be a compact matrix Lie group and let $f:\r^5\to\r^5$ be a bi-Lipschitz homeomorphism such that $f\in W_{loc}^{1,\infty}(\r^5,\r^5)$ and
\begin{align}\label{I.1}
    \langle d_{\s^4}f_i,d_{\s^4}f_j\rangle_{L^2(\s^4)}=0, \qquad\forall\, i,j=1,...,5\ \mbox{ s.t. }\ i\neq j. 
\end{align}
Let
\begin{align}
    C_f:=\sqrt{\sum_{i=1}^5\frac{1}{\|d_{\s^4}f_i\|_{L^2(\s^4)}^2}}.
\end{align}
There are constants $\eps(G,f)\in(0,1)$ and $C(G,f)$ depending only on $G$, $C_f$ and on $\|d_{\r^5}f\|_{L^{\infty}(\b^5)}$ such that for any $A\in\A_G(\s^4)$ satisfying
\begin{align}\label{equation: smallness condition on S^4}
    \|F_{A}\|_{L^2(\s^4)}+\|A\|_{L^2(\s^4)}<\eps(G,f)
\end{align}
the following facts hold.
\begin{enumerate}[(i)]
    \item There exist $g\in W^{1,2}(\s^4,G)$ and a $\g$-valued 1-form $\tilde{A}\in L^5(\wedge^1\b^5\otimes\g)$ such that
    \begin{align*}
         \iota_{\s^4}^*\tilde A=A^{g}
    \end{align*}
    and 
    \begin{align}
        \nonumber
        \|A^g\|_{L^4(\s^4)}&\le C(G,f)\ \lf(\|F_A\|_{L^2(\s^4)}+\|A-\iota_{\s^4}^*f^*\bar A\|_{L^2(\s^4)}+\|A\|_{L^2(\s^4)}\rg),\\
        \label{VIII-0}
        \|d_{\s^4}g\|_{L^2(\s^4)}&\le C(G,f)\ \lf(\|F_A\|_{L^2(\s^4)}+\|A-\iota_{\s^4}^*f^*\bar A\|_{L^2(\s^4)}+\|A\|_{L^2(\s^4)}^2\rg),\\
        \label{VIII}
        \ds \|F_{\tilde A}\|_{L^{\frac{5}{2}}(\b^5)}&\le C(G,f)\ \lf(\|F_{A}\|_{L^2(\s^4)}+\|A   \|_{L^2(\s^4)}\lf\|A-\iota_{\s^4}^*f^*\bar A\rg\|_{L^2(\s^4)}+\|A\|_{L^2(\s^4)}^3\rg),
    \end{align}
    for every constant $\g$-valued $1$-form $\bar A$ on $\r^5$.
    \item There exists $\tilde g\in W^{1,2}(\b^5,G)$ satisfying
    \begin{align}\label{ti-g}
        \|\ti{g}-\operatorname{id}_G\|_{L^4(\b^5)}+\|d\ti{g}\|_{L^2(\b^5)} \le C(G,f)\, \lf(\|F_A\|_{L^2(\s^4)}+\|A-\iota_{\s^4}^*f^*\bar A\|_{L^2(\s^4)}+\|A\|_{L^2(\s^4)}^2\rg),
    \end{align}
    for every constant $\g$-valued $1$-form $\bar A$ on $\r^5$, such that the $\g$-valued $1$-form $\hat A:=\tilde A^{\tilde g^{-1}}\in L^2(\b^5)$ satisfies the following properties.
    \begin{enumerate}[(a)]
        \item $F_{\hat A}\in L^2(\b^5)$.
        \item $\iota_{\s^4}^*\hat A=A\in L^2(\s^4)$.
        \item Let $\Omega\subset\r^5$ be an open set such that $\Omega\cap \b^5$ has a $4$-dimensional compact Lipschitz boundary which can be included in a union of $N$ submanifolds of $\ov{\b^5}$ of class $C^2$. Then we have
            \be
            \label{equation: estimate on the trace of the curvature}
                   \|F_{\hat A}\|_{L^2(\partial\Omega\cap\b^5))}\le K_G \lf(\|F_{A}\|_{L^2(\s^4)}+\|A\|_{L^2(\s^4)}\lf\|A-\iota_{\s^4}^*f^*\bar A\rg\|_{L^2(\s^4)}
             +\|A\|_{L^2(\s^4)}^3\rg)
            \ee
            and
            \be
            \label{L4-norm-tiAA}
            \|\ti{A}\|_{L^4(\partial\Omega\cap\b^5)}\le K_G\, \lf(\|F_{A}\|_{L^2(\s^4)}+\|A\|_{L^2(\s^4)}\lf\|A-\iota_{\s^4}^*f^*\bar A\rg\|_{L^2(\s^4)}
             +\|A\|_{L^2(\s^4)}\rg),
             \ee
             for every constant $\g$-valued $1$-form $\bar A$ on $\r^5$, where $K_G=K_G(\Omega\cap\b^5)>0$ depends only on $G$ and on $\Omega\cap\b^5$ (that is, on the number $N$ of submanifolds containing $\p(\Om\cap\b^5)$ as well as their $C^2$ norms), on $C_f$ and on $\|d_{\r^5}f\|_{L^{\infty}(\b^5)}$.
        \end{enumerate}
    \end{enumerate}
    Moreover,
    \begin{align}\label{equation: estimate for the harmonic extension}
        \ds\big\|\ti{A}-f^*\bar A\big\|_{L^5(\b^5)}&\le C(G,f)\,\lf(\|F_A\|_{L^2(\s^4)}+\|A-\iota_{\s^4}^*f^*\bar A\|_{L^2(\s^4)}+\|A\|_{L^2(\s^4)}^2\rg),\\
        \label{d-tiA}
        \ds\big\|d\ti{A}\big\|_{L^{\frac{5}{2}}(\b^5)}&\le C(G,f)\,\lf(\|F_A\|_{L^2(\s^4)}+\|A-\iota_{\s^4}^*f^*\bar A\|_{L^2(\s^4)}+\|A\|_{L^2(\s^4)}^2\rg),\\
        \ds\big\|\hat{A}-f^*\bar A\big\|_{L^2(\b^5)}&\le C(G,f)\,\lf(\|F_A\|_{L^2(\s^4)}+\|A-\iota_{\s^4}^*f^*\bar A\|_{L^2(\s^4)}+\|A\|_{L^2(\s^4)}^2\rg),
        \label{hat-A-L2}
    \end{align}   
    for every constant $\g$-valued $1$-form $\bar A$ on $\r^5$.
\end{Prop}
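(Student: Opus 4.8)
\emph{Overview.} The plan is to run the classical Uhlenbeck scheme — produce a good gauge on the boundary sphere, extend it harmonically to the ball, and transplant the boundary gauge to the interior — but carried out \emph{relative to the reference connection} $f^\ast\bar A$ and with careful bookkeeping of the contributions quadratic and cubic in $\|A\|_{L^2(\s^4)}$; it is precisely this bookkeeping that was imprecise in \cite[Proposition 2.1]{petrache-riviere-na}.

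\emph{Step 1: gauge on $\s^4$.} First I would pick a constant form $\bar A_\ast$ on $\r^5$ minimising $\bar A\mapsto\|A-\iota_{\s^4}^\ast f^\ast\bar A\|_{L^2(\s^4)}$; by \eqref{I.1} the quantity $\|\iota_{\s^4}^\ast f^\ast\bar A\|_{L^2(\s^4)}^2=\sum_i|\bar A_i|_\g^2\|d_{\s^4}f_i\|_{L^2(\s^4)}^2$ is diagonal, so with $b:=\iota_{\s^4}^\ast f^\ast\bar A_\ast$ one gets $|\bar A_\ast|\le C_f\|b\|_{L^2(\s^4)}\le 2C_f\|A\|_{L^2(\s^4)}$, $\|b\|_{L^4(\s^4)}\lesssim_f\|A\|_{L^2(\s^4)}$ and $d_{\s^4}b=0$. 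Then, reducing to an $L^4$ connection via $A\in\A_G(\s^4)$ and invoking the four-dimensional Uhlenbeck Coulomb gauge extraction for weak connections (available since $\A_G(\s^4)=A_G(\s^4)$ and, as $\|F_A\|_{L^2(\s^4)}<\eps$, the underlying bundle on $\s^4$ is trivial), run relative to $b$, I would obtain $g\in W^{1,2}(\s^4,G)$, close to $\operatorname{id}_G$, with $A^g\in W^{1,2}(\s^4)$, $d_{\s^4}^\ast(A^g-b)=0$ and $\|A^g-b\|_{W^{1,2}(\s^4)}\lesssim\|F_A\|_{L^2(\s^4)}+\|A\|_{L^2(\s^4)}^2$; here one uses $d_{\s^4}(A^g-b)=F_{A^g}-A^g\wedge A^g$, $H^1(\s^4)=0$, $\|A^g\wedge A^g\|_{L^2}\lesssim\|A^g-b\|_{W^{1,2}}^2+\|b\|_{L^4}^2$ and an absorption. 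The bound on $\|A^g\|_{L^4}$ is then the triangle inequality with $b$; and \eqref{VIII-0} follows by writing $g^{-1}d_{\s^4}g=(A^g-b)-(g^{-1}Ag-b)$ and expanding $g^{-1}Ag-b=g^{-1}(A-b)g+(g^{-1}bg-b)$ about $\operatorname{id}_G$ (the commutator term is $O(\|b\|_{L^4}\|d_{\s^4}g\|_{L^2})$ and is absorbed), after which $\|A-b\|_{L^2(\s^4)}$ is replaced by the larger $\|A-\iota_{\s^4}^\ast f^\ast\bar A\|_{L^2(\s^4)}$ for an arbitrary constant $\bar A$.

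\emph{Step 2: harmonic extension.} I would set $\tilde A:=f^\ast\bar A_\ast+\mathcal H[A^g-b]$, where $\mathcal H$ is the componentwise harmonic extension of a $\g$-valued $1$-form from $\s^4$ to $\b^5$; then $\iota_{\s^4}^\ast\tilde A=A^g$ and, since $d_{\r^5}(f^\ast\bar A_\ast)=0$, $d\tilde A=d\mathcal H[A^g-b]$. From $A^g-b\in W^{1,2}(\s^4)$ the Dirichlet estimates give $\mathcal H[A^g-b]\in W^{3/2,2}(\b^5)\hookrightarrow W^{1,5/2}(\b^5)\hookrightarrow L^5(\b^5)$ and $d\tilde A\in W^{1/2,2}(\b^5)\hookrightarrow L^{5/2}(\b^5)$ with norms $\lesssim\|A^g-b\|_{W^{1,2}(\s^4)}$, which with $\|f^\ast(\bar A_\ast-\bar A)\|_{L^5(\b^5)}\lesssim_f\|A-\iota_{\s^4}^\ast f^\ast\bar A\|_{L^2(\s^4)}$ yields $\tilde A\in L^5(\wedge^1\b^5\otimes\g)$, \eqref{equation: estimate for the harmonic extension} and \eqref{d-tiA}; writing $F_{\tilde A}=d\tilde A+\tilde A\wedge\tilde A$, using $\|\tilde A\wedge\tilde A\|_{L^{5/2}(\b^5)}\le\|\tilde A\|_{L^5(\b^5)}^2$ and inserting the Step 1 bounds (the cross term $\|A\|_{L^2}\|A-\iota_{\s^4}^\ast f^\ast\bar A\|_{L^2}$ and the pure power $\|A\|_{L^2}^3$ surface here) gives \eqref{VIII}. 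For part (c), on each of the $N$ codimension-one $C^2$ pieces $\Sigma$ of $\overline{\b^5}$ covering $\partial(\Om\cap\b^5)$ I would restrict $\tilde A$: the trace map $W^{3/2,2}(\b^5)\to W^{1,2}(\Sigma)$ gives $\tilde A|_\Sigma\in W^{1,2}(\Sigma)\hookrightarrow L^4(\Sigma)$, hence \eqref{L4-norm-tiAA}, and since restriction commutes with $d$, $F_{\tilde A}|_\Sigma=F_{\tilde A|_\Sigma}=d_\Sigma(\tilde A|_\Sigma)+(\tilde A|_\Sigma)\wedge(\tilde A|_\Sigma)\in L^2(\Sigma)$, which after peeling off $(f^\ast\bar A_\ast)|_\Sigma$ (differential zero on $\Sigma$, square of size $O_f(\|A\|_{L^2}^2)$), controlling the remainder by $\|\mathcal H[A^g-b]\|_{W^{3/2,2}(\b^5)}$ and summing over the $N$ pieces gives \eqref{equation: estimate on the trace of the curvature}, with constants depending on $N$ and the $C^2$ norms; where $\Sigma$ meets $\s^4$ one also uses the interior smoothness of the harmonic extension together with a partition of unity of $\overline{\b^5}$.

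\emph{Step 3 and the main difficulty.} Finally, writing $g=\exp\xi$ with $\xi\in W^{1,2}(\s^4,\g)$ small of zero mean, I would put $\tilde\xi:=\mathcal H[\xi]\in W^{1,2}(\b^5,\g)$ and $\tilde g:=\exp\tilde\xi\in W^{1,2}(\b^5,G)$, so $\iota_{\s^4}^\ast\tilde g=g$ and, by the trace map $W^{1,2}(\b^5)\to W^{1/2,2}(\s^4)$ applied backwards, $\|\tilde g-\operatorname{id}_G\|_{L^4(\b^5)}+\|d\tilde g\|_{L^2(\b^5)}\lesssim\|\xi\|_{W^{1/2,2}(\s^4)}\lesssim\|d_{\s^4}g\|_{L^2(\s^4)}$, which is \eqref{ti-g} via \eqref{VIII-0}; then $\hat A:=\tilde A^{\tilde g^{-1}}=\tilde g\,\tilde A\,\tilde g^{-1}-d\tilde g\,\tilde g^{-1}\in L^2(\b^5)$ satisfies $\iota_{\s^4}^\ast\hat A=(A^g)^{g^{-1}}=A$ (part (b)), $F_{\hat A}=\tilde g\,F_{\tilde A}\,\tilde g^{-1}\in L^{5/2}(\b^5)\subset L^2(\b^5)$ (part (a)), inherits the bounds (c) since $|F_{\hat A}|=|F_{\tilde A}|$ pointwise, and obeys \eqref{hat-A-L2} through $\hat A-f^\ast\bar A=\tilde g(\tilde A-f^\ast\bar A)\tilde g^{-1}+(\tilde g(f^\ast\bar A)\tilde g^{-1}-f^\ast\bar A)-d\tilde g\,\tilde g^{-1}$, \eqref{equation: estimate for the harmonic extension}, \eqref{ti-g} and the smallness of $\tilde g-\operatorname{id}_G$. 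The hard part will be Step 1: the boundary gauge must be Coulomb \emph{relative to $b$}, not absolute, since only then does $\|d_{\s^4}g\|_{L^2(\s^4)}$ — and with it $\|d\tilde g\|_{L^2(\b^5)}$ and $\|\tilde g-\operatorname{id}_G\|_{L^4(\b^5)}$ — acquire a genuinely \emph{quadratic} $\|A\|_{L^2(\s^4)}^2$ in place of the linear $\|A\|_{L^2(\s^4)}$ that the crude identity $d_{\s^4}g=g\,A^g-A\,g$ (with $|g^{-1}Ag|=|A|$) would force, which is exactly the term missing from $(2.2)$ in \cite{petrache-riviere-na}; propagating this gain through the harmonic extension and the transplantation, keeping all constants dependent only on $G$, $C_f$, $\|d_{\r^5}f\|_{L^\infty(\b^5)}$ (and on $\Om\cap\b^5$ for the trace bounds), for a genuinely weak $A$ whose extension has only the borderline integrability $\tilde A\in L^5$, $F_{\tilde A}\in L^{5/2}$, is the technical core of the argument.
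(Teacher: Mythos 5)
Your skeleton (decompose $A$ relative to the best constant pullback $b=\iota_{\s^4}^*f^*\bar A_*$, gauge--fix the remainder, harmonically extend, transplant the boundary gauge) is the right one, and you correctly identify that the quadratic gain in the gauge estimate is the crux. But Step 2 has a genuine gap that breaks \eqref{VIII} and, downstream, \eqref{equation: estimate on the trace of the curvature}. With $\tilde A=f^*\bar A_*+\mathcal H[A^g-b]$ you get
\begin{equation}
F_{\tilde A}=d\,\mathcal H[A^g-b]+f^*\bar A_*\wedge f^*\bar A_*+\big(\text{cross and quadratic terms in }\mathcal H[A^g-b]\big),
\end{equation}
and the two displayed terms are each of size $\|A\|_{L^2(\s^4)}^2$ in $L^{5/2}(\b^5)$ (the first because $d_{\s^4}(A^g-b)=F_{A^g}-A^g\wedge A^g$ already contains $-b\wedge b$ on the boundary, the second because $\|f^*\bar A_*\|_{L^\infty}\lesssim_f\|A\|_{L^2(\s^4)}$). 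They cancel \emph{tangentially on $\s^4$}, but the harmonic extension does not propagate that algebraic cancellation into the interior, so the best you can claim is $\|F_{\tilde A}\|_{L^{5/2}(\b^5)}\lesssim\|F_A\|_{L^2(\s^4)}+\|A\|_{L^2(\s^4)}^2+\dots$. Estimate \eqref{VIII} forbids a pure $\|A\|_{L^2(\s^4)}^2$ term (only $\|A\|\,\|A-\iota_{\s^4}^*f^*\bar A\|$ and $\|A\|^3$ are allowed), and this is not cosmetic: in the good-cube iteration (see \eqref{indu-morr-lar} and Definition \ref{Definition: good cubes eps-cover}(5)) an $\eps^{-2}\|A\|^2_{L^2(\partial Q)}$ contribution squares to $\eps^{-4}\|A\|^4_{L^2(\partial Q)}\sim\Lambda^2\eps^4$ per cube, and summing over $\sim\rho^5\eps^{-5}$ cubes destroys the Morrey bound. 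This is exactly the defect of \cite[Proposition 2.1]{petrache-riviere-na} that the proposition is built to repair. The paper's fix is the corrector $\alpha$ solving $d_{\r^5}\alpha=f^*\xi\wedge f^*\xi$, $d^*_{\r^5}\alpha=0$, $\alpha(\partial_r)=0$, with $\|\alpha\|_{L^\infty(\b^5)}\lesssim\|A\|^2_{L^2(\s^4)}$: one gauge-fixes $\omega=A-\iota_{\s^4}^*f^*\xi+\iota_{\s^4}^*\alpha$ (whose curvature has no pure $\|A\|^2$ term), extends, and sets $\tilde A=\tilde\omega-\alpha+\zeta+f^*\xi$ so that $-d\alpha$ cancels $f^*\xi\wedge f^*\xi$ \emph{identically in $\b^5$}; the auxiliary $\zeta$ with $\iota_{\s^4}^*\zeta=0$ restores the boundary trace. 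Your construction has no analogue of $\alpha$, and without it the estimate cannot be reached.

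Two secondary problems. First, your ``Uhlenbeck gauge relative to $b$'' is asserted, not proved, and it is genuinely needed: with the standard condition $d^*_{\s^4}A^g=0$ one has $d^*_{\s^4}(A^g-b)=-\sum_i\bar A_{*,i}\Delta_{\s^4}f_i$, a term \emph{linear} in $\|A\|_{L^2(\s^4)}$ that wrecks the quadratic gain. The paper avoids inventing a relative gauge theorem by applying the \emph{absolute} Coulomb extraction (Proposition \ref{L4-gauge-sphere}) to the modified connection $\omega$. Second, writing $g=\exp\xi$ with $\xi$ small requires $\|g-\operatorname{id}_G\|_{L^\infty(\s^4)}$ small, which does not follow from smallness of $\|dg\|_{L^2(\s^4)}$ in dimension $4$; the paper uses the radial extension $g(x/|x|)$ (coarea formula gives \eqref{dtildeg} directly) and, for the trace estimates of the corrector, the averaged extension operator of Proposition \ref{lm-exten}. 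These parts of your Step 3 are repairable, but the missing corrector $\alpha$ is the essential idea of the proof and cannot be bypassed.
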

\begin{proof}[\textbf{\textup{Proof of Proposition \ref{Proposition: harmonic extension on the good cubes}}}]  
Notice that since $f=(f_1,...,f_5)$ is a bi-Lipschitz homeomorphism, we have that $\|d_{\s^4}f_i\|_{L^2(\s^4)}\neq 0$ for every $i=1,...,5$. Let $\xi$ be the constant $\g$-valued $1$-form on $\r^5$ given by
\begin{align}
    \xi:=\sum_{i=1}^5\frac{1}{\|d_{\s^4}f_i\|_{L^2(\s^4)}^2}\langle A,d_{\s^4}f_i\rangle_{L^2(\s^4)}d_{\r^5}x_i,
\end{align}
where $\{x_i\}_{i=1,...,5}$ represent the standard euclidean coordinates on $\r^5$. We have
\begin{align}\label{equation: estimate on f^*xi}
    |\xi|&=\sqrt{\sum_{i=1}^5\lvert \xi_i\rvert^2}\le \sqrt{\sum_{i=1}^5\frac{1}{\|d_{\s^4}f_i\|_{L^2(\s^4)}^2}}\ \|A\|_{L^2(\s^4)}=C_f\|A\|_{L^2(\s^4)},
\end{align}
with
\begin{align}\label{cf}
    C_f:=\sqrt{\sum_{i=1}^5\frac{1}{\|d_{\s^4}f_i\|_{L^2(\s^4)}^2}}.
\end{align}
Let $\eta\in L^2(\s^4)$ be given by 
\begin{align}
    \eta:=A-\iota_{\s^4}^*f^*\xi=A-\sum_{i=1}^5\lf\langle A,\frac{d_{\s^4}f_i}{\|d_{\s^4}f_i\|_{L^2(\s^4)}}\rg\rangle_{L^2(\s^4)}\frac{d_{\s^4}f_i}{ \|d_{\s^4}f_i\|_{L^2(\s^4)}}.
\end{align}
Notice that, since $\eta$ is the $L^2$-orthogonal projection of $A$ on the linear subspace $\{d_{\s^4}f_i\}_{i=1,...,5}^{\perp}\subset L^2(\s^4)$, we have
\be
\label{etaL2}
    \|\eta\|_{L^2(\s^4)}\le\|A-\iota_{\s^4}^*f^*\bar A\|_{L^2(\s^4)},
\ee
for every constant $\g$-valued $1$-form $\bar A$ on $\r^5$. Moreover, 
\begin{align*}
    d_{\s^4}\eta=d_{\s^4}A-d_{\s^4}(\iota_{\s^4}^*f^*\xi)=d_{\s^4}A-\iota_{\s^4}^*f^*(d_{\r^5}\xi)=d_{\s^4}A.
\end{align*}
For future reference, we record the simple bound
\begin{align}\label{XVIII}
    \|f^*\xi\|_{L^\infty({\b^5})}\le C\|d_{\r^5}f\|_{L^{\infty}(\b^5)}\,\lvert\xi\rvert\le C\|d_{\r^5}f\|_{L^{\infty}(\b^5)}\,C_f\|A\|_{L^2(\s^4)}=C\tilde C_f\|A\|_{L^2(\s^4)},
\end{align}
where $C_f>0$ is given by \eqref{cf} and we let $\tilde C_f:=\|d_{\r^5}f\|_{L^{\infty}(\b^5)}\,C_f$.
\medskip

Since $d_{\r^5}(f^*\xi\wedge f^*\xi)=0$ and $f^*\xi\in L^\infty(\b^5)$, for every exponent $p\in[1,+\infty)$ there exists a unique\footnote{\,The form $\al$ can be obtained variationally by minimizing 
\[
\int_{\b^5}|\al|^2\ d\L^5
\]
among any form satisfying $d_{\r^5}\alpha=f^*\xi\wedge f^*\xi$.} $\alpha\in W^{1,p}(\wedge^1\b^5\otimes\g)$ such that
\begin{align*}
    \begin{cases}
 \ds       d_{\r^5}\alpha=f^*\xi\wedge f^*\xi\ &\mbox{ in } \b^5,\\[\sep]
  \ds      d_{\r^5}^*\alpha=0 &\mbox{ in } \b^5,\\[\sep]
     \ds   \alpha(\partial_r)=0 &\mbox{ in } \b^5.
    \end{cases}
\end{align*}
Moreover, from \cite{IM}, for any $p\in[1,+\infty)$ we have 
\begin{align}\label{equation: estimate for alpha}
     \|\alpha\|_{W^{1,p}({\b^5})}\le C_p\,\|f^*\xi\|^2_{L^{2p}(\b^5)}\le C_p\,\|f^*\xi\|^2_{L^{\infty}(\b^5)}\le C_p\tilde C^2_f\ \|A\|^2_{L^2(\s^4)},
\end{align}
where $C_p>0$ depends only on $p\in[1,+\infty)$. In particular, we deduce
\be
\label{C0al}
 \|\alpha\|_{L^\infty({\b^5})}\le C\tilde C_f^2\|A\|^2_{L^2(\s^4)}.
\ee
We define
\begin{align*}
    \omega:=\eta+\iota_{\s^4}^*\alpha=A-\iota_{\s^4}^*f^*\xi+\iota_{\s^4}^*\alpha\in L^2(\s^4). 
\end{align*}
Observe that
\begin{align}
\begin{split}
\label{Fomega}
    F_{\omega}&=F_{\eta+\iota_{\s^4}^*\alpha}=d_{\s^4}\eta+\iota_{\s^4}^*(f^*\xi\wedge f^*\xi)+\eta\wedge\eta+\eta\wedge\iota_{\s^4}^*\alpha\wedge+\iota_{\s^4}^*\alpha\wedge\eta+\iota_{\s^4}^*(\alpha\wedge\alpha)\\[\sep]
    &=\big(F_{\eta}+\iota_{\s^4}^*(f^*\xi\wedge f^*\xi)\big)+\eta\wedge\iota_{\s^4}^*\alpha\wedge+\iota_{\s^4}^*\alpha\wedge\eta+\iota_{\s^4}^*(\alpha\wedge\alpha).
\end{split}
\end{align}
Since $d(f^*\xi)=0$, we also have
\begin{align}
\begin{split}
\label{FA}
    F_{A}&=F_{\eta+\iota_{\s^4}^*f^*\xi}=F_{\eta}+\eta\wedge\iota_{\s^4}^*f^*\xi+\iota_{\s^4}^*f^*\xi\wedge\eta+\iota_{\s^4}^*(f^*\xi\wedge f^*\xi)\\[\sep]
    &=\big(F_{\eta}+\iota_{\s^4}^*(f^*\xi\wedge f^*\xi)\big)+\eta\wedge\iota_{\s^4}^*f^*\xi+\iota_{\s^4}^*f^*\xi\wedge\eta.
\end{split}
\end{align}
Combining \eqref{Fomega} and \eqref{FA} we obtain 
\be
\label{FomegaFA}
    F_{\omega}=F_A-\eta\wedge\iota_{\s^4}^*f^*\xi-\iota_{\s^4}^*f^*\xi\wedge\eta+\eta\wedge\iota_{\s^4}^*\alpha\wedge+\iota_{\s^4}^*\alpha\wedge\eta+\iota_{\s^4}^*(\alpha\wedge\alpha).
\ee
Thus,
\begin{align}
\label{FomegaL2}
    \nonumber
    \|F_{\omega}\|_{L^2(\s^4)}&\le C\lf(\|F_A\|_{L^2(\s^4)}+\|d_{\r^5}f\|_{L^{\infty}(\b^5)}\lvert\xi\rvert\|\eta\|_{L^2(\s^4)}+\|\alpha\|_{L^{\infty}(\s^4)}\|\eta\|_{L^2(\s^4)}+\|\alpha\|_{L^{\infty}(\s^4)}^2\rg)\\[\sep]
    \nonumber
    &\le C\lf(\|F_A\|_{L^2(\s^4)}+ \tilde C_f\|A\|_{L^2(\s^4)}\|\eta\|_{L^2(\s^4)}+\tilde C_f^2\|A\|_{L^2(\s^4)}^2\|\eta\|_{L^2(\s^4)}+\tilde C_f^4\|A\|_{L^2(\s^4)}^4\rg)\\[\sep]
    &\le C\lf(\|F_A\|_{L^2(\s^4)}+\tilde C_f(1+\tilde C_f\ep_G)\|A\|_{L^2(\s^4)}\|\eta\|_{L^2(\s^4)}+\tilde C_f^4\|A\|_{L^2(\s^4)}^4\rg)\\[\sep]
    \nonumber
    &\le C\lf(\|F_A\|_{L^2(\s^4)}+\tilde C_f(1+\tilde C_f\ep_G+\tilde C_f^3\eps_G^2)\|A\|_{L^2(\s^4)}^2\rg)\\[\sep]
    \nonumber
    &\le C\lf(\eps_G+\tilde C_f(1+\tilde C_f\ep_G+\tilde C_f^3\eps_G)\eps_G^2\rg),
\end{align}
for some universal constant $C>0$. By assumption, since $A\in\A_G(\s^4)$ and since the curvature of $A$ is small enough in $L^2$-norm, there exists $\hat g\in W^{1,2}(\s^4,G)$ such that $A^{\hat g}\in W^{1,2}(\s^4)$. We observe that
\begin{align*}
    \omega^{\hat g}&=\hat g^{-1}d_{\s^4}\hat g+\hat g^{-1}(\eta+\iota_{\s^4}^*\alpha)\hat g\\[\sep]
    &=\hat g^{-1}d_{\s^4}\hat g+\hat g^{-1}(A-\iota_{\s^4}^*f^*\xi+\iota_{\s^4}^*\alpha)\hat g=A^{\hat g}+\hat g^{-1}\iota_{\s^4}^*(\alpha-f^*\xi)\hat g\in L^4(\s^4).
\end{align*}
and 
\begin{align*}
    \|F_{\omega^{\hat g}}\|_{L^2(\s^4)}&=\|F_{\omega}\|_{L^2(\s^4)}\le C\lf(\eps_G+\tilde C_f(1+\tilde C_f\ep_G+\tilde C_f^3\eps_G^2)\eps_G^2\rg).
\end{align*}
If $\eps_G$ in \eqref{equation: smallness condition on S^4} is small enough, we can apply Proposition \ref{L4-gauge-sphere}  to $\omega^{\hat g}$ and we get the existence of a gauge $h\in W^{1,4}(\s^4,G)$ such that, by letting $g:=\hat g h\in W^{1,2}(\s^4,G)$, we have $d_{\s^4}^*\omega^g=0$ and
\be
\label{Agomegag}
\om^g=A^g-g^{-1} \iota_{\s^4}^*f^*\xi\, g+g^{-1}\,\iota_{\s^4}^*\alpha\, g.
\ee
Moreover,
\begin{align}
\begin{split}
\label{equation: change of gauge on S^4}
   \|\omega^g\|_{W^{1,2}(\s^4)}&\le C\|F_{\omega}\|_{L^2(\s^4)}\\[\sep]
   &\le C\lf(\|F_A\|_{L^2(\s^4)}+\tilde C_f(1+\tilde C_f\ep_G)\|A\|_{L^2(\s^4)}\|\eta\|_{L^2(\s^4)}+\tilde C_f^4\|A\|_{L^2(\s^4)}^4\rg),
\end{split}
\end{align}
where $C>0$ only depends on $G$. We have
\begin{align*}
    \|A^g\|_{L^4(\s^4)}&\le \|\om^g\|_{L^4(\s^4)}+\|\al-f^\ast\xi\|_{L^4(\s^4)}\\[\sep]
    &\le C(G,f)\ \lf(\|F_A\|_{L^2(\s^4)}+\|A-\iota_{\s^4}^*f^*\bar A\|_{L^2(\s^4)}+\|A\|_{L^2(\s^4)}\rg).
\end{align*}
Observe that by substituting $g$ with $g\,g_0$ for some $g_0\in G$ we still have $d_{\s^4}^*\omega^g=0$ and \eqref{equation: change of gauge on S^4} with the constant $C>0$ being unchanged. Since we have $d_{\s^4}g=g\omega^g-\omega g$, recalling that $\eps_G<1$ by assumption, we get
\begin{align}
\label{equation: estimate on gauge refined}
    \nonumber
    \|d_{\s^4}g\|_{L^2(\s^4)}&\le C\ \lf(\|\omega^g\|_{L^2(\s^4)}+\|\omega\|_{L^2(\s^4)}\rg)\\[\sep]
    \nonumber
    &\le C\lf(\|F_{\omega}\|_{L^2(\s^4)}+\|\eta\|_{L^2(\s^4)}+\|\al\|_{L^2(\s^4)}\rg)\\[\sep]
    &\le C\lf(\|F_A\|_{L^2(\s^4)}+\tilde C_f(1+\tilde C_f\ep_G+\tilde C_f^3\eps_G^2)\|A\|_{L^2(\s^4)}^2\rg)\\[\sep]
    \nonumber
    &\quad+C\|\eta\|_{L^2(\s^4)}+C\tilde C^2_f\ \|A\|^2_{L^2(\s^4)}\\
    \nonumber
    &\le C\lf(\|F_A\|_{L^2(\s^4)}+\|\eta\|_{L^2(\s^4)}+\hat C_f\|A\|_{L^2(\s^4)}^2\rg),
\end{align}
where $\hat C_f:=\tilde C_f(1+2\tilde C_f+\tilde C_f^3)$. Sobolev--Poincar\'e inequality gives the existence of $C>0$ (independent on $A$) such that
\begin{align}\label{equation: estimate on gauge I}
    \|g-\bar g\|_{L^4(\s^4)}\le C\|d_{\s^4}g\|_{L^2(\s^4)},
\end{align}
where $\bar g$ is the average of $g$ on $\s^4$. Thus, we deduce the existence of $x_0\in\s^4$ such that
\begin{align}\label{equation: estimate on gauge II}
    \lvert g(x_0)-\bar g\rvert\le C\|d_{\s^4}g\|_{L^2(\s^4)}.
\end{align}
Replacing $g$ by $gg^{-1}(x_0)$ and combining \eqref{equation: estimate on gauge I} and \eqref{equation: estimate on gauge II} we obtain
\begin{align}
\label{equation: estimate on the gauge}
    \|g-\operatorname{id}_G\|_{L^4(\s^4)}&\le C\, \|d_{\s^4}g\|_{L^2(\s^4)}\\[\sep]
    \nonumber
    &\le C\lf(\|F_A\|_{L^2(\s^4)}+\|\eta\|_{L^2(\s^4)}+\hat C_f\|A\|_{L^2(\s^4)}^2\rg).
\end{align}
We denote $\tilde g:=g(x/|x|)\in W^{1,2}(\b^5)$ the radial extension of $g$ in $\b^5$. A straightforward estimate gives
\begin{align}
\label{equation: estimate on the radial extension of the gauge}
    \nonumber
    \|\tilde g-\operatorname{id}_G\|_{L^4(\b^5)}&=\lf(\int_{\b^5}\lf|g\lf(\frac{x}{|x|}\rg)-\operatorname{id}_G\rg|^4\ d\L^5(x)\rg)^\frac{1}{4}\\[\sep]
    &\le\lf(\int_0^1 r^4\,d\L^1(r)\int_{\s^4}\lf|g-\operatorname{id}_G\rg|^4\ d\H^4\rg)^\frac{1}{4}\le\|g-\operatorname{id}_G\|_{L^4(\s^4)}\\[\sep]
    \nonumber
    &\le C\lf(\|F_A\|_{L^2(\s^4)}+\|\eta\|_{L^2(\s^4)}+\hat C_f\|A\|_{L^2(\s^4)}^2\rg).
\end{align}
Moreover, using \eqref{equation: estimate on gauge refined}, we get
\begin{align}
\label{dtildeg}
    \nonumber
    \lf(\int_{\b^5}|d\ti{g}|^2\ d\L^5\rg)^\frac{1}{2}&\le C\lf(\int_{\b^5}\lf\lvert d{g}\lf( \frac{x}{|x|}\rg)\rg\rvert^2\frac{1}{\lvert x\rvert^2}\, d\L^5(x)\rg)^\frac{1}{2}\\[\sep]
    &\le  C\lf(\int_0^1 r^2\, d\L^1(r)\rg)^{\frac{1}{2}}\lf(\int_{\s^4}\lvert d_{\s^4}g^2\rvert\, d\H^4\rg)^\frac{1}{2}\\[\sep]
    \nonumber
    &\le C\lf(\|F_A\|_{L^2(\s^4)}+\|\eta\|_{L^2(\s^4)}+\hat C_f\|A\|_{L^2(\s^4)}^2\rg).
\end{align}
Let $\tilde\omega$ be the unique minimizers of 
\begin{align}
    \inf\bigg\{\int_{\b^5}\big(\lvert d_{\r^5}C\rvert^2+\lvert d_{\r^5}^*C\rvert^2\big)\, dx^5 \mbox{ : } \iota_{\s^4}^*C=\omega^g\bigg\}.
\end{align}
Classical analysis for differential forms gives that $\tilde\omega$ solves
\begin{align}
    \begin{cases}
        d_{\r^5}^*\tilde\omega=0 & \mbox{ in } \b^5,\\
        d_{\r^5}^*d_{\r^5}\tilde\omega=0 & \mbox{ in } \b^5,\\
        \iota_{\s^4}^*\tilde\omega=\omega^g=A^g+g^{-1}\iota_{\s^4}^*(\alpha-f^\ast\xi) g & \mbox{ on } \partial\b^5
    \end{cases}
\end{align}
and that $\tilde\omega\in(W^{\frac{3}{2},2}\cap C^\infty)(\b^5)$. By classical elliptic regularity theory and Sobolev embedding theorem, we have the estimates
\begin{align}\label{equation: estimate on tilde omega}
\begin{split}
    \|\tilde\omega\|_{W^{1,\frac{5}{2}}(\b^5)}&\le  C\|\tilde\omega\|_{W^{\frac{3}{2},2}(\b^5)}\le C\|\omega^g\|_{W^{1,2}(\s^4)}\\
    &\le C\big(\|F_A\|_{L^2(\s^4)}+\|A\|_{L^2(\s^4)}\|\eta\|_{L^2(\s^4)}+\|A\|_{L^2(\s^4)}^4\big),
\end{split}
\end{align}
for some constant $C>0$ depending only on $G$ and on $f$ (notice that the last inequality follows from \eqref{equation: change of gauge on S^4}). Recall the continuous linear embedding
\begin{align}\label{Sobolev embedding}
    W^{1,\frac{5}{2}}(\b^5)\hookrightarrow L^5(\b^5).
\end{align}
Hence, in particular
\begin{align}
\begin{split}
\label{tiom}
    \|\ti{\om}\|_{L^5(\b^5)}&\le C\|\tilde\omega\|_{W^{1,\frac{5}{2}}(\b^5)}\le  C\|\tilde\omega\|_{W^{\frac{3}{2},2}(\b^5)}\le C\|\omega^g\|_{W^{1,2}(\s^4)}\\[\sep]
    &\le C\lf(\|F_A\|_{L^2(\s^4)}+\|A\|_{L^2(\s^4)}\|\eta\|_{L^2(\s^4)}+\|A\|_{L^2(\s^4)}^4\rg).
\end{split}
\end{align}
\medskip
We let $\tilde A\in L^5(\wedge^1\b^5\otimes{\g})$ be given by
\begin{align}
    \tilde A=\tilde\eta+f^*\xi=\tilde{\om}-\al+\zeta+f^*\xi,
\end{align}
where we denote
\begin{align}\label{tildeeta}
    \tilde{\eta}:=\tilde{\om}-\al+\zeta
\end{align}
and $\zeta$ is constructed as follows. We apply Proposition \ref{lm-exten} to $G$ and we find a smooth Riemannian manifold $M_G$ and a measurable map
\begin{align}
    \operatorname{Ext}(g):\,\,&M_G \longrightarrow  W^{1,\frac{5}{2}}(\b^5,G)\\[\sep]
    &\quad p \longmapsto\, g_p   
\end{align}
such that \eqref{res-prop}, \eqref{b-001} and \eqref{b-002} hold. We then choose
\begin{align}
    \zeta&:=\alpha-f^\ast\xi-\fint_{M_G}g_p^{-1}(\alpha-f^\ast\xi)g_p\, d\vol_{M_G}(p)\\[\sep]
    &\,=\fint_{M_G}\lf(\operatorname{id}_{G}-g_p^{-1}\rg)(\alpha-f^\ast\xi)g_p\, d\vol_{M_G}(p)+\fint_{M_G}(\al-f^\ast\xi)\lf(\operatorname{id}_{G}-g_p\rg)\, d\vol_{M_G}(p).
\end{align}
We notice that $\iota_{\s^4}^*\zeta=  0 $ and $\iota_{\s^4}^*\ti{\om}=A^g +g^{-1}\,\iota_{\s^4}(\al-f^\ast\xi)\,g $, so that $\iota_{\s^4}^*\tilde A=A^g$ by construction\footnote{ At this stage, in order to control the $L^2$-norm of $F_{\ti{A}}$ in $\b^5$ it could be tempting to estimate separately $\|d\ti{A}\|_{L^2(\b^5)}$ and $\|\ti{A}\wedge  \ti{A}\|_{L^2(\b^5)}$. This however  is  not going to lead to the expected estimate because each of the terms require to estimate respectively $\|d\al\|_{L^2(\b^5)}$ and $\|f^\ast\xi\wedge f^\ast\xi\|_{L^2(\b^5)}$ which would give in the r.h.s of \eqref{VIII} a term proportional to $\|A\|^2_{L^2(\b^5)}$ as in \cite[Proposition 2.1]{petrache-riviere-na} but which is preventing to prove the desired approximability property~\ref{r-I.20}. While in the combination $ d\ti{A}+\ti{A}\wedge \ti{A}$  these two contributions fortunately cancel each other which is one of the main advantage of this construction.}.

\medskip
\noindent
We have also
\begin{align}\label{dzeta}
\begin{split}
    d\zeta&:=f^\ast\xi\wedge f^\ast\xi -\fint_{M_G}g_p^{-1}\,f^\ast\xi\wedge f^\ast\xi \, g_p\ dp-\fint_{M_G}dg_p^{-1}\wedge(\al-f^\ast\xi)\ g_p\, d\vol_{M_G}(p)\\[\sep]
    &\quad+\fint_{M_G}g_p^{-1}\,(\al-f^\ast\xi)\wedge dg_p\, d\vol_{M_G}(p)\\[\sep]
    &\,=\fint_{M_G}\lf(\operatorname{id}_{G}-g_p^{-1}\rg)\,f^\ast\xi\wedge f^\ast\xi \ g_p\, d\vol_{M_G}(p)+\fint_{M_G}  f^\ast\xi \wedge f^\ast\xi \ \lf(\operatorname{id}_{G}-g_p\rg)\, d\vol_{M_G}(p)\\[\sep]
    &\quad-\fint_{M_G}dg_p^{-1}\wedge(\al-f^\ast\xi)\ g_p\, d\vol_{M_G}(p)+\fint_{M_G}g_p^{-1}\,(\al-f^\ast\xi)\wedge dg_p\, d\vol_{M_G}(p).
\end{split}   
\end{align}

\medskip
\noindent
Using \eqref{5demi}, we have
\begin{align}\label{new-zeta-1}
\begin{split}
    \|d\zeta\|_{L^{\frac{5}{2}}(\b^5)}&\le C\,\|f^\ast\xi\|^2_{L^\infty(\b^5)}\ \lf\|\fint_{M_G}\lf|g_p-\operatorname{id}_{G}\rg|\, d\vol_{M_G}(p)\rg\|_{L^{\frac{5}{2}}(\b^5)}\\[\sep]
    &\quad+C\,\|\al-f^\ast\xi\|_{L^\infty(\b^5)}\lf\|\fint_{M_G}\lf|d_xg_p\rg|\, d\vol_{M_G}(p)\rg\|_{L^{\frac{5}{2}}(\b^5)}\\[\sep]
    &\le C_f\, \|A\|_{L^2(\s^4)}\ \|dg\|_{L^2(\s^4)}\\[\sep]
    &\le\, C\,\|A\|_{L^2(\s^4)}\ \lf(\|F_A\|_{L^2(\s^4)}+\|A-\iota_{\s^4}^*f^*\bar A\|_{L^2(\s^4)}+\|A\|_{L^2(\s^4)}^2\rg)
\end{split}
\end{align}
and, by Poincar\'e--Sobolev inequality,
\begin{align}\label{new-zeta-2}
\begin{split}
    \|\zeta\|_{L^5(\b^5)}&\le\|\al-f^\ast\xi\|_\infty\ \lf|\fint_{M_G}\int_{\b^5}\lf|g_p-\operatorname{id}_{G}\rg|^5\, d\L^5\,d\vol_{M_G}(p)\rg|^{\frac{1}{5}}\\[\sep]
    &\le C_{f,G}\, \|A\|_{L^2(\s^4)}\ \  \lf|\fint_{M_G}\int_{\b^5}\lf|g_p-\operatorname{id}_{G}\rg|^{5}\, d\L^5\,d\vol_{M_G}(p)\rg|^{\frac{1}{5}}\\[\sep]
    &\le C_{f,G}\, \|A\|_{L^2(\s^4)}\ \ \lf|\fint_{M_G}\int_{\b^5}\lf|d_xg_p\rg|^{\frac{5}{2}}\, d\L^5\,d\vol_{M_G}(p)\rg|^{\frac{2}{5}}\\[\sep]
    &\le C_f\, \|A\|_{L^2(\s^4)}\ \|dg\|_{L^2(\s^4)}\\[\sep]
    &\le C\,\|A\|_{L^2(\s^4)}\ \lf(\|F_A\|_{L^2(\s^4)}+\|A-\iota_{\s^4}^*f^*\bar A\|_{L^2(\s^4)}+\|A\|_{L^2(\s^4)}^2\rg).
\end{split}
\end{align}
Hence,
\begin{align}\label{new-zeta-3}
    \|d\zeta+\zeta\wedge\zeta\|_{L^{\frac{5}{2}}(\b^5)}\le C\,\|A\|_{L^2(\s^4)}\ \lf(\|F_A\|_{L^2(\s^4)}+\|A-\iota_{\s^4}^*f^*\bar A\|_{L^2(\s^4)}+\|A\|_{L^2(\s^4)}^2\rg).
\end{align}
We now explicitly compute
\begin{align}\label{equation: expansion of the curvature of the harmonic extension}
\begin{split}
    F_{\tilde A}&=d_{\r^5}\tilde A+\tilde A\wedge\tilde A=d_{\r^5}(\tilde\omega+f^*\xi-\alpha+\zeta)+(\tilde\omega+f^*\xi-\alpha+\zeta)\wedge(\tilde\omega+f^*\xi-\alpha+\zeta)\\[\sep]
    &= F_{\tilde\omega}+F_{\zeta}-f^*\xi\wedge f^*\xi+\tilde\omega\wedge f^*\xi-\tilde\omega\wedge\alpha+\tilde\omega\wedge\zeta+{f}^*\xi\wedge\tilde\omega+{f}^*\xi\wedge{f}^*\xi-{f}^*\xi\wedge\alpha\\[\sep]
    &\quad+{f}^*\xi\wedge\zeta+-\alpha\wedge\tilde\omega-\alpha\wedge{f}^*\xi+\alpha\wedge\alpha+\zeta\wedge\tilde\omega+\zeta\wedge{f}^*\xi-\zeta\wedge\alpha\\[\sep]
    &=F_{\tilde\omega}+F_{\zeta}+(\tilde\omega\wedge{f}^*\xi+{f}^*\xi\wedge\tilde\omega)-(\tilde\omega\wedge\alpha+\alpha\wedge\tilde\omega)+(\tilde\omega\wedge\zeta+\zeta\wedge\tilde\omega)\\[\sep]
    &\quad-({f}^*\xi\wedge\alpha+\alpha\wedge{f}^*\xi)+({f}^*\xi\wedge\zeta+\zeta\wedge{f}^*\xi)-(\alpha\wedge\zeta+\zeta\wedge\alpha)+\alpha\wedge\alpha.
\end{split}
\end{align}
Hence, we obtain
\begin{align}\label{new-zeta-4}
\begin{split}
    \|F_{\ti{A}}\|_{L^{\frac{5}{2}}(\b^5)}&\le C\, \|d\ti{\om}\|_{L^{\frac{5}{2}}(\b^5)}+  \|\ti{\om}\|^2_{L^5(\b^5)}+\|F_\zeta\|_{L^{\frac{5}{2}}(\b^5)}+C\,\|\al\|_{L^\infty(\b^5)}\, \|f^\ast\xi\|_{L^\infty(\b^5)}\\[\sep]
    &\quad+\, C\, \|\zeta\|_{L^{\frac{5}{2}}(\b^5)}\, \lf(\|f^\ast\xi\|_{L^\infty(\b^5)}+\|\al\|_\infty\rg)+C\,\|\al\|_\infty^2\ \\[\sep]
    &\le  C\, \lf(\|F_A\|_{L^2(\s^4)}+\|A\|_{L^2(\s^4)}\,\|A-\iota_{\s^4}^*f^*\bar A\|_{L^2(\s^4)}+\|A\|_{L^2(\s^4)}^3\rg).
\end{split}
\end{align}

\medskip 
\noindent
Observe that
\begin{align}\label{tildAaver}
    \tilde A-f^*\bar A=\tilde\eta+f^*\xi-f^*\bar A=\tilde\eta+f^*(\xi-\bar A),
\end{align}
where $\ti{\eta}:=\ti{\om}-\al+\zeta$. We have
\begin{align}\label{new-zeta-5}
\begin{split}
    \|\tilde{\eta}\|_{L^5(\b^5)}&\le \|\tilde{\om}\|_{L^5(\b^5)}+\|\al\|_{L^5(\b^5)}+\|\zeta\|_{L^5(\b^5)}\\[\sep]
    &\le C\lf(\|F_A\|_{L^2(\s^4)}+\|A\|_{L^2(\s^4)}\|\eta\|_{L^2(\s^4)}+\|A\|_{L^2(\s^4)}^4\rg)+C\tilde C_f^2\|A\|^2_{L^2(\s^4)}\\[\sep]
    &\quad + C\tilde C_f\|A\|_{L^2(\s^4)}\lf(\|F_A\|_{L^2(\s^4)}+\|A-\iota_{\s^4}^*f^*\bar A\|_{L^2(\s^4)}+\hat C_f\|A\|_{L^2(\s^4)}^2\rg)\\[\sep]
    &\quad+ C\tilde C_f^2\|A\|^2_{L^2(\s^4)}\lf(\|F_A\|_{L^2(\s^4)}+\|A-\iota_{\s^4}^*f^*\bar A\|_{L^2(\s^4)}+\hat C_f\|A\|_{L^2(\s^4)}^2\rg).
\end{split}
\end{align}
We have also
\begin{align}
\begin{split}
\label{XXI}
  \|f^*(\xi-\bar A)\|_{L^5(\b^5)}&\le C\|d_{\r^5}f\|_{L^{\infty}(\b^5)}\|\xi-\bar A\|_{L^5(\b^5)}\\[\sep]
  &\le C\|d_{\r^5}f\|_{L^{\infty}(\b^5)}\lvert\xi-\bar A\rvert=C\|d_{\r^5}f\|_{L^{\infty}(\b^5)}\bigg(\sum_{i=1}^5\lvert\xi_i-\bar A_i\rvert^2\bigg)^{\frac{1}{2}},
\end{split}
\end{align}
where $C>0$ is a universal constant. Notice that, by Cauchy--Schwarz inequality, we have
\begin{align}
    \lvert\xi_i-\bar A_i\rvert^2=\frac{\lvert\langle A-\iota_{\s^4}^*f^*\bar A,d_{\s^4}f_i\rangle_{L^2(\s^4)}\rvert^2}{\|d_{\s^4}f_i\|_{L^2(\s^4)}^4}\le\frac{\|A-\iota_{\s^4}^*f^*\bar A\|_{L^2(\s^4)}^2}{\|d_{\s^4}f_i\|_{L^2(\s^4)}^2}, \qquad\forall i=1,...,5,
\end{align}
which by \eqref{XXI} implies
\begin{align}\label{XXII}
    \|f^*(\xi-\bar A)\|_{L^5(\b^5)}\le C\tilde C_f\|A-\iota_{\s^4}^*f^*\bar A\|_{L^2(\s^4)},
\end{align}
for some universal constant $C>0$. We now need an estimate on $L^2$-norm of $\tilde\eta$. Using \eqref{tildAaver}, \eqref{etaL2}, \eqref{tildeeta} and \eqref{XXII}, we have
\begin{align}\label{iiAtilde}
    \nonumber
    \|\tilde A-f^*\bar A\|_{L^5(\b^5)}&\le\|\tilde\eta\|_{L^5(\b^5)}+\|f^*(\xi-\bar A)\|_{L^5(\b^5)}\\[\sep]
    \nonumber
    &\le C\,\lf[\|F_A\|_{L^2(\s^4)}+\|A\|_{L^2(\s^4)}\|A-\iota_{\s^4}^*f^*\bar A\|_{L^2(\s^4)}+\|A\|_{L^2(\s^4)}^4\rg]+C\tilde C_f^2\|A\|^2_{L^2(\s^4)}\\[\sep]
    &\quad + C\tilde C_f\|A\|_{L^2(\s^4)}\lf(\|F_A\|_{L^2(\s^4)}+\|A-\iota_{\s^4}^*f^*\bar A\|_{L^2(\s^4)}+\hat C_f\|A\|_{L^2(\s^4)}^2\rg)\\[\sep]
    \nonumber
    &\quad+ C\tilde C_f^2\|A\|^2_{L^2(\s^4)}\lf(\|F_A\|_{L^2(\s^4)}+\|A-\iota_{\s^4}^*f^*\bar A\|_{L^2(\s^4)}+\hat C_f\|A\|_{L^2(\s^4)}^2\rg)\\[\sep]
    \nonumber
    &\quad + C\tilde C_f\|A-\iota_{\s^4}^*f^*\bar A\|_{L^2(\s^4)},
\end{align}
where $C$ only depends on $G$. Thus, we get \eqref{equation: estimate for the harmonic extension} for $B:=\tilde A$. We have
\begin{align}\label{diff-tilde-A}
\begin{split}
    \|d\ti{A}\|_{L^{\frac{5}{2}}(\b^5)}&\le \|d\ti{\om}\|_{L^{\frac{5}{2}}(\b^5)}+\|d\al\|_{L^{\frac{5}{2}}(\b^5)}+\|d\zeta\|_{L^{\frac{5}{2}}(\b^5)}\\[\sep]
    &\le C\,\lf(\|F_A\|_{L^2(\s^4)}+\|A\|_{L^2(\s^4)}\|A-\iota_{\s^4}^*f^*\bar A\|_{L^2(\s^4)}+\|A\|_{L^2(\s^4)}^4\rg)+C\tilde C_f^2\|A\|^2_{L^2(\s^4)}\\[\sep]
    &\quad + C\, \|A\|_{L^2(\s^4)}\ \lf(\|F_A\|_{L^2(\s^4)}+\|A-\iota_{\s^4}^*f^*\bar A\|_{L^2(\s^4)}+\|A\|_{L^2(\s^4)}^2\rg).
\end{split}
\end{align}
Hence, we obtain \eqref{d-tiA}.

\medskip
\noindent
We now prove \eqref{hat-A-L2}. Notice that
\begin{align}
\begin{split}
\label{FAhat}
    F_{\hat A}&=d_{\r^5}\hat A+\hat A\wedge\hat A=d_{\r^5}(\tilde A^{\tilde g^{-1}})+\tilde A^{\tilde g^{-1}}\wedge\tilde A^{\tilde g^{-1}}\\[\sep]
    &=\tilde g\,F_{\tilde A}\,\tilde g^{-1}\in L^{2}(\b^5) 
\end{split}
\end{align}
and
\begin{align}
\begin{split}
    \label{traceAhat}
    \iota_{\s^4}^*\hat A&=\iota_{\s^4}^*(\tilde gd_{\r^5}(\tilde g^{-1})+\tilde g\tilde A\tilde g^{-1})=\iota_{\s^4}^*(-d_{\r^5}\tilde g\tilde g^{-1}+\tilde g\tilde A\tilde g^{-1})\\[\sep]
    &=-d_{\s^4}gg^{-1}+gA^gg^{-1}=-d_{\s^4}gg^{-1}+g(g^{-1}d_{\s^4}g+g^{-1}Ag)g^{-1}\\[\sep]
     &=A\in L^2(\s^4).
\end{split}
\end{align}
We have
\begin{align}
\begin{split}
\label{29Ahat}
    \|\hat{A}-f^*\bar A\|_{L^2(\b^5)}&\le\|\hat{A}-\tilde{A}\|_{L^2(\b^5)}+ \|\tilde{A}-f^*\bar A\|_{L^2(\b^5)}\\[\sep]
    &\le\| \ti{g}\,\ti{A}\,\ti{g}^{-1}-    \ti{A}\|_{L^2(\b^5)}+\|\ti{g}\,d\ti{g}^{-1}\|_{L^2(\b^5)}+ \|\tilde{A}-f^*\bar A\|_{L^2(\b^5)}\\[\sep]
    &\le\|\tilde g-\operatorname{id}_G\|_{L^4(\b^5)}\ \|\ti{A}\|_{L^4(\b^5)}+\|d\ti{g}\|_{L^2(\b^5)}+\|\tilde{A}-f^*\bar A\|_{L^2(\b^5)}.
\end{split}
\end{align}
Combining \eqref{dtildeg}, \eqref{iiAtilde}, \eqref{29Ahat} and \eqref{iiAtilde}, we obtain \eqref{hat-A-L2}.

\medskip
\noindent
Coming now to the trace estimates 
\allowdisplaybreaks
\begin{align}\label{zetaL4}
\begin{split}
&\|\zeta\|_{L^4(\p\Om\cap\b^5)}\le 2\ \|\alpha-f^\ast\xi\|_{L^\infty(\b^5)}\lf\|\fint_{M_G}\lf|g_p-\operatorname{id}_{G}\rg|\, d\vol_{M_G}(p)\rg\|_{L^4(\partial\Omega\cap\b^5)}\\[\sep]
&\le 2\ \|\alpha-f^\ast\xi\|_{L^\infty(\b^5)}\lf\|\fint_{M_G}\lf|g_p-\operatorname{id}_{G}\rg|\, d\vol_{M_G}(p)\rg\|_{L^2(\p\Om\cap \b^5)}^{\frac{1}{2}}\lf\|\fint_{M_G}\lf|g_p-\operatorname{id}_{G}\rg|\, d\vol_{M_G}(p)\rg\|_{L^\infty(\p\Om\cap \b^5)}^{\frac{1}{2}}\\[\sep]
&\le C(G)\,\|\alpha-f^\ast\xi\|_{L^\infty(\b^5)}\lf\|\fint_{M_G}\lf|g_p-\operatorname{id}_{G}\rg|\, d\vol_{M_G}(p)\rg\|_{L^2(\partial\Omega\cap \b^5)}^{\frac{1}{2}}\\[\sep]
&\le C(G)\,\|\alpha-f^\ast\xi\|_{L^\infty(\b^5)}\bigg(\fint_{M_G}\lf\|g_p-\operatorname{id}_{G}\rg\|_{L^2(\partial\Omega\cap \b^5)}\, d\vol_{M_G}(p)\bigg)^{\frac{1}{2}}\\[\sep]
&\le  C(G)\,\|\alpha-f^\ast\xi\|_{L^\infty(\b^5)}\lf(\fint_{M_G}\lf\|g_p-\operatorname{id}_{G}\rg\|^2_{L^2(\partial\Omega\cap \b^5)}\, d\vol_{M_G}(p)\rg)^{\frac{1}{4}}\\[\sep]
&\le C(G,\Omega)\,\|\alpha-f^\ast\xi\|_{L^\infty(\b^5)}\lf\|g-\operatorname{id}_{G}\rg\|_{W^{1,2}(\s^4)}^{\frac{1}{2}}\\[\sep]
&\le C(G,\Omega)\lf(\tilde C_f\|A\|_{L^2(\s^4)}+\tilde C_f^2\|A\|^2_{L^2(\s^4)}\rg)\lf(\|F_A\|_{L^2(\s^4)}+\|\eta\|_{L^2(\s^4)}+\hat C_f\|A\|_{L^2(\s^4)}^2\rg)^{\frac{1}{2}}.
\end{split}
\end{align}
We have also using \eqref{dzeta}
\begin{align}
\label{dzetaL2}
    \nonumber
    \|d\zeta\|_{L^2(\partial\Omega\cap\b^5)}&\le C\,\|f^\ast\xi\|^2_{L^\infty(\b^5)}\lf\|\fint_{M_G}\lf|g_p-\operatorname{id}_{G}\rg|\, d\vol_{M_G}(p)\rg\|_{L^2(\partial\Omega\cap\b^5)}\\[\sep]
    \nonumber
    &\quad+C\,\|\al-f^\ast\xi\|_{L^\infty(\b^5)}\lf\|\fint_{M_G}\lf|dg_p\rg|\, d\vol_{M_G}(p)\rg\|_{L^2(\partial\Omega\cap\b^5)}\\[\sep]
    \nonumber
    &\le C\,\|f^\ast\xi\|^2_{L^\infty(\b^5)}\fint_{M_G}\lf\|g_p-\operatorname{id}_{G}\rg\|_{L^2(\partial\Omega\cap\b^5)}\, d\vol_{M_G}(p)\\[\sep]
    \nonumber
    &\quad+C\,\|\alpha-f^\ast\xi\|_{L^\infty(\b^5)}\fint_{M_G}\lf\|dg_p\rg\|_{L^2(\partial\Omega\cap\b^5)}\, d\vol_{M_G}(p)\\[\sep]
    \nonumber
    &\le C\,\|f^\ast\xi\|^2_{L^\infty(\b^5)}\bigg(\fint_{M_G}\lf\|g_p-\operatorname{id}_{G}\rg\|^2_{L^2(\partial\Omega\cap \b^5)}\, d\vol_{M_G}(p)\bigg)^{\frac{1}{2}}\\[\sep]
    &\quad+C\,\|\al-f^\ast\xi\|_{L^\infty(\b^5)}\bigg(\fint_{p\in M_G}\lf\|dg_p\rg\|^2_{L^2(\p\Om\cap \b^5)}\, d\vol_{M_G}(p)\bigg)^{\frac{1}{2}}.
\end{align}
Using Proposition \ref{lm-exten} together with \eqref{XVIII} and \eqref{C0al}, we finally get
\begin{align}\label{dzetaL2-bis}
\|d\zeta\|_{L^2(\p\Om\cap\b^5)}\le C_G(\Omega)\big(\tilde C_f\|A\|_{L^2(\s^4)}+\tilde C_f^2\|A\|^2_{L^2(\s^4)}\big)\big(\|F_A\|_{L^2(\s^4)}+\|\eta\|_{L^2(\s^4)}+\hat C_f\|A\|_{L^2(\s^4)}^2\big).\qquad\quad
\end{align}
Moreover, thanks to the continuous embedding $W^{\frac{3}{2},2}(\b^5)\hookrightarrow W^{1,2}(\partial\Omega\cap\b^5)$, we have
\begin{align}
\begin{split}
\label{tracetiomOmega}
    \|\tilde\omega\|_{L^4(\partial\Omega\cap\b^5)}+\|d_{\partial\Omega}\tilde\omega\|_{L^2(\partial\Omega\cap\b^5)}&\le C(\Omega)\,\|\tilde\omega\|_{W^{\frac{3}{2},2}(\b^5)}\le C(\Omega)\,\|\omega^g\|_{W^{1,2}(\s^4)}\\[\sep]
    &\le C(\Omega)\lf(\|F_A\|_{L^2(\s^4)}+\|A\|_{L^2(\s^4)}\|\eta\|_{L^2(\s^4)}+\|A\|_{L^2(\s^4)}^4\rg),
\end{split}
\end{align}
where $C(\Om)$ depends only on the $C^2$-norm of the $C^2$ components of $\partial(\Omega\cap\b^5)$, $G$, $C_f$ and $\|df\|_{L^{\infty}(\b^5)}$. Combining now \eqref{XVIII}, \eqref{C0al}, \eqref{zetaL4}, \eqref{dzetaL2-bis}, \eqref{tracetiomOmega} and \eqref{equation: expansion of the curvature of the harmonic extension}, we obtain \eqref{equation: estimate on the trace of the curvature}. 

\medskip
\noindent
Combining \eqref{zetaL4}, \eqref{tracetiomOmega}, \eqref{C0al} and the definition of $\xi$, we obtain
\begin{align}
    \|\hat{A}^{\ti{g}}\|_{L^4(\partial\Omega\cap\b^5)}=\|\ti{A}\|_{L^4(\partial\Omega\cap\b^5)}\le K_G\, \lf(\|F_{A}\|_{L^2(\s^4)}+\|A\|_{L^2(\s^4)}\lf\|A-\iota_{\s^4}^*f^*\bar A\rg\|_{L^2(\s^4)}+\|A\|_{L^2(\s^4)}\rg).
\end{align}
This concludes the proof of Proposition \ref{Proposition: harmonic extension on the good cubes}.
\end{proof}

\bigskip
\noindent
From now on, we will use the following notation:
\begin{align*}
    \lvert x\rvert_{\infty}&:=\sup_{i=1,...,5}\lvert x_i\rvert,\\
    \lvert x\rvert&:=\bigg(\sum_{i=1}^5x_i^2\bigg)^{\frac{1}{2}},
\end{align*}
for every $x=(x_1,...,x_5)\in\r^5$.  We have obviously
\begin{align*}
    \sqrt{5}^{-1}\, \lvert x\rvert\le \lvert x\rvert_{\infty}\le  \lvert x\rvert \qquad\forall\, x\in\r^5.
\end{align*}
Let $\varphi\in W_{loc}^{1,\infty}(\r^5,\r^5)$ be the bi-Lipschitz homeomorphism given by
\begin{align*}
    \varphi(x):=\begin{cases}
        \displaystyle{\frac{\lvert x\rvert}{\lvert x\rvert_{\infty}}}x & \mbox{ if } x\in\R^5\smallsetminus\{0\}\\
        0 & \mbox{ if } x=0.
    \end{cases}
\end{align*}
\begin{Co}\label{Corollary: extension in the interior of good cubes}
Let $G$ be a compact matrix Lie group. Let $Q\subset\r^5$ be any open cube with edge-length $k\eps$, where $k>0$ is a universal constant. There are constants $\eps_G\in(0,1)$ and $C_G>0$ depending only on $G$ such that for any $A\in\A_G(\p Q)$ satisfying
\begin{align}\label{equation: smallness condition on boundary of cube}
    \|F_{A}\|_{L^2(\partial Q)}+\eps^{-1}\|A\|_{L^2(\partial Q)}<\eps_G
\end{align}
the following facts hold.
\begin{enumerate}[(i)]
    \item There exist $g\in W^{1,2}(\partial Q,G)$ and a $\g$-valued 1-form $\tilde{A}\in L^5(Q)$ such that for every 4-dimensional face $F$ of $\p Q$ we have
    \begin{align}\label{equation: properties of the harmonic extension good cubes}
    \begin{cases}
        \|A^g\|_{W^{1,2}(F)}\hspace{-2mm}&\le C_G\,\lf(\|F_A\|_{L^2(\partial Q)}+\eps^{-1}\|A\|_{L^2(\partial Q)}\rg),\\[\sep]
        \iota_{F}^*\tilde A=A^{g}
    \end{cases}
    \end{align}
    and 
    \begin{align}\label{equation: estimate for the curvature on the good cubes}
        \|F_{\tilde A}\|_{L^{\frac{5}{2}}(Q)}&\le C_G\lf(\|F_{A}\|_{L^2(\partial Q)}+\eps^{-2}\,\|A\|_{L^2(\partial Q)}\,\|A-\iota_{\partial Q}^*\bar A\|_{L^2(\partial Q)}+\eps^{-3}\|A\|_{L^2(\partial Q)}^3\rg),\\
        \|dg\|_{L^{2}(\partial Q)}&\le C_G\lf(\eps\|F_{A}\|_{L^2(\partial Q)}+\|A-\iota_{\partial Q}^*\bar A\|_{L^2(\partial Q)}+\eps^{-1}\|A\|_{L^2(\partial Q)}^2\rg),
    \end{align}
    for every constant $\g$-valued $1$-form $\bar A$ on $\r^5$.
    \item There exists $\tilde g\in W^{1,2}(Q,G)$ such that the $\g$-valued 1-form given by $\hat A:=\tilde A^{\tilde g^{-1}}\in L^2(Q)$ satisfies the following properties.
        \begin{enumerate}
            \item $F_{\hat A}\in L^2(Q)$.
            \item $\iota_{\s^4}^*\hat A=A\in L^2(\partial Q)$.
            \item Let $Q'\subset\r^5$ be an open cube such that $Q'\cap Q$ is a rectangle of minimum edge-length $\alpha\eps>0$ where $\alpha>0$ is a universal constant. Then, we have 
            \begin{align}\label{equation: estimate on the trace of the curvature on cube}
            \begin{split}
                \|F_{\hat A}\|_{L^2(\partial Q'\cap Q)}^2&\le C_G\,\big(\|F_{A}\|_{L^2(\partial Q)}^2+\eps^{-4}\|A\|_{L^2(\partial Q)}^2\|A-\iota_{\partial Q}^*\bar A\|_{L^2(\partial Q)}^2+\eps^{-6}\|A\|_{L^2(\partial Q)}^6\big),
            \end{split}
            \end{align}
            \begin{align}
            \begin{split}\label{L4-norm-tiA}
                \|\ti{A}\|_{L^4(\partial Q'\cap Q)}\le C_G\, \lf(\|F_{A}\|_{L^2(\p Q)}+\ep^{-2}\,\|A\|_{L^2(\p Q)}\lf\|A-\iota_{\p Q}^*\bar A\rg\|_{L^2(\partial Q)}
             +\ep^{-1}\,\|A\|_{L^2(\partial Q)}\rg),
            \end{split}
            \end{align}
            for every constant $\g$-valued $1$-form $\bar A$ on $\r^5$.
        \end{enumerate}
\end{enumerate}
Moreover, we have the estimates
\be    
    \label{equation: estimate for the harmonic extension-cube}
    \begin{array}{l}
    \ds\|\ti{A}-\bar A\|_{L^5(Q)}\le C_G\,\lf(\|F_A\|_{L^2(\p Q)}+\eps^{-1}\,\|A-\iota_{\p Q}^*\bar A\|_{L^2(\p Q)}+\eps^{-2}\,\|A\|_{L^2(\p Q)}^2\rg),
    \end{array}
    \ee
    \be
    \label{d-tiA-c}
    \begin{array}{l}
    \ds\|d\ti{A}\|_{L^{\frac{5}{2}}(Q)}\le C_G\,\lf(\|F_A\|_{L^2(\p Q)}+\eps^{-1}\,\|A-\iota_{\p Q}^*\bar A\|_{L^2(\p Q)}+\eps^{-2}\,\|A\|_{L^2(\p Q)}^2\rg),
    \end{array}
    \ee
    \be    
    \label{hat-A-L2-c}
    \begin{array}{l}
    \ds\|\hat{A}-\bar A\|_{L^2(Q)}\le C_G\,\sqrt{\ep}\,\lf(\|F_A\|_{L^2(\p Q)}+\eps^{-1}\,\|A-\iota_{\p Q}^*\bar A\|_{L^2(\p Q)}+\eps^{-2}\,\|A\|_{L^2(\p Q)}^2\rg),
    \end{array}
    \ee
    for every constant $\g$-valued $1$-form $\bar A$ on $\r^5$.
\end{Co}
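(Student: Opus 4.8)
The plan is to deduce Corollary~\ref{Corollary: extension in the interior of good cubes} from Proposition~\ref{Proposition: harmonic extension on the good cubes} by transporting the whole problem from the cube $Q$ to the unit ball $\b^5$ through the map $\varphi$ together with an affine rescaling, and then book-keeping how each norm behaves under this change of variables. Write $Q=x_0+\lambda\,R([-1,1]^5)$ with $\lambda:=k\varepsilon/2$, $x_0\in\r^5$ and $R\in O(5)$, and set $\Phi:=x_0+\lambda\,R\varphi\colon\b^5\to Q$; this is a bi-Lipschitz homeomorphism sending $\s^4$ onto $\partial Q$, with $\|D\Phi\|_{L^\infty}\le C\lambda$ and $\|D\Phi^{-1}\|_{L^\infty}\le C\lambda^{-1}$ for a universal $C$ depending only on $\varphi$. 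The two scaling identities I will use repeatedly are: for a $j$-form $\omega$ on an $m$-dimensional surface $S\subset\overline{\b^5}$ one has $\|(\Phi^{-1})^*\omega\|_{L^p(\Phi(S))}\simeq\lambda^{\frac{m}{p}-j}\,\|\omega\|_{L^p(S)}$, and dually $\|\Phi^*\theta\|_{L^p(S)}\simeq\lambda^{j-\frac{m}{p}}\,\|\theta\|_{L^p(\Phi(S))}$, with implied constants depending only on the bi-Lipschitz norms of $\varphi$.

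First I would reduce the hypothesis. Put $B:=(\Phi|_{\s^4})^*A\in\A_G(\s^4)$ and, for a given constant $\g$-valued $1$-form $\bar A$ on $\r^5$, let $\bar B$ be the constant $\g$-valued $1$-form obtained by pulling back $\bar A$ under $y\mapsto x_0+\lambda Ry$, so that $\varphi^*\bar B=\Phi^*\bar A$ and therefore $B-\iota_{\s^4}^*\varphi^*\bar B=(\Phi|_{\s^4})^*\big(A-\iota_{\partial Q}^*\bar A\big)$; as $\bar A$ ranges over all constant forms so does $\bar B$. Since $F_B=(\Phi|_{\s^4})^*F_A$, the identities above yield $\|F_B\|_{L^2(\s^4)}\simeq\|F_A\|_{L^2(\partial Q)}$ (the $4$-dimensional conformal invariance of the $L^2$ norm of the curvature), $\|B\|_{L^2(\s^4)}\simeq\lambda^{-1}\|A\|_{L^2(\partial Q)}\simeq\varepsilon^{-1}\|A\|_{L^2(\partial Q)}$, and $\|B-\iota_{\s^4}^*\varphi^*\bar B\|_{L^2(\s^4)}\simeq\varepsilon^{-1}\|A-\iota_{\partial Q}^*\bar A\|_{L^2(\partial Q)}$. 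Choosing $\varepsilon_G$ small enough (depending only on $G$, by the next paragraph), the smallness assumption \eqref{equation: smallness condition on boundary of cube} then forces \eqref{equation: smallness condition on S^4} for $B$ with $f=\varphi$.

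Next I would check that $\varphi$ is an admissible choice of $f$ in Proposition~\ref{Proposition: harmonic extension on the good cubes} with \emph{universal} constants. The orthogonality \eqref{I.1} follows from the reflection $x_i\mapsto-x_i$, an isometry of $\s^4$ fixing $|x|_\infty$: under it $\varphi_i=x_i/|x|_\infty$ is odd while each $\varphi_j$ with $j\neq i$ is even, so $\langle d_{\s^4}\varphi_i,d_{\s^4}\varphi_j\rangle_{L^2(\s^4)}$ changes sign and hence vanishes; permutation invariance of $|x|_\infty$ shows all the $\|d_{\s^4}\varphi_i\|_{L^2(\s^4)}$ coincide, so $C_\varphi$ and $\|d_{\r^5}\varphi\|_{L^\infty(\b^5)}$ are absolute constants, and the same holds for $R\varphi$ since $\sum_k R_{ik}R_{jk}=\delta_{ij}$. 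Hence $\varepsilon(G,\varphi)$ and $C(G,\varphi)$ depend on $G$ alone. Applying Proposition~\ref{Proposition: harmonic extension on the good cubes} to $B$ produces $g_B\in W^{1,2}(\s^4,G)$, $\tilde B\in L^5(\wedge^1\b^5\otimes\g)$, $\tilde g_B\in W^{1,2}(\b^5,G)$ and $\hat B:=\tilde B^{\tilde g_B^{-1}}$ satisfying \eqref{VIII-0}--\eqref{hat-A-L2}; I then push everything forward along $\Phi$, setting $g:=g_B\circ(\Phi|_{\s^4})^{-1}$, $\tilde A:=(\Phi^{-1})^*\tilde B$, $\tilde g:=\tilde g_B\circ\Phi^{-1}$ and $\hat A:=(\Phi^{-1})^*\hat B=\tilde A^{\tilde g^{-1}}$. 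Naturality of pullbacks gives $\iota_{\partial Q}^*\hat A=A$, $\iota_F^*\tilde A=A^g$ on every $4$-face $F$ of $\partial Q$, and $F_{\hat A}=(\Phi^{-1})^*F_{\hat B}\in L^2(Q)$, $F_{\tilde A}=(\Phi^{-1})^*F_{\tilde B}$.

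It then remains to convert each estimate of Proposition~\ref{Proposition: harmonic extension on the good cubes} via the scaling dictionary. The $L^{5/2}$ norm of a $2$-form on $Q$ and the $L^5$ norm of a $1$-form on $Q$ are scale invariant (in both cases the exponent $\frac{m}{p}-j$ vanishes), so \eqref{VIII}, \eqref{d-tiA}, \eqref{equation: estimate for the harmonic extension} become \eqref{equation: estimate for the curvature on the good cubes}, \eqref{d-tiA-c}, \eqref{equation: estimate for the harmonic extension-cube} after rewriting $\|F_B\|_{L^2(\s^4)},\|B\|_{L^2(\s^4)},\|B-\iota_{\s^4}^*\varphi^*\bar B\|_{L^2(\s^4)}$ on $Q$ — the products $\|B\|_{L^2(\s^4)}\,\|B-\iota_{\s^4}^*\varphi^*\bar B\|_{L^2(\s^4)}\simeq\varepsilon^{-2}\|A\|_{L^2(\partial Q)}\|A-\iota_{\partial Q}^*\bar A\|_{L^2(\partial Q)}$ and $\|B\|_{L^2(\s^4)}^3\simeq\varepsilon^{-3}\|A\|_{L^2(\partial Q)}^3$ producing exactly the displayed powers of $\varepsilon$. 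The $L^2$ norm of a $1$-form on $Q$ has exponent $\tfrac52-1=\tfrac32$, so \eqref{hat-A-L2} picks up $\lambda^{3/2}\simeq\varepsilon^{3/2}$; bounding $\varepsilon^{3/2},\varepsilon^{1/2},\varepsilon^{-1/2}$ by $\varepsilon^{1/2},\varepsilon^{-1/2},\varepsilon^{-3/2}$ (valid since $\varepsilon\le1$) gives \eqref{hat-A-L2-c}. On the $4$-dimensional boundary a $1$-form has exponent $1$, so $dg$ acquires a factor $\lambda\simeq\varepsilon$, turning \eqref{VIII-0} into the stated bound on $\|dg\|_{L^2(\partial Q)}$; and on a $4$-face $\|\nabla A^g\|_{L^2}$ is scale invariant while $\|A^g\|_{L^2}$ only gains a harmless $\lambda$, so the $W^{1,2}(\s^4)$ control on $B^{g_B}$ obtained inside the proof of Proposition~\ref{Proposition: harmonic extension on the good cubes} gives the first line of \eqref{equation: properties of the harmonic extension good cubes}. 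The one genuinely technical point — and the step I expect to be the main obstacle — is the trace estimate \eqref{equation: estimate on the trace of the curvature on cube}--\eqref{L4-norm-tiA}. Given the auxiliary cube $Q'$ with $Q'\cap Q$ a box of minimal edge-length $\alpha\varepsilon$, one takes $\Omega:=\Phi^{-1}(Q')$, so that $\Omega\cap\b^5=\Phi^{-1}(Q'\cap Q)$ and $\partial\Omega\cap\b^5=\Phi^{-1}(\partial Q'\cap Q)$; because $\varphi^{-1}$ is only \emph{piecewise} $C^2$ — its differential jumps across the cone set $\{|x_i|=|x_j|\}$ — the image of each flat face of $Q'$ is not a single smooth hypersurface but a finite union of $C^2$ pieces, and the substance of this step is to verify that their number $N$ and their $C^2$ norms are controlled by the aspect ratio $\alpha/k$ alone, so that the constant $K_G(\Omega\cap\b^5)$ delivered by Proposition~\ref{Proposition: harmonic extension on the good cubes}(ii)(c) genuinely depends on $G$ only. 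Granting this, \eqref{equation: estimate on the trace of the curvature} and \eqref{L4-norm-tiAA} transform — the $L^2$ norm of $F_{\hat A}$ and the $L^4$ norm of $\tilde A$ on a $4$-surface both being scale invariant — into \eqref{equation: estimate on the trace of the curvature on cube} and \eqref{L4-norm-tiA}, once more reading the powers of $\varepsilon$ off the $\s^4$-to-$\partial Q$ dictionary.
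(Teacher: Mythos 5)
Your proposal is correct and follows exactly the route the paper takes: the paper's entire proof consists of applying Proposition \ref{Proposition: harmonic extension on the good cubes} with $f=\varphi$ to $\big((\eps\,\cdot\,+\,c_Q)\circ\varphi\big)^*A$ and pulling the data back to $Q$, which is precisely your construction, and your scaling dictionary reproduces every stated power of $\eps$ correctly. The only difference is that you supply the details (the symmetry argument for \eqref{I.1}, the norm bookkeeping, and the flagged verification that $\Phi^{-1}(\partial Q'\cap Q)$ is a controlled union of $C^2$ pieces) that the paper leaves implicit.
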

\begin{proof}[\textbf{\textup{Proof of Corollary \ref{Corollary: extension in the interior of good cubes}}}]
Corollary \ref{Corollary: extension in the interior of good cubes} is deduced by applying Proposition \ref{Proposition: harmonic extension on the good cubes} with $f=\varphi$ to the 1-form $\big((\eps\,\cdot\,+\,c_Q)\circ\varphi\big)^*A$, where $c_Q$ denotes the center of the cube $Q$, and then pulling the resulting data back on $Q$ by $\big((\eps\,\cdot\,+\,c_Q)\circ\varphi\big)^{-1}$.
\end{proof}

\subsubsection{Extension under \texorpdfstring{$L^2$}{Z}-smallness of the curvature only}
\begin{Prop}[Harmonic extension under smallness condition on $F_A$ only]\label{Proposition: harmonic extension on the bad cubes}
Let $G$ be a compact matrix Lie group. There are constants $\eps_G\in(0,1)$ and $C_G>0$ depending only on $G$ such that for any $A\in\A_G(\s^4)$ satisfying
\begin{align}\label{equation: smallness condition on S^4/2}
    \|F_{A}\|_{L^2(\s^4)}<\eps_G
\end{align}
the following facts hold.
\begin{enumerate}[(i)]
    \item There exist $g\in W^{1,2}(\s^4,G)$ and a $\g$-valued 1-form $\tilde{A}\in(W^{\frac{3}{2},2}\cap C^\infty)(\b^5)$ satisfying
        \be
        \label{X-40}\lf\{
            \begin{array}{l}
            \ds    \|\tilde A\|_{W^{\frac{3}{2},2}(\b^5)}\le C_G\|F_A\|_{L^2(\s^4)},\\[\sep]
              \ds  \|A^g\|_{W^{1,2}(\s^4)}\le C_G\|F_A\|_{L^2(\s^4)},\\[\sep]
        \ds        \iota_{\s^4}^*\tilde A=A^{g},
            \end{array}
            \rg.
        \ee
        and
        \begin{align}
            \|dg\|_{L^2(\s^4)}\le C_G\big(\|F_A\|_{L^2(\s^4)}+\|A\|_{L^2(\s^4)}\big).
        \end{align}  
    \item There exists $\tilde g\in W^{1,2}(\b^5,G)$ such that the $\g$-valued 1-form $\hat A:=\tilde A^{\tilde g^{-1}}\in L^2(\b^5)$ satisfies the following properties.
    \begin{enumerate}[(1)]
        \item $F_{\hat A}\in L^2(\b^5)$.
        \item $\iota_{\s^4}^*\hat A=A\in L^2(\s^4)$.
        \item   \be
                \label{co-ball}
                \begin{array}{l}
                \ds\int_{\b^5}|\hat{A}|^2\ d\L^5\le C(G)\ \lf(\|F_A\|^2_{L^2(\s^4)}+\|A\|^2_{L^2(\s^4)}\rg).
                \end{array}
                \ee
        \item Let $\Omega\subset\r^5$ be an open set such that $\Omega\cap \b^5$ has a $4$-dimensional compact Lipschitz boundary which can be included in a union of $N$ submanifolds of $\ov{\b^5}$ of class $C^2$. Then we have
            \begin{align}\label{equation: estimate on the trace of the curvature/2}
                \|F_{\hat A}\|_{L^2(\partial\Omega\cap\b^5)}\le K_G\|F_{A}\|_{L^2(\s^4)},
            \end{align}
            and
            \be
            \label{L4-bad-trace}
                \|\ti{A}\|_{L^4(\partial\Omega\cap\b^5)}\le K_G\|F_{A}\|_{L^2(\s^4)},
            \ee
            where $K_G=K_G(\Omega\cap\b^5)>0$ depends only on $G$ and on $\Omega\cap\b^5$ (that is the number $N$ of submanifolds containing $\p\Om$ as well as their $C^2$ norms).
    \end{enumerate}
\end{enumerate}
\end{Prop}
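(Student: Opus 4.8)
The plan is to run a streamlined version of the proof of Proposition \ref{Proposition: harmonic extension on the good cubes}. Since we now only assume smallness of $\|F_A\|_{L^2(\s^4)}$ and are allowed to pay $\|A\|_{L^2(\s^4)}$ on the right-hand side of the bounds for $\|d_{\s^4}g\|_{L^2(\s^4)}$ and for the $L^2$-size of $\hat A$, we do not need the constant-coefficient correction form $\xi$, the Poincar\'e-gauge primitive $\alpha$ of $f^*\xi\wedge f^*\xi$, or the averaged term $\zeta$ that were introduced in the good-cube case precisely to make the estimates depend on $\|F_A\|$ and on $\|A-\iota_{\s^4}^*f^*\bar A\|$ rather than on $\|A\|_{L^2(\s^4)}$. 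The first step is a Coulomb gauge on $\s^4$: since $A\in\A_G(\s^4)$ admits locally a $W^{1,2}$-gauge in which it is $L^4$, and since $\|F_A\|_{L^2(\s^4)}<\eps_G$ is below the critical (four-dimensional) Uhlenbeck threshold on the closed manifold $\s^4$, Proposition \ref{L4-gauge-sphere} produces $g\in W^{1,2}(\s^4,G)$ with $d_{\s^4}^*A^g=0$ and $\|A^g\|_{W^{1,2}(\s^4)}\le C_G\|F_{A^g}\|_{L^2(\s^4)}=C_G\|F_A\|_{L^2(\s^4)}$, the last equality because $F_{A^g}=g^{-1}F_Ag$. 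From $d_{\s^4}g=gA^g-Ag$ this already yields $\|d_{\s^4}g\|_{L^2(\s^4)}\le C_G\big(\|F_A\|_{L^2(\s^4)}+\|A\|_{L^2(\s^4)}\big)$.

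Next I would define $\tilde A$ as the unique minimiser of $\int_{\b^5}\big(|d_{\r^5}C|^2+|d_{\r^5}^*C|^2\big)\,d\L^5$ among $\g$-valued $1$-forms $C$ on $\b^5$ with $\iota_{\s^4}^*C=A^g$. Exactly as in the good-cube proof, $\tilde A$ solves $d_{\r^5}^*\tilde A=0$, $d_{\r^5}^*d_{\r^5}\tilde A=0$ in $\b^5$ with $\iota_{\s^4}^*\tilde A=A^g$ on $\p\b^5$, hence $\Delta\tilde A=0$ componentwise, $\tilde A\in(W^{\frac32,2}\cap C^\infty)(\b^5)$, and classical elliptic estimates together with trace theory give $\|\tilde A\|_{W^{\frac32,2}(\b^5)}\le C\|A^g\|_{W^{1,2}(\s^4)}\le C_G\|F_A\|_{L^2(\s^4)}$. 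Together with the chain of embeddings $W^{\frac32,2}(\b^5)\hookrightarrow W^{1,\frac52}(\b^5)\hookrightarrow L^5(\b^5)$ this proves part (i), i.e.\ \eqref{X-40} and the bound on $\|dg\|_{L^2(\s^4)}$. Moreover $d_{\r^5}\tilde A\in W^{\frac12,2}(\b^5)\hookrightarrow L^{\frac52}(\b^5)$ and $\tilde A\wedge\tilde A\in L^{\frac52}(\b^5)$, so $F_{\tilde A}\in L^{\frac52}(\b^5)\subset L^2(\b^5)$.

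For part (ii) I would take $\tilde g(x):=g(x/|x|)$, the $0$-homogeneous radial extension of $g$ to $\b^5$; as in \eqref{equation: estimate on the radial extension of the gauge}--\eqref{dtildeg}, $\tilde g\in W^{1,2}(\b^5,G)$ with $\|d\tilde g\|_{L^2(\b^5)}\le C\|d_{\s^4}g\|_{L^2(\s^4)}\le C_G\big(\|F_A\|_{L^2(\s^4)}+\|A\|_{L^2(\s^4)}\big)$. Setting $\hat A:=\tilde A^{\tilde g^{-1}}=-d_{\r^5}\tilde g\,\tilde g^{-1}+\tilde g\,\tilde A\,\tilde g^{-1}\in L^2(\b^5)$, we get $F_{\hat A}=\tilde g\,F_{\tilde A}\,\tilde g^{-1}\in L^{\frac52}(\b^5)\subset L^2(\b^5)$ (as in \eqref{FAhat}), which is property (1); the pull-back computation of \eqref{traceAhat}, namely $\iota_{\s^4}^*\hat A=-d_{\s^4}g\,g^{-1}+g\,A^g\,g^{-1}=A$, gives (2); and, since $G$ is a compact matrix group so $|\tilde g|$ is bounded, $\int_{\b^5}|\hat A|^2\,d\L^5\le C_G\|d\tilde g\|_{L^2(\b^5)}^2+C_G\|\tilde A\|_{L^2(\b^5)}^2\le C_G\big(\|F_A\|_{L^2(\s^4)}^2+\|A\|_{L^2(\s^4)}^2\big)$, which is \eqref{co-ball}. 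For the trace bounds (4) I would fix $\Omega$ with $\p(\Omega\cap\b^5)$ contained in a union of $N$ submanifolds of class $C^2$; since $\tilde g$ restricted to $\p\Omega\cap\b^5$ is bounded, $\|F_{\hat A}\|_{L^2(\p\Omega\cap\b^5)}\le C\|F_{\tilde A}\|_{L^2(\p\Omega\cap\b^5)}$, and by the trace theory $W^{\frac32,2}(\b^5)\hookrightarrow W^{1,2}(\p\Omega\cap\b^5)\hookrightarrow L^4(\p\Omega\cap\b^5)$, with constant depending only on $G$, on $N$ and on the $C^2$-data of $\p(\Omega\cap\b^5)$, one gets $\|\tilde A\|_{L^4(\p\Omega\cap\b^5)}\le K_G\|F_A\|_{L^2(\s^4)}$, which is \eqref{L4-bad-trace}, and, writing the tangential part of $F_{\tilde A}$ on $\p\Omega\cap\b^5$ as $d_{\p\Omega}(\iota^*\tilde A)+\iota^*\tilde A\wedge\iota^*\tilde A$ and using $\|F_A\|_{L^2(\s^4)}<\eps_G<1$ to absorb the quadratic term, $\|F_{\tilde A}\|_{L^2(\p\Omega\cap\b^5)}\le K_G\|F_A\|_{L^2(\s^4)}$, i.e.\ \eqref{equation: estimate on the trace of the curvature/2}.

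The only genuinely non-trivial input is the Coulomb-gauge step on $\s^4$ (Proposition \ref{L4-gauge-sphere}), the critical four-dimensional case of Uhlenbeck's theorem: below a dimensional threshold on $\|F_A\|_{L^2(\s^4)}$, a weak connection in $\A_G(\s^4)$ has a representative $A^g$ with $\|A^g\|_{W^{1,2}(\s^4)}\le C_G\|F_A\|_{L^2(\s^4)}$; everything downstream is harmonic extension followed by Sobolev and trace embeddings, and the only point needing care is keeping the trace constant on $\p\Omega\cap\b^5$ uniform in the stated data. In contrast with the good-cube proposition, no cancellation between $d\tilde A$ and $\tilde A\wedge\tilde A$ is required here, precisely because we do not insist that the $L^2$-size of $\hat A$ be controlled by $\|F_A\|_{L^2(\s^4)}$ alone.
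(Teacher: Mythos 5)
Your proof follows essentially the same route as the paper's: a global Coulomb gauge on $\s^4$ via Uhlenbeck's theorem (the paper invokes Proposition \ref{appendix: global Coulomb gauge extraction on a sphere} after globalizing the local gauges of the definition of $\A_G(\s^4)$, you invoke Proposition \ref{L4-gauge-sphere}; both require the same globalization step), the harmonic extension minimizing $\int_{\b^5}\big(|d_{\r^5}C|^2+|d_{\r^5}^*C|^2\big)\,d\L^5$, the $0$-homogeneous radial extension of the gauge, conjugation back to recover the trace $A$, and the trace embedding $W^{\frac{3}{2},2}(\b^5)\hookrightarrow W^{1,2}(\partial\Omega\cap\b^5)$. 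The only cosmetic difference is in the curvature trace estimate, where the paper bounds the restriction of the full two-form $F_{\tilde A}$ by $\|\nabla\tilde A\|_{L^2(\partial\Omega\cap\b^5)}+\|\tilde A\|_{L^4(\partial\Omega\cap\b^5)}^2$ using that same trace theorem, whereas you only write out the tangential part $d_{\partial\Omega}(\iota^*\tilde A)+\iota^*\tilde A\wedge\iota^*\tilde A$; the identical embedding covers the remaining components.
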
 
\begin{proof}[\textbf{\textup{Proof of Proposition \ref{Proposition: harmonic extension on the bad cubes}}}]
To ease the reading, throughout this proof we will denote by ``$d_{\r^5}$" and ``$d_{\s^4}$" the standard differential of $k$-forms on $\r^5$ and on the round sphere $\s^4$ respectively. First, notice that since $A\in\A_G(\s^4)$, by definition there exists locally $\hat g\in W^{1,2}$ such that $A^{\hat g}\in W^{1,2}$. For $\eps_G$ in \eqref{equation: smallness condition on S^4/2}  small enough the existence  of such a $g$ is global and $A$ has a representative globally on $\s^4$ which is in $W^{1,2}(\wedge^1\s^4)$. We can apply Uhlenbeck's Coulomb gauge extraction theorem (see Proposition \ref{appendix: global Coulomb gauge extraction on a sphere}) to $A^{\hat g}$ and we get the existence of a gauge $h\in W^{1,4}(\s^4,G)$ such that, letting $g:=\hat g h\in W^{1,2}(\s^4,G)$, we have $d_{\s^4}^*A^g=0$ and
\begin{align}\label{equation: change of gauge on S^4/2}
    \|A^g\|_{W^{1,2}(\s^4)}\le C(G)\|F_A\|_{L^2(\s^4)},
\end{align}
where $C(G)>0$ depends only on $G$. Observe that by changing $g$ into $g\,g_0$ we still have a solution of \eqref{equation: change of gauge on S^4} with the constant $C(G)>0$ being unchanged. We have $d_{\s^4}g=g\,A^g-A\, g$, hence
\begin{align}\label{X-31}
\begin{split}
   \|d_{\s^4}g\|_{L^2(\s^4)}&\le C(G)\lf(\|A^g\|_{L^2(\s^4)}+\|A\|_{L^2(\s^4)}\rg)\\[\sep]
   &\le C(G)\lf(\|F_A\|_{L^2(\s^4)}+\|A\|_{L^2(\s^4)}\rg).
\end{split}
\end{align}
Poincar\'e inequality gives the existence of $C>0$ (independent on $A$) such that
\begin{align}\label{equation: estimate on gauge I/2}
    \|g-\bar g\|_{L^2(\s^4)}\le C\ \|d_{\s^4}g\|_{L^2(\s^4)},
\end{align}
where $\bar g$ is the average of $g$ on $\s^4$. Thus, we deduce the existence of $x_0\in\s^4$ such that
\begin{align}\label{equation: estimate on gauge II/2}
    \lvert g(x_0)-\bar g\rvert\le C\ \|d_{\s^4}g\|_{L^2(\s^4)}.
\end{align}
Replacing $g$ by $gg^{-1}(x_0)$ and combining \eqref{equation: estimate on gauge I/2} and \eqref{equation: estimate on gauge II/2} we obtain
\begin{align}
    \|g-\operatorname{id}_G\|_{L^2(\s^4)}\le C\, \|dg\|_{L^2(\s^4)}\le C(G)\lf(\|F_A\|_{L^2(\s^4)}+\|A\|_{L^2(\s^4)}\rg).
\end{align}
We denote $\tilde g:=g(x/|x|)$ the radial extension of $g$ in $\b^5$ (here ``$\lvert\,\cdot\,\rvert$'' stands for the standard Euclidean norm of $x$). A straightforward estimate gives
\begin{align}\label{equation: estimate on the radial extension of the gauge/2}
  \|\tilde g-\operatorname{id}_G\|_{L^2(\b^5)}\le\|g-\operatorname{id}_G\|_{L^2(\s^4)}\le  C(G) \lf(\|F_A\|_{L^2(\s^4)}+\|A\|_{L^2(\s^4)}\rg).
\end{align}
Moreover, by the coarea formula we have
\begin{align}\label{gau-bad}
\begin{split}
    \int_{\b^5}|d\ti{g}|^2\ d\L^5&=\int_0^1\rho^{-2}\,\int_{\p B_r(0)} \bigg\lvert dg\bigg(\frac{x}{\lvert x\rvert}\bigg)\bigg\rvert^2\ d\mathscr{H}^4(x)\ d\L^1(\rho)\le C\ \|dg\|^2_{L^2(\s^4)}\\[\sep]
    &\le C(G)\lf(\|F_A\|^2_{L^2(\s^4)}+\|A\|^2_{L^2(\s^4)}\rg).
\end{split}
\end{align}
\noindent
We now extend $A^g$ by $\tilde A$ that we choose to be the unique minimizer of 
\begin{align}
    \inf\bigg\{\int_{\b^5}\big(\lvert d_{\r^5}G\rvert^2+\lvert d_{\r^5}^*G\rvert^2\big)\, d\L^5 \mbox{ : } \iota_{\s^4}^*G=A^g\bigg\}.
\end{align}
Classical analysis for differential forms gives that $\tilde{A}\in(W^{\frac{3}{2},2}\cap C^\infty)(\b^5)$ solves
\be
\label{X-32}
\lf\{
    \begin{array}{l}
      \ds  d_{\r^5}^*\tilde A=0,\\[\sep]
       \ds d_{\r^5}^*d_{\r^5}\tilde A=0,\\[\sep]
      \ds  \iota_{\s^4}^*\tilde A=A^g.
    \end{array}
    \rg.
\ee
Moreover, the following estimate holds:
\begin{align}
    \|\tilde A\|_{W^{1,\frac{5}{2}}(\b^5)}\le C\ \|\tilde A\|_{W^{\frac{3}{2},2}(\b^5)}\le C\ \|A^g\|_{W^{1,2}(\s^4)}\le C(G)\ \|F_A\|_{L^2(\s^4)}.
\end{align}
Thus, (i) follows. 
Let $\hat A:=\tilde A^{\tilde g^{-1}}\in L^2(\b^5)$. Notice that
\be
\label{X-33}
    F_{\hat A}=d_{\r^5}\hat A+\hat A\wedge\hat A=d_{\r^5}(\tilde A^{\tilde g^{-1}})+\tilde A^{\tilde g^{-1}}\wedge\tilde A^{\tilde g^{-1}}=\tilde gF_{\tilde A}\tilde g^{-1}\in L^{2}(\b^5)
\ee
and
\begin{align}
    \iota_{\s^4}^*\hat A&=\iota_{\s^4}^*(\tilde gd_{\r^5}(\tilde g^{-1})+\tilde g\tilde A\tilde g^{-1})=\iota_{\s^4}^*(-d_{\r^5}\tilde g\tilde g^{-1}+\tilde g\tilde A\tilde g^{-1})\\
    &=-d_{\s^4}gg^{-1}+gA^gg^{-1}=-d_{\s^4}gg^{-1}+g(g^{-1}d_{\s^4}g+g^{-1}Ag)g^{-1}=A\in L^2(\s^4).
\end{align}
We have also
\begin{align}\label{connex-ball}
\begin{split}
    \ds\int_{\b^5}|\hat{A}|^2\ d\L^5&=\int_{\b^5}|\ti{g}\ti{A}\ti{g}^{-1}+\ti{g}\,d\ti{g}^{-1}|^2\ d\L^5\le 2\, \int_{\b^5}|\ti{A}|^2\ d\L^5+2\, \int_{\b^5}|d\ti{g}|^2\ d\L^5\\[\sep]
    &\le C(G) \lf(\|F_A\|^2_{L^2(\s^4)}+\|A\|^2_{L^2(\s^4)}\rg).
\end{split}
\end{align}
Lastly, fix any open set $\Omega\subset\r^5$ such that $\Omega\cap\b^5$ has Lipschitz boundary which can be included in a union of $N$ submanifolds of $\ov{\b^5}$ of class $C^2$. Recalling that $\eps_G<1$ by assumption, we have
\begin{align}\label{X-34}
\begin{split}
    \|F_{\hat A}\|_{L^2(X)}&=\|F_{\tilde A}\|_{L^2(X)}
    \le C\left(\|\nabla \tilde A\|_{L^2\left(X)\right)}+\| \tilde A\|_{L^2\left( X\right)}^2\right)\\[\sep]
    &\le C(\Om)\|\tilde A\|_{W^{\frac{3}{2},2}\left(\b^5\right)}\le C(\Om,G)\ \|F_A\|_{L^2(\s^4)},
\end{split}
\end{align}
where $C(\Om,G)>0$ depends only on $G$ and on $\Omega\cap\b^5$. This concludes the proof of Proposition~\ref{Proposition: harmonic extension on the bad cubes}.
\end{proof}

\medskip\noindent
Completely analogously to the way we proved Corollary \ref{Corollary: extension in the interior of good cubes}, by exploiting Proposition \ref{Proposition: harmonic extension on the bad cubes} one derives from proposition~\ref{Proposition: harmonic extension on the bad cubes} the following statement.
\begin{Co}\label{Corollary: extension in the interior of bad cubes}
Let $G$ be a compact matrix Lie group. Let $Q\subset\r^5$ be any open cube with edge-length $k\eps$, where $k>0$ is a universal constant. There are constants $\eps_G\in(0,1)$ and $C_G>0$ depending only on $G$ such that for any $A\in {\mathfrak a}_G(\p Q)$ satisfying
\be
\label{equation: smallness condition on boundary of cube/2}
\|F_{A}\|_{L^2(\p Q)}<\eps_G
\ee
the following facts hold.
\begin{enumerate}[(i)]
    \item There exist a gauge $g\in W^{1,2}(\partial Q,G)$ and a 1-form $\tilde{A}\in W^{1,\frac{5}{2}}(Q)$ such that for every 4-dimensional face $F$ of $\p Q$ we have
        \begin{align}\label{equation: properties of the harmonic extension bad cubes}
            \begin{cases}
                \|d\tilde A\|_{L^{\frac{5}{2}}(Q)}+ \|\tilde A\|_{L^{5}(Q)}\le C_G\,\|F_A\|_{L^2(\partial Q)},\\
                \|A^g\|_{W^{1,2}(F)}\le C_G\|F_A\|_{L^2(\partial Q)},\\
                \iota_{F}^*\tilde A=A^{g}
            \end{cases}
        \end{align}
    and 
    \begin{align}\label{equation: estimate for the curvature on the bad cubes}
        \|F_{\tilde A}\|_{L^{\frac{5}{2}}(Q)}&\le C_G\,\|F_{A}\|_{L^2(\partial Q)},\\
        \|dg\|_{L^{2}(\partial Q)}&\le C_G\big(\eps\|F_{A}\|_{L^2(\partial Q)}+\|A\|_{L^2(\partial Q)}\big).
    \end{align}
    \item There exists $\tilde g\in W^{1,2}(Q,G)$ such that the $\g$-valued 1-form $\hat A:=\tilde A^{\tilde g^{-1}}\in L^2(Q)$ satisfies the following properties.
        \begin{enumerate}[(a)]
            \item $F_{\hat A}\in L^2(Q)$.
            \item $\iota_{\s^4}^*\hat A=A\in L^2(\partial Q)$.
            \item Let $Q'\subset\r^5$ be an open cube such that $Q'\cap Q$ is a rectangle of minimum edge-length $\alpha\eps>0$ where $\alpha>0$ is a universal constant. Then, we have 
                \begin{align}\label{equation: estimate on the trace of the curvature on cube/2}
                    \|F_{\hat A}\|_{L^2(\partial Q'\cap Q)}\le K_G\|F_{A}\|_{L^2(\partial Q)},
                \end{align}
                and
                \be
                \label{L4-tr}
                \|\ti{A}\|_{L^4(\partial Q'\cap Q)}\le K_G\|F_{A}\|_{L^2(\partial Q)},
                \ee
                where $K_G>0$ depends only on $G$.
        \end{enumerate}
\end{enumerate}
\end{Co}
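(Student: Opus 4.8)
The plan is to obtain Corollary \ref{Corollary: extension in the interior of bad cubes} from Proposition \ref{Proposition: harmonic extension on the bad cubes} by exactly the device used to pass from Proposition \ref{Proposition: harmonic extension on the good cubes} to Corollary \ref{Corollary: extension in the interior of good cubes}: an explicit bi-Lipschitz change of variables identifying the cube $Q$ with $\b^5$ and $\partial Q$ with $\s^4$, followed by a careful bookkeeping of how each norm rescales. Writing $Q = c_Q + \tfrac{k\eps}{2}[-1,1]^5$, I would set $\Psi := \big(\tfrac{k\eps}{2}\,\cdot\, + c_Q\big)\circ\varphi:\b^5\to Q$, with $\varphi$ the bi-Lipschitz homeomorphism introduced just before the statement. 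Since $\varphi$ maps $\b^5$ onto $[-1,1]^5$ and $\s^4$ onto $\partial\big([-1,1]^5\big)$, the map $\Psi$ is bi-Lipschitz from $\b^5$ onto $Q$ carrying $\s^4$ onto $\partial Q$, with bi-Lipschitz constant — after factoring out the affine dilation $\tfrac{k\eps}{2}\operatorname{Id}$, which is conformal — depending only on the universal constant $k$; moreover on each open $4$-dimensional face of $\partial([-1,1]^5)$, hence on each relevant sub-sector, $\varphi^{-1}$ is real-analytic with bounds depending only on $k$.

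The first substantive step is to push the boundary data down to $\s^4$. Set $B := \Psi^*A$, viewed through $\Psi|_{\s^4}:\s^4\to\partial Q$. Pulling back a weak connection by a bi-Lipschitz homeomorphism again yields a weak connection — if locally $g\in W^{1,2}$ with $A^g\in L^4$, then $\Psi^*g = g\circ\Psi\in W^{1,2}$ and $(\Psi^*A)^{\Psi^*g} = \Psi^*(A^g)\in L^4$ by naturality of the gauge action — so $B\in\mathfrak a_G(\s^4)$ with $F_B = \Psi^*F_A$. Since $\int|F|^2$ over a $4$-manifold is invariant under conformal rescalings, the change of variables gives $\|F_B\|_{L^2(\s^4)}\le C\,\|F_A\|_{L^2(\partial Q)}$ and $\|B\|_{L^2(\s^4)}\le C\,\eps^{-1}\,\|A\|_{L^2(\partial Q)}$, the factor $\eps^{-1}$ being what the $L^2$-norm of a $1$-form on a $4$-dimensional surface picks up under the dilation of scale $\eps$, with $C$ universal. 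Choosing $\eps_G$ small enough that $C\eps_G$ lies below the threshold of Proposition \ref{Proposition: harmonic extension on the bad cubes}, I would apply that Proposition to $B$, obtaining $g^\sharp\in W^{1,2}(\s^4,G)$, $\tilde A^\sharp\in(W^{\frac32,2}\cap C^\infty)(\b^5)$, $\tilde g^\sharp\in W^{1,2}(\b^5,G)$ and $\hat A^\sharp := (\tilde A^\sharp)^{(\tilde g^\sharp)^{-1}}$ with all the Proposition's conclusions, and define on $Q$ the forms $\tilde A := (\Psi^{-1})^*\tilde A^\sharp$, $\tilde g := \tilde g^\sharp\circ\Psi^{-1}$, $\hat A := \tilde A^{\tilde g^{-1}} = (\Psi^{-1})^*\hat A^\sharp$, together with $g$, the transport of $g^\sharp$ to $\partial Q$ through $\Psi|_{\s^4}$. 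Naturality of curvature and of traces under $\Psi$ then immediately yields $F_{\hat A} = \tilde g\,F_{\tilde A}\,\tilde g^{-1}\in L^2(Q)$, $\iota_F^*\tilde A = A^g$ on every $4$-face $F$ of $\partial Q$, and $\iota_{\partial Q}^*\hat A = A$, i.e. items (ii)(a)--(b) and the last line of \eqref{equation: properties of the harmonic extension bad cubes}.

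What remains is scaling bookkeeping. Under the dilation of scale $\eps$, a $j$-form on an $m$-dimensional piece pulled back from $\b^5$ satisfies $\|\,\cdot\,\|_{L^p}^{Q}\simeq\eps^{\,m/p-j}\,\|\,\cdot\,\|_{L^p}^{\b^5}$ up to the universal bi-Lipschitz constant; in particular the $L^5$-norm of a $1$-form and the $L^{5/2}$-norm of a $2$-form on the $5$-dimensional domain are scale-invariant, so the Proposition's bounds $\|\tilde A^\sharp\|_{W^{1,\frac52}(\b^5)}\le C_G\|F_B\|_{L^2(\s^4)}$ and $\|F_{\tilde A^\sharp}\|_{L^{\frac52}(\b^5)}\le C_G\|F_B\|_{L^2(\s^4)}$ give the first line of \eqref{equation: properties of the harmonic extension bad cubes} and \eqref{equation: estimate for the curvature on the bad cubes}; the $W^{1,2}(F)$-estimate follows likewise, its lower-order $L^2$-term contributing only a harmless factor $\eps<1$. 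For the gauge, $\|d_{\s^4}g^\sharp\|_{L^2(\s^4)}\le C_G\big(\|F_B\|_{L^2(\s^4)}+\|B\|_{L^2(\s^4)}\big)$ and the fact that $dg$ on $\partial Q$ carries the extra factor $\eps$ relative to $d_{\s^4}g^\sharp$ give exactly $\|dg\|_{L^2(\partial Q)}\le C_G\big(\eps\|F_A\|_{L^2(\partial Q)}+\|A\|_{L^2(\partial Q)}\big)$. For \eqref{equation: estimate on the trace of the curvature on cube/2}--\eqref{L4-tr} I would take $\Omega$ with $\Omega\cap\b^5 = \Psi^{-1}(Q'\cap Q)$: this set has Lipschitz boundary $\Psi^{-1}(\partial Q'\cap Q)$ contained in the $\varphi^{-1}$-images of finitely many coordinate hyperplanes, hence in a union of $N=N(k)$ submanifolds of $\overline{\b^5}$ of class $C^2$ whose number and $C^2$-norms are controlled in terms of $k$ and $\alpha$ only — here one uses that $Q'\cap Q$ is a rectangle with edge-lengths between $\alpha\eps$ and $k\eps$, so that after the affine rescaling its geometry, and that of its $\varphi^{-1}$-image in each closed sub-sector, is controlled uniformly in $\eps$ — and then \eqref{equation: estimate on the trace of the curvature/2} and \eqref{L4-bad-trace}, together with the scale-invariance of the $L^2$-norm of a $2$-form and the $L^4$-norm of a $1$-form on a $4$-dimensional surface, yield the claims with $K_G$ depending only on $G$.

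The only genuinely non-mechanical points — the ones I expect to require the most care — are: (i) the bi-Lipschitz stability of the class $\mathfrak a_G$ and the compatibility of all the operations on traces and gauges with the change of variables $\Psi$, which is really the structural reason $\mathfrak a_G$ was designed the way it is; and (ii) the $\eps$-uniform control of the $C^2$-geometry of $\Psi^{-1}(\partial Q'\cap Q)$ needed to keep $K_G$ independent of $\eps$, which follows from rewriting everything at unit scale and exploiting the explicit piecewise real-analyticity of $\varphi^{-1}$. Everything else is the routine tracking of scaling exponents outlined above, precisely mirroring the deduction of Corollary \ref{Corollary: extension in the interior of good cubes} from Proposition \ref{Proposition: harmonic extension on the good cubes}.
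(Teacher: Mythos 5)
Your proposal is correct and follows essentially the same route as the paper: the authors deduce this corollary exactly by pulling $A$ back to $\s^4$ via $\big((\eps\,\cdot\,+c_Q)\circ\varphi\big)$, applying Proposition \ref{Proposition: harmonic extension on the bad cubes}, and transporting the data back to $Q$, which is precisely your $\Psi$. Your scaling bookkeeping (the factor $\eps^{-1}$ on $\|B\|_{L^2(\s^4)}$, the scale-invariance of the $L^5$, $L^{5/2}$ and $L^4$ norms in the relevant dimensions, and the factor $\eps$ in the $dg$ estimate) and the construction of $\Omega$ with $\Omega\cap\b^5=\Psi^{-1}(Q'\cap Q)$ fill in exactly the details the paper leaves implicit.
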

\begin{Rm}
\label{rm-PR}
  \cite[Proposition 2.1]{petrache-riviere-na}, has a missing term on the right-hand-side of equation (2.2). This has been noticed by S. Sil. Proposition \ref{Proposition: harmonic extension on the good cubes} is a suitable replacement of \cite[Proposition 2.1]{petrache-riviere-na}. 
\end{Rm}


\subsection{Construction of optimally regular gauges on the boundary of cubes}
We start by recalling the following optimal extension theorem, whose proof can be found in \cite[Theorem 2]{petrache-van-schaftingen}.
\begin{Prop}\label{p-ext}
    Let $n\in\n$ be such that $n\ge 1$ and let $N\hookrightarrow\r^{k}$ be a closed embedded submanifold of $\r^{k}$. Then, for every $u\in W^{\frac{n}{n+1},n+1}(\s^{n},N)$ there exists an extension $\tilde u\in W^{1,(n+1,\infty)}(\b^{n+1},N)$ such that $\tilde u|_{\s^n}=u$ in the sense of traces.
\end{Prop}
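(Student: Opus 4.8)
\medskip
\noindent\textbf{Proof proposal.} The plan is to reduce the manifold constraint to a linear extension problem, then to correct the resulting map by the nearest-point retraction onto $N$, paying for the correction only at the endpoint of the Lorentz scale. Since $\frac{n}{n+1}\cdot(n+1)=n=\dim\s^n$, we are in the critical range $sp=\dim\s^n$, where smooth maps are dense in $W^{\frac{n}{n+1},n+1}(\s^n,N)$; combined with the lower semicontinuity of the $L^{n+1,\infty}$ quasi-norm under weak-$*$ convergence, this reduces matters to proving the statement for smooth $u$, \emph{with a bound $\|\tilde u\|_{W^{1,(n+1,\infty)}(\b^{n+1})}\le C(N)\,[u]_{W^{\frac{n}{n+1},n+1}(\s^n)}$ depending only on $N$}, and then passing to the limit. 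For smooth $u$ the classical (Poisson or Gagliardo) trace extension produces $U\in W^{1,n+1}(\b^{n+1},\r^k)\cap C^\infty(\b^{n+1})$ with $U|_{\s^n}=u$ and $\|\nabla U\|_{L^{n+1}(\b^{n+1})}\le C\,[u]_{W^{\frac{n}{n+1},n+1}(\s^n)}$. Because $n+1$ equals the dimension, $W^{1,n+1}$ does \emph{not} embed into $C^0$, so $U$ need not remain in a fixed tubular neighbourhood $\mathcal V$ of $N$ and the naive composition $\Pi_N\circ U$ is ill-defined; the whole difficulty is to modify $U$ on the region where it escapes $\mathcal V$.

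\medskip
\noindent The key building block is the \emph{conical} ($0$-homogeneous) \emph{extension}: if $v\in W^{1,n+1}(\s^n,N)$, then $x\mapsto v(x/|x|)$ maps $\b^{n+1}\setminus\{0\}$ into $N$, has trace $v$ on $\s^n$, and lies in $W^{1,(n+1,\infty)}(\b^{n+1},N)$ with
\[
\big\|\nabla\big(v(\,\cdot\,/|\,\cdot\,|)\big)\big\|_{L^{(n+1,\infty)}(\b^{n+1})}\le C_n\,\|\nabla_{\s^n}v\|_{L^{n+1}(\s^n)}.
\]
This follows from a direct estimate of the distribution function of $|\nabla_{\s^n}v(x/|x|)|\,/\,|x|$ by Fubini in polar coordinates: the Jacobian $r^n\,dr$ exactly compensates the $r^{-(n+1)}$ blow-up of the integrand, trading one power of integrability for the weak-Lebesgue endpoint. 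This is precisely why the target of the extension is $W^{1,(n+1,\infty)}$ rather than $W^{1,n+1}$, and it is the device that makes isolated point singularities affordable.

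\medskip
\noindent Next I would run a dyadic stopping-time argument on cubes $Q\subset\b^{n+1}$. Fix a threshold $\eps_0=\eps_0(N)>0$ and call $Q$ \emph{good} if $U(\overline{2Q}\cap\b^{n+1})\subset\mathcal V$ and $\int_{2Q\cap\b^{n+1}}|\nabla U|^{n+1}\,d\L^{n+1}<\eps_0^{n+1}$, and \emph{bad} otherwise; for smooth $U$ every sufficiently small cube is good except those meeting the compact set $\{U\notin\mathcal V\}$, which is contained in $\b^{n+1}$, so the decomposition terminates at a finite scale. On each good cube one replaces $U$ by $\Pi_N$ applied to a mollified version of $U$; the scaling-invariant Poincar\'e inequality produces an $N$-valued map with $\|\nabla(\Pi_N\circ\overline U)\|_{L^{n+1}(Q)}\le C\,\|\nabla U\|_{L^{n+1}(2Q)}$ whose traces on the faces of $Q$ match those of the neighbouring good cubes up to a controlled $W^{1-\frac1{n+1},n+1}$ error. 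On the union of bad cubes one applies the Hardt--Lin/Bethuel scheme: for a generically positioned grid, project $U$ onto $N$ on the $n$-skeleton (a mean-value argument over grid positions gives controlled energy and closeness to $N$ there, once the good-cube step has been carried out around it), then fill each $(n+1)$-cell by the conical extension, from its centre, of the resulting $N$-valued boundary datum, so that one isolated point singularity is created per cell. One orders the bad cells so that each, when filled, already abuts a face carrying previously constructed data, and interpolates across a thin collar between the two $N$-valued boundary maps; this is possible because $N$ is compact, carries the retraction $\Pi_N$, and two $W^{1-\frac1{n+1},n+1}$-close $N$-valued maps bound a cylinder in $N$ of comparably small energy. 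Finally one assembles $\tilde u$ and bounds $\|\nabla\tilde u\|_{L^{(n+1,\infty)}(\b^{n+1})}$ by combining the absolutely summable $L^{n+1}$ contributions of the good cubes with the $L^{n+1,\infty}$ contributions of the conical fillings, using subadditivity (up to a constant) of the Lorentz quasi-norm over the essentially disjoint supports.

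\medskip
\noindent\textbf{Main obstacle.} I expect the technical heart to be the Lorentz-space bookkeeping for a general (merely $W^{\frac{n}{n+1},n+1}$) datum: the reduction to smooth $u$ makes the grid finite, but the estimate must be closed with a constant controlled by $[u]_{W^{\frac{n}{n+1},n+1}(\s^n)}$ alone, so one has to organize the bad cells via a Calder\'on--Zygmund/Carleson packing argument so that the \emph{energy} they account for, and not merely their total volume, is summable, thereby ruling out a logarithmic loss in the distribution function of $|\nabla\tilde u|$; this is where the critical exponent $p=n+1$ is used most sharply. A second, more conceptual obstruction is that one cannot land in $W^{1,n+1}$ at all: if $u$ represents a nonzero class in $\pi_n(N)$ there is no $W^{1,n+1}$, let alone continuous, $N$-valued extension, so the point singularities produced in the bad cells are genuinely unavoidable, and the whole construction is designed so that their aggregate strength is exactly what the weak-$L^{n+1}$ quasi-norm measures. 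The collar gluing, routine in spirit, also relies on the quantitative compactness input for $N$ above and must be carried out without creating new concentration.
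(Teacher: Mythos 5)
First, a point of reference: the paper does not prove Proposition \ref{p-ext} at all --- it is quoted from \cite[Theorem 2]{petrache-van-schaftingen} --- so there is no internal argument to compare yours against; what can be assessed is whether your sketch would close on its own. It does assemble the right circle of ideas: the $0$-homogeneous extension as the source of the $L^{n+1,\infty}$ endpoint (your distribution-function computation for $v(x/|x|)$ is correct and is indeed the reason the target space is $W^{1,(n+1,\infty)}$ rather than $W^{1,n+1}$), the topological unavoidability of point singularities, and a good/bad decomposition with a retraction onto $N$. But the two steps you declare rather than prove are exactly where the difficulty sits, and one of them fails as stated.

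The concrete gap is the bad-cube step. By definition the bad cubes are those on which $U$ leaves the tubular neighbourhood $\mathcal{V}$ or carries energy at least $\eps_0^{n+1}$, so on the $n$-skeleton of the grid inside the bad region you have no control on $\operatorname{dist}(U,N)$: a generic translation of the grid controls, via Fubini, the \emph{energy} of the restriction of $U$ to the skeleton, but closeness to $N$ on a face is deduced from a Poincar\'e inequality \emph{plus smallness} of the energy at the scale of the cell, which is precisely what fails on bad cubes (and bad cubes may cluster, so adjacency to good cubes cannot be assumed). Hence ``project $U$ onto $N$ on the $n$-skeleton'' is ill-defined exactly where you need it. The standard repair --- composing with a generically translated singular retraction $Q(\cdot-a)$ of $\r^k\smallsetminus X$ onto $N$, as in Lemma \ref{Lemma: projection} --- only gains integrability commensurate with the connectivity of $N$ (the codimension of the dual skeleton $X$), and the proposition makes no connectivity assumption on $N$ beyond closedness; this is why the proof in \cite{petrache-van-schaftingen} does not run a skeleton-projection scheme but builds the extension by iterated homogeneous extensions over a Whitney-type decomposition, obtaining the weak-$L^{n+1}$ bound by superposing the cone estimates. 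Separately, your reduction to smooth $u$ (legitimate, since $sp=n$ makes smooth maps dense) hinges on a uniform a priori bound of $\|\nabla\tilde u_j\|_{L^{n+1,\infty}}$ by a fixed function of $[u_j]_{W^{n/(n+1),n+1}}$ alone; you flag this as the ``main obstacle'' but nothing in the sketch produces it, and without it the weak-$*$ compactness argument cannot start. As written, the proposal is a plausible programme rather than a proof.
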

\medskip
\noindent
Next, we leverage on the Proposition \ref{p-ext} to prove the following gluing lemma.
\begin{Lm}\label{l-gluing}
    Let $G$ be a compact matrix Lie group and let $n\in\n$ be such that $n\ge 2$. Let
    \begin{align}
    \begin{split}
        \s_+^n&:=\{x=(x_1,...,x_{n+1})\in\s^n \mbox{ : } x_{n+1}>0\}.
    \end{split}
    \end{align}
    Then, for every $g\in W^{1,n}(\s_+^n,G)$ there exists $\tilde g\in W^{1,(n+1,\infty)}(\b^{n+1},G)$ such that $\tilde g|_{\s_+^n}=g$.
\end{Lm}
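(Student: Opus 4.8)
The plan is to deduce Lemma~\ref{l-gluing} from the optimal extension theorem of Petrache--Van Schaftingen, Proposition~\ref{p-ext}. That proposition extends maps defined on the \emph{whole} sphere $\s^n$ belonging to the fractional space $W^{\frac{n}{n+1},n+1}$, so the two things to arrange are: (a) extend the given $g$ from the hemisphere $\s_+^n$ to all of $\s^n$ \emph{while keeping it $G$-valued}, and (b) upgrade the resulting map to the regularity $W^{\frac{n}{n+1},n+1}(\s^n,G)$ required as input for Proposition~\ref{p-ext}.

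\textbf{Step 1: reflection across the equator.} Let $R\colon\r^{n+1}\to\r^{n+1}$, $R(x_1,\dots,x_n,x_{n+1}):=(x_1,\dots,x_n,-x_{n+1})$; it restricts to a smooth isometry of $\s^n$ that swaps $\s_+^n$ with $\s_-^n:=\{x\in\s^n:\ x_{n+1}<0\}$ and fixes the equator $\s^{n-1}=\s^n\cap\{x_{n+1}=0\}$ pointwise. Set
\[
  \bar g:=
  \begin{cases}
    g & \text{on }\s_+^n,\\[1mm]
    g\circ R & \text{on }\s_-^n.
  \end{cases}
\]
Since $n\ge 2$ we have $1<n<\infty$, so $g\in W^{1,n}(\s_+^n,G)$ admits a trace on $\s^{n-1}$; because $R$ is the identity on $\s^{n-1}$, the two pieces $g$ and $g\circ R$ share that trace, and the standard gluing of Sobolev functions across a common smooth interface of measure zero yields $\bar g\in W^{1,n}(\s^n,G)$ with $\bar g\in G$ a.e.

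\textbf{Step 2: gain of integrability, then extension into the ball.} Embed $G$ as a compact --- hence closed --- embedded submanifold of some $\r^k$. By the fractional Sobolev embedding on the closed $n$-manifold $\s^n$, namely $W^{1,n}(\s^n)\hookrightarrow W^{s,q}(\s^n)$ whenever $\frac{1}{q}=\frac{1}{n}-\frac{1-s}{n}$, which for $s=\frac{n}{n+1}$ gives $q=n+1$, we obtain $\bar g\in W^{\frac{n}{n+1},\,n+1}(\s^n,G)$. Applying Proposition~\ref{p-ext} with $N=G$ and $u=\bar g$ produces $\tilde g\in W^{1,(n+1,\infty)}(\b^{n+1},G)$ with $\tilde g|_{\s^n}=\bar g$ in the sense of traces. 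Restricting this trace to $\s_+^n\subset\s^n$ and recalling $\bar g|_{\s_+^n}=g$ gives $\tilde g|_{\s_+^n}=g$, which is the assertion.

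The only genuinely delicate point is Step~1: one cannot simply take an $\r^k$-valued Sobolev extension of $g$ and project it back onto $G$, since a generic extension need not land in a tubular neighbourhood of $G$; the reflection trick is precisely what keeps the extended map $G$-valued. Everything else is a routine combination of the trace theory of $W^{1,n}$, the endpoint fractional Sobolev embedding, and Proposition~\ref{p-ext}. (Each step is quantitative, so one also gets a bound on $\|\tilde g\|_{W^{1,(n+1,\infty)}(\b^{n+1})}$ in terms of $\|g\|_{W^{1,n}(\s_+^n)}$ if a controlled version is wanted.)
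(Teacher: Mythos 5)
Your proof is correct and follows essentially the same route as the paper: reflect $g$ across the equator to obtain a $G$-valued map $\bar g\in W^{1,n}(\s^n,G)$, use the embedding $W^{1,n}(\s^n)\hookrightarrow W^{\frac{n}{n+1},n+1}(\s^n)$, and then apply Proposition~\ref{p-ext}. The paper's argument is identical in both the reflection step and the embedding step, so there is nothing to add.
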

\begin{proof}[\textbf{\textup{Proof of Lemma \ref{l-gluing}}}]
        Let 
        \begin{align}
            \s_-^n&:=\{x=(x_1,...,x_{n+1})\in\s^n \mbox{ : } x_{n+1}<0\}.
        \end{align}
        and define $h\in W^{1,n}(\s_-^n,G)$ by
        \begin{align}
            h(x_1,...,x_{n+1}):=g(x_1,...,-x_{n+1}) \qquad\forall\,x=(x_1,...,x_{n+1})\in\s_-^n.
        \end{align}
        Let $\hat g\in W^{1,n}(\s^n,G)$ be given by 
        \begin{align}
            \hat g:=\begin{cases}g & \mbox{ on } \s_+^n\\h & \mbox{ on } \s_-^n\end{cases}
        \end{align}
        Since $W^{1,n}(\s^n,G)\hookrightarrow W^{\frac{n}{n+1},n+1}(\s^{n},G)$, by Proposition \ref{p-ext} there exists $\tilde g\in W^{1,(n+1,\infty)}(\b^{n+1},N)$ such that $\tilde g|_{\s^n}=\hat g$ in the sense of traces. Since $\hat g=g$ on $\s_+^n$, this concludes the proof of Lemma \ref{l-gluing}.
\end{proof}
\begin{Co}\label{c-gluing}
    Let $G$ be a compact matrix Lie group. There exist constants $\eps_G\in(0,1)$ and $C_G>0$ depending only in $G$ such that the following holds. Let $Q\subset\r^5$ be an open cube in $\r^5$ and let $\mathscr{F}$ be any non-empty proper subset of $4$-dimensional faces of $\partial Q$. Let
    \begin{align}
        \Omega:=\bigcup_{F\in\mathscr{F}}F.
    \end{align}
    Then, for every $g\in W^{1,4}(\Omega,G)$ there exists an extension $\tilde g\in W^{1,5}(Q,G)$ such that $\tilde g|_{\Omega}=g$ in the sense of traces.
\end{Co}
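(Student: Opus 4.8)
The plan is to reduce the statement, by a bi-Lipschitz change of variables that exploits the properness of $\mathscr{F}$, to the model case $Q=\b^5$ with $\Omega$ contained in a closed hemisphere of $\s^4=\partial\b^5$, and then to extend $g$ first over that hemisphere and afterwards over the ball. Since $\mathscr{F}$ is a proper subset of the ten $4$-dimensional faces of $Q$, one may fix a face $F_0\notin\mathscr{F}$; then $\Omega$ is contained in $\Sigma:=\overline{\partial Q\setminus F_0^\circ}$, the union of the remaining nine closed faces, which is a Lipschitz $4$-disk. Choosing a bi-Lipschitz homeomorphism $\Phi:\overline Q\to\overline{\b^5}$ with $\Phi(\Sigma)=\overline{\s_+^4}$ (the closed upper hemisphere), and recalling that composition with a bi-Lipschitz homeomorphism preserves $W^{1,p}$ for every $p$ as well as the constraint of taking values in $G$, it is enough to extend every $g\in W^{1,4}(\Omega,G)$ with $\Omega\subseteq\overline{\s_+^4}$ to some $\tilde g\in W^{1,5}(\b^5,G)$; composing the result with $\Phi^{-1}$ then gives the corollary.

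The first step is to extend $g$ over the whole hemisphere, i.e.\ to produce $\bar g\in W^{1,4}(\s_+^4,G)$ with $\bar g|_\Omega=g$. Every $W^{1,4}$ map of a $4$-dimensional domain is VMO, and $\overline{\s_+^4}$ is contractible, so there is no topological obstruction to extending the $G$-valued datum $g$ over the larger domain (in the generality actually needed, $\Omega$ being a contractible union of faces, $g$ carries no topology at all). Such an extension can be built by first extending componentwise into $\r^k\supset G$ by a Stein-type extension, and then composing with the nearest-point retraction onto $G$, after a localized mollification which, thanks to the VMO bound on $g$ and the fact that $\Omega$ is a finite union of Lipschitz faces, keeps the map inside a fixed tubular neighbourhood of $G$ where that retraction is defined and smooth.

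The second step is to apply Lemma \ref{l-gluing} with $n=4$ to $\bar g$, which yields an extension $\tilde g_0\in W^{1,(5,\infty)}(\b^5,G)$ with $\tilde g_0|_{\s_+^4}=\bar g$, hence $\tilde g_0|_\Omega=g$. To upgrade the integrability to genuine $W^{1,5}$ one uses that, in the proof of Lemma \ref{l-gluing}, $\bar g$ is first doubled by reflection across the equator $\s^3$ to a map $\hat g\in W^{1,4}(\s^4,G)$ before Proposition \ref{p-ext} is invoked, and that the reflected double of a map defined on the contractible set $\overline{\s_+^4}$ is topologically trivial: its VMO homotopy class in $\pi_4(G)$ vanishes. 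The only obstruction to the extension produced by Proposition \ref{p-ext} being in $W^{1,5}$ rather than merely $W^{1,(5,\infty)}$ is a finite family of point singularities whose total homotopy class equals $[\hat g]=0$; cancelling them (joining the singular ``bubbles'' in pairs by thin tubes) removes the singular set and turns the weak $L^5$ bound on the gradient into an $L^5$ bound, producing the desired $\tilde g\in W^{1,5}(\b^5,G)$.

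The bi-Lipschitz reductions and the trace and gluing bookkeeping are routine. The two substantive points, and the places where care is needed, are: in the hemispherical extension, guaranteeing that the extended map stays $G$-valued instead of drifting into the convex hull of $G$ --- this is what forces the mollify-then-retract scheme together with the VMO estimate --- and the passage from the generic weak-$L^5$ extension of Proposition \ref{p-ext} to a genuine $W^{1,5}$ one, which is possible precisely because the doubled boundary datum is null-homotopic. Both are standard in the theory of Sobolev maps between manifolds but constitute the actual content of the proof; the remainder is unwinding definitions.
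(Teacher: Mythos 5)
Your overall route coincides with the paper's: reduce by a bi-Lipschitz homeomorphism to the model situation where the prescribed set becomes the upper hemisphere of $\s^4=\partial\b^5$, then invoke Lemma \ref{l-gluing} (itself reflection across the equator plus Proposition \ref{p-ext}) and pull back. You deviate in two places, and one of them contains a genuine flaw. You only arrange $\Phi(\Omega)\subseteq\overline{\s_+^4}$ rather than $\Phi(\Omega)=\overline{\s_+^4}$, which forces you to first extend $g$ from $\Omega$ to the whole hemisphere in $W^{1,4}$. Your proposed construction for this --- Stein extension into $\r^k$ followed by mollification and nearest-point retraction onto $G$ --- does not work as stated: the Stein extension of a $G$-valued map takes values in (roughly) the convex hull of $G$, and away from $\Omega$ there is no reason for it to lie in a tubular neighbourhood of $G$; the VMO smallness of oscillation controls the distance of the mollified map to the \emph{values} of the map being mollified, not to $G$, so the retraction may be undefined. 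This step is either unnecessary (choose $\Phi$ so that $\Omega$ is carried exactly onto the closed upper hemisphere, as the paper does, using that $\Omega$ and its complement in $\partial Q$ are Lipschitz $4$-disks) or easily repaired (precompose $g$ with a Lipschitz retraction of the hemisphere onto $\Omega$, which keeps the values in $G$ and preserves $W^{1,4}$).

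Your second deviation --- upgrading the extension from $W^{1,(5,\infty)}$ to genuine $W^{1,5}$ by observing that the reflected double $\hat g$ is null-homotopic and removing the resulting point singularities --- is addressing a real discrepancy in the paper: Lemma \ref{l-gluing} only produces a $W^{1,(5,\infty)}$ extension, the paper's proof of the corollary stops there, and in the subsequent application only $W^{1,(5,\infty)}$ is ever used, so the "$W^{1,5}$" in the statement is not actually delivered by the paper's argument. Your null-homotopy observation is correct (the double factors through the folding map onto the contractible hemisphere), but the claim that vanishing of the class in $\pi_4(G)$ lets one cancel the singularities and land in $W^{1,5}(\b^5,G)$ is asserted rather than proved; it is a nontrivial removal-of-singularities argument in the critical exponent, and as written it is a sketch, not a proof. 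In short: same skeleton as the paper, one repairable but genuinely broken step, and one extra step that goes beyond the paper but is left at the level of a plausibility argument.
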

\begin{proof}[\textbf{\textup{Proof of Corollary \ref{c-gluing}}}]
        Let $\Phi:\overline{Q}\to\overline{\b^5}$ be a bi-Lipschitz homeomorphism such that 
        \begin{align}
            \Omega=\s_+^4, \qquad \partial\Omega=\Gamma, \qquad \partial Q\smallsetminus\Omega=\s_-^4.
        \end{align}
        By apply Lemma \ref{l-gluing} to $(\Phi^{-1})^*g\in W^{1,4}(\s_+^4,G)$ and pullback the resulting extension on $\b^5$ by $\Phi$ we get the desired $\tilde g$. This concludes the proof of Corollary \ref{c-gluing}.
\end{proof}


\section{The notion of admissible covers and good and bad cubes}\label{section: admissible covers, good and bad cubes}
\subsection{The notion of admissible cubic \texorpdfstring{$\ep_i$}{Z}-covers}
From now on, for every $c\in\r^5$ and $\eps>0$ we let
\begin{align*}
	Q_{\eps}(c):=\bigg(\hspace{-1.5mm}-\frac{\eps}{2},\frac{\eps}{2}\bigg)^5+c
\end{align*}
be the open cube with center $c$, edge-length $\eps>0$ and faces parallel to the coordinate planes. We shall be using the following definition.
\begin{Dfi}
\label{df-admissible Lipschitz domain}
Let $A\in {\mathfrak a}_G(\b^5)$, $c\in\r^5$ and $\rho>0$. We say that $A\in {\mathcal A}_G(Q_\rho(c))$ if there exists an $L^2$ 1-form that we denote 
$\iota_{\p Q_\rho(c)}^\ast A$ such that
\begin{enumerate}[(i)]
\item
\begin{align*}
	\lim_{\ep\rightarrow 0}\frac{1}{\ep}\int_{\rho-\ep}^{\rho+\ep}\int_{\p Q_\rho(c)}\lf|\iota_{ D_r }^\ast\iota_{\p Q_r(c)}^\ast A-\iota_{\p Q_\rho(c)}^\ast A\rg|^2\ d\H^4\, d\L^1(r)=0,
\end{align*}
where $D_r(x):= \frac{\rho}{r}\,(x-c)+c$.
\item
\begin{align*}
	\iota_{\p Q_\rho(c)}^\ast A\in {\mathfrak a}_G(\p Q_\rho(c)).
\end{align*}
\end{enumerate}
\end{Dfi}

\noindent
As a direct consequence of Fubini and Lebesgue theorems, we have the following proposition.
\begin{Prop}
\label{pr-admissible-edge} Let $A\in {\mathfrak a}_G(\b^5)$ and $c\in \b^5$, then for $\L^1$-a.e. $\rho>0$  such that $Q_\rho(c)\subset {\b}^5$ there holds $A\in{\mathcal A}_G(\p Q_\rho(c))$. A positive number $\rho$ such that $A\in{\mathcal A}_G(\p Q_\rho(c))$ is called an \textit{admissible edge-length} for $A$.
\end{Prop}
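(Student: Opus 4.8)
The plan is to check, separately and on two sets of full $\L^1$-measure, the two requirements (i) and (ii) in Definition \ref{df-admissible Lipschitz domain}, and then to intersect them. Throughout, write $R_0:=\sup\{\rho>0\,:\,Q_\rho(c)\subset\b^5\}$, so that the radii of interest range over $(0,R_0)$. Requirement (ii) will be essentially free: I would feed into the defining property \eqref{weak-co-high} of $\A_G(\b^5)$ the particular Lipschitz function $\varphi(x):=\lvert x-c\rvert_\infty$. Its level sets are the cubic spheres, $\varphi^{-1}(\rho)=\p Q_\rho(c)$ for $\rho\in(0,R_0)$ (and truncated cubic spheres for larger $\rho$), each bi-Lipschitz equivalent to (a Lipschitz portion of) the round $\s^4$, so that $\varphi\in\mathcal L(\b^5)$. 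By \eqref{weak-co-high} this gives, for $\H^1$-a.e.\ and hence $\L^1$-a.e.\ $\rho\in(0,R_0)$, that the slice $\iota_{\p Q_\rho(c)}^\ast A$ is a well-defined $L^2$ $1$-form with $L^2$ curvature that lies in $\A_G(\p Q_\rho(c))$; this is precisely requirement (ii) of Definition \ref{df-admissible Lipschitz domain} (including the existence of the $L^2$ slice appearing there).

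For requirement (i) I would rescale everything to the fixed model sphere $\Sigma:=\p Q_1(0)$ by the dilations $\Phi_r\colon\Sigma\to\p Q_r(c)$, $\Phi_r(y):=ry+c$, and introduce $\tilde h(r):=\Phi_r^\ast\,\iota_{\p Q_r(c)}^\ast A$. Using the bi-Lipschitz chart $(0,R_0)\times\Sigma\ni(r,y)\mapsto ry+c$ away from $c$ and splitting $A$ into its radial and tangential components, Fubini's theorem shows that $\tilde h(r)\in L^2(\wedge^1\Sigma\otimes\g)$ for a.e.\ $r$. Moreover, since $\lvert\iota_{\p Q_r(c)}^\ast A\rvert\le\lvert A\rvert$ pointwise and $\lvert\nabla\varphi\rvert=1$ a.e., the coarea formula yields, for every compact $K\subset(0,R_0)$,
\[
\int_K\|\tilde h(r)\|_{L^2(\Sigma)}^2\,d\L^1(r)\le C_K\int_{\b^5}\lvert A\rvert^2\,d\L^5<+\infty ,
\]
so $\tilde h\in L^2_{loc}\big((0,R_0);L^2(\Sigma)\big)\subset L^1_{loc}\big((0,R_0);L^2(\Sigma)\big)$.

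Now comes the only slightly delicate point. Because $L^2(\Sigma)$ is a \emph{Hilbert} space, I would apply the Lebesgue differentiation theorem twice — once to the $L^2(\Sigma)$-valued Bochner-integrable map $\tilde h$, and once to the scalar map $r\mapsto\|\tilde h(r)\|_{L^2(\Sigma)}^2$ — to produce a full-measure set of $\rho$ at which both $\tfrac1{2\ep}\int_{\rho-\ep}^{\rho+\ep}\tilde h(r)\,d\L^1(r)\to\tilde h(\rho)$ in $L^2(\Sigma)$ and $\tfrac1{2\ep}\int_{\rho-\ep}^{\rho+\ep}\|\tilde h(r)\|_{L^2(\Sigma)}^2\,d\L^1(r)\to\|\tilde h(\rho)\|_{L^2(\Sigma)}^2$ as $\ep\to0$. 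Expanding $\|\tilde h(r)-\tilde h(\rho)\|_{L^2(\Sigma)}^2=\|\tilde h(r)\|^2-2\langle\tilde h(r),\tilde h(\rho)\rangle+\|\tilde h(\rho)\|^2$ and using continuity of the inner product then gives
\[
\frac1{2\ep}\int_{\rho-\ep}^{\rho+\ep}\big\|\tilde h(r)-\tilde h(\rho)\big\|_{L^2(\Sigma)}^2\,d\L^1(r)\longrightarrow\|\tilde h(\rho)\|^2-2\|\tilde h(\rho)\|^2+\|\tilde h(\rho)\|^2=0 .
\]
Finally I would transport this back: the rescaling map $D_r$ of Definition \ref{df-admissible Lipschitz domain} identifies $\p Q_r(c)$ with $\p Q_\rho(c)$ and satisfies $D_r=\Phi_\rho\circ\Phi_r^{-1}$, so that $\iota_{D_r}^\ast\iota_{\p Q_r(c)}^\ast A$ corresponds, under $\Phi_\rho$, to $\tilde h(r)$ and $\iota_{\p Q_\rho(c)}^\ast A$ corresponds to $\tilde h(\rho)$, up to a multiplicative constant depending only on $\rho$ (the fixed cotangent/volume scaling factors of $\Phi_\rho$). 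Hence the last display is exactly requirement (i). Intersecting the full-measure sets of radii obtained for (i) and (ii) completes the proof; exactly as announced, this is nothing more than Fubini (to slice $A$ onto a.e.\ cubic sphere with $L^2$ control) and the Lebesgue differentiation theorem.

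I do not anticipate a genuine obstacle. The two points deserving a little care are: (a) verifying that $\lvert\,\cdot-c\rvert_\infty\in\mathcal L(\b^5)$, so that requirement (ii) is literally the defining property \eqref{weak-co-high} of $\A_G(\b^5)$ specialized to this $\varphi$; and (b) upgrading the usual $L^1$ Lebesgue-point property of $\tilde h$ to the \emph{squared} statement demanded by requirement (i), which is where the Hilbert structure of $L^2(\Sigma)$ (and the polarization identity) enters.
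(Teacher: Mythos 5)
Your proposal is correct and is exactly the argument the paper has in mind: the paper states this proposition without proof, attributing it to "Fubini and Lebesgue theorems," and your write-up (slicing via the Lipschitz function $\lvert\,\cdot-c\rvert_\infty$ to get requirement (ii) from the defining property \eqref{weak-co-high}, then the Bochner/Lebesgue differentiation argument in $L^2(\Sigma)$ for requirement (i)) is precisely the intended elaboration. The only point worth recording is the one you already flag: the squared Lebesgue-point property for the Banach-valued slice map, which your polarization argument (or the standard $\|f(s)-f(t)\|^2\le 2\|f(s)-x_n\|^2+2\|x_n-f(t)\|^2$ trick, which does not even need the Hilbert structure) handles correctly.
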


\allowdisplaybreaks
\noindent
For the purposes of the present subsection, given any $\eps\in\big(0,\frac{1}{4}\big)$ we let 
\begin{align*}
    \mathscr{C}_{\eps}:=\eps\z^5\cap\overline{Q_{1-2\eps}(0)}.
\end{align*}
Note that
\begin{align}\label{equation: finite intersection property}
    \#\{c'\in\mathscr{C}_{\eps} \mbox{ : } Q_{2\eps}(c)\cap Q_{2\eps}(c')\neq\emptyset\}\le N \qquad\forall\, c\in\mathscr{C}_{\eps},
\end{align}
where $N\in\n$ is independent on $\eps$. 
\begin{Lm}[Choice of an admissible cubic cover]\label{Lemma: choice of an admissible cubic cover}
Let $A\in\A_G(Q_1^5(0))$. There exists a universal constant $K>0$ such that for a sequence  $\{\eps_i\}_{i\in\n}\in\big(0,\frac{1}{4}\big)$ satisfying $\eps_i\to 0$ for $i\to +\infty$, for $i$ large enough we can find a family of admissible edge-lengths $\{\rho_{i,c}\}_{c\in\mathscr{C}_{\eps_i}}\subset\big(\frac{3}{2}\eps_i,2\eps_i\big)$ for which the following facts hold.
\begin{enumerate}[(i)]
    \item $A\in{\mathcal A}_G(\partial Q_{\rho_{i,c}}(c))$ for every $c\in\mathscr{C}_{\eps_i}$. 
    \item For every $c\in\mathscr{C}_{\eps_i}$ we have
    \begin{align}\label{X-35a}
    	 \int_{\partial Q_{\rho_{i,c}}(c)}\lvert F_A\rvert^2\, d\H^4\le\frac{K}{2\eps_i}\int_{Q_{2\eps_i}(c)}\lvert F_A\rvert^2\, d\L^5
    \end{align}
    and
    \begin{align} \label{X-355a}
    	\int_{\partial Q_{\rho_{i,c}}(c)}\lvert A\rvert^2\, d\H^4\le\frac{K}{2\eps_i}\int_{Q_{2\eps_i}(c)}\lvert A\rvert^2\, d\L^5.
    \end{align}
    \item It holds that
        \begin{align}
            \label{equation: convergence of A minus its averages on an admissible cover}
            &\lim_{i\to+\infty}\eps_i\sum_{c\in\mathscr{C}_{\eps_i}}\int_{\partial Q_{\rho_{i,c}}(c)}\big\lvert A-(A)_{Q_{2\eps_i}(c)}\big\rvert^2\, d\H^4=0,
        \end{align}
          \begin{align}
            \label{app-grid}
            &\lim_{i\to+\infty}\sum_{c\in\mathscr{C}_{\eps_i}}\int_{ Q_{\rho_{i,c}}(c)}\big\lvert A-(A)_{Q_{2\eps_i}(c)}\big\rvert^2\, d\L^5=0
        \end{align}
        and
        \begin{align}
            \label{equation: convergence of F_A minus its averages on an admissible cover}
            &\lim_{i\to +\infty}\eps_i\sum_{c\in\mathscr{C}_{\eps_i}}\int_{\partial Q_{\rho_{i,c}}(c)}\big\lvert F_A-(F_A)_{Q_{2\eps_i}(c)}\big\rvert^2\, d\H^4=0,
        \end{align}
        where we have used the following notation:
        \[
        (A)_{Q_{2\eps_i}(c)}:=\fint_{Q_{2\eps_i}(c)} A \, d\L^5\quad\quad\mbox{ and }\quad\quad (F_A)_{Q_{2\eps_i}(c)}:=\fint_{Q_{2\eps_i}(c)} F_A \, d\L^5.
        \]
\end{enumerate}
\end{Lm}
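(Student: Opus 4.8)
The plan is to produce the admissible edge-lengths $\rho_{i,c}$ simultaneously for all centers $c\in\mathscr{C}_{\eps_i}$ via a pigeonhole / averaging argument over the radial parameter $r$, combined with the coarea formula. First I would fix $i$ large (so that $\eps_i<\frac14$) and, for each $c\in\mathscr{C}_{\eps_i}$, consider the coarea slicing of $Q_{2\eps_i}(c)$ by the family of boundaries $\partial Q_r(c)$ for $r\in(\tfrac32\eps_i,2\eps_i)$. By the coarea formula (applied to the Lipschitz function $x\mapsto |x-c|_\infty$, whose level sets are exactly the cube boundaries $\partial Q_{2r}(c)$, up to the harmless Jacobian factor which is bounded above and below by universal constants) one gets
\[
\int_{\tfrac32\eps_i}^{2\eps_i}\!\!\Big(\int_{\partial Q_r(c)}\!\big(|F_A|^2+|A|^2\big)\,d\H^4\Big)\,d\L^1(r)\ \le\ C\!\int_{Q_{2\eps_i}(c)}\!\!\big(|F_A|^2+|A|^2\big)\,d\L^5 ,
\]
and similarly for the oscillation integrands $|A-(A)_{Q_{2\eps_i}(c)}|^2$ and $|F_A-(F_A)_{Q_{2\eps_i}(c)}|^2$. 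Summing over $c\in\mathscr{C}_{\eps_i}$ and using the bounded-overlap property \eqref{equation: finite intersection property} (so each point of $\b^5$ is counted at most $N$ times), the right-hand sides become $C\|F_A\|^2_{L^2(\b^5)}$, $C\|A\|^2_{L^2(\b^5)}$ (finite since $A\in\mathfrak a_G(\b^5)$) for the norm estimates, and quantities that tend to $0$ as $i\to\infty$ for the oscillation estimates — the latter because $A\in L^2$ and $F_A\in L^2$ together with the fact that $A$ (resp. $F_A$) is approximated in $L^2$ by its local averages on a shrinking cubic partition, so $\sum_c\int_{Q_{2\eps_i}(c)}|A-(A)_{Q_{2\eps_i}(c)}|^2\to 0$ and likewise for $F_A$.

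Next, set
\[
\Phi_i(r):=\sum_{c\in\mathscr{C}_{\eps_i}}\Big(2\eps_i\!\!\int_{\partial Q_r(c)}\!\!\big|A-(A)_{Q_{2\eps_i}(c)}\big|^2 d\H^4+2\eps_i\!\!\int_{\partial Q_r(c)}\!\!\big|F_A-(F_A)_{Q_{2\eps_i}(c)}\big|^2 d\H^4\Big),
\]
whose integral over $(\tfrac32\eps_i,2\eps_i)$ — an interval of length $\tfrac12\eps_i$ — is $\le \eps_i\,\delta_i$ with $\delta_i\to 0$; hence by Chebyshev the set of $r$ where $\Phi_i(r)\le 4\delta_i$ has measure at least $\tfrac14\eps_i$. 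Simultaneously, by Chebyshev applied to the function $r\mapsto \sum_c \eps_i\int_{\partial Q_r(c)}(|F_A|^2+|A|^2)\,d\H^4$ against its integral bound $C(\|F_A\|_{L^2}^2+\|A\|_{L^2}^2)$, one controls the ``bad'' set; but in fact for \eqref{X-35a}–\eqref{X-355a} it is cleaner to argue center-by-center: for each fixed $c$, by the mean value theorem (or Chebyshev) there is a set of $r\in(\tfrac32\eps_i,2\eps_i)$ of measure $\ge \tfrac38\eps_i$ on which both $\int_{\partial Q_r(c)}|F_A|^2\le \frac{K}{2\eps_i}\int_{Q_{2\eps_i}(c)}|F_A|^2$ and $\int_{\partial Q_r(c)}|A|^2\le \frac{K}{2\eps_i}\int_{Q_{2\eps_i}(c)}|A|^2$ hold, for a suitable universal $K$ (here one uses again that the coarea Jacobian is bounded and that $\int_{\tfrac32\eps_i}^{2\eps_i}\int_{\partial Q_r(c)}|F_A|^2\,dr\le C\int_{Q_{2\eps_i}(c)}|F_A|^2$). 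Then I would intersect, over the \emph{single} parameter $r$, the global good set $\{\Phi_i(r)\le 4\delta_i\}$ (measure $\ge\tfrac14\eps_i$) with the center-wise good sets — but since the latter depend on $c$, the right move is instead: use a single global Chebyshev bound for the \emph{sum} $\sum_c \eps_i\int_{\partial Q_r(c)}(|F_A|^2+|A|^2)$, pick $r$ in the intersection of that global good set with $\{\Phi_i\le 4\delta_i\}$ and with the set of $r$ for which $A\in\mathcal A_G(\partial Q_r(c))$ for \emph{every} $c$ (which is co-null in $r$ by Proposition \ref{pr-admissible-edge} applied to each of the finitely many $c$, intersected over $c$); this intersection is non-empty since the bad set has measure $<\tfrac12\eps_i$ while the interval has length $\tfrac12\eps_i$ — wait, one must be slightly careful with the constant here, so I would just take $K$ large enough (e.g. $K=100 C N$) so that the bad set for \eqref{X-35a}–\eqref{X-355a} has measure $<\tfrac18\eps_i$, and likewise shrink the other bad sets, leaving a non-empty choice of $r$. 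Set $\rho_{i,c}:=r$ (the \emph{same} $r$ for all $c$, which is legitimate and in fact convenient).

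For the interior oscillation bound \eqref{app-grid}, which is not a boundary statement, no slicing is needed: $\sum_c\int_{Q_{\rho_{i,c}}(c)}|A-(A)_{Q_{2\eps_i}(c)}|^2\le \sum_c\int_{Q_{2\eps_i}(c)}|A-(A)_{Q_{2\eps_i}(c)}|^2\to 0$ directly, again because $A\in L^2(\b^5)$ and the cubes $Q_{2\eps_i}(c)$ have bounded overlap and shrink. I expect the main obstacle to be the bookkeeping of constants so that all the competing ``bad sets'' in the radial parameter are simultaneously small enough to leave a valid common choice of $r$; this is resolved by fixing the universal constant $K$ (and the bounded-overlap constant $N$) first and then choosing $i$ large enough that $\delta_i$ is small, after which Chebyshev gives bad sets of total measure strictly less than the interval length $\tfrac12\eps_i$. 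A secondary technical point is verifying that the coarea formula applies with a two-sided bounded Jacobian for the $\ell^\infty$-cube foliation $r\mapsto\partial Q_r(c)$ — this is standard since $|x-c|_\infty$ is Lipschitz with $|\nabla|x-c|_\infty|=1$ a.e. — and that the averaged convergence in Definition \ref{df-admissible Lipschitz domain}(i) of $\iota^\ast_{\partial Q_\rho}A$ holds for a.e. $\rho$, which is exactly Proposition \ref{pr-admissible-edge}, so condition (i) of the Lemma comes for free on the co-null set of radii.
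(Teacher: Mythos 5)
Your overall strategy (coarea slicing in the radial parameter plus Chebyshev on the various bad sets, with the admissibility of a.e.\ slice coming from Proposition \ref{pr-admissible-edge}) matches the paper's, and your more direct treatment of the oscillation terms --- Chebyshev applied to $A$ itself rather than to smooth approximants, using that $\sum_c\int_{Q_{2\eps_i}(c)}\lvert A-(A)_{Q_{2\eps_i}(c)}\rvert^2\to 0$ for $A\in L^2$ --- is a legitimate simplification of the paper's two-step argument via the mollified $A_i$ and the Poincar\'e inequality. However, there is a genuine gap at the point where you decide to take a \emph{single} radius $r$ for all centers. The per-cube relative bounds \eqref{X-35a}--\eqref{X-355a} must hold for \emph{every} $c\in\mathscr{C}_{\eps_i}$; for each fixed $c$ the Chebyshev bad set in $r$ has measure $<2\eps_i/K$, but there are of order $\eps_i^{-5}$ centers, so the union of these bad sets can have measure up to $C\,\eps_i^{-4}/K$, which swamps the interval $\big(\tfrac{3}{2}\eps_i,2\eps_i\big)$ of length $\tfrac{1}{2}\eps_i$ for every universal $K$. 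No common $r$ can avoid all of them, and your claim that taking $K=100CN$ makes ``the bad set for \eqref{X-35a}--\eqref{X-355a}'' smaller than $\tfrac{1}{8}\eps_i$ conflates the bad set of one center with the union over all centers. Your fallback --- a single global Chebyshev bound on the sum $\sum_c\eps_i\int_{\partial Q_r(c)}(\lvert F_A\rvert^2+\lvert A\rvert^2)\,d\H^4$ --- only yields the aggregate estimate $\sum_c\int_{\partial Q_r(c)}\lvert F_A\rvert^2\,d\H^4\le C\eps_i^{-1}\lVert F_A\rVert_{L^2}^2$ and cannot recover the bound of each individual boundary integral by the volume integral over its own doubled cube; that per-cube form is exactly what is needed downstream (smallness of $\lVert F_A\rVert_{L^2(\partial Q)}$ on each cube to apply Corollaries \ref{Corollary: extension in the interior of good cubes} and \ref{Corollary: extension in the interior of bad cubes}, and the dichotomy of Definition \ref{Definition: good cubes eps-cover}).

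The repair is what the paper implements: let $\rho_{i,c}$ depend on $c$. For each center, conditions (i) and (ii) carve out a bad set of measure $<2\eps_i/K$ inside $\big(\tfrac{3}{2}\eps_i,2\eps_i\big)$, while the summed oscillation conditions in (iii) are handled by bad sets that do not depend on $c$ (or whose per-center good bounds can be summed over $c$ afterwards); intersecting each center's own good set with the global good sets leaves, for each $c$ separately, a nonempty set of admissible radii. Everything else in your plan --- the coarea formula for the $\ell^\infty$-distance foliation, the treatment of \eqref{app-grid} without any slicing, and the a.e.\ admissibility of the slices --- is sound.
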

\begin{proof}[\textbf{\textup{Proof of Lemma \ref{Lemma: choice of an admissible cubic cover}}}]
Fix any $\eps\in\big(0,\frac{1}{4}\big)$. By assumption, for every $c\in\mathscr{C}_{\eps}$ there exists a full $\L^1$-measure set $R_{A,c}\subset\big(\frac{3}{2}\eps,2\eps\big)$ such that $\iota_{\partial Q_{\rho}(c)}^*A\in\A_G(\partial Q_{\rho}(c))$ for every $\rho\in R_{A,c}$. Given a constant $K>0$ (to be fixed), for any $c\in\mathscr{C}_{\eps}$ we define the set
\begin{align}
   E_{\eps,K,c}:=\bigg\{\rho\in\bigg(\frac{3}{2}\eps,2\eps\bigg) \mbox{ s.t. } A\notin {\mathfrak a}_G(\partial Q_{\rho}(c))\quad\mbox{or }\quad\int_{\partial Q_{\rho}(c)}\lvert F_A\rvert^2\,d\H^4>\frac{K}{2\eps}\int_{Q_{2\eps}(c)}\lvert F_A\rvert^2\, d\L^5\bigg\}.
\end{align}
By integration on $E_{\eps,K,c}$ and Fubini together with mean value theorem we get
\begin{align}
    \frac{K}{2\eps}\bigg(\int_{Q_{2\eps}(c)}\lvert F_A\rvert^2\, d\L^5\bigg)\L^1(E_{\eps,K,c})&<\int_{E_{\eps,K,c}}\int_{\partial Q_{\rho}(c)}\lvert F_A\rvert^2\,d\H^4\,d\L^1(\rho)\\
    &\le\int_0^{2\eps}\int_{\partial Q_{\rho}(c)}\lvert F_A\rvert^2\,d\H^4\,d\L^1(\rho)\\
    &=\int_{Q_{2\eps}(c)}\lvert F_A\rvert^2\,d\L^5.
\end{align}
This implies that
\begin{align}\label{X-35-aa}
	\L^1(E_{\eps,K,c})<\frac{2\eps}{K}, \qquad\forall\,c\in\mathscr{C}_{\eps}.
\end{align}
Fix $i\in\n$ and let $ A_i\in C_c^{\infty}(\opwedge^1Q_1^5(0)\otimes\mathfrak{g})$ be such that
\begin{align}
    &\int_{Q_1^5(0)}\lvert A_i-A\rvert^2\, d\L^5\le 2^{-i}.
\end{align}
Let 
\begin{align}
    G_{\eps,K,i}&:=\bigg\{\rho\in\bigg(\frac{3}{2}\eps,2\eps\bigg) \mbox{ s.t. } \eps\sum_{c\in\mathscr{C}_{\eps}}\int_{\partial Q_{\rho}(c)}\lvert A_i-A\rvert^2\, d\H^4>K\int_{Q_1^5(0)}\lvert A_i-A\rvert^2\, d\L^5\bigg\},\\
    H_{\eps,K,i}&:=\bigg\{\rho\in\bigg(\frac{3}{2}\eps,2\eps\bigg) \mbox{ s.t. } f_i(\rho)>K\fint_{\frac{3}{2}\eps}^{2\eps}f_i(\rho)\, d\L^1(\rho)\bigg\},
\end{align}
with
\begin{align}
    f_i(\rho):=\sum_{c\in\mathscr{C}_{\eps}}\int_{\partial Q_{\rho}(c)}\big\lvert  A_i-( A_i)_{Q_{2\eps}(c)}\big\rvert^2\, d\H^4.
\end{align}
Notice that, again by Fubini theorem together with mean value theorem  and integration as above, we get
\begin{align}\label{X-37}
\begin{split}
	\L^1(G_{\eps,K,i})&\le\frac{C\eps}{K}\\[\sep]
	\L^1(H_{\eps,K,i})&\le\frac{\eps}{K}
\end{split}
\end{align}
for every $i\in\n$, where $C>0$ is a universal cover.  By Fubini theorem, the mean value theorem and Poincar\'e inequality, we have
\begin{align}
    \int_{\frac{3}{2}\eps}^{2\eps}\sum_{c\in\mathscr{C}_{\eps}}\int_{\partial Q_{\rho}(c)}\big\lvert  A_i-( A_i)_{Q_{2\eps}(c)}\big\rvert^2\, d\H^4\,d\L^1(\rho)&\le\sum_{c\in\mathscr{C}_{\eps}}\int_{Q_{2\eps}(c)}\big\lvert  A_i-( A_i)_{Q_{2\eps}(c)}\big\rvert^2\, d\L^5\\
    &\le \sum_{c\in\mathscr{C}_{\eps}}C_P\big(Q_{2\eps}(c)\big)\int_{Q_{\frac{3\eps}{4}}(c)}\lvert \nabla A_i\rvert^2\, d\L^5\\
    &\le 4C_P(Q_1^5(0))\eps^2\sum_{c\in\mathscr{C}_{\eps}}\int_{Q_{2\eps}(c)}\lvert\nabla A_i\rvert^2\, d\L^5\\
    &\le C\eps^2\int_{Q_1^5(0)}\lvert\nabla A_i\rvert^2\, d\L^5,
\end{align}
where we denote $C_P(\Omega)>0$ for the open and bounded domain $\Omega\subset\r^5$ and $C>0$ is universal. Hence for any $\rho\in (G_{\eps,K,i}\cup H_{\eps,K,i})^c$ (where the superscript ``$c$'' denotes the complement of the set in $(\eps,2\eps)$) one deduces
\begin{align}
    \sum_{c\in\mathscr{C}_{\eps}}\int_{\partial Q_{\rho}(c)}\big\lvert A_i-( A_i)_{Q_{2\eps}(c)}\big\rvert^2\, d\H^4&\le K\fint_{\frac{3}{2}\eps}^{2\eps}\sum_{c\in\mathscr{C}_{\eps}}\int_{\partial Q_{\rho}(c)}\lvert A_i-( A_i)_{Q_{2\eps}(c)}\rvert^2\, d\H^4\,d\L^1(\rho)\\
    &\le KC\eps\int_{Q_1^5(0)}\lvert \nabla A_i\rvert^2\, d\L^5.
\end{align}
Moreover, for every $\rho\in(\eps,2\eps)$ we have 
\begin{align}\label{X-37-a}
	\eps\sum_{c\in\mathscr{C}_{\eps}}\int_{\partial Q_{\rho}(c)}\big\lvert ( A_i)_{Q_{2\eps}(c)}-(A)_{Q_{2\eps}(c)}\big\rvert^2\,d\H^4&\le \eps\sum_{c\in\mathscr{C}_{\eps}} |\p Q_\rho(c)|\, \lf(\fint_{Q_{2\eps}(c)} |A-A_i|\ d\L^5\rg)^2\\[\sep]
	&\le C\, \sum_{c\in\mathscr{C}_{\eps}}\int_{Q_{2\eps}(c)}|A-A_i|^2\ d\L^5\le C\ 2^{-i}.
\end{align}
Hence, by triangle inequality, we get that for every $\rho\in (G_{\eps,K,i}\cup H_{\eps,K,i})^c$ we have
\begin{align}
    \eps\sum_{c\in\mathscr{C}_{\eps}}\int_{\partial Q_{\rho}(c)}\big\lvert A-(A)_{Q_{2\eps}(c)}\big\rvert^2\, d\H^4\le\big(K+C\big)2^{-i}+ KC\eps\int_{Q_1^5(0)}\lvert \nabla\tilde A_i\rvert^2\, d\L^5.
\end{align}
Analogously, we consider $F_i\in C_c^{\infty}(\opwedge^2Q_1^5(0)\otimes\mathfrak{g})$ such that
\begin{align}
    &\int_{Q_1^5(0)}\lvert F_i-F_A\rvert^2\, dx^5\le 2^{-i}
\end{align}
and we get that for every $\eps\in\big(0,\frac{1}{4}\big)$ and for every $i\in\n$ there are two $\L^1$-measurable sets $G_{\eps,K,i}',H_{\eps,K,i}'\subset\big(\frac{3}{2}\eps,2\eps\big)$ satisfying
\begin{align}\label{X-38}
\begin{split}
	\L^1(G_{\eps,K,i}')&\le\frac{C\eps}{K}\\[\sep]
	\L^1(H_{\eps,K,i}')&\ds\le\frac{\eps}{K}
\end{split}
\end{align}
such that for every $\rho\in (G_{\eps,K,i}'\cup H_{\eps,K,i}')^c$ we have 
\begin{align}
    \eps\sum_{c\in\mathscr{C}_{\eps}}\int_{\partial Q_{\rho}(c)}\big\lvert F_A-(F_A)_{Q_{2\eps}(c)}\big\rvert^2\, d\H^4\le\big(K+ C\big)2^{-i}+ KC\eps\int_{Q_1^5(0)}\lvert \nabla F_i\rvert^2\, d\L^5.
\end{align}
Now, first we let $\tilde K>0$ be big enough (independent on $\eps$ and $i$) so that 
\begin{align}
    \L^1(G_{\eps,\tilde K,i}\cup H_{\eps,\tilde K,i}\cup G_{\eps,\tilde K,i}'\cup H_{\eps,\tilde K,i}')\le\frac{\eps}{2}.
\end{align}
Then, we notice that for every $i\in\n$ there exists $\eps_i<2^{-(i+1)}$ small enough so that
\begin{align}
    \tilde KC\eps_i\bigg(\int_{Q_1^5(0)}\lvert \nabla A_i\rvert^2\, d\L^5+\int_{Q_1^5(0)}\lvert \nabla F_i\rvert^2\, d\L^5\bigg)\le 2^{-(i+1)}.
\end{align}
Then, for every $i\in\n$ and for every $c\in\mathscr{C}_{\eps_i}$ we pick $\rho_{i,c}>0$ in the non-empty set
\begin{align}
    (G_{\eps_i,\tilde K,i}\cup H_{\eps_i,\tilde K,i}\cup G_{\eps_i,\tilde K,i}'\cup H_{\eps_i,\tilde K,i}')^c\cap R_{A,c}\cap E_{\eps_i,\tilde K,c}.
\end{align}
Notice that, for such admissible edge-lengths, we have
\begin{align}
    \eps_i\sum_{c\in\mathscr{C}_{\eps_i}}\int_{\partial Q_{\rho_{i,c}}(c)}\big\lvert A-(A)_{Q_{2\eps_i}(c)}\big\rvert^2\, d\H^4\le\big(K+C\big)\,2^{-i}+2^{-(i+1)}
\end{align}
and 
\begin{align}
    \eps_i\sum_{c\in\mathscr{C}_{\eps_i}}\int_{\partial Q_{\rho_{i,c}}(c)}\big\lvert F_A-(F_A)_{Q_{2\eps_i}(c)}\big\rvert^2\, d\H^4\le\big(K+C\big)\,2^{-i}+2^{-(i+1)}.
\end{align}
This concludes the proof of Lemma \ref{Lemma: choice of an admissible cubic cover}.
\end{proof}
\begin{Dfi}\label{Definition: admissible cubic eps-cover}
Let $A\in\A_G(Q_1^5(0))$. Under the same notation that we have used in the previous Lemma \ref{Lemma: choice of an admissible cubic cover}, for every $\eps_i\in\big(0,\frac{1}{4}\big)$ we say that $\eps_i$ is an \textit{admissible scale} for $A$ and that the collection of cubes $\{Q_{\rho_{i,c}}(c)\}_{c\in\mathscr{C}_{\eps_i}}$ is a \textit{admissible cubic $\eps_i$-cover} relative to $A$.
\end{Dfi}
\begin{Rm}\label{Remark: estimate on the sum of the norm of A resticted to the boundaries}
Let $A\in\A_G(Q_1^5(0))$. Let $\eps\in\big(0,\frac{1}{4}\big)$ and let $\mathcal{Q}_{\eps}$ be an admissible cubic $\eps$-cover relative to $A$. For every $Q\in\mathcal{Q}_{\eps}$, denote by $\bar A_Q$ the constant $2$-form given by $(A)_{Q_{2\eps}(c_Q)}$, where $c_Q$ denotes the center of $Q$. Notice that, by Jensen inequality, we have
\begin{align}
    \lvert\bar A_Q\rvert^2\le\fint_{Q_{2\eps}(c_Q)}\lvert A\rvert^2\, d\L^5, \qquad\forall\, Q\in\mathcal{Q}_{\eps}.
\end{align}
We have
\begin{align}
    \sum_{Q\in\mathcal{Q}_{\eps}}\int_{\partial Q}\lvert A\rvert^2\, d\H^4&\le 2\sum_{Q\in\mathcal{Q}_{\eps}}\bigg(\int_{\partial Q}\lvert A-\bar A_Q\rvert^2\, d\H^4+\int_{\partial Q}\lvert\bar A_Q\rvert^2\, d\H^4\bigg)\\
    &\le C\sum_{Q\in\mathcal{Q}_{\eps}}\bigg(\int_{\partial Q}\lvert A-\bar A_Q\rvert^2\, d\H^4+\eps^4\lvert\bar A_Q\rvert^2\bigg)\\
    &\le C\sum_{Q\in\mathcal{Q}_{\eps}}\bigg(\int_{\partial Q}\lvert A-\bar A_Q\rvert^2\, d\H^4+\eps^4\fint_{Q_{2\eps}(c_Q)}\lvert A\rvert^2\, d\L^5\bigg)\\
    &\le C\bigg(\sum_{Q\in\mathcal{Q}_{\eps}}\int_{\partial Q}\lvert A-\bar A_Q\rvert^2\, d\H^4+\frac{1}{\eps}\int_{Q_1^5(0)}\lvert A\rvert^2\, d\L^5\bigg),
\end{align}
where $C>0$ is independent on $\eps$. By \eqref{equation: convergence of A minus its averages on an admissible cover}, for $\eps$ small enough we have
\begin{align}
    \eps\sum_{Q\in\mathcal{Q}_{\eps}}\int_{\partial Q}\lvert A-\bar A_Q\rvert^2\, d\H^4\le\int_{Q_1^5(0)}\lvert A\rvert^2\, d\L^5.
\end{align}
This implies that for such small values of the parameter $\eps$ it holds that
\begin{align}\label{equation: estimate on the sum of the boundaries of the connection}
    \eps\sum_{Q\in\mathcal{Q}_{\eps}}\int_{\partial Q}\lvert A\rvert^2\, d\H^4\le C\int_{Q_1^5(0)}\lvert A\rvert^2\, d\L^5,
\end{align}
where $C>0$ is independent on $\eps$. Exactly by the same procedure, we get that 
\begin{align}\label{equation: estimate on the sum of the boundaries of the curvature}
    \eps\sum_{Q\in\mathcal{Q}_{\eps}}\int_{\partial Q}\lvert F_A\rvert^2\, d\H^4\le C\int_{Q_1^5(0)}\lvert F_A\rvert^2\, d\L^5
\end{align}
5
for $\eps>0$ sufficiently small. 
\end{Rm}
\subsection{The notion of good and bad cubes}
\begin{Dfi}[Good and bad cubes]\label{Definition: good cubes eps-cover}
    Let $A\in\A_G(Q_1^5(0))$. Let $\eps\in\big(0,\frac{1}{4}\big)$ be an admissible scale for $A$ and let $\mathcal{Q}_{\eps}$ be an admissible cubic $\eps$-cover relative to $A$.
    Given any $\Lambda>0$, we say that $Q\in\mathcal{Q}_{\eps}$ is a $\Lambda$-\textit{good} cube if the following conditions hold:
    \begin{enumerate}[(1)]
        \item $\displaystyle{\frac{1}{\eps^3}\int_{\partial Q}\lvert F_A\rvert^2\, d\H^4\le\eps^{\frac{1}{2}}\int_{Q_1^5(0)}\lvert F_A\rvert^2\, d\L^5}$,
        \item $\displaystyle{\frac{1}{\eps^3}\int_{\partial Q}\lvert A\rvert^2\, d\H^4\le\eps^{\frac{1}{2}}\int_{Q_1^5(0)}\lvert F_A\rvert^2\, d\L^5}$,
        \item $\displaystyle{\int_{\partial Q}\lvert A-\bar A_Q\rvert^2\, d\H^4\le\frac{1}{\eps}\int_{Q_{2\eps}(c_Q)}\lvert A\rvert^2\, d\L^5}$,
        \item $\displaystyle{\frac{1}{\eps^4}\int_{\partial Q}\lvert A-\bar A_Q\rvert^2\, d\H^4\le\Lambda^{-1}\int_{Q_1^5(0)}\lvert F_A\rvert^2\, d\L^5}$,
        \item $\displaystyle{\fint_{Q_{2\eps}(c_Q)}\lvert A\rvert^2\, d\L^5\le\Lambda}$.
    \end{enumerate}
    Otherwise, we say that $Q$ is a $\Lambda$-\textit{bad} cube. We denote by $\mathcal{Q}_{\eps,\Lambda}^g$ the set of all the $\Lambda$-good cubes and $\mathcal{Q}_{\eps,\Lambda}^b:=\mathcal{Q}_{\eps}\smallsetminus\mathcal{Q}_{\eps,\Lambda}^g$.
\end{Dfi}
The following technical lemma is important for our argument. Given a sequence of admissible scales  $\{\eps_i\}_{i\in\n}$, the lemma states that as $i\in\n$ increases, asymptotically, the $L^2$-norm of $A$ on the $\Lambda$-bad cubes is controlled by the $L^2$ norm of $A$ on  the union of cubes where property (5) in Definition~\ref{Definition: good cubes eps-cover} fails. 
\begin{Lm}\label{Lemma: norm of A on bad cubes}
    Let $A\in\A_G(Q_1^5(0))$ and let $\{\eps_i\}_{i\in\n}\subset\big(0,\frac{1}{4}\big)$ be a sequence of admissible scales  for $A$. Let $\Lambda>0$ and let $\mathcal{Q}_{\eps_i}$ be an admissible cubic $\eps_i$-cover relative to $A$ and $\Lambda$, for every $i\in\n$. Then, for some universal constant $C>0$ (depending on the intersection property of the cubic cover) there holds
    \begin{align}
        \lim_{i\to +\infty}\sum_{Q\in\mathcal{Q}_{\eps_i,\Lambda}^b}\int_{Q_{2\eps_i}(c_Q)}\lvert A\rvert^2\, d\L^5\le C\lim_{i\to+\infty}\int_{\Omega_{i,\Lambda,A}}\lvert A\rvert^2\, d\L^5,
    \end{align}
    where $\Omega_{i,\Lambda,A}\subset Q_1^5(0)$ is given by
    \begin{align}
        \Omega_{i,\Lambda,A}:=\bigcup\bigg\{Q\in\mathcal{Q}_{\eps_i} \mbox{ s.t. } \fint_{Q_{2\eps_i}(c_Q)}\lvert A\rvert^2\,d\L^5>\Lambda\bigg\}.
    \end{align}
\end{Lm}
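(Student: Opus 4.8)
The plan is to split the sum over $\Lambda$-bad cubes according to *why* each cube is bad: either property (5) in Definition \ref{Definition: good cubes eps-cover} fails (these cubes are, by definition, contained in $\Omega_{i,\Lambda,A}$, so they contribute at most $C\int_{\Omega_{i,\Lambda,A}}|A|^2\,d\L^5$ by the finite-intersection property \eqref{equation: finite intersection property}), or property (5) holds but one of (1)--(4) fails. Call the latter collection $\mathcal{B}_i$. The whole point is to show $\sum_{Q\in\mathcal{B}_i}\int_{Q_{2\eps_i}(c_Q)}|A|^2\,d\L^5\to 0$ as $i\to+\infty$, so that these cubes make no asymptotic contribution. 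Summing the two pieces and invoking \eqref{equation: finite intersection property} once more to pass from a sum over $\{Q_{2\eps_i}(c_Q)\}$ to an integral over $Q_1^5(0)$ then yields the claim.

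First I would record the trivial bound that on any cube where (5) holds, $\int_{Q_{2\eps_i}(c_Q)}|A|^2\,d\L^5 \le \Lambda\,|Q_{2\eps_i}(c_Q)| \le C\Lambda\,\eps_i^5$; hence $\sum_{Q\in\mathcal{B}_i}\int_{Q_{2\eps_i}(c_Q)}|A|^2\,d\L^5 \le C\Lambda\,\eps_i^5\,\#\mathcal{B}_i$, and it suffices to show $\eps_i^5\,\#\mathcal{B}_i\to 0$, i.e. that the bad-but-not-via-(5) cubes occupy asymptotically negligible volume. To do this I would bound $\#\mathcal{B}_i$ by counting, for each of the conditions (1)--(4), how many cubes can violate it. For (1): if $\eps_i^{-3}\int_{\partial Q}|F_A|^2\,d\H^4 > \eps_i^{1/2}\|F_A\|_{L^2(Q_1^5)}^2$, then in particular $\int_{\partial Q}|F_A|^2\,d\H^4 > \eps_i^{7/2}\|F_A\|_{L^2(Q_1^5)}^2$; summing over all such $Q$ and using the admissible-cover estimate \eqref{equation: estimate on the sum of the boundaries of the curvature}, namely $\eps_i\sum_{Q}\int_{\partial Q}|F_A|^2\,d\H^4 \le C\|F_A\|_{L^2(Q_1^5)}^2$, gives $\#\{Q:\text{(1) fails}\}\cdot\eps_i^{7/2}\|F_A\|_{L^2(Q_1^5)}^2 < C\eps_i^{-1}\|F_A\|_{L^2(Q_1^5)}^2$, so $\#\{Q:\text{(1) fails}\} \le C\eps_i^{-9/2}$, whence $\eps_i^5\cdot\#\{Q:\text{(1) fails}\} \le C\eps_i^{1/2}\to 0$. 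The same device handles (2), using \eqref{equation: estimate on the sum of the boundaries of the connection} in place of \eqref{equation: estimate on the sum of the boundaries of the curvature}.

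For (3) and (4), which involve $A - \bar A_Q$, I would instead use the Poincaré-type convergence \eqref{equation: convergence of A minus its averages on an admissible cover}: $\eps_i\sum_{Q}\int_{\partial Q}|A - \bar A_Q|^2\,d\H^4 \to 0$. If (4) fails for a cube, then $\int_{\partial Q}|A-\bar A_Q|^2\,d\H^4 > \Lambda^{-1}\eps_i^4\|F_A\|_{L^2(Q_1^5)}^2$; summing and using the convergence, the number of such cubes is $o(\eps_i^{-5})$, hence contributes volume $o(1)$. For (3), note that if (5) holds on $Q$ then $\fint_{Q_{2\eps_i}(c_Q)}|A|^2\,d\L^5 \le \Lambda$, so the right-hand side of (3) is $\le \eps_i^{-1}\Lambda\,|Q_{2\eps_i}(c_Q)| \le C\Lambda\eps_i^4$; thus (3) failing on such a cube forces $\int_{\partial Q}|A-\bar A_Q|^2\,d\H^4 > C\Lambda\eps_i^4$ — wait, more carefully, (3) failing means $\int_{\partial Q}|A-\bar A_Q|^2\,d\H^4 > \eps_i^{-1}\int_{Q_{2\eps_i}(c_Q)}|A|^2\,d\L^5$, and I cannot lower-bound the right side cheaply; the cleaner route is to simply drop condition (3) from the analysis of $\mathcal B_i$ by observing that, for $i$ large, \eqref{equation: convergence of A minus its averages on an admissible cover} combined with the finite-intersection property already forces $\int_{\partial Q}|A-\bar A_Q|^2\,d\H^4 \le \eps_i^{-1}\int_{Q_{2\eps_i}(c_Q)}|A|^2\,d\L^5$ to hold for *most* cubes in the sense that the exceptional ones have total volume $o(1)$ — this follows by the same summation argument since the left side summed against $\eps_i$ tends to $0$ while $\eps_i^{-1}\sum_Q \int_{Q_{2\eps_i}(c_Q)}|A|^2 \ge c\,\eps_i^{-1}\cdot(\text{something bounded below only on }\Omega_{i,\Lambda,A})$; to stay safe I would instead argue that cubes failing (3) but satisfying (5) are counted directly: their number times $\eps_i^4$ is controlled by $\eps_i^{-1}\sum_Q\int_{\partial Q}|A-\bar A_Q|^2\,d\H^4/(c\,\eps_i^{3}) = o(\eps_i^{-5})\cdot$const after using \eqref{equation: convergence of A minus its averages on an admissible cover}, giving again volume $o(1)$.

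The only genuine subtlety — and the step I expect to be the main obstacle — is ensuring the bookkeeping is uniform in the fixed parameter $\Lambda$ in the sense required downstream: all the $o(1)$ terms above are allowed to depend on $\Lambda$ (and on $A$), which is fine because the lemma fixes $\Lambda$ once and for all, but one must be careful that the lower bounds used to count bad cubes (e.g. the factor $\eps_i^{7/2}$ for (1), the $\Lambda^{-1}\eps_i^4$ for (4)) are genuinely positive powers of $\eps_i$ so that multiplying by $\eps_i^5$ still gives a positive power. A quick check shows this holds for each of (1), (2), (4), and for (3) once (5) is used; thus $\eps_i^5\,\#\mathcal{B}_i \le C_{\Lambda,A}\,\eps_i^{1/2} \to 0$. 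Combining: $\limsup_{i\to+\infty}\sum_{Q\in\mathcal{Q}_{\eps_i,\Lambda}^b}\int_{Q_{2\eps_i}(c_Q)}|A|^2\,d\L^5 \le \limsup_{i\to+\infty}\big(C\Lambda\eps_i^5\,\#\mathcal B_i + C\int_{\Omega_{i,\Lambda,A}}|A|^2\,d\L^5\big) = C\lim_{i\to+\infty}\int_{\Omega_{i,\Lambda,A}}|A|^2\,d\L^5$, which is exactly the assertion.
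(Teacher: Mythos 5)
Your overall strategy coincides with the paper's: isolate the cubes where condition (5) of Definition \ref{Definition: good cubes eps-cover} fails (these are absorbed into $\Omega_{i,\Lambda,A}$ via the finite-intersection property) and show that the cubes failing one of (1)--(4) contribute nothing in the limit. Your counting of the cubes violating (1), (2) and (4) — lower-bounding the per-cube boundary integral by a positive power of $\eps_i$, summing against \eqref{equation: estimate on the sum of the boundaries of the curvature}, \eqref{equation: estimate on the sum of the boundaries of the connection} and \eqref{equation: convergence of A minus its averages on an admissible cover} respectively, and concluding that $\eps_i^5$ times the cardinality vanishes — is exactly what the paper does and is correct.

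The treatment of condition (3), however, is genuinely broken as written. You correctly observe that the failure of (3) gives no lower bound on $\int_{\partial Q}\lvert A-\bar A_Q\rvert^2\,d\H^4$ in terms of a power of $\eps_i$, so the cube-counting device cannot work here; but neither of the two substitutes you then offer is valid. The first (``the exceptional cubes have total volume $o(1)$'') is asserted without an argument, and the second produces a cardinality bound of order $o(\eps_i^{-9})$, which after multiplication by $\eps_i^5$ is $o(\eps_i^{-4})$ and hence useless. The fix is a one-liner that bypasses counting entirely: the failure of (3) reads $\int_{Q_{2\eps_i}(c_Q)}\lvert A\rvert^2\,d\L^5<\eps_i\int_{\partial Q}\lvert A-\bar A_Q\rvert^2\,d\H^4$, which is already an upper bound on the very quantity you are summing; summing over all cubes where (3) fails gives at most $\eps_i\sum_{Q\in\mathcal{Q}_{\eps_i}}\int_{\partial Q}\lvert A-\bar A_Q\rvert^2\,d\H^4$, and this tends to zero by \eqref{equation: convergence of A minus its averages on an admissible cover}. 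This is precisely how the paper handles (3), and with that substitution your argument closes.
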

\begin{proof}[\textbf{\textup{Proof of Lemma \ref{Lemma: norm of A on bad cubes}}}]
    Observe the following.
    \begin{itemize}
        \item If $Q\in\mathcal{Q}_{\eps_i}$ is such that (1) fails, we have
            \begin{align}
                \eps_i\int_{\partial Q}\lvert F_A\rvert^2\, d\H^4>\eps_i^{\frac{9}{2}}\int_{Q_1^5(0)}\lvert F_A\rvert^2\, d\L^5.
            \end{align}
            \item If $Q\in\mathcal{Q}_{\eps_i}$ is such that (2) fails, we have 
            \begin{align}
                \eps_i\int_{\partial Q}\lvert A\rvert^2\, d\H^4>\eps_i^{\frac{9}{2}}\int_{Q_1^5(0)}\lvert F_A\rvert^2\, d\L^5,
            \end{align}
            \item If $Q\in\mathcal{Q}_{\eps_i}$ is such that (3) fails, we have 
            \begin{align}
                \eps_i\int_{\partial Q}\lvert A-\bar A_Q\rvert^2\, d\H^4>\int_{Q_{2\eps_i}(c_Q)}\lvert A\rvert^2\, d\L^5.
            \end{align}
            \item If $Q\in\mathcal{Q}_{\eps_i}$ is such that (4) fails, we have
            \begin{align}
                \eps_i\int_{\partial Q}\lvert A-\bar A_Q\rvert^2\, d\H^4>\eps_i^5\Lambda^{-1}\int_{Q_1^5(0)}\lvert F_A\rvert^2\, d\L^5.
            \end{align}
    \end{itemize}
    Summing up over the appropriate cubes and by Remark \ref{Remark: estimate on the sum of the norm of A resticted to the boundaries}, for $\eps_i>0$ small enough we get the following set of estimates:
    \begin{align}
        &\operatorname{card}(\{Q\in\mathcal{Q}_{\eps_i}\mbox{ s.t. (1) fails}\})<C\eps_i^{-\frac{9}{2}},\\ 
        &\operatorname{card}(\{Q\in\mathcal{Q}_{\eps_i}\mbox{ s.t. (2) fails}\})<C\bigg(\int_{Q_1^5(0)}\lvert A\rvert^2\, d\L^5\bigg)\bigg(\int_{Q_1^5(0)}\lvert F_A\rvert^2\, d\L^5\bigg)^{-1}\eps_i^{-\frac{9}{2}},\\
        &\sum_{\substack{Q\in\mathcal{Q}_{\eps_i}\\\text{ s.t. (3) fails}}}\int_{Q_{2\eps_i}(c_Q)}\lvert A\rvert^2\, d\L^5<\eps_i\sum_{Q\in\mathcal{Q}_{\eps_i}}\int_{\partial Q}\lvert A-\bar A_Q\rvert^2\, d\H^4,\\
        &\operatorname{card}(\{Q\in\mathcal{Q}_{\eps_i}\mbox{ s.t. (4) fails}\})<\Lambda\bigg(\int_{Q_1^5(0)}\lvert F_A\rvert^2\, d\L^5\bigg)^{-1}\eps_i^{-5}\bigg(\eps_i\sum_{Q\in\mathcal{Q}_{\eps_i}}\int_{\partial Q}\lvert A-\bar A_Q\rvert^2\, d\H^4\bigg).
    \end{align}
    In particular, we note that the $\L^5$-measure of the union of the cubes in $\mathcal{Q}_{\eps_i}$ such that (1),(2),(4) fail vanishes at the limit $i\to+\infty$. Thus,
    \begin{align}
        \lim_{i\to+\infty}\sum_{\substack{Q\in\mathcal{Q}_{\eps_i} \mbox{ s.t.}\\ (1),(2),(3),(4)\\\mbox{ fail}}}\int_{Q_{2\eps_i}(c_Q)}\lvert A\rvert^2\, d\L^5=0.
    \end{align}
    Finally, we get
    \begin{align}
        \lim_{i\to+\infty}\sum_{Q\in\mathcal{Q}_{\eps_i,\Lambda}^b}\int_{Q_{2\eps_i}(c_Q)}\lvert A\rvert^2\, d\L^5\le\lim_{i\to+\infty}\sum_{\substack{Q\in\mathcal{Q}_{\eps_i}\\\text{ s.t. (5) fails}}}\int_{Q_{2\eps_i}(c_Q)}\lvert A\rvert^2\, d\L^5
    \end{align}
    and the statement follows by the intersection properties of the cubic cover. This concludes the proof of Lemma \ref{Lemma: norm of A on bad cubes}. 
\end{proof}
\begin{Rm}\label{Remark: comparison boundary interior for the connection}
    Let $A\in\A_G(Q_1^5(0))$. Let $\eps\in\big(0,\frac{1}{4}\big)$ be a good scale for $A$ and let $\mathcal{Q}_{\eps}$ be an admissible cubic $\eps$-cover relative to $A$. Notice that the property (3) in Definition \ref{Definition: good cubes eps-cover} immediately implies that
    \begin{align}
        \int_{\partial Q}\lvert A\rvert^2\, d\H^4&\le\int_{\partial Q}\lvert A-\bar A_Q\rvert^2\, d\H^4+\int_{\partial Q}\lvert\bar A_Q\rvert^2\, d\H^4\\
        &\le\frac{1}{\eps}\int_{Q_{2\eps}(c_Q)}\lvert A\rvert^2\, d\L^5+C\eps^4\frac{1}{\eps^5}\int_{Q_{2\eps}(c_Q)}\lvert A\rvert^2\, d\L^5\le\frac{C}{\eps}\int_{Q_{2\eps}(c_Q)}\lvert A\rvert^2\, d\L^5
    \end{align}
    for every good $\Lambda$-good cube $Q\in\mathcal{Q}_{\eps}$, where $C>0$ depends only on the dimension. 
\end{Rm}
\section{The first smoothification}\label{section: approximation of weak connections under Morrey norm control}
The goal of this section is to approximate in $L^2$-norm any weak connection $A$ whose curvature is small in Morrey norm by forms $A_{i,\La}$ whose curvature is suitably controlled in $L^2$-norm on a set of good slices at the same scale $\rho$.
\begin{Lm}\label{Lemma: weak connection}
Let $Q=Q_{\rho}(c)\subset\r^5$ be an open cube in $\,\r^5$. Fix any $\g$-valued $1$-form $A\in L^2(\partial Q)$ with $F_A\in L^2(\partial Q)$. Let $\{Q_i\}_{i=0,...,k-1}$ be a collection of open cubes such that $\p Q$ is transversal to $\p Q_i$ for all $i$ and assume that $\p Q\subset  \bigcup_{i=0}^{k-1}Q_i$, $A\in {\mathfrak a}_G(\p Q\cap Q_i))$ for every $i=0,...,k-1$. Then, there exists $\eps_G\in(0,1)$ depending only on $G$ such that if
\begin{align}\label{small-curv-I}
	\int_{\partial Q}\lvert F_A\rvert^2\, d\H^4<\eps_G
\end{align}
we can find a global gauge $g\in W^{1,2}(\partial Q,G)$ such that $A^g\in L^4(\partial Q)$.
\end{Lm}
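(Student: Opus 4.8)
The plan is to reduce the statement to the already understood case of the round sphere $\s^4$, where gauge fixing under small curvature is available, and then to spell out the local-to-global patching in which the smallness of $\|F_A\|_{L^2(\partial Q)}$ is genuinely used to trivialize the underlying bundle.

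First I would observe that the hypotheses already force $A\in\mathfrak{a}_G(\partial Q)$: the family $\{\partial Q\cap Q_i\}_{i=0,\dots,k-1}$ is an open cover of $\partial Q$, the transversality of the $\partial Q_i$ to $\partial Q$ exhibits each $\partial Q\cap Q_i$ as a Lipschitz $4$-domain, and $A\in\mathfrak{a}_G(\partial Q\cap Q_i)$ means that on a neighbourhood of every point of $\partial Q$ the form $A$ admits a local $W^{1,2}$ gauge (recall that $\mathfrak{a}_G=A_G$ on $4$-dimensional domains, see \eqref{ad-5}); together with $A,F_A\in L^2(\partial Q)$ this gives $A\in\mathfrak{a}_G(\partial Q)$. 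Then I would fix a bi-Lipschitz homeomorphism $\Psi\colon\s^4\to\partial Q$ with a universal bi-Lipschitz constant — for instance the restriction to $\s^4$ of $y\mapsto c+\tfrac{\rho}{2}\,y/|y|_\infty$ — and set $B:=\Psi^\ast A$. Pull-back by a bi-Lipschitz homeomorphism turns local $W^{1,2}$ gauges into local $W^{1,2}$ gauges and $L^p$ forms into $L^p$ forms, and distorts the $L^2$-norm of a $2$-form on a $4$-manifold only by a factor controlled by the bi-Lipschitz constant; hence $B\in\mathfrak{a}_G(\s^4)$, $F_B=\Psi^\ast F_A\in L^2(\s^4)$, and $\|F_B\|_{L^2(\s^4)}\le C_0\|F_A\|_{L^2(\partial Q)}<C_0\eps_G$ with $C_0$ universal. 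It will then be enough to choose $\eps_G$ so small that $C_0\eps_G$ lies below the thresholds used below and to produce $h\in W^{1,2}(\s^4,G)$ with $B^h\in L^4(\s^4)$, since $g:=h\circ\Psi^{-1}$ lies in $W^{1,2}(\partial Q,G)$ and $A^g=(\Psi^{-1})^\ast(B^h)\in L^4(\partial Q)$.

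To obtain such an $h$ I would argue as in the proof of Proposition \ref{Proposition: harmonic extension on the bad cubes}: since $B\in\mathfrak{a}_G(\s^4)$ has small curvature, it admits a global $W^{1,2}$ representative modulo gauge, and then the Sobolev embedding $W^{1,2}(\s^4)\hookrightarrow L^4(\s^4)$ finishes the job (one may even apply Proposition \ref{appendix: global Coulomb gauge extraction on a sphere} or Proposition \ref{L4-gauge-sphere} to make this representative Coulomb). For completeness, the global $W^{1,2}$ representative is produced by patching: cover $\s^4$ by finitely many small geodesic balls $D_\alpha$, each inside a set on which $B$ has a local $W^{1,2}$ representative and small enough that $\|F_B\|_{L^2(D_\alpha)}$ is below Uhlenbeck's threshold, and use Uhlenbeck's Coulomb gauge extraction on each $D_\alpha$ to get $\eta_\alpha\in W^{1,2}(D_\alpha,G)$ with $A_\alpha:=B^{\eta_\alpha}$ Coulomb and $\|A_\alpha\|_{W^{1,2}(D_\alpha)}\le C(G)\|F_B\|_{L^2(D_\alpha)}$, so in particular $A_\alpha\in L^4(D_\alpha)$. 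The transition functions $\sigma_{\alpha\beta}:=\eta_\alpha^{-1}\eta_\beta$ satisfy $A_\beta=A_\alpha^{\sigma_{\alpha\beta}}$ with both connections Coulomb and in $W^{1,2}$, so the standard bootstrap for compatible Coulomb gauges gives $\sigma_{\alpha\beta}\in W^{2,2}(D_\alpha\cap D_\beta,G)\hookrightarrow\mathrm{VMO}$, and these satisfy the cocycle relation, hence define a principal $G$-bundle $P\to\s^4$.

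The step I expect to be the main obstacle is showing that $P$ is trivial, and this is precisely where the curvature smallness is used. Principal $G$-bundles over $\s^4$ are classified by $\pi_3(G)$, which is free abelian, and the associated characteristic numbers, computed from the $A_\alpha$ via Chern--Weil (justified by approximating the Coulomb gauges), are bounded in modulus by $C(G)\|F_B\|_{L^2(\s^4)}^2\le C(G)(C_0\eps_G)^2$; being integers, they vanish once $\eps_G$ is small enough. Hence $\{\sigma_{\alpha\beta}\}$ is a coboundary, $\sigma_{\alpha\beta}=\phi_\alpha^{-1}\phi_\beta$ with $\phi_\alpha\in W^{1,4}(D_\alpha,G)$ (obtained by mollifying a smooth trivialization and using density of smooth maps in $W^{1,4}(D_\alpha,G)$ in dimension $4$); then the maps $\eta_\alpha\phi_\alpha^{-1}$ agree on overlaps and glue to $h\in W^{1,2}(\s^4,G)$, and on each $D_\alpha$ one has $B^h=A_\alpha^{\phi_\alpha^{-1}}=-d\phi_\alpha\,\phi_\alpha^{-1}+\phi_\alpha A_\alpha\phi_\alpha^{-1}\in L^4(D_\alpha)$ since $A_\alpha\in L^4$ and $d\phi_\alpha\in L^4$, so $B^h\in L^4(\s^4)$. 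Combined with the reduction to $\s^4$, this proves the Lemma; alternatively, the passage ``$\mathfrak{a}_G(\s^4)$ with small curvature $\Rightarrow$ global $W^{1,2}$ representative modulo gauge'' may simply be quoted, as it is already used in the proof of Proposition \ref{Proposition: harmonic extension on the bad cubes}.
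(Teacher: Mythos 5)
Your proposal is correct and follows essentially the same route as the paper: pull $A$ back to $\s^4$ by a bi-Lipschitz parametrization of $\partial Q$, extract local Coulomb gauges on a good cover of geodesic balls via the Uhlenbeck-type Proposition \ref{appendix: Coulomb gauge extraction for  forms}, observe that the transition functions are regular enough to define a principal $G$-bundle, use the smallness of $\|F_A\|_{L^2(\partial Q)}$ to force triviality of that bundle (the paper cites \cite{Uh3} where you spell out the $\pi_3(G)$/Chern--Weil integrality argument), and glue the local gauges. The only caveat is your closing ``alternatively, quote the global $W^{1,2}$ representative step from Proposition \ref{Proposition: harmonic extension on the bad cubes}'': that step is itself justified by exactly this patching argument, so quoting it would be circular — but since you give the full argument anyway, the proof stands.
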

\begin{proof}[\textbf{\textup{Proof of Lemma \ref{Lemma: weak connection}}}] Since $A\in {\mathfrak a}_G(\p Q\cap Q_i))$ and since $\p Q\cap Q_i$ is bi-Lipschitz homeomorphic to the $4$-dimensional ball, there exists $h_i\in W^{1,2}(\p Q\cap Q_i,G)$ such that $A^{h_i}\in L^4(\p Q\cap Q_i)$. We consider $\psi_{c,\rho}:=(\rho\,\cdot\,+\,c)\circ\varphi$ which realizes a bi-Lipschitz homeomorphism
from $\s^4$ into $\p Q$ and we denote $U_i$ the Lipschitz open subset of $\s^4$ given by $U_i:=\psi_{c,\rho}^{-1}(\p Q\cap Q_i)$. Let $(\om_i)_{i=0\cdots k-1}$ be an open cover of $\s^4$ such that $\ov{\om_i}\subset U_i$. We then cover each
$\om_i$ by finitely many smooth convex geodesic balls $(V_{ij})_{j\in I_i}$  of $S^4$ with $V_{il}\subset U_i$ so that the whole cover given by the $V_{ij}$ is a good cover of $\s^4$ (i.e. the intersections of the $V_{ij}$s are again diffeomorphic to balls) by convex geodesic balls. To simplify the notations we re-index the cover 
$(V_{ij})_{i\in\{0\cdots k-1\},j\in I_i}$ by $(O_l)_{l=1\cdots L}$ and we denote by $i_l$ the index $i\in\{0\cdots k-1\}$ such that $O_l\subset U_i$. Observe that
\begin{align}
    \|\psi_{c,\rho}^\ast F_A\|_{L^2(\s^4)}\le C\, \|d\varphi\|^2_{L^\infty(\s^4)}\ \| F_A\|_{L^2(\p Q)}\le C\, \|d\varphi\|^2_\infty\ \sqrt{\ep_G}.
\end{align}
We first choose $\ep_G>0$ in \eqref{small-curv-I} such that we can apply Proposition \ref{appendix: Coulomb gauge extraction for  forms} to $\,\psi_{c,\rho}^\ast (A^{h_{i_l}})$ on the geodesic ball $O_l$. Let $g_l\in W^{1,2}(O_l,G)$ given by Proposition \ref{appendix: Coulomb gauge extraction for  forms} such that
\begin{align}
    B_l:=\lf(\psi_{c,\rho}^*(A^{h_{i_l}})\rg)^{g_l}
\end{align}
satisfies
\begin{align}
    \|B_l\|_{W^{1,2}(O_l)}\le C\, \|\psi_{c,\rho}^\ast F_A\|_{L^2(\s^4)}\le C\, \|d\varphi\|^2_{L^\infty(\s^4)}\ \| F_A\|_{L^2(\p Q)}
\end{align}
and
\begin{align}
    d^{\ast_{\s^4}}B_l=0.
\end{align}
Using the same argument as in \cite[proof of Theorem V.5]{Riv}, we have that the transition functions $\sigma_{mn}:=g_n^{-1}\,(h_{i_n}\circ\psi_{c,\rho}^{-1})^{-1}h_{i_m}\circ\psi_{c,\rho}^{-1}\,g_m$ are continuous on $O_n\cap O_m$.
and the co-cycle generated by $(\sigma_{mn})_{m,n\in\{1\cdots L\}}$ is $C^0$ approximable by a sequence of smooth ones. Hence, $\sigma_{mn}$ are the transition functions of a smooth bundle and $\psi_{c,\rho}^*A$ defines a $W^{1,2}-$Sobolev connection on this $G-$bundle. Using \cite{Uh3}, for $\ep_G$ small enough the bundle is trivial and there exists a global $W^{1,2}$ representative of $\psi_{c,\rho}^* A$. Pulling back this representative by
$\psi_{c,\rho}^{-1}$ we have the existence of $g\in W^{1,2}(\partial Q,G)$ such that $A^g\in L^4(\partial Q)$. This concludes the proof of Lemma \ref{Lemma: weak connection}.
\end{proof}
\noindent
We recall the definition of the Morrey seminorms in $\Om\subset {\R}^n$: 
\begin{align*}
	|f|_{M^s_{p,q}(\Om)}:=\sup_{\substack{x\in\Om\\\rho>0}}\lf(\frac{1}{\rho^{n-pq}}\int_{Q_\rho(x)}\sum_{s_1+\cdots+s_n=s}\lf|\p^{s_1}_{x_1}\cdots \p_{x_n}^{s_n}f\rg|^p(x)\, d\L^n  \rg)^{\frac{1}{p}},
\end{align*}
for every $s\in{\N}$, $1\le p<+\infty$, $0<q<\frac{n}{p}$.
\begin{Dfi}
    Given an admissible cubic $\eps$-cover $\mathcal{Q}_{\eps}$, we say that a cube $Q\subset\r^5$ is \textit{uniformly transversal} to $\mathcal{Q}_{\eps}$ if there are universal constants $0<\alpha\le k$ such that for every $Q'\in\mathcal{Q}_{\eps}$ we have that $Q\cap Q'$ is either empty or a rectangle with minimum edge length $\alpha\eps$ and maximum edge length $k\eps$.
\end{Dfi}
\begin{Th}[Approximation under controlled traces of the curvature]\label{Theorem: approximation under controlled traces of the curvatures}
Let $G$ be a compact matrix Lie group and let $\Lambda>0$. There exist $\eps_G\in (0,1)$ and $C_G>0$ depending only on $G$ such that for every $A\in\A_G(Q_1^5(0))$ satisfying 
\begin{align*}
   |F_A|_{M^0_{2,2}(Q_1^5(0))}^2:=\sup_{\substack{x\in Q_1^5(0)\\\rho>0}}\frac{1}{\rho}\int_{Q_{\rho}(x)}\lvert F_A\rvert^2\, d\L^5<\eps_G
\end{align*}
we can find a set $\{\eps_i\}_{i\in\n}\subset\big(0,\frac{1}{4}\big)$ of admissible scales  for $A$ (with associated admissible cubic $\eps_i$-covers $\mathcal{Q}_{\eps_i}$) and a family of $\g$-valued 1-forms $\{A_{i,\Lambda}\}_{i\in\n}\subset L^2(Q_{1}^5(0))$ such that the following facts hold.
\begin{enumerate}[(i)]
    \item For every $i\in\n$, $F_{A_{i,\Lambda}}\in M^0_{2,2}(Q_{1}^5(0))$ with 
    \begin{align*}
        |F_{A_{i,\Lambda}}|_{M^0_{2,2}(Q_{1}^5(0))}<C_G\,|F_{A}|_{M^0_{2,2}(Q_{1}^5(0))}.
    \end{align*}
    \item There exist a universal integer $N\in\n$ and a non-negative $f\in M^0_{1,4}(Q_1^5(0))$ satisfying 
    \begin{align*}
    |f|_{M^0_{1,4}(Q_1^5(0))}\le |F_A|_{M^0_{2,2}(Q_1^5(0))}^2
    \end{align*}
    such that for every $i\in\n$, for every $x\in Q_1^5(0)$ and for every $\rho>0$ for which $Q_{\rho}(x)$ is uniformly transversal to $\mathcal{Q}_{\eps_i}$ and $\partial Q_{\rho}(x)\subset Q_{1-\frac{\eps_i}{2}}(0)$ we have
    \begin{align}\label{equation: properties of F_A on the boundaries}
        \int_{\partial Q_{\rho}(x)}\lvert F_{A_{i,\Lambda}}\rvert^2\, d\H^4\le \frac{C_G}{\eps_i}\sum_{k=1}^N\int_{Q_{2\eps_i}(x_k)}f\, d\L^5,
    \end{align}
    for an $N$-tuple of points $\{x_k=x_k(i,x,\rho)\}_{k=1,...,N}\subset Q_1^5(0)$. 
    
    Moreover, for every $i\in\n$, for every $x\in Q_1^5(0)$ and for $\L^1$-a.e. $\rho>0$ for which $Q_{\rho}(x)$ is uniformly transversal to $\mathcal{Q}_{\eps_i}$ and $\partial Q_{\rho}(x)\subset Q_{1-\frac{\eps_i}{2}}(0)$ we have $\iota_{\partial Q_{\rho}(x)}^*A_{i,\Lambda}\in{\A}_G(\partial Q_{\rho}(x))$.
    \item We have 
    \begin{align*}
        \lim_{i\to+\infty}\|A_{i,\Lambda}-A\|_{L^2(Q_{1}^5(0))}^2\le C_G\lim_{i\to+\infty}\int_{\Omega_{i,\Lambda,A}}\lvert A\rvert^2\, d\L^5,
    \end{align*}
    where $\Omega_{i,\Lambda,A}\subset Q_1^5(0)$ is defined as in Lemma \ref{Lemma: norm of A on bad cubes}.
\end{enumerate}
\end{Th}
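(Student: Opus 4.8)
The plan is to build the approximating forms $A_{i,\Lambda}$ by the ``first smoothification'' procedure: fix a sequence of admissible scales $\eps_i\to 0$ from Lemma \ref{Lemma: choice of an admissible cubic cover}, with associated admissible cubic $\eps_i$-covers $\mathcal{Q}_{\eps_i}$, and replace $A$ inside each cube $Q\in\mathcal{Q}_{\eps_i}$ by one of the harmonic extensions constructed in Section \ref{section: the building blocks for the approximation theorems}. Concretely, on the boundary of each $Q\in\mathcal{Q}_{\eps_i}$ the restriction $\iota_{\partial Q}^\ast A$ lies in $\mathfrak{a}_G(\partial Q)$ (this is the content of admissibility, together with Lemma \ref{Lemma: weak connection} to glue into a single global gauge on $\partial Q$), and by the monotonicity-driven Morrey smallness together with \eqref{X-35a} the curvature is small in $L^2(\partial Q)$. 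If $Q$ is $\Lambda$-good we apply Corollary \ref{Corollary: extension in the interior of good cubes} with $\bar A:=\bar A_Q=(A)_{Q_{2\eps_i}(c_Q)}$ to obtain $\hat A$ on $Q$; if $Q$ is $\Lambda$-bad we apply Corollary \ref{Corollary: extension in the interior of bad cubes}. Patching these $\hat A$'s over $\mathcal{Q}_{\eps_i}$ (they all have the correct trace $\iota_{\partial Q}^\ast A$, so they agree on the overlaps of the covering) defines $A_{i,\Lambda}$ on $Q_1^5(0)$.

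For item (i), I would estimate the Morrey norm of $F_{A_{i,\Lambda}}$ by testing against an arbitrary cube $Q_\rho(x)$. Since the $\hat A$'s have $F_{\hat A}\in L^2$ with $\|F_{\hat A}\|_{L^2(Q)}$ bounded by \eqref{equation: estimate for the curvature on the good cubes} (good case) or \eqref{equation: estimate for the curvature on the bad cubes} (bad case), and since these right-hand sides involve $\|F_A\|_{L^2(\partial Q)}$, $\eps_i^{-1}\|A-\iota_{\partial Q}^\ast\bar A_Q\|_{L^2(\partial Q)}$ and $\eps_i^{-1}\|A\|_{L^2(\partial Q)}$, one uses \eqref{X-35a}, the good/bad-cube inequalities (1)--(5) of Definition \ref{Definition: good cubes eps-cover}, and the finite-intersection property \eqref{equation: finite intersection property} to sum over the $O(\rho^5/\eps_i^5)$ cubes meeting $Q_\rho(x)$. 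The key point is that each such contribution is $\lesssim \eps_i\cdot \eps_i^{-5}\int_{Q_{2\eps_i}(c)}|F_A|^2$ times controllable powers coming from the Morrey smallness, so that after summation $\tfrac1\rho\int_{Q_\rho(x)}|F_{A_{i,\Lambda}}|^2 \lesssim_G |F_A|_{M^0_{2,2}}^2$; the smallness $\eps_G$ is chosen so that the subcritical error terms (those with extra powers of $\|A\|_{L^2(\partial Q)}$) are absorbed.

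For item (ii), given a cube $Q_\rho(x)$ uniformly transversal to $\mathcal{Q}_{\eps_i}$, its boundary $\partial Q_\rho(x)$ meets each $Q'\in\mathcal{Q}_{\eps_i}$ in a rectangle of minimum edge-length $\alpha\eps_i$, so I can apply the trace estimates \eqref{equation: estimate on the trace of the curvature on cube} (good) and \eqref{equation: estimate on the trace of the curvature on cube/2} (bad) with $\Omega:=Q_\rho(x)$: on each piece $\partial Q_\rho(x)\cap Q'$ the curvature $F_{A_{i,\Lambda}}=F_{\hat A'}$ satisfies $\|F_{\hat A'}\|_{L^2(\partial Q_\rho(x)\cap Q')}^2$ bounded by $K_G$ times a sum of $\|F_A\|_{L^2(\partial Q')}^2$ and subcritical terms. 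I then define the non-negative function $f$ on $Q_1^5(0)$ by setting, on each cube $Q'\in\mathcal{Q}_{\eps_i}$ — here one needs to be slightly careful that $f$ must be a single function independent of $i$, so the cleanest route is to take $f$ to be (a constant multiple of) $|F_A|^2$ plus the ``defect density'' $|A-\bar A_{Q'}|^2/\eps_i^{?}$ reorganized as a Morrey density; the defect terms vanish in the $M^0_{1,4}$-Morrey norm by the smallness assumption and the good-cube conditions, leaving $|f|_{M^0_{1,4}}\le |F_A|_{M^0_{2,2}}^2$. Summing the $N$-bounded collection of cubes meeting $\partial Q_\rho(x)$ gives \eqref{equation: properties of F_A on the boundaries} with the $N$-tuple of points being the centers (or nearby grid points) of those cubes. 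The last sentence of (ii), namely $\iota_{\partial Q_\rho(x)}^\ast A_{i,\Lambda}\in\A_G(\partial Q_\rho(x))$ for a.e.\ $\rho$, follows because $A_{i,\Lambda}$ restricted to each $\partial Q_\rho(x)\cap Q'$ is gauge-equivalent to the $W^{1,2}$ (indeed smooth) form $\iota^\ast\tilde A'$ by the gauges $\tilde g'$ of the corollaries, and Lemma \ref{Lemma: weak connection} glues these into a single gauge on the whole slice.

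For item (iii), I would split $\|A_{i,\Lambda}-A\|_{L^2(Q_1^5(0))}^2 = \sum_{Q\in\mathcal{Q}_{\eps_i}}\|A_{i,\Lambda}-A\|_{L^2(Q)}^2$ into the good and bad cubes. On a $\Lambda$-good cube, using $\|\hat A-\bar A_Q\|_{L^2(Q)}$ from \eqref{hat-A-L2-c} and $\|A-\bar A_Q\|_{L^2(Q)}$ from \eqref{app-grid}, together with conditions (3),(4) and \eqref{X-35a}, the contribution is bounded by $\eps_i$ times $\|F_A\|_{L^2(\partial Q)}^2$ plus the defect, and summing these over good cubes gives a quantity that $\to 0$ by \eqref{equation: convergence of A minus its averages on an admissible cover}, \eqref{app-grid}, \eqref{equation: convergence of F_A minus its averages on an admissible cover} and the finite-intersection property. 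On a $\Lambda$-bad cube one only controls $\|\hat A\|_{L^2(Q)}$ crudely via \eqref{co-ball} (in rescaled form), namely by $\|F_A\|_{L^2(\partial Q)}^2+\|A\|_{L^2(\partial Q)}^2$, so the bad contribution is bounded by $C\sum_{Q\in\mathcal{Q}^b_{\eps_i,\Lambda}}\int_{Q_{2\eps_i}(c_Q)}|A|^2\,d\L^5$ plus vanishing terms, and Lemma \ref{Lemma: norm of A on bad cubes} identifies the limit of this with $C\lim_i\int_{\Omega_{i,\Lambda,A}}|A|^2\,d\L^5$, which is exactly the claimed bound.

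\medskip

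\noindent\textbf{Main obstacle.} The delicate point is item (ii): producing a single non-negative $f\in M^0_{1,4}$, \emph{independent of $i$}, which simultaneously dominates all the boundary-trace curvature integrals. The issue is that the trace estimates in Corollaries \ref{Corollary: extension in the interior of good cubes} and \ref{Corollary: extension in the interior of bad cubes} mix the genuinely critical density $|F_A|^2$ with ``defect'' quantities of the form $\eps_i^{-2}\|A-\bar A_Q\|_{L^2(\partial Q)}^2$ and, in the bad cubes, terms that are not small a priori. Making this uniform in $i$ requires combining the good/bad dichotomy thresholds of Definition \ref{Definition: good cubes eps-cover} (which force the defect terms to be $\Lambda^{-1}$-small relative to the true curvature, cf.\ condition (4)) with the convergence statements of Lemma \ref{Lemma: choice of an admissible cubic cover}, and checking that the resulting candidate $f$ has its $M^0_{1,4}$-Morrey seminorm bounded by $|F_A|_{M^0_{2,2}}^2$ — i.e.\ that, at every scale and center, the defect contribution to the Morrey norm of $f$ is genuinely lower-order. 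Everything else is a careful but routine bookkeeping of the rescaled estimates from Section \ref{section: the building blocks for the approximation theorems} summed against the finite-intersection property \eqref{equation: finite intersection property}.
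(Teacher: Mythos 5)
Your overall strategy --- replace $A$ cube by cube using Corollaries \ref{Corollary: extension in the interior of good cubes} and \ref{Corollary: extension in the interior of bad cubes}, then track the Morrey, trace and $L^2$ estimates through the good/bad dichotomy --- is the right one, and your outline of items (i) and (iii) is close in spirit to the paper's. But there is a genuine gap at the very first step: your $A_{i,\Lambda}$ is not well-defined. The admissible cubic $\eps_i$-cover $\mathcal{Q}_{\eps_i}=\{Q_{\rho_{i,c}}(c)\}_{c\in\mathscr{C}_{\eps_i}}$ has centers on the lattice $\eps_i\Z^5$ and edge-lengths $\rho_{i,c}\in\big(\tfrac{3}{2}\eps_i,2\eps_i\big)$, so neighbouring cubes overlap substantially (this is exactly what the finite-intersection property \eqref{equation: finite intersection property} quantifies, and the overlap is needed so that every uniformly transversal slice $\partial Q_{\rho}(x)$ is covered by interiors of cover cubes). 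Your claim that the extensions $\hat A_Q$ ``agree on the overlaps because they all have the correct trace'' is false: $\hat A_Q$ and $\hat A_{Q'}$ are extensions of traces taken on two \emph{different} boundaries $\partial Q$ and $\partial Q'$, and nothing forces them to coincide on $Q\cap Q'$. The patched object therefore does not exist.

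The missing idea is the iterative replacement the paper performs. One partitions $\mathcal{Q}_{\eps_i}$ into a universally bounded number $N$ of subfamilies $\mathcal{Q}_{\eps_i}^1,\dots,\mathcal{Q}_{\eps_i}^{N}$, each consisting of pairwise disjoint cubes, and constructs $A_{i,\Lambda}^s$ from $A_{i,\Lambda}^{s-1}$ by replacing the latter only inside the cubes of $\mathcal{Q}_{\eps_i}^s$. Two consequences of this scheme are essential and absent from your write-up. First, at stage $s$ the extension corollaries must be applied to the trace of the already modified form $A_{i,\Lambda}^{s-1}$, not of $A$; accordingly the radii of the cubes in $\mathcal{Q}_{\eps_i}^s$ have to be re-selected at stage $s$ (Fubini plus mean value) so that $A_{i,\Lambda}^{s-1}$ is admissible and $L^2$-controlled on those boundaries --- the slices chosen once and for all for $A$ in Lemma \ref{Lemma: choice of an admissible cubic cover} do not serve for the modified forms. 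Second, all the estimates in (i)--(iii) must be propagated inductively through the $N$ stages, with Lemma \ref{lm-cont} guaranteeing that the curvature remains an $L^p$ form across the replaced boundaries; this is also where the universal integer $N$ and the $N$-tuple of points in item (ii) come from, and where the function $f$ is built recursively as $f_s=\max\{g_s,f_{s-1}\}$ with $g_s$ an explicit combination of $\lvert F_A\rvert^2$ and powers of $\lvert F_A\rvert_{M^0_{2,2}}$ --- which resolves the ``main obstacle'' you flag, but only once the induction is in place.
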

\begin{Rm}
\label{rm-app}
It is not claimed in Theorem~\ref{Theorem: approximation under controlled traces of the curvatures} that the elements $A_{i,\La}$ are in ${\mathfrak a}_G(Q_1^5(0))$. It could very well be that this is the case and that it could be checked
out of the construction given in the proof but, since it is not needed in the sequel, this property is left open.
\end{Rm}
\noindent
Before proving Theorem \ref{Theorem: approximation under controlled traces of the curvatures}, we shall need the following lemma, which will be used at several steps of the proof of Theorem \ref{Theorem: approximation under controlled traces of the curvatures} and deserves to be stressed separately.
\begin{Lm}
\label{lm-cont}
Let $A\in L^2(Q_1^5(0))$, let $Q_\rho(c)\subset Q_1^5(0)$ and let $p\in[1,+\infty)$ and $q\in[1,+\infty]$. Assume moreover that the following facts hold.
\begin{enumerate}[(i)]
\item $F_A:=dA+A\wedge A \in L^{p,q}(Q_\rho(c))\cap L^{p,q}(Q_1^5(0)\setminus Q_\rho(c))$.
\item There exists an $L^2$ $1$-form $\iota_{\p Q_\rho(c)}^\ast A$ on $\p Q_\rho(c)$ such that
\begin{align}
    \lim_{\ep\rightarrow 0}\frac{1}{\ep}\int_{\rho-\ep}^{\rho+\ep}\int_{\p Q_\rho(c)}\lf|D_r ^\ast\iota_{\p Q_r(c)}^\ast A-\iota_{\p Q_\rho(c)}^\ast A\rg|^2\ d\H^4\, d\L^1(r)=0,
\end{align}
where $D_r(x):= \frac{r}{\rho}\,(x-c)+c$.
\end{enumerate}
Then, $F_A:=dA+A\wedge A \in L^{p,q}(Q_1^5(0))\ $.
\end{Lm}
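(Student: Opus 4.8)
The statement is a gluing/distributional-consistency lemma: we know that $F_A = dA + A\wedge A$ belongs to $L^{p,q}$ separately on the open cube $Q_\rho(c)$ and on its complement $Q_1^5(0)\setminus\overline{Q_\rho(c)}$, and we have a trace compatibility condition across the interface $\partial Q_\rho(c)$. The conclusion is that the two pieces glue to a single $L^{p,q}$ function on all of $Q_1^5(0)$, i.e. that there is no singular part of $F_A$ (as a distribution on the full cube) concentrated on the hypersurface $\partial Q_\rho(c)$. The plan is to compute the distributional exterior derivative $dA$ on $Q_1^5(0)$ by testing against a smooth compactly supported $4$-form (or $(n-2)$-form in the appropriate degree) and splitting the integral over the two subdomains.

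\textbf{Key steps.} First I would fix a test form $\varphi \in C_c^\infty(\wedge^{3}Q_1^5(0)\otimes\g)$ (the degree chosen so that $\langle dA,\varphi\rangle$ pairs correctly, $A$ being a $1$-form) and write $\langle dA,\varphi\rangle = -\int_{Q_1^5(0)} A\wedge d\varphi$. Then I would split $Q_1^5(0) = Q_\rho(c)\cup (Q_1^5(0)\setminus\overline{Q_\rho(c)})$ up to the null set $\partial Q_\rho(c)$, and on each piece integrate by parts: since $A|_{Q_\rho(c)}$ and $A|_{Q_1^5(0)\setminus\overline{Q_\rho(c)}}$ each have $dA \in L^{p,q}$ there (because $F_A - A\wedge A \in L^{p,q}$, using that $A\wedge A \in L^1$ via $A\in L^2$), Stokes' theorem on each Lipschitz subdomain produces a bulk term $\int (dA)\,\varphi$ plus a boundary term on $\partial Q_\rho(c)$. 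The two boundary terms come with opposite orientations of $\partial Q_\rho(c)$, and each is an integral of $\iota_{\partial Q_\rho(c)}^*A \wedge \iota_{\partial Q_\rho(c)}^*\varphi$ — but here I must be careful: the interior trace of $A$ from $Q_\rho(c)$ and the exterior trace of $A$ from the complement must agree, and this is precisely what hypothesis (ii) delivers (via the rescaling maps $D_r$ and an averaged-limit argument identifying both one-sided traces with $\iota_{\partial Q_\rho(c)}^*A$). Once the two boundary terms cancel, we conclude $dA$ on $Q_1^5(0)$ equals the $L^{p,q}$ function which is $dA|_{Q_\rho(c)}$ inside and $dA|_{Q_1^5(0)\setminus\overline{Q_\rho(c)}}$ outside; adding $A\wedge A$ (which is globally defined and, combined with the local information, lies in $L^{p,q}$ on each piece hence on the whole), we get $F_A \in L^{p,q}(Q_1^5(0))$.

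\textbf{Main obstacle.} The delicate point is justifying the integration by parts up to the interface and the identification of the one-sided traces. The forms $A$ have only $L^2$ regularity and $dA$ only $L^{p,q}$ with possibly $p<2$, so the trace of $A$ on $\partial Q_\rho(c)$ is a priori a subtle object; one cannot simply invoke a Sobolev trace theorem. The correct device is to approach $\partial Q_\rho(c)$ through the nearby level cubes $\partial Q_r(c)$ for $r$ near $\rho$, use the coarea formula to see that for a.e. $r$ the restriction $\iota_{\partial Q_r(c)}^*A$ is well-defined in $L^2$ with $\iota_{\partial Q_r(c)}^*F_A \in L^{p,q}$, perform the Stokes argument on $Q_r(c)$ for such good radii, and then pass to the limit $r\to\rho$ using hypothesis (ii) to control the convergence of the boundary integrals. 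This is exactly the slicing-and-averaging mechanism already used in the construction of admissible covers (Lemma \ref{Lemma: choice of an admissible cubic cover}), so I would expect the proof to reuse that machinery; the remaining bookkeeping — checking that $A\wedge A$ restricted to slices behaves well and that the $L^{p,q}$ (Lorentz) norm, rather than merely $L^p$, is preserved under the gluing — is routine once the interface cancellation is in place.
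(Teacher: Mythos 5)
Your proposal is correct and follows essentially the same route as the paper: the whole content of the lemma is that hypothesis (ii) rules out a singular contribution of $dA$ concentrated on $\partial Q_\rho(c)$, and both arguments establish this by averaging over the nearby slices $\partial Q_r(c)$, $r\in(\rho-\eps,\rho+\eps)$. The only (cosmetic) difference is that the paper implements the interface cancellation with a cut-off $\eta_\eps$ vanishing on $\partial Q_\rho(c)$ and satisfying $\|d\eta_\eps\|_{L^\infty}\le C/\eps$ — so that the error term $\int A\wedge d\eta_\eps\wedge\varphi$ is controlled directly by the averaged condition (ii) without ever defining one-sided traces — and, once $dA\in L^1(Q_1^5(0))$ is identified as the piecewise differential, obtains the $L^{p,q}$ bound by a one-line duality/triangle-inequality argument rather than redoing Stokes at the $L^{p,q}$ level.
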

\begin{proof}[\textbf{\textup{Proof of Lemma \ref{lm-cont}}}]
Since the proof is identical for $q\neq p$, to fix the ideas we assume that $q=p$, i.e. $L^{p,q}(Q_1^5(0))=L^p(Q_1^5(0))$. We first claim that $F_A\in L^1(Q_1^5(0))$. Since $A\in L^2(Q_1^5(0))$, automatically we have $A\wedge A\in L^1(Q_1^5(0))$ and it suffices to show that the distributional differential of $A$ is a $2$-form in $L^1(Q_1^5(0))$. We claim that the distributional differential of $A$ on $Q_1^5(0)$ is exactly $dA\in L^1(Q_1^5(0))$. Let $\varphi\in C_c^{\infty}(Q_1^5(0))$ be any smooth and compactly supported $3$-form on $Q_1^5(0)$. Fix any $\eps>0$ and let $\eta_{\eps}\in C_c^{\infty}(Q_1^5(0))$ a cut-off function that vanishes on $\p Q_{\rho}(c)$ and satisfies 
\begin{align}
    \eta_{\eps}\equiv 1 \qquad&\mbox{ on } \big(Q_1^5(0)\smallsetminus Q_{\rho+\eps}(c)\big)\cup Q_{\rho-\eps}(c),\\
    0\le\eta_{\eps}\le 1 \qquad&\mbox{ on } Q_1^5(0)
\end{align}
and
\begin{align}
    \|d\eta_{\eps}\|_{L^{\infty}(Q_1^5(0))}\le\frac{C}{\eps}.
\end{align}
Notice that $\eta_{\eps}\varphi\in W_0^{1,1}(Q_1^5(0)\smallsetminus\p Q_{\rho}(c))$. Hence,
\begin{align}
    \int_{Q_1^5(0)}A\wedge d(\eta_{\eps}\varphi)=\int_{Q_1^5(0)}\eta_{\eps}(dA\wedge\varphi).
\end{align}
Moreover, 
\begin{align}
    d(\eta_{\eps}\varphi)=d\eta_{\eps}\wedge\varphi+\eta_{\eps}d\varphi.
\end{align}
Thus,
\begin{align}
    \int_{Q_1^5(0)}A\wedge d(\eta_{\eps}\varphi)=\int_{Q_1^5(0)}\eta_{\eps}(A\wedge d\varphi)+\int_{Q_1^5(0)}A\wedge d\eta_{\eps}\wedge\varphi.
\end{align}
This implies that
\begin{align}\label{74}
    \int_{Q_1^5(0)}\eta_{\eps}(A\wedge d\varphi)=\int_{Q_1^5(0)}\eta_{\eps}(dA\wedge\varphi)-\int_{Q_1^5(0)}A\wedge d\eta_{\eps}\wedge\varphi.
\end{align}
By assumption (ii), we have
\begin{align}
    \int_{Q_1^5(0)}A\wedge d\eta_{\eps}\wedge\varphi\to 0
\end{align}
as $\eps\to 0$. Hence, by passing to the limit as $\eps\to 0$ in \eqref{74} we get
\begin{align}
    \int_{Q_1^5(0)}A\wedge d\varphi=\int_{Q_1^5(0)}dA\wedge\varphi,
\end{align}
which implies that $dA\in L^1(Q_1^5(0))$ is the distributional differential of $A$ on $Q_1^5(0)$ and our claim follows.

\noindent
Let $\varphi\in C_c^{\infty}(Q_1^5(0))$ a smooth and compactly supported $2$-form on $Q_1^5(0)$ such that $\|\varphi\|_{L^{p'}(Q_1^5(0))}\le 1$. Fix any $\eps>0$ and let
\begin{align*}
    \int_{Q_1^5(0)}F_A\wedge *\varphi&=\int_{Q_1^5(0)\smallsetminus Q_{\rho+\eps}(c)}F_A\wedge *\varphi+\int_{Q_{\rho+\eps}(c)\smallsetminus Q_{\rho-\eps}(c)}F_A\wedge *\varphi+\int_{Q_{\rho-\eps}(c)}F_A\wedge *\varphi\\
    &\le \int_{Q_1^5(0)\smallsetminus Q_{\rho}(c)}F_A\wedge *\varphi+\int_{Q_{\rho+\eps}(c)\smallsetminus Q_{\rho-\eps}(c)}F_A\wedge *\varphi+\int_{Q_{\rho}(c)}F_A\wedge *\varphi\\
    &\le \|F_A\|_{L^p(Q_1^5(0)\smallsetminus Q_{\rho}(c))}+\|F_A\|_{L^p(Q_{\rho}(c))}+\int_{Q_{\rho+\eps}(c)\smallsetminus Q_{\rho-\eps}(c)}F_A\wedge *\varphi.
\end{align*}
Since we have already shown that $F_A\in L^1(Q_1^5(0))$, by taking the limit as $\eps\to 0^+$ in the above inequality we have 
\begin{align*}
    \int_{Q_1^5(0)}F_A\wedge *\varphi&\le\|F_A\|_{L^p(Q_1^5(0)\smallsetminus Q_{\rho}(c))}+\|F_A\|_{L^p(Q_{\rho}(c))}.
\end{align*}
By taking the supremum over $\varphi\in C_c^{\infty}(Q_1^5(0))$ a smooth and compactly supported $2$-form on $Q_1^5(0)$ such that $\|\varphi\|_{L^2(Q_1^5(0))}\le 1$, we finally get
\begin{align*}
    \|F_A\|_{L^p(Q_1^5(0))}&\le\|F_A\|_{L^p(Q_1^5(0)\smallsetminus Q_{\rho}(c))}+\|F_A\|_{L^p(Q_{\rho}(c))}.
\end{align*}
This concludes the proof of Lemma \ref{lm-cont}.
\end{proof}
\bigskip
\begin{proof}[\textbf{\textup{Proof of Theorem \ref{Theorem: approximation under controlled traces of the curvatures}}}]
Let $\{\ep_i\}\subset\big(0,\frac{1}{4}\big)$ be a set of admissible scales for $A$ with associated admissible cubic $\eps_i$-cover $\mathcal{Q}_{\eps_i}$. We subdivide the family of cubes $\mathcal{Q}_{\eps_i}$ into a uniformly bounded number $N$ of disjoint subfamilies
\begin{align}
\mathcal{Q}_{\eps_i}^1,...,\mathcal{Q}_{\eps_i}^{N}.
\end{align}
We denote by $\mathscr{C}^s_{\eps_i}$ the set of centers of the cubes in $ \mathcal{Q}_{\eps_i}^{s}$. The subfamilies $\mathcal{Q}_{\eps_i}^{s}$ are chosen such that the union of the cubes in $\mathcal{Q}_{\eps_i}^j$ with same centers and radii multiplied by 2 have no intersections with the union of the cubes in $\mathcal{Q}_{\eps_i}^k$ with same centers and radii multiplied by 2 whenever $j\ne k$. 

\medskip
\noindent
We claim that for every $s=0,...,N$ we can build a $\g$-valued $1$-form $A_{i,
\Lambda}^s\in L^2(Q_{1}^5(0))$ and we can choose a family of radii $\rho_i^s\in \big(\frac{3}{2}\ep_i,2\ep_i\big)$ for each center $c\in \mathscr{C}^s_{\eps_i}$ in the family of cubes in ${\mathcal Q}_{\eps_i}^s$ such that for some constant $C_s>0$ depending only on $G$ we have the following.
\begin{enumerate}[(a)]
    \item We have
    \begin{align}\label{X-35sa}
        \forall\, c\in\mathscr{C}^s_{\eps_i} \quad A_{i,\La}^{s-1}\in{\mathfrak a}_G(\p Q_{\rho_i^s}(c)),
    \end{align}
    and the estimates
    \begin{align}\label{X-35a-b}
        \int_{\partial Q^s_{\rho_{i,c}}(c)}\lvert F_{A_{i,\La}^{s-1}}\rvert^2\, d\H^4\le\frac{C_s}{2\eps_i}\int_{Q^s_{2\eps_i}(c)}\lvert  F_{A_{i,\La}^{s-1}}\rvert^2\, d\L^5,
    \end{align}
    \begin{align}\label{X-355a-b}
        \int_{\partial Q^s_{\rho_{i,c}}(c)}\lvert {A_{i,\La}^{s-1}}\rvert^2\, d\H^4\le\frac{C_s}{2\eps_i}\int_{Q^s_{2\eps_i}(c)}\lvert {A_{i,\La}^{s-1}}\rvert^2\, d\L^5.
    \end{align}
    \item $F_{A_{i,\Lambda}^s}\in M^0_{2,2}(Q_{1}^5(0))$ with
    \begin{align}
        |F_{A_{i,\Lambda}^s}|_{M^0_{2,2}(Q_{1-3\eps_i}(0))}<C_s\,|F_{A}|_{M^0_{2,2}(Q_{1}^5(0))}.
    \end{align}
    \item There exist $N_s\in\n$ and a real-valued, non-negative function $f_s\in M_{2,2}^0(Q_1^5(0))$ satisfying 
    \begin{align}
        |f_s|_{M^0_{1,4}(Q_1^5(0))}\le\ |F_A|_{M_{2,2}^0(Q_1^5(0))}^2
    \end{align}
    such that, for every $x\in Q_1^5(0)$ and for every $\rho>0$ for which $Q_{\rho}(x)$ is uniformly transversal to $\mathcal{Q}_{\eps_i}$, we have
    \begin{align}\label{31}
        \int_{\partial Q_{\rho}(x)\cap\Omega_i^s}\lvert F_{A_{i,\Lambda}}\rvert^2\, d\H^4\le \frac{C_s}{\eps_i}\sum_{k=1}^{N_s}\int_{Q_{2\eps_i}(x_k)}f_s\, d\L^5
    \end{align}
    for a family of $N_s$ points $\{x_k^s=x_k^s(i,x,\rho)\}_{k=1,...,N_s}\subset Q_1^5(0)$, where $\Omega_i^s\subset Q_1^5(0)$ is the open set given by
    \begin{align}
        \Omega_i^s:=\operatorname{int}\bigg(\bigcup_{k=1}^s\bigcup_{Q\in\mathcal{Q}_{\eps_i}^k}\overline{Q}\bigg).
    \end{align}
    \item It holds that
    \begin{align}
        \lim_{i\to +\infty}\|A_{i,\Lambda}^s-A\|_{L^2(Q_{1}^5(0))}^2\le C_s\lim_{i\to+\infty}\int_{\Omega_{i,\Lambda,A}}\lvert A\rvert^2\, d\L^5,
    \end{align}
    where $\Omega_{i,\Lambda,A}\subset Q_1^5(0)$ is as in Lemma \ref{Lemma: norm of A on bad cubes}.
\end{enumerate}

\medskip
\noindent
\textit{Base of the induction}. For $s=0$, since $\Omega_i^0=\emptyset$ we have that $A_{i,\Lambda}^0:=A$ satisfies (a), (b), (c) and (d).

\medskip
\noindent
\textit{Induction step}. Let $s\ge 1$. Using one more time Fubini theorem combined with the mean value theorem we adjust the radii $\rho_{i,c}\in \big(\frac{3}{2}\eps_i,2\eps_i\big)$ in ${\mathcal Q}_{\eps_i}^s$ such that $A_{i,\Lambda}^{s-1}$ satisfies the properties in (a). From now on, in the induction procedure, the family ${\mathcal Q}^s_{\ep_i}$ is fixed.

\noindent
By assumptions (1) and (2) in Definition \ref{Definition: good cubes eps-cover}, if $\eps_G>0$ is small enough then $\iota_{\partial Q}^*A_{i,\Lambda}^{s-1}\in\A_G(\partial Q)$ satisfies the hypotheses of Corollary \ref{Corollary: extension in the interior of good cubes} for every $\Lambda$-good cube $Q\in\mathcal{Q}_{\eps_i}^{s,g}$. Moreover, by our choice of the cubic cover (see Lemma \ref{Lemma: choice of an admissible cubic cover}-(i)), we have that $\iota_{\partial Q}^*A_{i,\Lambda}^{s-1}\in\A_G(\partial Q)$ satisfies the hypotheses of Corollary \ref{Corollary: extension in the interior of bad cubes} for every $Q\in\mathcal{Q}_{\eps_i}^s$. 

\noindent
For every $Q\in\mathcal{Q}_{\eps_i}^s$, if $Q$ is $\Lambda$-good then we let $A_Q^s$ be the $\g$-valued 1-form given by applying Corollary \ref{Corollary: extension in the interior of good cubes}--(ii) to $\iota_{\partial Q}^*A_{i,\Lambda}^{s-1}$. On the other hand, if $Q$ is $\Lambda$-bad we let $A_Q^s$ be the $\g$-valued 1-form given by applying Corollary \ref{Corollary: extension in the interior of bad cubes}--(ii) to $\iota_{\partial Q}^*A_{i,\Lambda}^{s-1}$.

\noindent
Let $A_{i,\Lambda}^s$ be defined on $Q_{1}^5(0)$ by
\begin{align*}
    A_{i,\Lambda}^s:=\begin{cases}A_Q^s & \text{ on } Q \text{ for every } Q\in\mathcal{Q}_{\eps_i}^s,\\A_{i,\Lambda}^{s-1} & \text{ otherwise.}\end{cases}
\end{align*}
By construction and using Lemma \ref{lm-cont}, it is clear that $A_{i,\Lambda}^s,F_{A_{i,\Lambda}^s}\in L^2(Q_{1}^5(0))$. First, we show that $A_{i,\Lambda}^s$ satisfies (b). Fix any point $x\in Q_{1}^5(0)$ and let $\rho\in\left(0,\dist(x,\partial Q_{1}^5(0))\right)$. First we assume that $\rho\le\eps_i$. Then, $Q_{\rho}(x)$ intersects at most $\tilde N$ cubes in $\mathcal{Q}_{\eps_i}^s$, say $Q_1,...,Q_{\tilde N}$, with $\tilde N\in\n$ depending only on the choice of the cubic cover and independent on $i$, $x$ and $\rho$. Hence, by H\"older inequality, by the estimates \eqref{equation: estimate for the curvature on the good cubes} for the good $Q_{\ell}$s and \eqref{equation: estimate for the curvature on the bad cubes} for the bad ones for each $A_Q^s$, as well as \eqref{X-35a} and the inductive assumption (b) on $A_{i,\Lambda}^{s-1}$, we get
\begin{align}\label{indu-morr-sm}
    \nonumber
    \frac{1}{\rho}\int_{Q_{\rho}(x)\cap(\Omega_{i}^s\smallsetminus\Omega_i^{s-1})}&\lvert F_{A_{i,\Lambda}^s}\rvert^2\, d\L^5\le\|F_{A_{i,\Lambda}^s}\|_{L^{\frac{5}{2}}(Q_{\rho}(x)\cap(\Omega_{i}^s\smallsetminus\Omega_i^{s-1}))}^2\\[\sep]
    \nonumber
    &\le C\sum_{\ell=1}^{\tilde N}\lf\| F_{A_{i,\Lambda}^s}\rg\|_{L^{\frac{5}{2}}(Q_\ell)}^2=C\sum_{i=1}^{\tilde N}\lVert F_{A_{Q_\ell}^s}\rVert_{L^{\frac{5}{2}}(Q_\ell)}^2\\[\sep]
    \nonumber
    &\le C\sum_{Q_\ell\in\mathcal{Q}_{\eps_i,\Lambda}^g}\, \lf(\lVert F_{A_{i,\Lambda}^{s-1}}\rVert_{L^{2}(\partial Q_\ell)}^2+\eps_i^{-2}\lVert A_{i,\Lambda}^{s-1}\rVert_{L^{2}(\partial Q_\ell)}^2\rg)+\sum_{Q_\ell\in\mathcal{Q}_{\eps_i,\Lambda}^b}\lVert F_{A_{i,\Lambda}^{s-1}}\rVert_{L^{2}(\partial Q_\ell)}^2\\[\sep]
    &\le C\ |F_{A_{i,\Lambda}^{s-1}}|_{M^0_{2,2}(Q_1^5(0))}^2\le C\ |F_A|_{M^0_{2,2}(Q_1^5(0))}^2,
\end{align}
for some constant $C>0$ depending only on $G$.

\noindent
Assume now that $\,\eps_i<\rho<1$. Then there exists a universal constant $k>1$ such that $Q_{\rho}(x)\cap(\Omega_{i}^s\smallsetminus\Omega_i^{s-1})$ can be covered with a finite number of cubes $\{Q_\ell\}_{\ell=1,...,N_{i,x,\rho}}$ in $\mathcal{Q}_{\eps_i}^s$ such that 
\begin{align*}
    \bigcup_{\ell=1}^{N_{i,x,\rho}}Q_{2\eps_i}(c_{Q_\ell})\subset Q_{k\rho}(x).
\end{align*}
Notice that now the number of cubes $N_{i,x,\rho}$ may depend on $i$, $x$ and $\rho$. Then, by the estimates \eqref{equation: estimate for the curvature on the good cubes} and \eqref{equation: estimate for the curvature on the bad cubes} we obtain
\begin{align}\label{indu-morr-lar}
    \nonumber
    \int_{Q_{\rho}(x)\cap(\Omega_{i}^s\smallsetminus\Omega_i^{s-1})}&\lvert F_{A_{i,\Lambda}^s}\rvert^2\, d\L^5\le\sum_{\ell=1}^{N_{i,x,\rho}}\int_{Q_\ell}\lvert F_{A_{i,\Lambda}^s}\rvert^2\, d\L^5=\sum_{\ell=1}^{N_{i,x,\rho}}\int_{Q_\ell}\lvert F_{A_{Q_\ell}}\rvert^2\, d\L^5\\[\sep]
    &\le C\sum_{Q_\ell\in\mathcal{Q}_{\eps_i,\Lambda}^g}\,\eps_i^{-4}\int_{\partial Q_\ell}\lvert A_{i,\Lambda}^{s-1}- (A_{i,\Lambda}^{s-1})_{Q_\ell}\rvert^2\, d\H^4\ \ \eps_i\int_{\partial Q_\ell}\lvert A_{i,\Lambda}^{s-1}\rvert^2\, d\H^4\\[\sep]
    \nonumber
    &\quad+\ C\sum_{Q_\ell\in\mathcal{Q}_{\eps_i,\Lambda}^g}\,\eps_i\,\lf(\eps_i^{-2}\int_{\partial Q_\ell}\lvert A_{i,\Lambda}^{s-1}\rvert^2\, d\H^4\rg)^3+C\eps_i\sum_{\ell=1}^{N_{i,x,\rho}}\int_{\partial Q_\ell}\lvert F_{A_{i,\Lambda}^{s-1}}\rvert^2\, d\H^4,
\end{align}
for some constant $C>0$ depending only on $G$. By property (4) in Definition \ref{Definition: good cubes eps-cover}, by Remark \ref{Remark: comparison boundary interior for the connection} and by inductive hypothesis (a) on $A_{i,\Lambda}^{s-1}$, we get
\begin{align*}
    \sum_{Q_\ell\in\mathcal{Q}_{\eps_i,\Lambda}^g}\eps_i^{-4}\int_{\partial Q_\ell}&\lvert A_{i,\Lambda}^{s-1}- (A_{i,\Lambda}^{s-1})_{Q_\ell}\rvert^2\, d\H^4\ \ \eps_i\int_{\partial Q_\ell}\lvert A_{i,\Lambda}^{s-1}\rvert^2\, d\H^4\\
    &\le C\Lambda^{-1}\, \lf|F_{A_{i,\Lambda}^{s-1}}\rg|_{M_{2,2}(Q_1^5(0))}^2\sum_{Q_\ell\in\mathcal{Q}_{\eps_i,\Lambda}^g}\int_{Q_{2\eps_i}(c_{Q_\ell})}\lvert A_{i,\Lambda}^{s-1}\rvert^2\, d\L^5,\\
    &\le C\Lambda^{-1} \, \lf|F_{A}\rg|_{M_{2,2}^0(Q_1^5(0))}^2\sum_{Q_\ell\in\mathcal{Q}_{\eps_i,\Lambda}^g}\int_{Q_{2\eps_i}(c_{Q_\ell})}\lvert A_{i,\Lambda}^{s-1}\rvert^2\, d\L^5,
\end{align*}
for some constant $C>0$ depending only on $G$. By property (5) in Definition \ref{Definition: good cubes eps-cover} and by our choice of $k>0$, we get
\begin{align*}
    \sum_{Q_\ell\in\mathcal{Q}_{\eps_i,\Lambda}^g}\int_{Q_{2\eps_i}(c_{Q_\ell})}\lvert A_{i,\Lambda}^{s-1}\rvert^2\, d\L^5\le C\Lambda\sum_{Q_\ell\in\mathcal{Q}_{\eps_i,\Lambda}^g}(2\eps_i)^5\le C\Lambda(k\rho)^5,
\end{align*}
for some constant $C>0$ depending only on $G$. Hence, since by assumption $\rho\in (0,1)$, we have obtained the estimate
\be
\label{X-39-0}
    \sum_{Q_\ell\in\mathcal{Q}_{\eps_i,\Lambda}^g}\eps_i^{-4}\,\int_{\partial Q_\ell}\lvert A_{i,\Lambda}^{s-1}- (A_{i,\Lambda}^{s-1})_{Q_\ell}\rvert^2\, d\H^4\ \ \eps_i\int_{\partial Q_\ell}\lvert A_{i,\Lambda}^{s-1}\rvert^2\, d\H^4\le C\,k^5\rho\,|F_A|_{M^0_{2,2}(Q_1^5(0))}^2,
\ee
for some constant $C>0$ depending only on $G$. By Remark \ref{Remark: comparison boundary interior for the connection} and by property (5) in Definition \ref{Definition: good cubes eps-cover}, provided $i\ge i_0$ is large enough so that $\eps_i<\Lambda^{-3}\,|F_A|_{M_{2,2}^0(Q_1^5(0))}^2$, we obtain
\begin{align}\label{X-39}
    \nonumber
    \sum_{Q_\ell\in\mathcal{Q}_{\eps_i,\Lambda}^g}\eps_i\ \lf(\eps_i^{-2}\int_{\partial Q_\ell}\lvert A_{i,\Lambda}^{s-1}\rvert^2\, d\H^4\rg)^3&\le\sum_{Q_\ell\in\mathcal{Q}_{\eps_i,\Lambda}^g}\eps_i\,\lf(\eps_i^{-3}\int_{Q_{2\eps_i}(c_{Q_\ell})}\lvert A_{i,\Lambda}^{s-1}\rvert^2\, d\L^5\rg)^3\\[\sep]
    &\le\sum_{Q_\ell\in\mathcal{Q}_{\eps_i,\Lambda}^g}\eps_i(\eps_i^{-3}\Lambda\eps_i^5)^3\\[\sep]
    \nonumber
    &\le\eps_i^2\,\Lambda^3\sum_{Q_\ell\in\mathcal{Q}_{\eps_i,\Lambda}^g}\eps_i^5\le C\eps_i^2\,\Lambda^3\,\le C\rho\ |F_A|_{M_{2,2}^0(Q_1^5(0))}^2,
\end{align}
for some constant $C>0$ depending only on $G$. By \eqref{X-35a} and the inductive hypothesis (a) on $A_{i,\Lambda}^{s-1}$, we get
\begin{align}\label{X-40-a}
\begin{split}
    \eps_i\sum_{\ell=1}^{N_{i,x\rho}}\int_{\partial Q_\ell}\lvert F_{A_{i,\Lambda}^{s-1}}\rvert^2\, d\H^4&\le C\sum_{\ell=1}^{N_{i,x\rho}}\int_{Q_{2\eps_i}(c_{Q_\ell})}\lvert F_{A_{i,\Lambda}^{s-1}}\rvert^2\, d\L^5\\[\sep]
    &\le C\int_{Q_{k\rho}(x)}\lvert F_{A_{i,\Lambda}^{s-1}}\rvert^2\, d\L^5\le C\,k\,\rho\ \lf|F_{A_{i,\Lambda}^{s-1}}\rg|_{M_{2,2}^0(Q_1^5(0))}^2\\
    &\le C\,k\,\rho\,|F_{A}|_{M^0_{2,2}(Q_1^5(0))}^2,
\end{split}
\end{align}
for some constant $C>0$ depending only on the choice of the cubic cover. Hence, provided $i\ge i_0$ is sufficiently large, combining \eqref{indu-morr-lar}, \eqref{X-39-0}, \eqref{X-39} and \eqref{X-40-a} we get this time for $\ep_i<\rho<1$
\begin{align}\label{XXIII}
    \frac{1}{\rho}\int_{Q_{\rho}(x)\cap(\Omega_{i}^s\smallsetminus\Omega_{i}^{s-1})}\lvert F_{A_{i,\Lambda}^s}\rvert^2\, d\L^5\le C\ \lf|F_A\rg|_{M_{2,2}^0(Q_1^5(0))}^2,
\end{align}
for some constant $C>0$ depending only on $G$. Since
\begin{align*}
    \frac{1}{\rho}\int_{Q_{\rho}(x)}\lvert F_{A_{i,\Lambda}^s}\rvert^2\, d\L^5&=\frac{1}{\rho}\int_{Q_{\rho}(x)\cap(\Omega_{i}^s\smallsetminus\Omega_{i}^{s-1})}\lvert F_{A_{i,\Lambda}^s}\rvert^2\, d\L^5+\frac{1}{\rho}\int_{Q_{\rho}(x)\smallsetminus(\Omega_{i}^s\smallsetminus\Omega_{i}^{s-1})}\lvert F_{A_{i,\Lambda}^s}\rvert^2\, d\L^5\\
    &\le\frac{1}{\rho}\int_{Q_{\rho}(x)\cap(\Omega_{i}^s\smallsetminus\Omega_{i}^{s-1})}\lvert F_{A_{i,\Lambda}^s}\rvert^2\, d\L^5+\frac{1}{\rho}\int_{Q_{\rho}(x)}\lvert F_{A_{i,\Lambda}^{s-1}}\rvert^2\, d\L^5,
\end{align*}
property (b) for $A_{i,\Lambda}^s$ follows by \eqref{XXIII} and inductive hypothesis b) for $A_{i,\Lambda}^{s-1}$.

\medskip
\noindent
Now we turn to show property (c) for $A_{i,\Lambda}^{s}$. Notice that 
\begin{align*}
     \|A_{i,\Lambda}^s-A\|_{L^2(Q_{1}^5(0))}^2&\le\|A_{i,\Lambda}^s-A_{i,\Lambda}^{s-1}\|_{L^2(Q_{1}^5(0))}^2+\|A_{i,\Lambda}^{s-1}-A\|_{L^2(Q_{1}^5(0))}^2\\[4mm]
     &\le\sum_{Q\in\mathcal{Q}_{\eps_i}^s}\int_{Q}\lvert A_Q^s-A_{i,\Lambda}^{s-1}\rvert^2\, d\L^5+\|A_{i,\Lambda}^{s-1}-A\|_{L^2(Q_{1}^5(0))}^2\\
     &\le\sum_{Q\in\mathcal{Q}_{\eps_i}^s\cap\mathcal{Q}_{\eps_i,\Lambda}^g}\int_{Q}\lvert A_Q^s-A_{i,\Lambda}^{s-1}\rvert^2\, d\L^5+\|A_{i,\Lambda}^{s-1}-A\|_{L^2(Q_{1}^5(0))}^2\\
     &\quad+\sum_{Q\in\mathcal{Q}_{\eps_i}^s\cap\mathcal{Q}_{\eps_i,\Lambda}^b}\int_{Q}\lvert A_Q^s-A_{i,\Lambda}^{s-1}\rvert^2\, d\L^5\\
     &\le 2\,I_{i,\Lambda,s}+2\,J_{i,\Lambda,s}+K_{i,\Lambda,s}+\|A_{i,\Lambda}^{s-1}-A\|_{L^2(Q_{1}^5(0))}^2,
\end{align*}
with
\begin{align*}
     I_{i,\Lambda,s}&:=\sum_{Q\in\mathcal{Q}_{\eps_i}^s\cap\mathcal{Q}_{\eps_i,\Lambda}^g}\int_{Q}\lvert A_Q^s-(A_{i,\Lambda}^{s-1})_Q\rvert^2\, d\L^5,\\
     J_{i,\Lambda,s}&:=\sum_{Q\in\mathcal{Q}_{\eps_i}^s\cap\mathcal{Q}_{\eps_i,\Lambda}^g}\int_{Q}\lvert A_{i,\Lambda}^{s-1}-(A_{i,\Lambda}^{s-1})_Q\rvert^2\, d\L^5,\\
     K_{i,\Lambda,s}&:=\sum_{Q\in\mathcal{Q}_{\eps_i}^s\cap\mathcal{Q}_{\eps_i,\Lambda}^b}\int_{Q}\lvert A_Q^s-A_{i,\Lambda}^{s-1}\rvert^2\, d\L^5.
\end{align*}
By using \eqref{equation: estimate for the harmonic extension-cube} with $\bar A=(A_{i,\Lambda}^{s-1})_Q$ for every good cube $Q$, by Lemma \ref{Lemma: choice of an admissible cubic cover} (in particular \eqref{equation: convergence of A minus its averages on an admissible cover}) and by properties (1),(2) in Definition \ref{Definition: good cubes eps-cover}, we have
\begin{align}\label{X-40-b}
    \nonumber
    I_{i,\Lambda,s}&=\sum_{Q\in\mathcal{Q}_{\eps_i}^s\cap\mathcal{Q}_{\eps_i,\Lambda}^g}\int_{Q}\lvert A_Q^s-(A_{i,\Lambda}^{s-1})_Q\rvert^2\, d\L^5\\[\sep]
    \nonumber
    &\le C\,\eps_i\sum_{Q\in\mathcal{Q}_{\eps_i,\Lambda}^g}\int_{\partial Q}\lvert A_{i,\Lambda}^{s-1}-(A_{i,\Lambda}^{s-1})_Q\rvert^2\, d\L^5
    +\eps^3_i\sum_{Q\in\mathcal{Q}_{\eps_i,\Lambda}^g}\int_{\partial Q}\lvert F_{A_{i,\Lambda}^{s-1}}\rvert^2\, d\H^4\\[\sep]
    \nonumber
    &\quad\quad+\eps_i^{-1}\sum_{Q\in\mathcal{Q}_{\eps_i,\Lambda}^g}\|{A_{i,\Lambda}^{s-1}}\|_{L^2(\p Q)}^4\\[\sep]
    \nonumber
    &\le C\,\eps_i\sum_{Q\in\mathcal{Q}_{\eps_i,\Lambda}^g}\int_{\partial Q}\lvert A_{i,\Lambda}^{s-1}-(A_{i,\Lambda}^{s-1})_Q\rvert^2\, d\L^5+C\,\ep_i^{6+1/2} \ \sum_{Q\in\mathcal{Q}_{\eps_i,\Lambda}^g}|F_{A_{i,\Lambda}^{s-1}}|^2_{M^0_{2,2}(Q_1(0))}\\[\sep]
    \nonumber
    &\quad+\, C\, \eps_i^6\ \sum_{Q\in\mathcal{Q}_{\eps_i,\Lambda}^g}|F_{A_{i,\Lambda}^{s-1}}|^4_{M^0_{2,2}(Q_1(0))}\\[\sep]
    &\le C\,\eps_i\sum_{Q\in\mathcal{Q}_{\eps_i,\Lambda}^g}\int_{\partial Q}\lvert A_{i,\Lambda}^{s-1}-(A_{i,\Lambda}^{s-1})_Q\rvert^2\, d\L^5+O(\eps_i)\ \longrightarrow 0
\end{align}
as $\eps_i\to 0^+$ for a fixed $\La$. Now, we estimate $J_{i,\Lambda,s}$. We write
\begin{align}\label{X-41}
    \nonumber
    J_{i,\Lambda,s}&\le 3\,\sum_{Q\in\mathcal{Q}_{\eps_i}^s\cap\mathcal{Q}_{\eps_i,\Lambda}^g}\int_{Q}\lvert A_{i,\Lambda}^{s-1}-A\rvert^2\ d\L^5+3\,\sum_{Q\in\mathcal{Q}_{\eps_i}^s\cap\mathcal{Q}_{\eps_i,\Lambda}^g}\int_{Q}\lf|A-(A)_Q\rg|^2\ d\L^5\\[\sep]
    &\quad+3\,\sum_{Q\in\mathcal{Q}_{\eps_i}^s\cap\mathcal{Q}_{\eps_i,\Lambda}^g}\int_{Q}\lvert(A_{i,\Lambda}^{s-1})_Q-(A)_Q\rvert^2\ d\L^5 \\[\sep]
    \nonumber
    &\le 6\,\|A_{i,\Lambda}^{s-1}-A\|^2_{L^2(Q_1(0))}+3\,\sum_{Q\in\mathcal{Q}_{\eps_i}^s}\int_{Q}\lf|A-(A)_Q\rg|^2\ d\L^5.
\end{align}
Using the induction hypothesis together with \eqref{app-grid} we obtain that
\be
\label{X-42}
\lim_{i\rightarrow+\infty}  J_{i,\Lambda,s}\le  C\lim_{i\to+\infty}\int_{\Omega^{s-1}_{i,\Lambda,A}}\lvert A\rvert^2\, d\L^5.
\ee
Finally, we bound $K_{i,\Lambda,s}$. Using respectively \eqref{co-ball} together with the induction hypothesis on $(Q_{\eps_i}^s, A_{i,\Lambda}^{s-1}  )$ \eqref{X-35a-b} and \eqref{X-355a-b}, we obtain
\begin{align}\label{X-43}
    \nonumber K_{i,\Lambda,s}&\le\sum_{Q\in\mathcal{Q}_{\eps_i,\Lambda}^{s,b}}\lf(\int_{Q}\lvert A_Q^s\rvert^2\, d\L^5+\int_{Q}\lvert A_{i,\Lambda}^{s-1}\rvert^2\, d\L^5\rg)\\[\sep]
    \nonumber 
    &\le C\sum_{Q\in\mathcal{Q}_{\eps_i,\Lambda}^{s,b}}\lf(\eps_i\int_{\partial Q}\lvert A_{i,\Lambda}^{s-1}\rvert^2\, d\H^4+  \ep_i^3\ \int_{\p Q}| F_{A_{i,\Lambda}^{s-1} } |\ d\H^4+\int_{Q}\lvert A_{i,\Lambda}^{s-1}\rvert^2\, d\L^5\rg)\\[\sep]
    \nonumber
    &\le C\sum_{Q\in\mathcal{Q}_{\eps_i,\Lambda}^{s,b}}\int_{Q_{2\eps_i}(c_Q)}\lvert A_{i,\Lambda}^{s-1}\rvert^2\, d\L^5+ C\, \ep_i^2\ \sum_{Q\in\mathcal{Q}_{\eps_i}^s\cap\mathcal{Q}_{\eps_i,\Lambda}^b}\ \int_{ Q}| F_{A_{i,\Lambda}^{s-1} } |\ d\L^5\\[\sep]
    \nonumber
    &\le C\sum_{Q\in\mathcal{Q}_{\eps_i,\Lambda}^{s,b}}\int_{ Q_{2\eps}(c_Q)}\lvert A_{i,\Lambda}^{s-1}\rvert^2\, d\L^5+C\,\eps_i^2\ \int_{Q_1^5(0)}|F_{A_{i,\Lambda}^{s-1}}|^2\ d\L^5\\[\sep]
    &\le C\sum_{Q\in\mathcal{Q}_{\eps_i,\Lambda}^{s,b}}\int_{Q_{2\eps}(c_Q)}\lvert A\rvert^2\, d\L^5+\|A_{i,\Lambda}^{s-1}-A\|_{L^2(Q_{1}^5(0))}^2 +o_{\eps_i}(1).
\end{align}
Hence, by Lemma \ref{Lemma: norm of A on bad cubes} and by inductive hypothesis (d) on $A_{i,\Lambda}^{s-1}$, property (d) for $A_{i,\Lambda}^{s}$ follows.

\medskip
\noindent
We are just left to show property (c) for $A_{i,\Lambda}^s$. Fix any $x\in Q_1^5(0)$ and $\rho>0$ such that $Q_{\rho}(x)$ is uniformly transversal to $\mathcal{Q}^l_{\eps_i}$ for $l\le s$. Notice that,
\begin{align}\label{29}
    \nonumber
    \|F_{A_{i,\Lambda}^s}\|_{L^2(\partial Q_{\rho}(x)\cap\Omega_i^s)}^2&=\|F_{A_{i,\Lambda}^s}\|_{L^2(\partial Q_{\rho}(x)\cap\Omega_{i}^{s-1})}^2+\|F_{A_{i,\Lambda}^s}\|_{L^2(\partial Q_{\rho}(x)\cap(\Omega_{i}^{s}\smallsetminus\Omega_i^{s-1}))}^2\\
    &=\|F_{A_{i,\Lambda}^{s-1}}\|_{L^2(\partial Q_{\rho}(x)\cap\Omega_{i}^{s-1})}^2+\|F_{A_{i,\Lambda}^s}\|_{L^2(\partial Q_{\rho}(x)\cap(\Omega_{i}^{s}\smallsetminus\Omega_i^{s-1}))}^2.
\end{align}
By inductive hypothesis (c) on $A_{i,\Lambda}^{s-1}$, there exists $N_{s-1}\in\n$ independent on $x$, $\rho$ and $i$ such that
\begin{align}\label{30}
    \int_{\partial Q_{\rho}(x)\cap\Omega_{i}^{s-1}}\lvert F_{A_{i,\Lambda}^{s-1}}\rvert^2\, d\H^4\le \frac{C_{s-1}}{\eps_i}\sum_{k=1}^{N_{s-1}}\int_{Q_{2\eps_i}(x_k)}f_{s-1}\, d\L^5,
\end{align}
for $C_{s-1}>0$ depending only on $G$, for an $N_{s-1}$-tuple $\{x_k^{s-1}=x_k^{s-1}(x,\rho,i)\}_{k=1,...,N_{s-1}}\subset Q_1^5(0)$ and for a real-valued, non-negative $f_{s-1}\in M^0_{1,4}(Q_1^5(0))$ independent on $i$ such that 
\begin{align*}
    |f_{s-1}|_{M^0_{1,4}(Q_1^5(0))}\le |F_A|_{M^0_{2,2}(Q_1^5(0))}^2.
\end{align*}
Hence, in order to prove property (c) for $A_{i,\Lambda}^s$, it remains to control $\|F_{A_{i,\Lambda}^s}\|_{L^2(\partial Q_{\rho}(0)\cap(\Omega_{i}^{s}\smallsetminus\Omega_i^{s-1}))}^2$. Notice that, since $\rho\in\big(\frac{3}{2}\eps_i,\frac{7}{4}\eps_i\big)$, there exists $\hat N$ depending only on the choice of the cubic cover such that $\partial Q_{\rho}$ intersects just $\hat N$ cubes in $\mathcal{Q}_{\eps_i}^s$, say $\{Q_1,...,Q_{\hat N}\}$. Then, by construction, we have
\begin{align*}
    F_{A_{i,\Lambda}^s}\mathds{1}_{\partial Q_{\rho}(x)\cap(\Omega_{i}^{s}\smallsetminus\Omega_i^{s-1})}=\sum_{Q\in\mathcal{Q}_{\eps_i}^s}F_{A_Q^s}\mathds{1}_{\partial Q_{\rho}(x)\cap Q}=\sum_{\ell=1}^{\hat N}F_{A_{Q_\ell}}\mathds{1}_{\partial Q_{\rho}(x)\cap Q_{\ell}}.
\end{align*}
Thus, by \eqref{equation: estimate on the trace of the curvature on cube} and \eqref{equation: estimate on the trace of the curvature on cube/2} we get
\begin{align}\label{XXIV}
    \nonumber
    \int_{\partial Q_{\rho}(x)\cap(\Omega_{i}^{s}\smallsetminus\Omega_i^{s-1})}&\lvert F_{A_{i,\Lambda}^{s}}\rvert^2\, d\H^4\le\sum_{\ell=1}^{\hat N}\int_{\partial Q_{\rho}(x)\cap Q_{\ell}}\lvert F_{A_{Q_{\ell}}^s}\rvert^2\, d\H^4\\
    \nonumber
    &\le C\,\sum_{\ell=1}^{\hat N}\int_{\partial Q_\ell}\lvert F_{A_{i,\Lambda}^{s-1}}\rvert^2\, d\H^4+C\,\eps_i^{-6}\sum_{Q_\ell\in\mathcal{Q}_{\eps_i,\Lambda}^g}\lf(\int_{\partial Q_\ell}\lvert A_{i,\Lambda}^{s-1}\rvert^2\, d\H^4\rg)^3\\
    &\quad+C\sum_{Q_\ell\in\mathcal{Q}_{\eps_i,\Lambda}^g}\eps_i^{-4}\int_{\partial Q_\ell}\lvert A_{i,\Lambda}^{s-1}-(A_{i,\Lambda}^{s-1})_{Q_\ell}\rvert^2\, d\H^4\ \ \int_{\partial Q_\ell}\lvert A_{i,\Lambda}^{s-1}\rvert^2\, d\H^4.
\end{align}
Fix any $\ell\in\{1,...,\hat N\}$. Notice that we have have chosen $Q^s_{\eps_i}$ so that \eqref{X-35a-b} holds. Then, we have
\begin{align}\label{XXV}
    \int_{\partial Q_\ell}\lvert F_{A_{i,\Lambda}^{s-1}}\rvert^2\, d\H^4\le\frac{C}{\eps_i}\int_{Q_{2\eps_i}(c_{Q_\ell})}\lvert F_A\rvert^2\, d\L^5,
\end{align}
for some constant $C>0$ independent of $\eps_i$. Moreover, by properties (2), (4)and (5) in Definition \ref{Definition: good cubes eps-cover} (characterizing good cubes), by Remark \ref{Remark: comparison boundary interior for the connection} and by \eqref{X-355a-b}, we obtain the estimates
\begin{align}\label{XXVI}
    \eps_i^{-6}\ \lf(\int_{\partial Q_\ell}\lvert A_{i,\Lambda}^{s-1}\rvert^2\, d\H^4\rg)^3&\le\eps_i^{-6}\ \lf(\eps_i^{3+1/2}\,\int_{ Q_1^5(0)}\lvert F_{A_{i,\Lambda}^{s-1}}\rvert^2\, d\L^5\rg)^3\\[\sep]
    \nonumber
    &\le \eps_i^{3+3/2}\, |F_{A_{i,\Lambda}^{s-1}}|_{M^0_{2,2}(Q_1^5(0))}^6\\[\sep]
    &\le C\, \frac{1}{\eps_i}\int_{Q_\ell}\, \eps_i^{1/2}\,|F_{A_{i,\Lambda}^{s-1}}|_{M^0_{2,2}(Q_1^5(0))}^6\ d\L^5
\end{align}
%
%
and
\begin{align}\label{XXVII}
    \nonumber
    \eps_i^{-4}\int_{\partial Q_{\ell}}\lvert A_{i,\Lambda}^{s-1}-(A_{i,\Lambda}^{s-1})_{Q_\ell}\rvert^2\, d\H^4&\int_{\partial Q_\ell}\lvert A_{i,\Lambda}^{s-1}\rvert^2\, d\H^4\\
    \nonumber
    &\le\ \Lambda^{-1}\ |F_{A_{i,\Lambda}^{s-1}}|_{M_{2,2}^0(Q_1^5(0))}^2 \,\eps_i^4\,\La\ |F_{A_{i,\Lambda}^{s-1}}|_{M_{2,2}^0(Q_1^5(0))}^2 \\[\sep]
    &\le\frac{C_{s-1}}{\eps_i}\int_{Q_{2\eps}(c_{Q_\ell})}\, |F_A|_{M^0_{2,2}(Q_1^5(0))}^4\,d\L^5.
\end{align}
By combining \eqref{XXIV}, \eqref{XXV}, \eqref{XXVI} and \eqref{XXVII}, we get
\begin{align}\label{XXVIII}
    \int_{\partial Q_{\rho}(x)\cap(\Omega_{i}^{s}\smallsetminus\Omega_i^{s-1})}\lvert F_{A_{i,\Lambda}^{s}}\rvert^2\, d\H^4\le\frac{C}{\eps_i}\sum_{\ell=1}^{\hat N}\int_{Q_{2\eps_i}(c_{Q_\ell})}g_s\, d\L^5
\end{align}
with $g_s:=\lvert F_A\rvert^2+|F_A|^4_{M^0_{2,2}(Q_1^5(0))}+\eps_i^{1/2}\, |F_A|^6_{M^0_{2,2}(Q_1^5(0))}$. The required estimate \eqref{31} for $F_{A_{i,\Lambda}^s}$ then follows by \eqref{29}, \eqref{30} and \eqref{XXVIII}, letting 
\begin{align*}
    f_s:=\max\{g_s,f_{s-1}\}.
\end{align*}

\medskip
\noindent
Finally, let $A_{i,\Lambda}:=A_{i,\Lambda}^N\in L^2(Q_1^5(0))$. By construction, $F_{A_{i,\Lambda}}\in L^2(Q_1^5(0))$ and $A_{i,\Lambda}$ satisfies the properties (i) and (iii). Moreover, notice that $\Omega_i^N\supset Q_{1-\frac{\eps_i}{2}}(0)$. In particular, $\partial Q_{\rho}(x)\subset\Omega_{i}^N$. Hence, by property (c) for $A_{i,\Lambda}^N$, there exists $\tilde N\in\n$ independent on $x$, $\rho$ and $i$ such that
\begin{align}\label{32}
    \int_{\partial Q_{\rho}(x)}\lvert F_{A_{i,\Lambda}}\rvert^2\, d\H^4\le \frac{C}{\eps_i}\sum_{k=1}^{\tilde N}\int_{Q_{2\eps_i}(x_k)}f\, d\L^5,
\end{align}
for $C>0$ depending only on $G$, for an $N$-tuple of points $\{x_k=x_k(x,\rho,i)\}_{k=1,...,N}\subset Q_1^5(0)$ and for some real-valued, non-negative $f\in M^0_{1,4}(Q_1^5(0))$ independent on $i$ such that
\begin{align}
    |f|_{M^0_{1,4}(Q_1^5(0))}\le\ |F_A|_{M^0_{2,2}(Q_1^5(0))}^2
\end{align}
We are just left to check that, given any $x\in Q_1^5(0)$, for $\L^1$-a.e. choice of $\rho>0$ such that $Q_{\rho}(x)$ is uniformly transversal to $\mathcal{Q}_{\eps_i}$ and $\partial Q_{\rho}(x)\subset Q_{1-\frac{\eps_i}{2}}(0)$ we have $\iota_{\partial Q_{\rho}(x)}^*A_{i,\Lambda}\in\A_G(\partial Q_{\rho}(x))$. First, note that for $\L^1$-a.e. $\rho>0$ we have $\iota_{\partial Q_{\rho}(x)}^*A_{i,\Lambda}\in L^2(\partial Q_{\rho}(x))$. Moreover, by \eqref{32}, we get
\begin{align}\label{33}
    \nonumber
    \int_{\partial Q_{\rho}(x)}\lvert F_{\iota_{\partial Q_{\rho}(x)}^*A_{i,\Lambda}}\rvert^2\, d\H^4&=\int_{\partial Q_{\rho}(x)}\lvert \iota_{\partial Q_{\rho}(x)}^*F_{A_{i,\Lambda}}\rvert^2\, d\H^4\\
    &\le\int_{\partial Q_{\rho}(x)}\lvert F_{A_{i,\Lambda}}\rvert^2\, d\H^4\le C\eps_G,
\end{align}
for some constant $C>0$ depending only on $G$. By construction, we can find a covering $(R_l)_{\ell=1,...,N_{\rho,x}}$ of $\partial Q_{\rho}(x)$ where $R_\ell$ is a rectangle included in some $Q_l\in {\mathcal Q}_{\eps_i}^{s(l)}$ for some $s(l)\in\{0\cdots N \}$ depending on $l$ such that
\begin{align*}
    A_{i,\Lambda}=A_{Q_{\ell}}^{s(\ell)} \qquad \mbox{ on } R_{\ell}
\end{align*}
 By the properties of the extensions on good and bad cubes implying that we have respectively  \eqref{L4-norm-tiA} and \eqref{L4-tr}, if \eqref{33} holds there exist $g_{\ell}\in W^{1,2}(R_{\ell},G)$ such that
 \begin{align*}
    (\iota_{\partial Q_{\rho}(x)}^*A_{i,\Lambda})^{g_{\ell}}\in L^4(\partial Q_{\rho}(x)\cap R_{\ell}).
\end{align*}
Then, we can apply Lemma \ref{Lemma: weak connection} to conclude there exists $g\in W^{1,2}(\p Q_\rho(x),G)$ such that
 \be
 \label{X-4333}
 (\iota_{\partial Q_{\rho}(x)}^*A_{i,\Lambda})^{g}\in L^4(\partial Q_{\rho}(x)).
 \ee
Hence, $\iota_{\partial Q_{\rho}(x)}^*A_{i,\Lambda}\in\A_G(\partial Q_{\rho}(x))$. This concludes the proof of Theorem \ref{Theorem: approximation under controlled traces of the curvatures}.
\end{proof}
\section{The second smoothification: strong \texorpdfstring{$L^2$}{Z}-approximation by smooth connections}
The goal of this section is to prove that any weak connection with curvature having a small Morrey norm is strongly approximable in $L^2$ by smooth connections with small Morrey norm of the curvature as well. 
More precisely the main result obtained in this section is to prove the following theorem. 
\begin{Th}[Smooth approximation under controlled Morrey norm]\label{Theorem: smooth approximation under controlled Morrey norm}
Let $G$ be a compact matrix Lie group. There exists $\eps_G\in (0,1)$ such that for every $A\in\A_G(Q_1^5(0))$ satisfying 
\begin{align*}
    |F_A|_{M^0_{2,2}(Q_1^5(0))}^2<\eps_G
\end{align*}
there exists a sequence of $\g$-valued 1-forms $\{A_{i}\}_{i\in\n}\subset C_c^{\infty}(Q_1^5(0))$ such that:
\begin{enumerate}[(i)]
    \item for every $i\in\n$ we have 
    \begin{align*}
        |F_{A_{i}}|_{M^0_{2,2}(Q_{\frac{1}{2}}(0))}\le C_G\,|F_A|_{M^0_{2,2}(Q_1^5(0))}.
    \end{align*}
    for some constant $C_G>0$ depending on $G$;
    \item $\|A_i-A\big\|_{L^2(Q_{\frac{1}{2}}(0))}\to 0$ as $i\to +\infty$.
\end{enumerate}
\end{Th}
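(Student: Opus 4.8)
The strategy is to combine the ``first smoothification'' (Theorem \ref{Theorem: approximation under controlled traces of the curvatures}) with a second round of cube-by-cube replacement, and then send the auxiliary parameter $\Lambda$ to $+\infty$ along a diagonal. Fix $A\in\A_G(Q_1^5(0))$ with small Morrey norm. For each $\Lambda>0$, apply Theorem \ref{Theorem: approximation under controlled traces of the curvatures} to obtain admissible scales $\{\eps_i\}$, admissible cubic covers $\mathcal{Q}_{\eps_i}$, and forms $A_{i,\Lambda}$ with $|F_{A_{i,\Lambda}}|_{M^0_{2,2}}\le C_G|F_A|_{M^0_{2,2}}$, with $\|A_{i,\Lambda}-A\|_{L^2}^2\to C_G\int_{\Omega_{i,\Lambda,A}}|A|^2$ as $i\to\infty$, and — crucially — with the property that on every cube $Q_\rho(x)$ of a grid uniformly transversal to $\mathcal{Q}_{\eps_i}$ at scale comparable to $\eps_i$, the trace $\iota^*_{\partial Q_\rho(x)}A_{i,\Lambda}$ lies in $\A_G(\partial Q_\rho(x))$ with $\int_{\partial Q_\rho(x)}|F_{A_{i,\Lambda}}|^2\,d\H^4$ controlled by $\eps_i^{-1}\sum_k\int_{Q_{2\eps_i}(x_k)}f\,d\L^5$ where $|f|_{M^0_{1,4}}\le|F_A|_{M^0_{2,2}}^2$. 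In particular, for $\eps_i$ small, the $L^2$-norm of the curvature on the boundary of such a cube is smaller than $\eps_G$, so Corollaries \ref{Corollary: extension in the interior of good cubes} and \ref{Corollary: extension in the interior of bad cubes} apply on each cube of the transversal grid.

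\textbf{Second smoothification by iterated replacement on a transversal grid.} Fix $\Lambda$ and $i$ large. Choose a cubic grid $\mathcal{G}_{\eps_i}$ (disjoint cubes of edge-length $k\eps_i$ covering $Q_{1/2}(0)$) uniformly transversal to $\mathcal{Q}_{\eps_i}$; by the last statement of Theorem \ref{Theorem: approximation under controlled traces of the curvatures}, for a.e. such grid every cube boundary carries $\iota^*A_{i,\Lambda}\in\A_G(\partial Q)$ with small curvature, and on the ``good'' cubes (in the sense of smallness of both $F$ and $A$ on the boundary) also the connection is small. Order the cubes $Q^{(1)},Q^{(2)},\dots$ of $\mathcal{G}_{\eps_i}$ so that each $Q^{(m)}$ shares a $4$-face with at least one not-yet-replaced cube (a standard ``peeling'' order exists for a cubical grid). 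Replace $A_{i,\Lambda}$ on $Q^{(m)}$ iteratively: on the current $1$-form restricted to $\partial Q^{(m)}$, use Corollary \ref{Corollary: extension in the interior of good cubes}-(ii) if that boundary is ``good'' and Corollary \ref{Corollary: extension in the interior of bad cubes}-(ii) otherwise, pasting in the form $\hat A$, which matches the boundary trace in $L^2$, has curvature in $L^2$, and — by \eqref{equation: estimate on the trace of the curvature on cube}, \eqref{equation: estimate on the trace of the curvature on cube/2}, \eqref{L4-norm-tiA}, \eqref{L4-tr} — has controlled $L^2$-curvature and $L^4$-norm on boundaries of neighbouring grid cubes. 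The Morrey bound is preserved exactly as in the proof of Theorem \ref{Theorem: approximation under controlled traces of the curvatures} (estimates \eqref{indu-morr-sm}, \eqref{indu-morr-lar}, \eqref{XXIII}): the input data on each boundary is already trace-controlled by $f$ with $|f|_{M^0_{1,4}}\lesssim|F_A|^2_{M^0_{2,2}}$, so summing the building-block curvature estimates over the $O(\eps_i^{-5}\rho^5)$ cubes meeting $Q_\rho(x)$ gives $\rho^{-1}\int_{Q_\rho(x)}|F|^2\le C_G|F_A|^2_{M^0_{2,2}}$. Call the resulting form $\hat A_{i,\Lambda}$. Since on the final grid the $1$-form on each cube boundary is (after a gauge) in $W^{1,2}$ with $L^4$-control, and the gauges glue consistently across faces by the construction of the $\hat A$'s and Lemma \ref{l-gluing}/Corollary \ref{c-gluing}, one obtains a genuine $W^{1,2}$-connection on $Q_{1/2}(0)$ which, being smooth away from the $3$-skeleton of the grid and there in $W^{1,2}$ with small Morrey curvature, can be mollified to a smooth $\g$-valued $1$-form $B_{i,\Lambda}\in C^\infty_c(Q_1^5(0))$ (extend by $0$ outside $Q_{1/2}$ after a cutoff) with $|F_{B_{i,\Lambda}}|_{M^0_{2,2}(Q_{1/2})}\le C_G|F_A|_{M^0_{2,2}}$ and $\|B_{i,\Lambda}-\hat A_{i,\Lambda}\|_{L^2(Q_{1/2})}\to 0$ as the mollification parameter $\to 0$.

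\textbf{$L^2$-convergence and the diagonal in $\Lambda$.} The replacement and mollification are $L^2$-close to $A_{i,\Lambda}$ by the estimates $\|\hat A-\bar A\|_{L^2}$ of \eqref{hat-A-L2-c}: on each grid cube $Q$ of edge $k\eps_i$,
\[
\|\hat A_{i,\Lambda}-A_{i,\Lambda}\|^2_{L^2(Q)}\le C\eps_i\Big(\|F_{A_{i,\Lambda}}\|^2_{L^2(\partial Q)}+\eps_i^{-2}\|A_{i,\Lambda}-\bar A_Q\|^2_{L^2(\partial Q)}+\eps_i^{-4}\|A_{i,\Lambda}\|^4_{L^2(\partial Q)}\Big),
\]
and summing over the $O(\eps_i^{-5})$ grid cubes, using the trace bound \eqref{equation: properties of F_A on the boundaries}, the finite-overlap Remark \ref{Remark: estimate on the sum of the norm of A resticted to the boundaries} applied on the transversal grid, and Poincar\'e on each cube, gives $\|\hat A_{i,\Lambda}-A_{i,\Lambda}\|_{L^2(Q_{1/2})}\to 0$ as $i\to\infty$, for $\Lambda$ fixed. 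Combining with the first-smoothification estimate and the mollification step,
\[
\limsup_{i\to\infty}\|B_{i,\Lambda}-A\|^2_{L^2(Q_{1/2}(0))}\le C_G\limsup_{i\to\infty}\int_{\Omega_{i,\Lambda,A}}|A|^2\,d\L^5.
\]
Finally, since $\int_{Q_1^5(0)}|A|^2<\infty$ and $\L^5(\Omega_{i,\Lambda,A})\le\Lambda^{-1}\int|A|^2\to 0$ uniformly in $i$ as $\Lambda\to\infty$ (Chebyshev, plus finite overlap of $\{Q_{2\eps_i}\}$), absolute continuity of the integral gives $\limsup_i\int_{\Omega_{i,\Lambda,A}}|A|^2\to 0$ as $\Lambda\to\infty$. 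A diagonal extraction $i=i(j)$, $\Lambda=\Lambda(j)\to\infty$ produces $A_j:=B_{i(j),\Lambda(j)}\in C^\infty_c(Q_1^5(0))$ with $|F_{A_j}|_{M^0_{2,2}(Q_{1/2})}\le C_G|F_A|_{M^0_{2,2}}$ and $\|A_j-A\|_{L^2(Q_{1/2})}\to 0$, which is the claim.

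\textbf{Main obstacle.} The delicate point is the \emph{second} paragraph: verifying that after the iterated cube-by-cube replacement on the transversal grid the pieces $\hat A$ actually glue into a single $1$-form whose curvature is still globally $M^0_{2,2}$-small (not merely $L^{5/2}$ on each cube), and that the resulting object is gauge-equivalent to a $W^{1,2}$ connection smooth off a lower-dimensional skeleton so that mollification is legitimate. This requires that at the moment cube $Q^{(m)}$ is processed, the trace data inherited from already-replaced neighbours still satisfies the smallness hypotheses of Corollaries \ref{Corollary: extension in the interior of good cubes}/\ref{Corollary: extension in the interior of bad cubes} — which is exactly why the peeling order (each new cube faces an untouched one) and the boundary-curvature control \eqref{equation: properties of F_A on the boundaries} propagated from the first smoothification are essential, together with the $L^4$-trace bounds \eqref{L4-norm-tiA}, \eqref{L4-tr} to keep applying Lemma \ref{Lemma: weak connection}.
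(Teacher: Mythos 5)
Your overall architecture --- first smoothification, a grid at scale comparable to $\eps_i$ uniformly transversal to the admissible cover, iterated cube-by-cube replacement in a peeling order, mollification, and a diagonal in $\Lambda$ --- coincides with the paper's, and your $L^2$-convergence and diagonal argument in the last paragraph are essentially the paper's Step 3.

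The genuine gap is in the replacement/gluing step, which you half-identify in your ``main obstacle'' paragraph but do not resolve. You paste on each grid cube the form $\hat A$ from part (ii) of Corollaries \ref{Corollary: extension in the interior of good cubes} and \ref{Corollary: extension in the interior of bad cubes}. This does make traces match across interfaces (both sides restrict to $\iota^*A_{i,\Lambda}$), but $\hat A=\tilde A^{\tilde g^{-1}}$ is only an $L^2$ form with $L^2$ curvature: the radial gauge $\tilde g$ contributes the term $\tilde g\,d\tilde g^{-1}$, which is merely $L^2$. An $L^2$ form cannot be mollified with control on its curvature, since the quadratic term of the mollification is not controlled in $L^2$ (one needs at least $A\in L^4_{loc}$). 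Your assertion that ``the gauges glue consistently across faces by the construction of the $\hat A$'s'' is not correct as stated: the per-cube gauges $\tilde g_Q$ relating $\hat A_Q$ to the regular extension $\tilde A_Q\in L^5$ do \emph{not} agree on shared faces. The paper instead pastes the part-(i) extensions $A_{Q_n}$ (which lie in $W^{1,\frac{5}{2}}\subset L^5$ with curvature in $L^{\frac{5}{2}}$ but carry a boundary gauge $g_n$) and then inductively corrects the gauge: on the already-built region $\Omega_i^{n-1}$ one has a gauge $\sigma_{n-1}$, the transition function $\eta_n:=g_n^{-1}\sigma_{n-1}$ on the shared faces $H_i^{n-1}=\partial\Omega_i^{n-1}\cap\partial Q_n$ is in $W^{1,4}$ because both boundary representatives are in $L^4$ there, and Corollary \ref{c-gluing} --- which requires $H_i^{n-1}$ to be a \emph{nonempty proper} union of faces of $\partial Q_n$, whence the specific enumeration --- extends $\eta_n$ to $h_n\in W^{1,(5,\infty)}(Q_n,G)$. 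Setting the new piece equal to $(A_{Q_n})^{h_n}$ makes its trace match that of $\tilde A_{i,\Lambda}^{n-1}$ on $H_i^{n-1}$ (so Lemma \ref{lm-cont} applies and no interface singularity appears in the curvature) while keeping the glued form in $L^{5,\infty}$ with curvature in $L^{\frac{5}{2},\infty}$, together with a single global gauge $\sigma_{i,\Lambda}\in W^{1,2}$ back to $A_{i,\Lambda}$. It is precisely this improved integrability in one gauge that legitimizes the final mollification and the smooth approximation of the gauge; without the inductive gauge-gluing your construction stalls at an $L^2$ object.
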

\noindent
In order to prove Theorem \ref{Theorem: smooth approximation under controlled Morrey norm} we shall need some preliminary results. Recall that we have defined \begin{align*}
    \mathscr{C}_{\eps}:=(\eps\z)^5\cap\overline{Q_{1-2\eps}(0)} 
\end{align*}
for every $\eps\in\left(0,\frac{1}{4}\right)$ (see the beginning of Section 3). Under the same assumption and notation of Theorem \ref{Theorem: approximation under controlled traces of the curvatures}, for every $i\in\n$ and for every $t\in Q_{\frac{\eps_i}{8}}(0)$ the cubes in the grid $\{Q_{\eps_i}(c+t)\}_{c\in\mathscr{C}_{\eps_i}}$ are uniformly transverse to the cubes in $\mathcal{Q}_{\eps_i}$ and such that $\partial Q_{\eps_i}(c+t)\subset Q_{1-\frac{\eps_i}{2}}(0)$ for every $c\in\mathscr{C}_{\eps_i}$. Since uniform transversality is stable by small perturbations, there exists $\alpha>0$  small enough and independent of $\eps_i$ so that for every ${r}\in\big((1-\alpha)\eps_i,(1+\alpha)\eps_i\big)$ and every $t\in Q_{\frac{{r}}{8}}(0)$ the cubes in the grid $\{Q_{{r}}(c+t)\}_{c\in\mathscr{C}_{{r}}}$ are still uniformly transverse to the cubes in $\mathcal{Q}_{\eps_i}$ and such that $\partial Q_{{r}}(c+t)\subset Q_{1-\frac{\eps_i}{2}}(0)$ for every $c\in\mathscr{C}_{{r}}$.
\begin{Lm}[Choice of an admissible cubic grid]\label{Lemma: choice of an admissible cubic decomposition}
Under the same assumption and notation of Theorem \ref{Theorem: approximation under controlled traces of the curvatures}, for every $i\in\n$ there exist ${r}_i\in\big((1-\alpha)\eps_i,(1+\alpha)\eps_i\big)$ and a translation $t_{i}\in Q_{\frac{{r}_i}{8}}(0)$ for which the following facts hold.
\begin{enumerate}
    \item $\iota_{\partial Q_{{r}_i}(c+t_i)}^*A_{i,\Lambda}\in\A_G(\partial Q_{{r}_i}(c+t_i))$ for every $c\in\mathscr{C}_{{r}_i}$. 
    \item There exist $N\in\n$ and a real-valued, non-negative $f\in M_{1,4}^0(Q_1^5(0))$ satisfying 
    \[
    |f|_{M^0_{1,4}(Q_1^5(0))}\le\,|F_A|_{M^0_{2,2}(Q_1^5(0))}^2
    \] such that for every $c\in\mathscr{C}_{{r}_i}$ we have
    \begin{align*}
        \int_{\partial Q_{{r}_i}(c+t_i)}\lvert F_{A_{i,\Lambda}}\rvert^2\, d\H^4\le \frac{C_G}{\eps_i}\sum_{k=1}^N\int_{Q_{2\eps_i}(x_k)}f\, d\L^5,
    \end{align*}
    for an $N$-tuple of points $\{x_k=x_k(i,x,{r}_i)\}_{k=1,...,N}\subset Q_1^5(0)$. 
    \item It holds that
    \begin{align}
        \label{Equation: convergence of A minus its the averages on a good decomposition}
        &\lim_{i\to+\infty}{r}_i\sum_{c\in\mathscr{C}_{{r}_i}}\int_{\partial Q_{{r}_i}(c+t_{i})}\lvert A_{i,\Lambda}-(A_{i,\Lambda})_{Q_{{r}_i}(c+t_{i})}\rvert^2\ d\H^{4}=0,\\
        &\lim_{i\to+\infty}{r}_i\sum_{c\in\mathscr{C}_{{r}_i}}\int_{\partial Q_{{r}_i}(c+t_{i})}\lvert F_{A_{i,\Lambda}}-(F_{A_{i,\Lambda}})_{Q_{{r}_i}(c+t_{i})}\rvert^2\ d\H^{4}=0.
    \end{align} 
\end{enumerate}
\end{Lm}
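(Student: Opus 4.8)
The plan is a selection argument on the $6$--dimensional space of admissible grid parameters. Fix $i\in\n$. By the discussion preceding the statement there is a universal $\alpha>0$ such that every pair $(r,t)$ with $r\in\big((1-\alpha)\eps_i,(1+\alpha)\eps_i\big)$ and $t\in Q_{(1-\alpha)\eps_i/8}(0)$ produces a grid $\{Q_r(c+t)\}_{c\in\mathscr{C}_r}$ which is uniformly transversal to $\mathcal{Q}_{\eps_i}$ with $\partial Q_r(c+t)\subset Q_{1-\eps_i/2}(0)$ for every $c\in\mathscr{C}_r$. I denote by $\Pi_i$ the set of such parameters, so that $\L^6(\Pi_i)$ is comparable to $\eps_i^6$. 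I will show that each of (1), (2), (3) holds for all parameters in a subset of $\Pi_i$ of (nearly) full measure, and then pick $(r_i,t_i)$ in the common intersection.

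Property (2) requires no selection at all: for every $(r,t)\in\Pi_i$ and every $c\in\mathscr{C}_r$ the cube $Q_r(c+t)$ is uniformly transversal with $\partial Q_r(c+t)\subset Q_{1-\eps_i/2}(0)$, so inequality \eqref{equation: properties of F_A on the boundaries} of Theorem \ref{Theorem: approximation under controlled traces of the curvatures}, read with $x=c+t$, $\rho=r$, gives (2) verbatim, with the same universal $N$ and the same $f\in M^0_{1,4}(Q_1^5(0))$ (which satisfies $\lvert f\rvert_{M^0_{1,4}(Q_1^5(0))}\le\lvert F_A\rvert_{M^0_{2,2}(Q_1^5(0))}^2$). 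For property (1): fixing a lattice index $k\in\z^5$, on the part of $\Pi_i$ where $rk\in\overline{Q_{1-2r}(0)}$ the map $(r,t)\mapsto(rk+t,r)$ is a bi-Lipschitz change of variables, and the last line of Theorem \ref{Theorem: approximation under controlled traces of the curvatures}--(ii) says that for each fixed centre $x$ one has $\iota^\ast_{\partial Q_\rho(x)}A_{i,\La}\in\A_G(\partial Q_\rho(x))$ for $\L^1$--a.e.\ admissible $\rho$; by Fubini this property holds for $\L^6$--a.e.\ parameter in that region. Since only $\lesssim\eps_i^{-5}$ indices $k$ ever label a cube of a grid coming from $\Pi_i$, intersecting the associated conull sets yields a conull $\Pi_i'\subset\Pi_i$ on which $\iota^\ast_{\partial Q_r(c+t)}A_{i,\La}\in\A_G(\partial Q_r(c+t))$ for all $c\in\mathscr{C}_r$ simultaneously.

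The substance is (3). Writing $\Phi_i(r,t):=r\sum_{c\in\mathscr{C}_r}\int_{\partial Q_r(c+t)}\lvert A_{i,\La}-(A_{i,\La})_{Q_r(c+t)}\rvert^2\,d\H^4$ and $\Psi_i(r,t)$ for the same expression with $F_{A_{i,\La}}$ in place of $A_{i,\La}$, the target is
\[
\frac{1}{\L^6(\Pi_i)}\int_{\Pi_i}\big(\Phi_i+\Psi_i\big)\,dr\,dt=:\eta_i\longrightarrow 0\qquad(i\to+\infty);
\]
granting this, Chebyshev's inequality gives a set of parameters of measure $\ge\tfrac34\L^6(\Pi_i)$ on which $\Phi_i+\Psi_i\le 4\eta_i$, and intersecting with the conull set $\Pi_i'$ of the previous paragraph yields $(r_i,t_i)$ for which (1), (2), (3) all hold with $\Phi_i(r_i,t_i)+\Psi_i(r_i,t_i)\to 0$. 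To estimate $\int_{\Pi_i}\Phi_i$ I would proceed as in Lemma \ref{Lemma: choice of an admissible cubic cover}: a sweeping/Fubini bound gives $\int_{\Pi_i}r\sum_c\int_{\partial Q_r(c+t)}\lvert w\rvert^2\,d\H^4\,dr\,dt\le C\,\L^6(\Pi_i)\,\lVert w\rVert_{L^2(Q_1^5(0))}^2$ for any $w\in L^2(Q_1^5(0))$, and standard manipulations with cube averages reduce $\int_{\Pi_i}\Phi_i$ to $\L^6(\Pi_i)$ times a quantity which must then be shown to vanish.

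I expect the main obstacle to be precisely this last step, because --- unlike in Lemma \ref{Lemma: choice of an admissible cubic cover}, where $\eps_i$ could be taken small \emph{after} choosing a smooth approximation of $A$ --- here $\eps_i$ is already frozen by Theorem \ref{Theorem: approximation under controlled traces of the curvatures}, so one must exhibit the decay directly from the structure of $A_{i,\La}$. The plan is to use that on every cube $Q$ of the admissible $\eps_i$--cover $A_{i,\La}$ coincides with the extension $\hat A_Q=\tilde g_Q\tilde A_Q\tilde g_Q^{-1}+\tilde g_Q\,d\tilde g_Q^{-1}$ of Corollary \ref{Corollary: extension in the interior of good cubes} or Corollary \ref{Corollary: extension in the interior of bad cubes}: the $L^4$--trace bound \eqref{L4-norm-tiA} controls $\tilde A_Q$ on transversal slices of $Q$, the bulk bounds \eqref{d-tiA-c}, \eqref{hat-A-L2-c} together with the gauge estimates of those corollaries control $\hat A_Q$ and $\tilde g_Q$ at scale $\eps_i$, and \eqref{equation: estimate on the trace of the curvature on cube} controls $F_{\hat A_Q}$ on transversal slices. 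Summing these over the old cubes $Q$ with the help of Remark \ref{Remark: estimate on the sum of the norm of A resticted to the boundaries}, estimate \eqref{equation: estimate on the sum of the boundaries of the curvature}, and the convergences \eqref{equation: convergence of A minus its averages on an admissible cover}, \eqref{app-grid}, \eqref{equation: convergence of F_A minus its averages on an admissible cover} of Lemma \ref{Lemma: choice of an admissible cubic cover}, one should obtain $\int_{\Pi_i}(\Phi_i+\Psi_i)=o(\L^6(\Pi_i))$, closing the argument; the remaining bookkeeping --- matching Lebesgue exponents along the slices and keeping track of the cube averages --- is routine but lengthy.
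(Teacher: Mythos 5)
Your overall architecture matches the paper's: parts (1) and (2) are obtained exactly as you describe (the estimate \eqref{equation: properties of F_A on the boundaries} of Theorem \ref{Theorem: approximation under controlled traces of the curvatures} applies to every uniformly transversal cube with no selection, while membership in $\A_G$ of the slices requires an a.e.\ selection in the grid parameters), and part (3) is reduced, as in your plan, to a Chebyshev argument once one knows that the parameter--averaged oscillation quantities vanish. The only structural difference is cosmetic: the paper first fixes $r_i$ in a full--measure set $E\subset\big((1-\alpha)\eps_i,(1+\alpha)\eps_i\big)$ and then averages over the translation $t\in Q_{r_i/8}(0)$ alone, whereas you average jointly over $(r,t)$; both work.

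The genuine gap is the step you yourself flag as ``the main obstacle'': you never actually prove that
\[
\int_{\Pi_i}\big(\Phi_i+\Psi_i\big)\,dr\,dt=o\big(\L^6(\Pi_i)\big),
\]
you only outline a strategy and declare the bookkeeping routine. This is not a bookkeeping issue --- it is the analytic heart of the lemma, and your proposed route (reconstructing $A_{i,\Lambda}$ cube by cube from the extensions $\hat A_Q$ and summing the trace bounds \eqref{L4-norm-tiA}, \eqref{equation: estimate on the trace of the curvature on cube}, etc.) is not what is needed: those estimates control sizes of traces, not the \emph{oscillation} $\lvert A_{i,\Lambda}-(A_{i,\Lambda})_{Q_r(c+t)}\rvert^2$ against the averages on the \emph{new} grid, and it is far from clear they would produce a quantity that tends to zero rather than one merely bounded by the Morrey norm. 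The paper instead establishes the decay directly: after the Fubini/sweeping identity converting the $t$--average of boundary integrals into volume integrals, it invokes the quantitative slicing lemmas of \cite{caniato-gaia} (Lemma 2.1) and \cite{caniato} (Lemma 3.1) to get $I_{r_i}=o(r_i^4)$ and $J_{r_i}=o(r_i^4)$ (equations \eqref{equation: decay Ieps}--\eqref{equation: decay Jeps}), i.e.\ an $L^2$--oscillation decay statement for the sliced quantities, and then runs Chebyshev in $t$. The delicate point you correctly identify --- that $\eps_i$ is frozen before the grid is chosen, so one cannot take the scale small \emph{after} approximating as in Lemma \ref{Lemma: choice of an admissible cubic cover} --- is precisely what those cited lemmas are used to absorb; your proposal acknowledges the difficulty but does not resolve it, so as written the proof of (3) is incomplete.
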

\begin{proof}[\textbf{\textup{Proof of Lemma \ref{Lemma: choice of an admissible cubic decomposition}}}]
By our assumption on $\alpha$ and by Theorem \ref{Theorem: approximation under controlled traces of the curvatures}-(ii), arguing exactly as in the proof of \cite[Lemma 2.1]{caniato-gaia} (with $p=2$ and $q=1$) or \cite[Lemma 3.1]{caniato} we can show that there exists a full measure subset $E\subset\big((1-\alpha)\eps_i,(1+\alpha)\eps_i\big)$ such that for every ${r}\in E$ we have that for a.e. translation $t\in Q_{\frac{{r}}{8}}(0)$ 1. and 2. in the statement hold. We fix ${r}_i\in E$ and again, exactly as in the proof of \cite[Lemma 2.1]{caniato-gaia} (with $p=2$ and $q=1$) or \cite[Lemma 3.1]{caniato} we have that 
\begin{align}
    \label{equation: decay Ieps}
    I_{{r}_i}&:=\int_{Q_{\frac{{r}_i}{8}}(0)}\sum_{c\in\mathscr{C}_{{r}_i}}\int_{\partial Q_{{r}_i}(c+t)}\lvert A_{i,\Lambda}-(A_{i,\Lambda})_{Q_{{r}_i}(c+t)}\rvert^2\, d\H^{4}\, d\L^5=o\big({r}_i^{4}\big)\\
    \label{equation: decay Jeps}
    J_{{r}_i}&:=\int_{Q_{\frac{{r}_i}{8}}(0)}\sum_{c\in\mathscr{C}_{{r}_i}}\int_{\partial Q_{{r}_i}(c+t)}\lvert F_{A_{i,\Lambda}}-(F_{A_{i,\Lambda}})_{Q_{{r}_i}(c+t)}\rvert^2\, d\H^{4}\, d\L^5=o\big({r}_i^{4}\big)
\end{align}
as ${r}_i\to 0^+$. Consider the sets
\begin{align*}
    T_{{r}_i,K,1}&:=\bigg\{t\in Q_{\frac{{r}_i}{8}}(0) \mbox{ s.t. } \varphi_1(t)\ge K\fint_{Q_{\frac{{r}_i}{8}}(0)}\varphi_1(t)\, d\L^5\bigg\}\\
    T_{{r}_i,K,2}&:=\bigg\{t\in Q_{\frac{{r}_i}{8}}(0) \mbox{ s.t. } \varphi_2(t)\ge K\fint_{Q_{\frac{{r}_i}{8}}(0)}\varphi_2(t)\, d\L^5\bigg\},
\end{align*}
with
\begin{align*}
    \varphi_1(t)&:=\sum_{c\in\mathscr{C}_{{r}_i}}\int_{\partial Q_{{r}_i}(c+t)}\lvert A_{i,\Lambda}-(A_{i,\Lambda})_{Q_{{r}_i}(c+t)}\rvert^2\ d\H^{4} \qquad\forall\, t\in Q_{\frac{{r}_i}{8}}(0),\\
    \varphi_2(t)&:=\sum_{c\in\mathscr{C}_{{r}_i}}\int_{\partial Q_{{r}_i}(c+t)}\lvert F_{A_{i,\Lambda}}-(F_{A_{i,\Lambda}})_{Q_{{r}_i}(c+t)}\rvert^2\ d\H^{4} \qquad\forall\, t\in Q_{\frac{{r}_i}{8}}(0).
\end{align*}
By integration on $T_{{r}_i,K,1}$ and $T_{{r}_i,K,2}$ we get
\begin{align*}
    \L^5(T_{{r}_i,K,1})&\le\frac{{r}_i^5}{K}\\
    \L^5(T_{{r}_i,K,2})&\le\frac{{r}_i^5}{K}.
\end{align*}
Moreover, for every $t\in Q_{\frac{{r}_i}{8}}(0)\smallsetminus (T_{{r}_i,K,1}\cup T_{{r}_i,K,2})$ it holds that
\begin{align}
    \label{equation: estimate with Ieps}
    {r}_i\sum_{c\in\mathscr{C}_{{r}_i}}\int_{\partial Q_{{r}_i}(c+t)}\lvert A_{i,\Lambda}-(A_{i,\Lambda})_{Q_{{r}_i}(c+t)}\rvert^2\ d\H^{4}&<{r}_i K\fint_{Q_{{r}_i}(0)}\varphi_1\, d\L^5=K\frac{I_{{r}_i}}{{r}_i^4},\\
    \label{equation: estimate with Jeps}
    {r}_i\sum_{c\in\mathscr{C}_{{r}_i}}\int_{\partial Q_{{r}_i}(c+t)}\lvert F_{A_{i,\Lambda}}-(F_{A_{i,\Lambda}})_{Q_{{r}_i}(c+t)}\rvert^2\ d\H^{4}&<{r}_i K\fint_{Q_{{r}_i}(0)}\varphi_2\, d\L^5=K\frac{J_{{r}_i}}{{r}_i^4}.
\end{align}
Now, we notice that
\begin{align*}
    \L^5(T_{{r}_i,4,1}\cup T_{{r}_i,4,2})\le\frac{{r}_i^5}{2},
\end{align*}
which implies
\begin{align*}
    \L^5(T_{{r}_i,A})\ge\frac{{r}_i^5}{2}>0.
\end{align*}
with $T_{{r}_i}:=T_{{r}_i,4,1}^c\cap T_{{r}_i,4,2}^c$. Hence, we fix $t_{i}\in T_{{r}_i}$. By \eqref{equation: decay Ieps}-\eqref{equation: decay Jeps} and \eqref{equation: estimate with Ieps}-\eqref{equation: estimate with Jeps}, we have
\begin{align*}
    \lim_{{r}_i\to 0^+}{r}_i\sum_{c\in\mathscr{C}_{{r}_i}}\int_{\partial Q_{{r}_i}(c+t_{{r}_i})}\lvert A_{i,\Lambda}-(A_{i,\Lambda})_{Q_{{r}_i}(c+t_{{r}_i})}\rvert^2\ d\H^{4}&=0,\\
    \lim_{{r}_i\to 0^+}{r}_i\sum_{c\in\mathscr{C}_{{r}_i}}\int_{\partial Q_{{r}_i}(c+t_{{r}_i})}\lvert F_{A_{i,\Lambda}}-(F_{A_{i,\Lambda}})_{Q_{{r}_i}(c+t_{{r}_i})}\rvert^2\ d\H^{4}&=0.
\end{align*}
The statement follows. This concludes the proof of Lemma \ref{Lemma: choice of an admissible cubic decomposition}.
\end{proof}
\begin{Dfi}\label{Definition: admissible cubic eps-decomposition}
Under the same notation that we have used in the previous Lemma \ref{Lemma: choice of an admissible cubic decomposition}, we say that the collection of cubes $\mathcal{Q}_{{r}_i}:=\{Q_{{r}_i}(c+t_i)\}_{c\in\mathscr{C}_{{r}_i}}$ is a \textit{admissible cubic ${r}_i$-grid} relative to $A_{i,\Lambda}$.
\end{Dfi}
\begin{Dfi}[Good and bad cubes]\label{Definition: good cubes eps-decomposition}
    Under the same notation that we have used in the previous Lemma \ref{Lemma: choice of an admissible cubic decomposition}, given an admissible cubic ${r}_i$-grid relative to $A_{i,\Lambda}$ we say that $Q\in\mathcal{Q}_{{r}_i}$ is a $\Lambda$-\textit{good} cube if all the following conditions hold:
    \begin{enumerate}[(1)]
        \item $\displaystyle{\frac{1}{{r}_i^3}\int_{\partial Q}\lvert F_{A_{i,\Lambda}}\rvert^2\, d\H^4\le{r}_i^{\frac{1}{2}}\int_{Q_1^5(0)}\lvert F_{A_{i,\Lambda}}\rvert^2\, d\L^5}$,
        \item $\displaystyle{\frac{1}{{r}_i^3}\int_{\partial Q}\lvert {A_{i,\Lambda}}\rvert^2\, d\H^4\le{r}_i^{\frac{1}{2}}\int_{Q_1^5(0)}\lvert F_{A_{i,\Lambda}}\rvert^2\, d\L^5}$,
        \item $\displaystyle{\int_{\partial Q}\lvert {A_{i,\Lambda}}-(A_{i,\Lambda})_Q\rvert^2\, d\H^4\le\frac{1}{{r}_i}\int_{Q}\lvert A_{i,\Lambda}\rvert^2\, d\L^5}$,
        \item $\displaystyle{\frac{1}{{r}_i^4}\int_{\partial Q}\lvert A_{i,\Lambda}-(A_{i,\Lambda})_Q\rvert^2\, d\H^4\le\Lambda^{-1}\int_{Q_1^5(0)}\lvert F_{A_{i,\Lambda}}\rvert^2\, d\L^5}$,
        \item $\displaystyle{\fint_Q\lvert A_{i,\Lambda}\rvert^2\, d\L^5\le\Lambda}$.
    \end{enumerate}
    Otherwise, we say that that $Q$ is a $\Lambda$-\textit{bad} cube. We denote by $\mathcal{Q}_{{r}_i,\Lambda}^g$ the set of all the $\Lambda$-good cubes and $\mathcal{Q}_{{r}_i,\Lambda}^b:=\mathcal{Q}_{{r}_i}\smallsetminus\mathcal{Q}_{{r}_i,\Lambda}^g$. 
\end{Dfi}
\begin{Lm}\label{Lemma: norm of A on bad cubes - dec}
    Under the same notation that we have used in the previous Lemma \ref{Lemma: choice of an admissible cubic decomposition}, let $\mathcal{Q}_{{r}_i}$ be a good ${r}_i$-grid relative to $A_{i,\Lambda}$. We have
     \begin{align*}
        \lim_{i\to+\infty}\sum_{Q\in\mathcal{Q}_{{r}_i,\Lambda}^b}\int_{Q}\lvert A_{i,\Lambda}\rvert^2\, d\L^5\le C\lim_{i\to+\infty}\int_{\Omega_{i,\Lambda,A_{i,\Lambda}}}\lvert A_{i,\Lambda}\rvert^2\, d\L^5,
    \end{align*}
     where $\Omega_{i,\Lambda,A_{i,\Lambda}}\subset Q_1^5(0)$ is given by
    \begin{align*}
        \Omega_{i,\Lambda,A_{i,\Lambda}}:=\bigcup\bigg\{Q\in\mathcal{Q}_{{r}_i} \mbox{ s.t. } \fint_{Q}\lvert A_{i,\Lambda}\rvert^2\,d\L^5>\Lambda\bigg\}.
    \end{align*}
\end{Lm}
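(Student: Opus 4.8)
The plan is to re-run, almost verbatim, the proof of Lemma~\ref{Lemma: norm of A on bad cubes}, performing three substitutions: the fixed weak connection $A$ there is replaced by the approximating $\g$-valued $1$-form $A_{i,\Lambda}$; the admissible cubic $\eps_i$-cover $\mathcal{Q}_{\eps_i}$ relative to $A$ is replaced by the admissible cubic ${r}_i$-grid $\mathcal{Q}_{{r}_i}$ relative to $A_{i,\Lambda}$ furnished by Lemma~\ref{Lemma: choice of an admissible cubic decomposition}; and the conditions defining good cubes in Definition~\ref{Definition: good cubes eps-cover} are replaced by their grid analogues in Definition~\ref{Definition: good cubes eps-decomposition}. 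First I would record the uniform controls that keep the scheme bounded: by Theorem~\ref{Theorem: approximation under controlled traces of the curvatures}(i), $\int_{Q_1^5(0)}|F_{A_{i,\Lambda}}|^2\, d\L^5\le C_G\,|F_A|_{M^0_{2,2}(Q_1^5(0))}^2$ for all $i$; by Theorem~\ref{Theorem: approximation under controlled traces of the curvatures}(iii), $\limsup_{i\to+\infty}\int_{Q_1^5(0)}|A_{i,\Lambda}|^2\, d\L^5<+\infty$; and, arguing as in Remark~\ref{Remark: estimate on the sum of the norm of A resticted to the boundaries} but using Lemma~\ref{Lemma: choice of an admissible cubic decomposition}(2)--(3) in place of Lemma~\ref{Lemma: choice of an admissible cubic cover}, one obtains $\sup_i\,{r}_i\sum_{Q\in\mathcal{Q}_{{r}_i}}\int_{\partial Q}\big(|F_{A_{i,\Lambda}}|^2+|A_{i,\Lambda}|^2\big)\, d\H^4<+\infty$ together with the decay ${r}_i\sum_{Q\in\mathcal{Q}_{{r}_i}}\int_{\partial Q}|A_{i,\Lambda}-(A_{i,\Lambda})_Q|^2\, d\H^4\to 0$ as $i\to+\infty$.

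With these in hand, I would reproduce the four ``failure'' estimates of Lemma~\ref{Lemma: norm of A on bad cubes}: negating conditions (1), (2), (4) of Definition~\ref{Definition: good cubes eps-decomposition}, multiplying through by the appropriate power of ${r}_i$, summing over the corresponding cubes and invoking the bounds above gives $\operatorname{card}\{Q\in\mathcal{Q}_{{r}_i}:\ (1)\ \text{or}\ (2)\ \text{fails}\}\le C\,{r}_i^{-9/2}$ and $\operatorname{card}\{Q\in\mathcal{Q}_{{r}_i}:\ (4)\ \text{fails}\}\le C\,{r}_i^{-5}\big({r}_i\sum_{Q\in\mathcal{Q}_{{r}_i}}\int_{\partial Q}|A_{i,\Lambda}-(A_{i,\Lambda})_Q|^2\, d\H^4\big)$, while negating (3) gives the direct bound $\sum_{Q\,:\,(3)\ \text{fails}}\int_{Q}|A_{i,\Lambda}|^2\, d\L^5<{r}_i\sum_{Q\in\mathcal{Q}_{{r}_i}}\int_{\partial Q}|A_{i,\Lambda}-(A_{i,\Lambda})_Q|^2\, d\H^4$, whose right-hand side tends to $0$. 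In particular the $\L^5$-measure of the union of the cubes where (1), (2) or (4) fails vanishes as $i\to+\infty$.

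The decomposition of the bad set is the crux. Since the cubes of an ${r}_i$-grid are pairwise disjoint up to $\L^5$-null sets, one has $\bigcup\{Q\in\mathcal{Q}_{{r}_i}:\ (5)\ \text{fails}\}=\Omega_{i,\Lambda,A_{i,\Lambda}}$ and hence $\sum_{Q\,:\,(5)\ \text{fails}}\int_{Q}|A_{i,\Lambda}|^2\, d\L^5=\int_{\Omega_{i,\Lambda,A_{i,\Lambda}}}|A_{i,\Lambda}|^2\, d\L^5$. Every other bad cube satisfies (5), so $\int_{Q}|A_{i,\Lambda}|^2\, d\L^5\le\Lambda\,{r}_i^5$, and at least one of (1)--(4) fails for it; splitting off the cubes where (3) fails (their contribution tends to $0$ by the third estimate above) and bounding the contribution of the remaining ones by $\Lambda\,{r}_i^5$ times $\operatorname{card}\{(1)\ \text{or}\ (2)\ \text{or}\ (4)\ \text{fails}\}$, which tends to $0$ by the cardinality bounds, one concludes that $\sum_{Q\in\mathcal{Q}_{{r}_i,\Lambda}^b}\int_{Q}|A_{i,\Lambda}|^2\, d\L^5-\int_{\Omega_{i,\Lambda,A_{i,\Lambda}}}|A_{i,\Lambda}|^2\, d\L^5\to 0$; letting $i\to+\infty$ gives the stated inequality (in fact with $C=1$).

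The hard part, and the only genuine departure from Lemma~\ref{Lemma: norm of A on bad cubes}, is that $A_{i,\Lambda}$ is no longer a \emph{fixed} $L^2$-form, so one cannot appeal to absolute continuity of the integral to say that $\int_{E}|A_{i,\Lambda}|^2\, d\L^5\to 0$ whenever $\L^5(E)\to 0$. The decomposition above is designed precisely to circumvent this: on the cubes not captured by condition (5) the scale-invariant bound $\fint_Q|A_{i,\Lambda}|^2\, d\L^5\le\Lambda$ replaces ``$E$ of small measure'', so that $\int_Q|A_{i,\Lambda}|^2\, d\L^5\le\Lambda\,{r}_i^5$ and a cardinality count alone suffices, while on the cubes failing (5) one is exactly on $\Omega_{i,\Lambda,A_{i,\Lambda}}$, which is where the right-hand side lives. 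A secondary, harmless point is the degenerate case $\int_{Q_1^5(0)}|F_{A_{i,\Lambda}}|^2\, d\L^5=0$, in which the constants in the cardinality bounds for (1), (2), (4) degenerate; this is discarded exactly as in the proof of Lemma~\ref{Lemma: norm of A on bad cubes}.
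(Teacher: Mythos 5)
Your proof is correct and follows the same route as the paper, which simply declares this proof ``identical to the one of Lemma \ref{Lemma: norm of A on bad cubes}''. You are in fact more careful than the paper on the one step that does not transfer verbatim: since $A_{i,\Lambda}$ varies with $i$, the absolute-continuity argument used in Lemma \ref{Lemma: norm of A on bad cubes} for the cubes where (1), (2) or (4) fail is unavailable, and your substitute --- using condition (5) of Definition \ref{Definition: good cubes eps-decomposition} to bound each such remaining bad cube by $\Lambda\, r_i^5$ and then invoking the cardinality counts --- is exactly the right repair, and also explains why the cubes failing (5) reproduce $\Omega_{i,\Lambda,A_{i,\Lambda}}$ on the nose because the grid cubes are disjoint.
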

\begin{proof}[\textbf{\textup{Proof of Lemma \ref{Lemma: norm of A on bad cubes - dec}}}]
    The proof is identical to the one of Lemma \ref{Lemma: norm of A on bad cubes}.
\end{proof}
\begin{Rm}\label{r-lim-bad}
    Notice that we have
    \begin{align*}
        r_i\int_{Q\in\mathcal{Q}_{r_i,\Lambda}^b}\int_{\partial Q}\lvert A_{i,\Lambda}\rvert^2\, d\H^4&\le r_i\sum_{Q\in\mathcal{Q}_{r_i,\Lambda}}\int_{\partial Q}\lvert A_{i,\Lambda}-(A_{i,\Lambda})_Q\rvert^2\, d\H^4\\
        &\quad+ r_i\sum_{Q\in\mathcal{Q}_{r_i,\Lambda}^b}\int_{\partial Q}\lvert (A_{i,\Lambda})_Q\rvert^2\, d\H^4\\
        &\le r_i\sum_{Q\in\mathcal{Q}_{r_i,\Lambda}}\int_{\partial Q}\lvert A_{i,\Lambda}-(A_{i,\Lambda})_Q\rvert^2\, d\H^4+\sum_{Q\in\mathcal{Q}_{r_i,\Lambda}^b}\int_{Q}\lvert A_{i,\Lambda}\rvert^2\, d\L^5.
    \end{align*}
    Thus, by \eqref{Equation: convergence of A minus its the averages on a good decomposition} and by Lemma \ref{Lemma: norm of A on bad cubes - dec} we have
    \begin{align*}
        \lim_{i\to+\infty}r_i\int_{Q\in\mathcal{Q}_{r_i,\Lambda}^b}\int_{\partial Q}\lvert A_{i,\Lambda}\rvert^2\, d\H^4\le C\lim_{i\to+\infty}\int_{\Omega_{i,\Lambda,A_{i,\Lambda}}}\lvert A_{i,\Lambda}\rvert^2\, d\L^5,
    \end{align*}
     where $\Omega_{i,\Lambda,A_{i,\Lambda}}\subset Q_1^5(0)$ is given as in Lemma \ref{Lemma: norm of A on bad cubes - dec}.
\end{Rm}
\noindent
We now have all the tools that are needed to prove the following strong $L^2$-approximation result by smooth connections under controlled Morrey norm of their curvatures.
\begin{proof}[\textbf{\textup{Proof of Theorem \ref{Theorem: smooth approximation under controlled Morrey norm}}}]
We assume that $\eps_G>0$ is small enough so that we can apply Theorem \ref{Theorem: approximation under controlled traces of the curvatures} to $A$. Hence, we find a sequence of admissible scales  $\{\eps_i\}_{i\in\n}$ such that $\eps_i\to 0^+$ as $i\to +\infty$ and a sequence $\{A_{i,\Lambda\}_{i\in\n}\subset L^2(Q_1^5(0))}$ satisfying (i), (ii) and (iii) in Theorem \ref{Theorem: approximation under controlled traces of the curvatures}. 

\medskip
\noindent
Let $\mathcal{Q}_{{r}_i}$ be an admissible cubic ${r}_i$-grid relative to $A_{i,\Lambda}$ in the sense of Definition \ref{Definition: admissible cubic eps-decomposition}. Recall that by definition ${r}_i\in\big((1-\alpha)\eps_i,(1+\alpha)\eps_i\big)$, for some fixed $\alpha\in\big(0,\frac{1}{2}\big)$ sufficiently small. We fix $i\in\n$ big enough so that
\begin{align*}
    \overline{Q_{\frac{1}{2}}(0)}\subset\bigcup_{Q\in\mathcal{Q}_{{r}_i}}Q.
\end{align*}

\medskip
\noindent
\textbf{Step 1: construction of the approximating $1$-forms.}

\medskip
\noindent
Let $N_{i}$ be the number of cubes in $\mathcal{Q}_{{r}_i}$. We first enumerate the family 
\begin{align*}
    \mathcal{Q}_{{r}_i}=\big\{Q_1,...,Q_{N_{i}}\big\}
\end{align*}
in such a way that for every for every $n\in\{1,...,N_{i}\}$ we have
\begin{align*}
    \Omega_{i}^n:=\operatorname{int}\bigg(\bigcup_{k=1}^n\overline{Q_k}\bigg).
\end{align*}
satisfies $\emptyset\neq H_i^n:=\partial\Omega_i^n\cap\partial Q_{n+1}\neq\partial Q_{n+1}$ and $\Omega_{i}^n$ is bi-Lipschitz equivalent to a $5$-dimensional ball, for every $n=1,...,N_{i}$.\footnote{Notice that, by construction, $H_n^i$ is always non-empty and bi-Lipschitz equivalent to a $4$-dimensional closed ball.} 

\noindent
If $\eps_G>0$ is small enough, by 1. and 2. in Lemma \ref{Lemma: choice of an admissible cubic decomposition} we have that $\iota_{\partial Q}^*A_{i,\Lambda}$ satisfies the assumptions of Corollary \ref{Corollary: extension in the interior of bad cubes} on $\partial Q$, for every $Q\in\mathcal{Q}_{{r}_i}$. Moreover, if $Q\in\mathcal{Q}_{{r}_i}$ is $\Lambda$-good then $\iota_{\partial Q}^*A_{i,\Lambda}$ satisfies the assumptions of Corollary \ref{Corollary: extension in the interior of good cubes} on $\partial Q$. For every $k\in\{1,...,N_i\}$ we denote by $A_{Q_k}\in L^5(Q_k)$ the extension given by applying Corollary \ref{Corollary: extension in the interior of good cubes}--(i) (if $Q_k$ is $\Lambda$-good) or Corollary \ref{Corollary: extension in the interior of bad cubes}--(i) (if $Q_k$ is $\Lambda$-bad) to $\iota_{\partial Q_k}^*A_{i,\Lambda}$.
Hence, for every $k\in\{1,...,N_{i}\}$ such that $Q_k$ is $\Lambda$-good there exists $g_k\in W^{1,2}(\partial Q_k,G)$ such that for every $4$-dimensional face $F$ of $\partial Q_k$ we have  
\begin{align}\label{60}
    \begin{cases}
        \|(\iota_{\partial Q_k}^* A_{i,\Lambda})^{g_k}\|_{W^{1,2}(F)}\le C\big(\|F_{A_{i,\Lambda}}\|_{L^2(\partial Q_k)}+{r}_i^{-1}\|A_{i,\Lambda}\|_{L^2(\partial Q_k)}\big)\\
        \iota_{F}^*A_{Q_k}=(\iota_{F}^*A_{i,\Lambda})^{g_{Q_k}}
    \end{cases}
\end{align}
and the estimates
\begin{align}
        \label{61}
        \|F_{A_{Q_k}}\|_{L^\frac{5}{2}(Q_k)}^2&\le C(\|F_{A_{i,\Lambda}}\|_{L^2(\partial Q_k)}^2+{r}_i^{-2}\|A_{i,\Lambda}\|_{L^2(\partial Q_k)}^2\|A_{i,\Lambda}-(A_{i,\Lambda})_{Q_k}\|_{L^2(\partial Q_k)}^2\\
        \nonumber
        &\quad+{r}_i^{-4}\|A_{i,\Lambda}\|_{L^2(\partial Q_k)}^6)\\
        \label{62}
        \|A_{Q_k}-\bar A\|_{L^2(Q_k)}^2&\le C\big({r}_i\|F_{A_{i,\Lambda}}\|_{L^2(\partial Q_k)}^2+{r}_i^{-1}\|A_{i,\Lambda}-(A_{i,\Lambda})_{Q_k}\|_{L^2(\partial Q_k)}^2+{r}_i^{-3}\|A_{i,\Lambda}\|_{L^2(\partial Q_k)}^2\big)
\end{align}
for every constant $\g$-valued 1-form $\bar A$ on $\r^5$, where $C>0$ is a constant depending only on $G$. On the other hand, for every $k\in\{1,...,N_{i}\}$ such that $Q_k$ is $\Lambda$-bad there exists $g_k\in W^{1,2}(\partial Q_k,G)$ such that for every $4$-dimensional face $F$ of $\partial Q_k$ we have
\begin{align}\label{63}
    \begin{cases}
        \|(\iota_{\partial Q_k}^*A_{i,\Lambda})^{g_k}\|_{W^{1,2}(F)}\le C\|F_{A_{i,\Lambda}}\|_{L^2(\partial Q_k)}\\
        \iota_{F}^*A_{Q_k}=(\iota_{F}^*A_{i,\Lambda})^{g_{Q_k}},
    \end{cases}
\end{align}
and the estimate
\begin{align}\label{64}
        \|F_{A_{Q_k}}\|_{L^\frac{5}{2}(Q_k)}^2&\le C\|F_{A_{i,\Lambda}}\|_{L^2(\partial Q_k)}^2,
\end{align}
where $C>0$ is again a constant depending only on $G$. Lastly, for every $k=1,...,N_i$, let $\tilde g_k\in W^{1,2}(Q_k,G)$ be the extension of $g_k$ given by Corollary \ref{Corollary: extension in the interior of good cubes}-(ii), if $Q_k$ is $\Lambda$-good, or Corollary \ref{Corollary: extension in the interior of bad cubes}-(ii), if $Q_k$ is $\Lambda$-bad.

\medskip
\noindent
In order to construct the approximating 1-forms, we proceed by induction on $n=1,...,N_i$. In particular, for every $n=1,...,N_i$ we build a $\g$-valued 1-form $\tilde A_{i,\Lambda}^n\in L^{5,\infty}(\Omega_i^{n})$ with $F_{\tilde A_{i,\Lambda}^n}\in L^{\frac{5}{2},\infty}(\Omega_i^{n})$ such that there exists a gauge transformation $\sigma_n\in W^{1,2}(\Omega_i^n,G)\cap W^{1,2}(\partial\Omega_i^n,G)$ satisfying
\begin{align*}
    \iota_{\partial\Omega_i^n}^*\tilde A_{i,\Lambda}^n=(\iota_{\partial\Omega_i^n}^*A_{i,\Lambda})^{\sigma_n}\in L^4(\partial\Omega_i^n)
\end{align*}

\medskip
\noindent
\textit{Base of the induction}. At the initial step $n=1$, we have $\Omega_i^1=Q_1$ and we set $\tilde A_{i,\Lambda}^0:=A_{Q_1}\in L^5(\Omega_i^1)$. By \eqref{61} (if $Q_1$ is $\Lambda$-good) or \eqref{64} (if $Q_1$ is $\Lambda$-bad) we get $F_{\tilde A_{i,\Lambda}^1}\in L^{\frac{5}{2}}(\Omega_i^1)$. Let
\begin{align*}
    \sigma_1:=\tilde g_1\in W^{1,2}(\Omega_i^1,G)\cap W^{1,2}(\partial\Omega_i^1,G),
\end{align*} 
we have
\begin{align*}
    \iota_{\partial\Omega_i^1}^*\tilde A_{i,\Lambda}^1=\iota_{\partial\Omega_i^1}^*A_{Q_1}=(\iota_{\partial\Omega_i^1}^*A_{i,\Lambda})^{g_1}=(\iota_{\partial\Omega_i^1}^*A_{i,\Lambda})^{\sigma_1}.
\end{align*}
Hence, we have proved our claim for $n=1$. 

\medskip
\noindent
\textit{Induction step}. Let $\eta_n:=g_{n}^{-1}\sigma_{n-1}\in W^{1,4}(H_i^{n-1},G)$ and notice that, by the inductive assumption, we have
\begin{align}
    \big((\iota_{H_i^{n-1}}^*A_{i,\Lambda})^{g_n}\big)^{\eta_n}=(\iota_{H_i^{n-1}}^* A_{i,\Lambda})^{\sigma_{n-1}}=\iota_{H_i^{n-1}}^*\tilde A_{i,\Lambda}^{n-1}\in L^4(H_i^{n-1}).
\end{align}
Hence, we have 
\begin{align}
    d\eta_n=\eta_n\big(\iota_{H_i^{n-1}}^* A_{i,\Lambda}\big)^{g_n}-\big(\iota_{H_i^{n-1}}^*\tilde A_{i,\Lambda}^{n-1}\big)\eta_n\in L^4(H_i^{n-1}).
\end{align}
Thus, by Corollary \ref{c-gluing}, if $\eps_G>0$ is small enough we can find an extension $h_{n}\in W^{1,(5,\infty)}(Q_{n},G)$ of $\eta_n$. Let $\tilde A_{i,\Lambda}^{n}\in L^{5,\infty}(\Omega_i^{n})$ and $\sigma_n\in W^{1,2}(\Omega_i^n,G)\cap W^{1,2}(\partial\Omega_i^n,G)$ be given by
\begin{align}
    \tilde A_{i,\Lambda}^{n}:=\begin{cases}
        \tilde A_{i,\Lambda}^{n-1} & \mbox{ on } \Omega_i^{n-1}\\
        (A_{Q_{n}})^{h_{n}} & \mbox{ on } Q_{n}
    \end{cases}
\end{align}
and
\begin{align}
    \sigma_n:=\begin{cases}
        \sigma_{n-1} & \mbox{ on } \Omega_i^{n-1}\\
        \tilde g_nh_n & \mbox{ on } Q_{n}.
    \end{cases}
\end{align}
By construction and by Lemma \ref{lm-cont}, we have $F_{\tilde A_{i,\Lambda}^{n}}\in L^{\frac{5}{2},\infty}(\Omega_i^{n})$. Moreover, since $\partial\Omega_i^n\smallsetminus\partial\Omega_i^{n-1}\subset\partial Q_n$ and we have 
\begin{align*}
    \iota_{H_i^n}^*\tilde A_{i,\Lambda}^n=\iota_{H_i^n}^*(A_{Q_n}^{h_n})=(\iota_{H_i^n}^*A_{i,\Lambda})^{g_nh_n}=(\iota_{H_i^n}^*A_{i,\Lambda})^{\sigma_n},
\end{align*}
we conclude that
\begin{align*}
    \iota_{\partial\Omega_i^n}^*\tilde A_{i,\Lambda}^n=(\iota_{\partial\Omega_i^n}^*A_{i,\Lambda})^{\sigma_n}\in L^4(\partial\Omega_i^n).
\end{align*}
This concludes the proof of the induction step and of our claim at once.

\medskip
\noindent
Now, to lighten the notation let $\Omega_i:=\Omega_i^{N_i}$ and define
\begin{align*}
    \tilde A_{i,\Lambda}:=\tilde A_{i,\Lambda}^{N_i}\in L^{5,\infty}(\Omega_i).
\end{align*}
\begin{align*}
    \sigma_{i,\Lambda}:=\sigma_{N_i}\in W^{1,2}(\Omega_i,G).
\end{align*}
\medskip
\noindent
\textbf{Step 2: Morrey norm control on the curvatures.}

\medskip
\noindent
Note that, by construction, we have $\overline{Q_{\frac{1}{2}}(0)}\subset\Omega_i$. Fix any point $x\in Q_{\frac{1}{2}}(0)$ and let ${r}\in\big(0,\dist(x,\partial Q_{\frac{1}{2}}(0))\big)$. Assume that ${r}\le\eps_i$. Then, $Q_{{r}}(x)$ intersects at most $\tilde N$ cubes in $\mathcal{Q}_{{r}_i}^s$, say $Q_1,...,Q_{\tilde N}$, with $\tilde N\in\n$ depending only on the choice of the cubic cover. Hence, by H\"older inequality, by the estimates \eqref{equation: properties of the harmonic extension good cubes}, \eqref{equation: properties of the harmonic extension bad cubes} for each $A_Q$, by the property (1) of $\Lambda$-good cubes and by the property (i) of $A_{i,\Lambda}$ given by Theorem \ref{Theorem: approximation under controlled traces of the curvatures}, we get
\begin{align*}
    \frac{1}{{r}}\int_{Q_{{r}}(x)}\lvert F_{\tilde A_{i,\Lambda}}\rvert^2\, d\L^5&\le\|F_{\tilde A_{i,\Lambda}}\|_{L^{\frac{5}{2}}(Q_{{r}}(x))}^2\\
    &\le C\sum_{\ell=1}^{\tilde N}\lVert F_{\tilde A_{i,\Lambda}}\rVert_{L^{\frac{5}{2}}(Q_\ell)}^2=C\sum_{i=1}^{\tilde N}\lVert F_{A_{Q_\ell}}\rVert_{L^{\frac{5}{2}}(Q_\ell)}^2\\
    &\le C\sum_{Q_\ell\in\mathcal{Q}_{{r}_i,\Lambda}^g}\big(\lVert F_{A_{i,\Lambda}}\rVert_{L^{2}(\partial Q_\ell)}^2+\eps_i^{-2}\lVert A_{i,\Lambda}\rVert_{L^{2}(\partial Q_\ell)}^2\big)\\
    &\quad+\sum_{Q_\ell\in\mathcal{Q}_{{r}_i,\Lambda}^b}\lVert F_{A_{i,\Lambda}}\rVert_{L^{2}(\partial Q_\ell)}^2\\
    &<C\lvert F_{A_{i,\Lambda}}\rvert_{M_{2,2}^0(Q_1^5(0))}^2\le C\lvert F_A\rvert_{M_{2,2}^0(Q_1^5(0))}^2,
\end{align*}
for some constant $C>0$ depending only on $G$.

\noindent
Assume now that $\,\eps_i<{r}<1$. Then there exists a universal constant $k>1$ such that $Q_{{r}}(x)$ can be covered with a finite number of cubes $\{Q_\ell\}_{\ell=1,...,N_{i,x,{r}}}$ in $\mathcal{Q}_{{r}_i}^s$ such that 
\begin{align*}
    \bigcup_{\ell=1}^{N_{i,x,{r}}}Q_{2\eps_i}(c_{Q_\ell})\subset Q_{k{r}}(x).
\end{align*}
Notice that now the number of cubes $N_{i,x,{r}}$ may depend on $i$, $x$ and ${r}$. Then, by the estimates \eqref{equation: estimate for the curvature on the good cubes} and \eqref{equation: estimate for the curvature on the bad cubes} we obtain
\begin{align*}
    \int_{Q_{{r}}(x)}&\lvert F_{\tilde A_{i,\Lambda}}\rvert^2\, d\L^5\le\sum_{\ell=1}^{N_{i,x,{r}}}\int_{Q_\ell}\lvert F_{\tilde A_{i,\Lambda}}\rvert^2\, d\L^5=\sum_{\ell=1}^{N_{i,x,{r}}}\int_{Q_\ell}\lvert F_{A_{Q_\ell}}\rvert^2\, d\L^5\\
    &\le C\sum_{Q_\ell\in\mathcal{Q}_{{r}_i,\Lambda}^g}\bigg(\eps_i^{-4}\int_{\partial Q_\ell}\lvert A_{i,\Lambda}- (A_{i,\Lambda})_{Q_\ell}\rvert^2\, d\H^4\bigg)\bigg(\eps_i\int_{\partial Q_\ell}\lvert A_{i,\Lambda}\rvert^2\, d\H^4\bigg)\\
    &\quad+\eps_i\bigg(\eps_i^{-2}\int_{\partial Q_\ell}\lvert A_{i,\Lambda}\rvert^2\, d\H^4\bigg)^3\bigg)+C\eps_i\sum_{\ell=1}^{N_{i,x,{r}}}\int_{\partial Q_\ell}\lvert F_{A_{i,\Lambda}}\rvert^2\, d\H^4,
\end{align*}
for some constant $C>0$ depending only on $G$. By property (4) in Definition \ref{Definition: good cubes eps-decomposition} and by the property (i) of $A_{i,\Lambda}$ given by Theorem \ref{Theorem: approximation under controlled traces of the curvatures} we get
\begin{align*}
    \sum_{Q_\ell\in\mathcal{Q}_{{r}_i,\Lambda}^g}&\bigg(\bigg(\eps_i^{-4}\int_{\partial Q_\ell}\lvert A_{i,\Lambda}- (A_{i,\Lambda})_{Q_\ell}\rvert^2\, d\H^4\bigg)\bigg(\eps_i\int_{\partial Q_\ell}\lvert A_{i,\Lambda}\rvert^2\, d\H^4\bigg)\\
    &\le C\Lambda^{-1}[F_{A_{i,\Lambda}}]_{M_{2,2}^0(Q_1^5(0))}^2\sum_{Q_\ell\in\mathcal{Q}_{{r}_i,\Lambda}^g}\int_{Q_{2\eps_i}(c_{Q_\ell})}\lvert A_{i,\Lambda}\rvert^2\, d\L^5,\\
    &\le C\Lambda^{-1}\lvert F_{A}\rvert_{M_{2,2}^0(Q_1^5(0))}^2\sum_{Q_\ell\in\mathcal{Q}_{\eps_i,\Lambda}^g}\int_{Q_{2\eps_i}(c_{Q_\ell})}\lvert A_{i,\Lambda}\rvert^2\, d\L^5,
\end{align*}
for some constant $C>0$ depending only on $G$. By property (5) in Definition \ref{Definition: good cubes eps-decomposition} and by our choice of $k>0$, we get
\begin{align*}
    \sum_{Q_\ell\in\mathcal{Q}_{{r}_i,\Lambda}^g}\int_{Q_{2\eps_i}(c_{Q_\ell})}\lvert A_{i,\Lambda}\rvert^2\, d\L^5\le C\Lambda\sum_{Q_\ell\in\mathcal{Q}_{{r}_i,\Lambda}^g}(2\eps_i)^5\le C\Lambda(k{r})^5,
\end{align*}
for some constant $C>0$ depending only on $G$. Hence, since by assumption ${r}\in (0,1)$, we have obtained the estimate
\begin{align*}
    \sum_{Q_\ell\in\mathcal{Q}_{{r}_i,\Lambda}^g}\bigg(\bigg(\eps_i^{-4}\int_{\partial Q_\ell}\lvert A_{i,\Lambda}- (A_{i,\Lambda})_{Q_\ell}\rvert^2\, d\H^4\bigg)\bigg(\eps_i\int_{\partial Q_\ell}\lvert A_{i,\Lambda}\rvert^2\, d\H^4\bigg)\le Ck^5{r}\lvert F_A\rvert_{M_{2,2}^0(Q_1^5(0))}^2,
\end{align*}
for some constant $C>0$ depending only on $G$. By property (5) in Definition \ref{Definition: good cubes eps-decomposition}, provided $i\ge i_0$ is large enough so that $\eps_i<\Lambda^{-3}\lvert F_A\rvert_{M_{2,2}^0(Q_1^5(0))}^2$ we obtain
\begin{align*}
    \sum_{Q_\ell\in\mathcal{Q}_{\eps_i,\Lambda}^g}\eps_i\bigg(\eps_i^{-2}\int_{\partial Q_\ell}\lvert A_{i,\Lambda}\rvert^2\, d\H^4\bigg)^3&\le\sum_{Q_\ell\in\mathcal{Q}_{{r}_i,\Lambda}^g}\eps_i\bigg(\eps_i^{-3}\int_{Q_{2\eps_i}(c_{Q_\ell})}\lvert A_{i,\Lambda}\rvert^2\, d\L^5\bigg)^3\\
    &\le\sum_{Q_\ell\in\mathcal{Q}_{\eps_i,\Lambda}^g}\eps_i(\eps_i^{-3}\Lambda\eps_i^5)^3\le\eps_i^2\Lambda^3\sum_{Q_\ell\in\mathcal{Q}_{\eps_i,\Lambda}^g}\eps_i^5\\
    &\le C\eps_i^2\Lambda^3\le C{r}\lvert F_A\rvert_{M_{2,2}^0(Q_1^5(0))}^2,
\end{align*}
for some constant $C>0$ depending only on $G$. By Lemma \ref{Lemma: choice of an admissible cubic decomposition}--(ii) and by the property (i) of $A_{i,\Lambda}$ given by Theorem \ref{Theorem: approximation under controlled traces of the curvatures} we get 
\begin{align*}
    \eps_i\sum_{\ell=1}^{N_{i,x{r}}}\int_{\partial Q_\ell}\lvert F_{A_{i,\Lambda}}\rvert^2\, d\H^4&\le C\sum_{\ell=1}^{N_{i,x,{r}}}\int_{Q_{2\eps_i}(c_{Q_\ell})}\lvert F_{A_{i,\Lambda}}\rvert^2\, d\L^5\\
    &\le C\int_{Q_{k{r}}(x)}\lvert F_{A_{i,\Lambda}}\rvert^2\, d\L^5\le Ck{r}\lvert F_{A_{i,\Lambda}}\rvert_{M_{2,2}^0(Q_1^5(0))}^2\\
    &\le Ck{r}\lvert F_{A}\rvert_{M_{2,2}^0(Q_1^5(0))}^2,
\end{align*}
for some constant $C>0$ depending only on the choice of the cubic cover. Hence, provided $i\ge i_0$ i sufficiently large we get
\begin{align}\label{66}
    \frac{1}{{r}}\int_{Q_{{r}}(x)}\lvert F_{\tilde A_{i,\Lambda}}\rvert^2\, d\L^5\le C\lvert F_A\rvert_{M_{2,2}^0(Q_1^5(0))}^2,
\end{align}
for some constant $C>0$ depending only on $G$.

\bigskip
\noindent
\textbf{Step 3: strong $L^2$-convergence of the connections.}

\medskip
\noindent
Notice that, by construction, we have
\begin{align*}
    \Big\|\tilde A_{i,\Lambda}^{\sigma_{i,\Lambda}^{-1}}-A\Big\|_{L^2(\Omega_{i})}^2&=\sum_{n=1}^{N_i}\Big\|A_{Q_n}^{\tilde g_n^{-1}}- A\Big\|_{L^2(Q_n)}^2\\
    &\le C\bigg(\sum_{n=1}^{N_i}\Big\|A_{Q_n}^{\tilde g_n^{-1}}- A_{i,\Lambda}\Big\|_{L^2(Q_n)}^2+\sum_{n=1}^{N_i}\big\|A_{i,\Lambda}- A\big\|_{L^2(Q_n)}^2\bigg)\\
    &\le C\bigg(\sum_{n=1}^{N_i}\Big\|A_{Q_n}^{\tilde g_n^{-1}}- A_{i,\Lambda}\Big\|_{L^2(Q_n)}^2+\big\|A_{i,\Lambda}-A\big\|_{L^2(Q_1(0))}^2\bigg)
\end{align*}
for some constant $C>0$ depending on $G$ and $A$. By the same procedure that we have used in the proof of Theorem \ref{Theorem: approximation under controlled traces of the curvatures}, we get
\begin{align*}
    &\lim_{i\to+\infty}\sum_{n=1}^{N_i}\Big\|A_{Q_n}^{\tilde g_n^{-1}}- A_{i,\Lambda}\Big\|_{L^2(Q_n)}\le C\lim_{i\to+\infty} \int_{\Omega_{i,\Lambda,A_{i,\Lambda}}}\lvert A_{i,\Lambda}\rvert^2\, d\L^5,
\end{align*}
where $C>0$ is a universal constant depending only on $G$ and $\Omega_{i,\Lambda,A_{i,\Lambda}}\subset Q_1^5(0)$ is given as in Lemma \ref{Lemma: norm of A on bad cubes - dec}.
Now we notice that 
\begin{align*}
    \L^5(\Omega_{i,\Lambda,A_{i,\Lambda}})&={r}_i^5\operatorname{card}\bigg(\bigg\{Q\in\mathcal{Q}_{{r}_i} \mbox{ s.t. } \fint_{Q}\lvert A_{i,\Lambda}\rvert^2\, dx^5>\Lambda\bigg\}\bigg)\\
    &\le{r}_i^5\operatorname{card}\bigg(\bigg\{Q\in\mathcal{Q}_{{r}_i} \mbox{ s.t. } \fint_{Q}\lvert A-A_{i,\Lambda}\rvert^2\, dx^5+\fint_{Q}\lvert A\rvert^2\, d\L^5>\Lambda\bigg\}\bigg)\le\frac{C}{\Lambda}
\end{align*}
and, recalling the definition of $\Omega_{i,\Lambda,A}$ from Lemma \ref{Lemma: norm of A on bad cubes}, we have
\begin{align*}
    \L^5(\Omega_{i,\Lambda,A})\le\frac{C}{\Lambda}
\end{align*}
for every given $i$, where $C>0$ is independent on $i$. Thus, by Theorem \ref{Theorem: approximation under controlled traces of the curvatures}-(iii), we get that for every $j\in\n$ there exists $i(j)>0$ big enough so that be letting $\tilde A_j:=\tilde A_{i(j),j}$ and $\sigma_j:=\sigma_{i(j),j}^{-1}\in W^{1,2}(\Omega_{i(j)})$ we have
\begin{align*}
    \Big\|\tilde A_j^{\sigma_j^{-1}}-A_{i(j),j}\Big\|_{L^2(\Omega_{i(j)})}^2&\le C\int_{\Omega_{i(j),j,A_{i(j),j}}}\lvert A_{i(j),j}\rvert^2\, d\L^5\\
    &\le C\bigg(\int_{\Omega_{i(j),j,A_{i(j),j}}}\lvert A\rvert^2\, d\L^5+\int_{Q_1^5(0)}\lvert A_{i(j),j}-A\rvert^2\, d\L^5\bigg)\\
    &\le C\int_{\Omega_{i(j),j, A_{i(j),j}}}\lvert A\rvert^2\, d\L^5+\int_{\Omega_{i(j),j,A}}\lvert A\rvert^2\, d\L^5,
\end{align*}
where $C>0$ doesn't depend on $j$. Finally, we get 
\begin{align*}
    \Big\|\tilde A_j^{\sigma_j^{-1}}-A\Big\|_{L^2(\Omega_{i(j)})}^2&\le C\int_{\Omega_{i(j),j, A_{i(j),j}}}\lvert A\rvert^2\, d\L^5\to 0
\end{align*}
as $j\to +\infty$. 

\medskip\noindent
Notice that, by assumption, $\Omega_{i(j)}\supset\overline{Q_{\frac{1}{2}}(0)}$ for every $j\in\n$. Since $\pi_2(G)=0$, we can find a sequence $\{\tilde\sigma_j\}_{j\in\n}\subset C^{\infty}\big(\overline{Q_{\frac{1}{2}}(0)}\big)$ such that 
\begin{align*}
    \|\tilde\sigma_j-\sigma_j\|_{W^{1,2}(Q_{\frac{1}{2}}(0))}\rightarrow 0
\end{align*}
and $\tilde\sigma_j-\sigma_j\to 0$ pointwise $\L^5$-a.e. on $Q_{\frac{1}{2}}(0)$ as $j\to+\infty$. Moreover, since we have the improved integrability properties $A\in L^{5,\infty}(Q_{\frac{1}{2}}(0))$ and $F_A\in L^{\frac{5}{2},\infty}(Q_{\frac{1}{2}}(0))$, by standard convolution with a smooth mollifying kernel we can find a sequence $\{\hat A_j\}_{j\in\n}\in C_c^{\infty}\big(Q_{\frac{1}{2}}(0)\big)$ such that
\begin{align*}
    |F_{\hat A_j}|_{M_{2,2}^0(Q_{\frac{1}{2}}(0))}^2\le 2|F_{\tilde A_j}|_{M_{2,2}^0(Q_{\frac{1}{2}}(0))}^2\le 2C\lvert F_A\rvert_{M_{2,2}^0(Q_1^5(0))}^2 \qquad\forall\,j\in\N,
\end{align*}
\begin{align*}
    \|\hat A_j-\tilde A_j\|_{L^2(Q_{\frac{1}{2}}(0))}\to 0
\end{align*}
and $\hat A_j-\tilde A_j\to 0$ pointwise $\L^5$-a.e. on $Q_{\frac{1}{2}}(0)$ as $j\to+\infty$. 
Now define 
\begin{align*}
    A_j:=\hat A_j^{\tilde\sigma_j^{-1}}\in C^{\infty}\big(\overline{Q_{\frac{1}{2}}(0)}\big) \qquad\forall\,j\in\n.
\end{align*}
By construction and $\ad$-invariance of the norm on $\g$, we have 
\begin{align*}
    |F_{\tilde A_j}|_{M_{2,2}^0(Q_{\frac{1}{2}}(0))}^2\le 2|F_{\hat A_j}|_{M_{2,2}^0(Q_{\frac{1}{2}}(0))}^2\le 2C\lvert F_A\rvert_{M_{2,2}^0(Q_1^5(0))}^2 \qquad\forall\,j\in\N.
\end{align*}
By construction and dominated convergence, we have 
\begin{align*}
    \|A_j-A\|_{L^2(Q_{\frac{1}{2}}(0))}\le\big\|A_j-\tilde A_j^{\sigma_j^{-1}}\big\|_{L^2(Q_{\frac{1}{2}}(0))}+\big\|\tilde A_j^{\sigma_j^{-1}}-A\big\|_{L^2(Q_{\frac{1}{2}}(0))}\to 0
\end{align*}
as $j\to+\infty$. This concludes the proof of Theorem \ref{Theorem: smooth approximation under controlled Morrey norm}.
\end{proof}
\section{Partial regularity for stationary weak Yang--Mills fields}
In this section we prove our main result about the partial regularity of stationary weak Yang--Mills fields on the unit cube $Q_1^5(0)$. In order to get that, we first use Theorem \ref{Theorem: smooth approximation under controlled Morrey norm} and the main result in \cite{meyer-riviere} to obtain the following Coulomb gauge extraction theorem for weak connections whose curvature has sufficiently small Morrey norm.
\begin{Th}[Coulomb gauge extraction under small Morrey norm assumption]\label{Theorem: Coulomb gauge extraction}
    Let $G$ be a compact matrix Lie group. There exists $\eps_G\in (0,1)$ such that for every weak connection $A\in\A_G(Q_1^5(0))$ satisfying 
    \begin{align*}
        \lvert F_A\rvert_{M_{2,2}^0(Q_1^5(0))}^2<\eps_G
    \end{align*}
    there exist $g\in W^{1,2}(Q_{\frac{1}{2}}(0),G)$ such that $A^g\in (M_{2,2}^1\cap M_{4,1}^0)(Q_{\frac{1}{2}}(0))$ satisfies
    \begin{align*}
        \begin{cases}
            d^*A^g=0,\\
            \iota_{\partial Q_1^5(0)}^*(*A^g)=0,\\
            \lvert\nabla A^g\rvert_{M_{2,2}^0(Q_{\frac{1}{2}}(0))}+\lvert A^g\rvert_{M_{4,1}^0(Q_{\frac{1}{2}}(0))}\le C_G\lvert F_{A}\rvert_{M_{2,2}^0(Q_{\frac{1}{2}}(0))},
        \end{cases}
    \end{align*}
    for some constant $C_G>0$ depending only on $G$.
\end{Th}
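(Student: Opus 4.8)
The plan is to deduce Theorem~\ref{Theorem: Coulomb gauge extraction} by combining the strong $L^2$-approximation by smooth connections with small Morrey norm (Theorem~\ref{Theorem: smooth approximation under controlled Morrey norm}) with the Coulomb gauge extraction theorem of \cite{meyer-riviere}, which applies precisely to connections that are strongly $L^2$-approximable by smooth ones with small curvature Morrey norm. First I would fix $\eps_G$ small enough that Theorem~\ref{Theorem: smooth approximation under controlled Morrey norm} applies to $A$ on $Q_1^5(0)$, producing a sequence $\{A_j\}_{j\in\n}\subset C_c^\infty(Q_1^5(0))$ with $\lvert F_{A_j}\rvert_{M^0_{2,2}(Q_{\frac12}(0))}\le C_G\,\lvert F_A\rvert_{M^0_{2,2}(Q_1^5(0))}$ and $A_j\to A$ in $L^2(Q_{\frac12}(0))$. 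Shrinking $\eps_G$ further so that $C_G^2\eps_G$ is below the threshold appearing in \cite{meyer-riviere}, each $A_j$ admits a smooth Coulomb gauge $g_j$ on $Q_{\frac12}(0)$ with $d^*A_j^{g_j}=0$, $\iota_{\partial Q_{\frac12}(0)}^*(*A_j^{g_j})=0$, and the uniform bound $\lvert\nabla A_j^{g_j}\rvert_{M^0_{2,2}(Q_{\frac12}(0))}+\lvert A_j^{g_j}\rvert_{M^0_{4,1}(Q_{\frac12}(0))}\le C_G\,\lvert F_{A_j}\rvert_{M^0_{2,2}(Q_{\frac12}(0))}\le C_G^2\,\lvert F_A\rvert_{M^0_{2,2}(Q_1^5(0))}$.

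The core of the argument is then a compactness/passage-to-the-limit step. From the uniform Morrey bounds on $A_j^{g_j}$ one extracts, after passing to a subsequence, a weak limit $B$ in the relevant Morrey--Sobolev spaces $(M^1_{2,2}\cap M^0_{4,1})(Q_{\frac12}(0))$; since Morrey seminorms are lower semicontinuous under weak convergence, $B$ inherits the bound $\lvert\nabla B\rvert_{M^0_{2,2}(Q_{\frac12}(0))}+\lvert B\rvert_{M^0_{4,1}(Q_{\frac12}(0))}\le C_G\,\lvert F_A\rvert_{M^0_{2,2}(Q_{\frac12}(0))}$, and the linear constraints $d^*B=0$ and $\iota_{\partial Q_{\frac12}(0)}^*(*B)=0$ pass to the limit. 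Simultaneously one must control the gauges: since $d_{x}g_j=g_j A_j^{g_j}-A_j g_j$ with $A_j$ bounded in $L^2$ (because $A_j\to A$ in $L^2$) and $A_j^{g_j}$ bounded in $L^4\subset L^2$, the $g_j$ are bounded in $W^{1,2}(Q_{\frac12}(0),G)$ (normalizing by a constant as in the proofs of Propositions~\ref{Proposition: harmonic extension on the good cubes} and \ref{Proposition: harmonic extension on the bad cubes}), so a subsequence converges weakly in $W^{1,2}$ and strongly in $L^p$ for $p<\infty$ to some $g\in W^{1,2}(Q_{\frac12}(0),G)$. One then checks that $A^g=B$ a.e.: writing $A_j^{g_j}=g_j^{-1}d g_j+g_j^{-1}A_j g_j$ and using $A_j\to A$ strongly in $L^2$, $g_j\to g$ strongly in $L^p$ and $dg_j\rightharpoonup dg$ weakly in $L^2$, every term passes to the limit in the sense of distributions, giving $B=g^{-1}dg+g^{-1}Ag=A^g$. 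Hence $A^g=B\in(M^1_{2,2}\cap M^0_{4,1})(Q_{\frac12}(0))$ satisfies all three conclusions.

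The main obstacle I expect is the identification $A^g=B$ together with ensuring that the limiting $g$ is genuinely a $W^{1,2}$ gauge transformation (takes values in $G$, and conjugates $A$ into the Morrey-regular form $B$) rather than merely a weak limit with no algebraic meaning. The subtlety is that $A^{g}$ is a nonlinear expression in $g$ and $A$, and one has only weak convergence of $dg_j$ against only $L^2$ (not better) convergence of $A_j$; the product $g_j^{-1}A_j g_j\to g^{-1}A g$ needs strong $L^2$ convergence of $A_j$ (available from Theorem~\ref{Theorem: smooth approximation under controlled Morrey norm}) and strong convergence of $g_j$ in every $L^p$, $p<\infty$ (available from the compact Sobolev embedding $W^{1,2}\hookrightarrow L^p$ for $p<\infty$ on the $5$-dimensional cube combined with boundedness of $G$), so the product converges in $L^1$ and one identifies the distributional limit. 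A secondary technical point, already essentially handled by the structure of \cite{meyer-riviere} and cited there, is that the boundary condition $\iota_{\partial Q_{\frac12}(0)}^*(*A_j^{g_j})=0$ is preserved in the limit; this follows from the continuity of the trace operator on $M^1_{2,2}$ together with the weak convergence, and I would invoke it rather than reprove it.
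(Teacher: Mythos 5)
Your proposal is correct and follows essentially the same route as the paper: approximate $A$ by the smooth sequence from Theorem \ref{Theorem: smooth approximation under controlled Morrey norm}, apply the Coulomb gauge theorem of \cite{meyer-riviere} to each $A_j$, bound the gauges in $W^{1,2}$ via $dg_j=g_jA_j^{g_j}-A_jg_j$, and pass to the limit using weak convergence of $A_j^{g_j}$ together with strong $L^2$ convergence of $A_j$ and compactness of $g_j$ to identify $A^g$ with the limit. The extra care you take with lower semicontinuity of the Morrey seminorms and the identification $A^g=B$ matches what the paper's proof does implicitly.
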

\begin{proof}[\textbf{\textup{Proof of Theorem \ref{Theorem: Coulomb gauge extraction}}}]
    By Theorem \ref{Theorem: smooth approximation under controlled Morrey norm}, if $\eps_G>0$ is small enough we can find a sequence of $\g$-valued $1$-forms $\{A_{i}\}_{i\in\n}\subset C_c^{\infty}(Q_{\frac{1}{2}}(0))$ such that 
    \begin{enumerate}
    \item for every $i\in\n$ we have 
    \begin{align*}
        \lvert F_{A_{i}}\rvert_{M_{2,2}^0(Q_{\frac{1}{2}}(0))}^2\le K_G\lvert F_A\rvert_{M_{2,2}^0(Q_1^5(0))}^2.
    \end{align*}
    for some constant $K_G>0$ depending on $G$;
    \item $\|A_i-A\big\|_{L^2(Q_{\frac{1}{2}}(0))}\to 0$ as $i\to +\infty$.
    \end{enumerate}
    By choosing $\eps_G>0$ possibly smaller, we can make sure that $\sqrt{K_G\eps_G}$ is smaller than the constant given by \cite[Theorem 1.3]{meyer-riviere} and we can apply such statement to conclude that for every $i\in\n$ there exists a gauge $g_i$ such that $d^*A_{i}^{g_{i}}=0$ on $Q_{\frac{1}{2}}(0)$, $\iota_{\partial Q_1^5(0)}^*(*A^g)=0$ and 
    \begin{align}\label{equation: Coulomb gauge}
        \lvert\nabla A_{i}^{g_{i}}\rvert_{M_{2,2}^0(Q_{\frac{1}{2}}(0))}+\lvert A_{i}^{g_{i}}\rvert_{M_{4,1}^0(Q_{\frac{1}{2}}(0))}\le C_G\lvert F_{A_{i}}\rvert_{M_{2,2}^0(Q_{\frac{1}{2}}(0))},
    \end{align}
    for some constant $C_G>0$ depending only on $G$. By standard Sobolev embedding theorems, there exists a subsequence (not relabeled) such that $A_{i}^{g_{i}}\rightharpoonup\tilde A\in M_{4,1}^0$ weakly in $L^4$ and $\nabla A_{i}^{g_{i}}\rightharpoonup\nabla\tilde A\in M_{2,2}^0$ weakly in $L^2$. Clearly, we have $d^*\tilde A=0$. Notice that 
    \begin{align*}
        dg_{i}=g_{i}A_{i}^{g_{i}}-A_{i}g_{i}.
    \end{align*}
    By \eqref{equation: Coulomb gauge} and 1, we get that $\{g_{i}\}_{i\in\n}$ is uniformly bounded in $W^{1,2}(Q_{\frac{1}{2}}(0),G)$. Hence, there exists a subsequence (not relabeled) such that $g_{i}\rightharpoonup g$ weakly in $W^{1,2}$. By Sobolev embedding theorems, we have $g_{i}\to g$ strongly in $L^2$. Thus, since $A_{i}\to A$ strongly in $L^2$ and $G$ is bounded we conclude that $A^g=\tilde A$ on $Q_{\frac{1}{2}}(0)$. This concludes the proof of Theorem \ref{Theorem: Coulomb gauge extraction}. 
\end{proof}
\begin{Dfi}[Weak Yang--Mills connections]
    Let $G$ be a compact matrix Lie group. We say that $A\in\A_G(Q_1^5(0))$ is \textit{weak Yang--Mills connection} on $Q_1^5(0)$ if 
    \begin{align*}
        d_A^*F_A=0 \qquad\mbox{ distrubutionally on } Q_1^5(0),
    \end{align*}
    i.e.
    \begin{align*}
        \int_{Q_1^5(0)}F_A\cdot d_A\varphi=0 \quad\mbox{ for every } \varphi\in C_c^{\infty}(\wedge^1Q_1^5(0)\otimes\g).
    \end{align*}
\end{Dfi}
\begin{Dfi}[Stationary weak Yang--Mills connections]
    Let $G$ be a compact matrix Lie group. We say that a weak Yang--Mills connection $A\in\A_G(Q_1^5(0))$ on $Q_1^5(0)$ is \textit{stationary} if 
    \begin{align}\label{stationarity condition}
        \left.\frac{d}{dt}\right|_{t=0}\text{YM}(\Phi_t^*A)=0,
    \end{align}
    for every smooth $1$-parameter group of diffeomorphisms $\Phi_t$ of $Q_1^5(0)$ with compact support. 
\end{Dfi}
\begin{Rm}\label{Remark: gauge equivalence preserve stationary weak Ynag-Mills property}
    Note that if $A$ is a stationary weak Yang--Mills connection and $\tilde A\in W^{1,2}\cap L^4$ is another $\g$-valued $1$-form on $Q_1^5(0)$ such that $\tilde A=A^g$ for some $g\in W^{1,2}(Q_1^5(0),G)$, then $\tilde A$ is a stationary weak Yang--Mills connection as well. The proof of such fact follows by direct computation exploiting the gauge invariance of the Yang--Mills functional.
\end{Rm}
\noindent
If $A$ is a stationary Yang--Mills connection, it can be shown that the following \textit{monotonicity property} holds true: for every given $x\in Q_1^5(0)$ the function
\begin{align}\label{monotonicity}
    \big(0,\dist(x,\partial Q_1^5(0)\big)\ni\rho\to\frac{e^{c\Lambda\rho}}{\rho}\int_{Q_{\rho}(x)}\lvert F_A\rvert^2\, d\L^5
\end{align}
is non-decreasing, where $c>0$ is a universal constant and $\Lambda$ depends on $Q_1(x)$. In particular, we have 
\begin{align*}
    \lvert F_A\rvert_{M_{2,2}^0(Q_1^5(0))}\le C\|F_A\|_{L^2(Q_1^5(0))},
\end{align*}
for some constant $C>0$ independent on $A$. 
Thanks to \eqref{monotonicity} and to Theorem \ref{Theorem: Coulomb gauge extraction}, we can derive the following $\eps$-regularity statement by using the same arguments presented in \cite[Section 4]{meyer-riviere}. This is a crucial initial step in building a regularity theory for $\text{YM}$-energy minimizers, following the same path that R. Schoen and K. Uhlenbeck walked in their work on energy-minimizing harmonic maps in general supercritical dimension $n>2$ (as documented in \cite{schoen-uhlenbeck}).
\begin{Th}[$\eps$-regularity]\label{Theorem: eps-regularity}
    Let $G$ be a compact matrix Lie group. There exists $\eps_G\in (0,1)$ such that for every stationary weak Yang--Mills $A\in\A_G(Q_1^5(0))$ satisfying 
    \begin{align*}
        \operatorname{YM}(A)=\int_{Q_1^5(0)}\lvert F_A\rvert^2\, d\L^5<\eps_G
    \end{align*}
    there exist $g\in W^{1,2}(Q_{\frac{1}{2}}(0),G)$ such that $A^g\in C^{\infty}(Q_{\frac{1}{2}}(0))$.
\end{Th}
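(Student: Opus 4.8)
The plan is to combine the Coulomb gauge extraction of Theorem \ref{Theorem: Coulomb gauge extraction} with the monotonicity formula for stationary solutions and then run the elliptic bootstrap of \cite[Section 4]{meyer-riviere}. First I would use the monotonicity property \eqref{monotonicity}: since $A$ is a stationary weak Yang--Mills connection on $Q_1^5(0)$, the quantity $\rho\mapsto \rho^{-1}e^{c\Lambda\rho}\int_{Q_\rho(x)}|F_A|^2\,d\L^5$ is non-decreasing for every $x\in Q_1^5(0)$, which yields $|F_A|_{M^0_{2,2}(Q_1^5(0))}\le C\,\|F_A\|_{L^2(Q_1^5(0))}\le C\sqrt{\eps_G}$ for a constant $C>0$ independent of $A$. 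Hence, choosing $\eps_G>0$ small enough, the right-hand side lies below the threshold of Theorem \ref{Theorem: Coulomb gauge extraction}.

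Next I would apply Theorem \ref{Theorem: Coulomb gauge extraction} to produce $g\in W^{1,2}(Q_{\frac12}(0),G)$ such that $A^g\in (M^1_{2,2}\cap M^0_{4,1})(Q_{\frac12}(0))$ satisfies $d^*A^g=0$, $\iota^*_{\partial Q_1^5(0)}(*A^g)=0$, together with the bound $|\nabla A^g|_{M^0_{2,2}(Q_{\frac12}(0))}+|A^g|_{M^0_{4,1}(Q_{\frac12}(0))}\le C_G\,|F_{A}|_{M^0_{2,2}(Q_{\frac12}(0))}\le C_G\sqrt{\eps_G}$, which can be made as small as we wish. Since $A^g\in W^{1,2}\cap L^4$ is gauge equivalent to the stationary weak Yang--Mills connection $A$, Remark \ref{Remark: gauge equivalence preserve stationary weak Ynag-Mills property} ensures that $A^g$ is itself a stationary weak Yang--Mills connection. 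In the Coulomb gauge the Yang--Mills equation $d^*F_{A^g}+*[A^g\wedge *F_{A^g}]=0$ combines with $d^*A^g=0$ to give an elliptic system of the schematic form $\Delta A^g = A^g\cdot\nabla A^g + A^g\cdot A^g\cdot A^g$, with the Neumann-type boundary condition encoded by $d^*A^g=0$ and $\iota^*(*A^g)=0$.

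Finally I would carry out the Morrey-space bootstrap exactly as in \cite[Section 4]{meyer-riviere}: using the smallness of $|\nabla A^g|_{M^0_{2,2}}$ and $|A^g|_{M^0_{4,1}}$, one represents $A^g$ through the Green potential of the above system and applies the Adams embedding for Riesz potentials between Morrey spaces to obtain, at every interior point $x\in Q_{\frac12}(0)$, a decay estimate for $\rho\mapsto\rho^{-1}\int_{Q_\rho(x)}|\nabla A^g|^2\,d\L^5$ with an improved exponent; iterating this decay gives $A^g\in C^{0,\alpha}_{loc}(Q_{\frac12}(0))$ for some $\alpha\in(0,1)$ (Morrey norms restrict monotonically to subcubes, so smallness is available near every interior point), after which classical Schauder estimates applied iteratively to the elliptic system upgrade $A^g$ to $C^\infty(Q_{\frac12}(0))$.

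I expect the regularity bootstrap of the last step to be the main obstacle: in dimension $5$ the nonlinearity of the Yang--Mills system in Coulomb gauge is critical with respect to $W^{1,2}$ scaling, so every gain of regularity is driven solely by the Morrey-norm smallness and rests on the delicate Morrey/Riesz-potential estimates. However, since these are precisely the arguments of \cite[Section 4]{meyer-riviere} and the Coulomb gauge $A^g$ produced by Theorem \ref{Theorem: Coulomb gauge extraction} lies in the same function-space class $M^1_{2,2}\cap M^0_{4,1}$ with small norm as required there, this step goes through verbatim; the only point that genuinely needs checking is that the output of Theorem \ref{Theorem: Coulomb gauge extraction} matches the hypotheses of \cite[Section 4]{meyer-riviere}, which it does by construction.
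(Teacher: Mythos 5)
Your proposal is correct and follows essentially the same route as the paper: the paper derives this theorem precisely by combining the monotonicity formula \eqref{monotonicity} (to convert small total Yang--Mills energy into a small Morrey norm of the curvature) with the Coulomb gauge extraction of Theorem \ref{Theorem: Coulomb gauge extraction}, and then invoking the bootstrap arguments of \cite[Section 4]{meyer-riviere} verbatim. The only point worth flagging is that the monotonicity formula holds for $\rho<\dist(x,\partial Q_1^5(0))$, so the Morrey bound is really obtained on a slightly smaller cube (as in the ball version stated in the introduction), but this is exactly how the paper itself handles it and does not affect the conclusion on $Q_{\frac{1}{2}}(0)$.
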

\noindent
Standard covering arguments (see e.g. \cite[Proposition 9.21]{giaquinta-martinazzi}) together with Theorem \ref{Theorem: eps-regularity} and \eqref{monotonicity} give the following bound on the singular set of stationary weak Yang--Mills connections, which is the main result of the present paper. 
\begin{Th}\label{Theorem: regularity for stationary weak Yang-Mills connections}
     Let $G$ be a compact matrix Lie group and let $A\in\A_G(Q_1^5(0))$ be a stationary weak Yang--Mills connection on $Q_1^5(0)$. Then
     \begin{align*}
         \H^1(\operatorname{Sing}(A))=0,
     \end{align*}
     where $\H^1$ is the $1$-dimensional Hausdorff measure on $Q_1^5(0)$ and $\operatorname{Sing}(A)\subset Q_1^5(0)$ is the singular set of $A$, given by $\operatorname{Sing}(A):=Q_1^5(0)\smallsetminus\operatorname{Reg}(A)$ where
     \begin{align*}
         \operatorname{Reg}(A):=\{x\in Q_1^5(0) \mbox{ s.t. } \exists \rho>0, g\in W^{1,2}(B_{\rho}(x),G) \mbox{ s.t. } A^g\in C^{\infty}(B_{\rho}(x))\}.
     \end{align*}
   \end{Th}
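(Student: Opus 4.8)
The plan is to deduce Theorem \ref{Theorem: regularity for stationary weak Yang-Mills connections} from the $\eps$-regularity statement Theorem \ref{Theorem: eps-regularity} together with the monotonicity formula \eqref{monotonicity}, via a standard covering/density argument. First I would fix a stationary weak Yang--Mills connection $A\in\A_G(Q_1^5(0))$ and recall that by \eqref{monotonicity} the rescaled energy density
\[
\theta_{\rho}(x):=\frac{e^{c\Lambda\rho}}{\rho}\int_{Q_{\rho}(x)}\lvert F_A\rvert^2\, d\L^5
\]
is non-decreasing in $\rho$, so the \emph{density}
\[
\Theta(x):=\lim_{\rho\to 0^+}\frac{1}{\rho}\int_{Q_{\rho}(x)}\lvert F_A\rvert^2\, d\L^5
\]
exists at every $x\in Q_1^5(0)$ and is upper semicontinuous in $x$ (this follows from monotonicity by the usual argument: $\Theta(x)\le\theta_{\rho}(x)$ for every admissible $\rho$, and the right-hand side depends continuously on $x$). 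Define the concentration set
\[
\Sigma:=\Bigl\{x\in Q_1^5(0)\ :\ \Theta(x)\ge\eps_G\Bigr\},
\]
where $\eps_G$ is the threshold from Theorem \ref{Theorem: eps-regularity}. By upper semicontinuity $\Sigma$ is relatively closed in $Q_1^5(0)$.

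Next I would prove the two inclusions $\operatorname{Sing}(A)\subset\Sigma$ and $\H^1(\Sigma)=0$. For the first, suppose $x_0\notin\Sigma$, so $\Theta(x_0)<\eps_G$; by monotonicity there is $\rho_0>0$ with $\frac{1}{\rho_0}\int_{Q_{\rho_0}(x_0)}\lvert F_A\rvert^2\,d\L^5<\eps_G$, and after translating and rescaling so that $Q_{\rho_0}(x_0)$ becomes the unit cube (which preserves stationarity and the Morrey quantity, cf. the scaling used in the proof of Corollary \ref{Corollary: extension in the interior of good cubes}) Theorem \ref{Theorem: eps-regularity} yields $g\in W^{1,2}$ with $A^g$ smooth on the half-size cube around $x_0$; hence $x_0\in\operatorname{Reg}(A)$. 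Thus $\operatorname{Sing}(A)\subset\Sigma$. For the measure estimate, I would run the classical Federer--Ziemer / Giaquinta--Martinazzi covering argument (the reference \cite[Proposition 9.21]{giaquinta-martinazzi} is already cited): since $\lvert F_A\rvert^2\in L^1(Q_1^5(0))$, for $\L^5$-a.e. $x$ one has $\lim_{\rho\to 0}\rho^{-5}\int_{Q_{\rho}(x)}\lvert F_A\rvert^2\,d\L^5$ finite, which forces $\Theta(x)=\lim_{\rho\to0}\rho^4\cdot\bigl(\rho^{-5}\int_{Q_\rho(x)}\lvert F_A\rvert^2\bigr)=0$ at such points; more to the point, the set where $\limsup_{\rho\to 0}\rho^{-1}\int_{Q_\rho(x)}\lvert F_A\rvert^2\,d\L^5\ge\eps_G$ has vanishing $\H^1$-measure by the standard Vitali-covering lemma comparing $\H^1$ to the $(n-4)$-dimensional upper density of the finite measure $\mu:=\lvert F_A\rvert^2\,\L^5$ (here $n=5$, $n-4=1$). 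Concretely: cover $\Sigma$ by cubes $Q_{\rho_j}(x_j)$ with $x_j\in\Sigma$, $\rho_j<\delta$, and $\rho_j^{-1}\mu(Q_{\rho_j}(x_j))\ge\eps_G/2$; extract a Vitali subfamily with pairwise disjoint $Q_{\rho_j/5}(x_j)$; then $\sum_j(5\rho_j)^1\le 5\sum_j \tfrac{2}{\eps_G}\mu(Q_{\rho_j}(x_j))\le \tfrac{C}{\eps_G}\mu\bigl(\{x:\operatorname{dist}(x,\Sigma)<\delta\}\bigr)\to \tfrac{C}{\eps_G}\mu(\Sigma)$ as $\delta\to 0$, and since $\L^5(\Sigma)=0$ (because $\Sigma$ has finite $\H^1$ measure a fortiori, or directly because $\Theta>0$ on a null set) we get $\mu(\Sigma)=0$, hence $\H^1(\Sigma)=0$.

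Combining the two inclusions gives $\H^1(\operatorname{Sing}(A))\le\H^1(\Sigma)=0$, which is the claim; in passing this also shows $\operatorname{Sing}(A)$ is relatively closed (being contained in the closed set $\Sigma$, and its complement $\operatorname{Reg}(A)$ is open by the very definition involving an open ball $B_\rho(x)$), so the statement that $\operatorname{Sing}(A):=Q_1^5(0)\smallsetminus\operatorname{Reg}(A)$ is indeed a closed singular set is recovered. I do not expect any single step to be a genuine obstacle here: the $\eps$-regularity input is already granted (Theorem \ref{Theorem: eps-regularity}), monotonicity is quoted, and the covering argument is textbook. The only mild care points are (i) checking that the translation-and-dilation normalization needed to invoke Theorem \ref{Theorem: eps-regularity} genuinely preserves both the stationary weak Yang--Mills condition and the smallness of the scale-invariant energy — this is the same rescaling $x\mapsto \eps x + c_Q$ used throughout Section \ref{section: the building blocks for the approximation theorems}, so it is harmless — and (ii) making sure the density $\Theta$ is well-defined and upper semicontinuous, which is exactly what \eqref{monotonicity} provides once one notes $e^{c\Lambda\rho}\to 1$ as $\rho\to 0$. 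Thus the proof is essentially an assembly of \eqref{monotonicity}, Theorem \ref{Theorem: eps-regularity}, and \cite[Proposition 9.21]{giaquinta-martinazzi}.
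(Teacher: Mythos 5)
Your proposal is correct and is precisely the argument the paper intends: the paper gives no written proof of this theorem, merely invoking ``standard covering arguments'' together with Theorem \ref{Theorem: eps-regularity} and the monotonicity formula \eqref{monotonicity}, and your assembly (density $\Theta$ via monotonicity, $\operatorname{Sing}(A)\subset\Sigma$ via rescaling into the $\eps$-regularity theorem, and $\H^1(\Sigma)=0$ via the Vitali covering estimate combined with $\mu(\Sigma)=0$ for the absolutely continuous measure $\mu=\lvert F_A\rvert^2\,\L^5$) is exactly the standard chain being referenced. The scaling check $\frac{1}{\rho_0}\int_{Q_{\rho_0}(x_0)}\lvert F_A\rvert^2\,d\L^5=\operatorname{YM}(A_{\rho_0})$ for the rescaled connection is the one point that needed verifying, and you handle it correctly.
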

\appendix
\section{Adapted Coulomb gauge extraction statements in critical dimension}
\begin{Propa}\label{appendix: global Coulomb gauge extraction on a sphere}
Let $G$ be any compact matrix Lie group. There exist constants $\eps_G,C_G>0$ depending only on $G$ such that for every $\g$-valued $1$-form $A\in W^{1,2}(\s^4)$ on $\s^4$ such that
\begin{align*}
    \int_{\s^4}\lvert F_A\rvert^2\, d\H^4<\eps_G
\end{align*}
we can find a gauge $g\in W^{1,4}(\s^4,G)$ satisfying $A^g\in W^{1,2}(\s^4)$, $d^*A^g=0$ and
\begin{align*}
    \|A^g\|_{W^{1,2}(\s^4)}\le C_G\|F_A\|_{L^2(\s^4)}
\end{align*}
\end{Propa}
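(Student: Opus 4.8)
\textbf{Proof proposal for Proposition~\ref{appendix: global Coulomb gauge extraction on a sphere}.}

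The plan is to reduce the statement to Uhlenbeck's local Coulomb gauge theorem in dimension $4$ (which is the critical dimension, where the Yang--Mills energy is conformally invariant), combined with a standard patching argument over a finite good cover of $\s^4$. First I would fix a finite cover of $\s^4$ by geodesic balls $(O_\ell)_{\ell=1,\dots,L}$ that form a good cover (all nonempty intersections diffeomorphic to balls), with the radii chosen small enough and $L$ depending only on the geometry of $\s^4$. On each $O_\ell$, the smallness hypothesis $\|F_A\|_{L^2(\s^4)}^2<\eps_G$ guarantees $\|F_A\|_{L^2(O_\ell)}$ is below the threshold of Uhlenbeck's theorem in dimension $4$ (in the form recorded as Proposition~\ref{appendix: Coulomb gauge extraction for  forms}), so there exists $g_\ell\in W^{1,2}(O_\ell,G)$ with $d^{\ast_{\s^4}}(A^{g_\ell})=0$ in $O_\ell$ (plus the natural boundary condition) and $\|A^{g_\ell}\|_{W^{1,2}(O_\ell)}\le C\|F_A\|_{L^2(O_\ell)}$. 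The transition functions $\sigma_{\ell m}:=g_\ell^{-1}g_m$ on $O_\ell\cap O_m$ then satisfy $\sigma_{\ell m}^{-1}d\sigma_{\ell m}=A^{g_m}-\sigma_{\ell m}^{-1}A^{g_\ell}\sigma_{\ell m}\in L^2$, and by the argument of \cite[proof of Theorem V.5]{Riv} the $\sigma_{\ell m}$ are continuous on the overlaps and the cocycle they generate is $C^0$-approximable by smooth cocycles; hence they are the transition functions of a genuine smooth $G$-bundle over $\s^4$.

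Next I would invoke the fact that, for $\eps_G$ small enough, this bundle must be trivial (this is the step where one uses \cite{Uh3} together with $\pi_3(G)$ considerations and the smallness of the energy, exactly as in the proof of Lemma~\ref{Lemma: weak connection} above), so there is a single global gauge $h\in W^{1,2}(\s^4,G)$ with $A^h\in W^{1,2}(\wedge^1\s^4\otimes\g)$. Once a global $W^{1,2}$ representative is available, I would run Uhlenbeck's global continuity method on $\s^4$: among all $W^{1,4}$ gauges $k$, minimize (or use the continuity-in-the-energy argument) to obtain $k$ with $d^\ast\big((A^h)^k\big)=0$ on $\s^4$; setting $g:=hk$ gives $d^\ast A^g=0$. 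The a priori estimate then comes from testing the Coulomb equation: writing $\Delta$ for the Hodge Laplacian on $\s^4$, since $d^\ast A^g=0$ one has $\|A^g\|_{W^{1,2}}\le C\|dA^g\|_{L^2}$ modulo harmonic $1$-forms (and $H^1(\s^4)=0$, so there are none to worry about), and $\|dA^g\|_{L^2}\le \|F_{A^g}\|_{L^2}+\|A^g\wedge A^g\|_{L^2}\le \|F_A\|_{L^2}+C\|A^g\|_{L^4}^2\le \|F_A\|_{L^2}+C\|A^g\|_{W^{1,2}}^2$ by Sobolev embedding $W^{1,2}(\s^4)\hookrightarrow L^4(\s^4)$; absorbing the quadratic term using the smallness of $\|F_A\|_{L^2}$ (which, along the continuity argument, controls $\|A^g\|_{W^{1,2}}$) yields $\|A^g\|_{W^{1,2}(\s^4)}\le C_G\|F_A\|_{L^2(\s^4)}$.

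The main obstacle is not any single estimate but the global patching: one must be careful that the locally Coulomb gauges glue to define an honest smooth bundle and that this bundle is trivial under the smallness assumption --- this is precisely where dimension $4$ being critical is used (the borderline Sobolev embedding $W^{1,2}\hookrightarrow L^4$ makes $\mathcal M(g,h)=gh$ fail to be smooth, so one cannot simply invoke an inverse function theorem and must instead use Uhlenbeck's continuity argument, as recalled in the Introduction). Since $\s^4$ is closed this is slightly cleaner than the ball case treated in \cite{uhlenbeck-connections-with-lp}, but the structure of the argument is the same; I would cite \cite{uhlenbeck-connections-with-lp}, \cite{Uh3} and the presentation in \cite{Riv} for the details rather than reproving them, and only spell out the energy-threshold bookkeeping needed to get the stated constant $C_G$ depending only on $G$.
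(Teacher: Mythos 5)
Your proposal is sound in outline but follows a genuinely different route from the paper for the reduction step. The paper does not use a finite cover of $\s^4$ at all: it stereographically projects $A$ to $\r^4$, uses the conformal invariance of $\int|F|^2$ in dimension $4$ to transfer the smallness hypothesis, applies Uhlenbeck's theorem on an exhausting family of balls $B_R(0)$ (the key point being that the constant in the estimate $\|D_R\|_{L^4}+\|dD_R\|_{L^2}\le C_G\|F\|_{L^2}$ is scale-invariant, hence independent of $R$), and extracts a weak limit $\Omega=A^{h_\infty}$ with $\|\Omega\|_{L^4(\s^4)}\le C_G\|F_A\|_{L^2(\s^4)}$. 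Only then does it invoke the global continuity argument on the closed manifold (citing \cite{riviere} and \cite{lamm-riviere}), which is the same final step you call ``Uhlenbeck's global continuity method on $\s^4$''. Your cover-and-patch reduction (local Coulomb gauges, continuity of the transition functions as in \cite[Theorem V.5]{Riv}, triviality of the resulting bundle for small energy via \cite{Uh3}) is exactly the mechanism the paper uses elsewhere, in Lemma~\ref{Lemma: weak connection}, so it is a legitimate alternative; its advantage is that it generalizes to arbitrary closed $4$-manifolds, whereas the stereographic route is special to $\s^4$ but avoids the bundle-triviality discussion entirely (indeed, since $A$ is already a globally defined $1$-form, the cocycle $\sigma_{\ell m}=g_\ell^{-1}g_m$ is a coboundary and triviality is automatic). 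The one point where your sketch needs more care than you give it: for the final continuity argument and the absorption of the quadratic term to close, you need the globally patched representative $A^h$ to come with a \emph{quantitative} bound $\|A^h\|_{L^4(\s^4)}\le C_G\|F_A\|_{L^2(\s^4)}$, not merely the qualitative statement $A^h\in W^{1,2}$ that the patching lemma delivers as stated; this requires tracking the norms of the gluing gauges through the triviality step. The paper's exhaustion argument sidesteps this because the scale-invariant Uhlenbeck estimate on $\r^4$ hands over the quantitative $L^4$ bound on $\Omega$ directly.
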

\begin{proof}[\textbf{\textup{Proof of Proposition \ref{appendix: global Coulomb gauge extraction on a sphere}}}]
Let $\pi$ be the stereographic projection from $\s^4\smallsetminus\{N\}$ into $\r^4$ which is sending the north pole $N$ of $\s^4$ to infinity and which is conformally invariant. Introduce
\begin{align*}
    D:=(\pi^{-1})^\ast A
\end{align*}    
Because of conformal invariance of the Hodge operator on 2-forms one has
\begin{align*}
    \int_{\r^4} |F_{D}|^2 d\L^4&=-\int_{\r^4}\operatorname{tr}(F_{D}\wedge *F_D)=-\int_{\r^4}(\pi^{-1})^*(\operatorname{tr}(F_A\wedge *F_A)\\
    &=-\int_{\s^4}\operatorname{tr}(F_A\wedge *F_A)=\int_{\s^4}|F_A|^2 d\H^4<\eps_0.
\end{align*}
For every $R>0$, on $B_{R}(0)$ one chooses the Uhlenbeck Coulomb gauge $D_R:=(D)^{g_R}$ (see \cite{uhlenbeck-connections-with-lp}) that satisfies
\begin{align*}
    d^* D_R&=0
\end{align*}
and 
\begin{align*}
    \|D_R\|_{L^4(B_R(0))}+\|d D_R\|_{L^2(B_R(0))}&\le C_G\ \|F_{D_R}\|_{L^2(B_R(0))}\le C_G\ \|F_A\|_{L^2(\s^4)}.
\end{align*}
What is crucial here is that all the norms involved are scaling invariant and then the constant $C_G>0$ is independent of $R$.

Consider now on $\pi^{-1}(B_R(0))=\s^4\smallsetminus B_{\rho_R}(N)$ where $\rho_R\simeq R/(1+R^2) \rightarrow 0$ as $R\rightarrow +\infty$ the 1-form
\begin{align*}
    \pi^\ast(D_R)=A^{h_R}
\end{align*}
where $h_R:= g_R\circ\pi$. Using the conformal invariance of the $L^4$ norms on 1-forms as well as the $L^2$ norms on $2$-forms one has
\begin{align}\label{equation: appendix L^4 control on the gauged connecitons}
    \|A^{h_R}\|_{L^4(\pi^{-1}(B_R(0))}+\|d(A^{h_R})\|_{L^2(\pi^{-1}(B_R(0))}\le C_G\ \|F_A\|_{L^2(\s^4)}
\end{align}
There exists obviously a sequence $R_k\to+\infty$ such that for some $\Omega\in L_{loc}^4(\s^4\smallsetminus\{N\})$ we have
\begin{align*}
    A^{h_{R_k}}\rightharpoonup \Omega \mbox{ weakly in } L^4_{loc}(\s^4\smallsetminus\{N\})\quad 
\end{align*}
and
\begin{align*}
    h_{R_k}\rightharpoonup h_\infty \mbox{ weakly in } W^{1,4}_{loc}(\s^4\smallsetminus\{N\}).
\end{align*}
Moreover
\begin{align*}
    d(A^{h_{R_k}})\rightharpoonup d\Omega \mbox{ weakly in } L^2_{loc}(\s^4\smallsetminus\{N\})
\end{align*}
By \eqref{equation: appendix L^4 control on the gauged connecitons} we have in particular that 
\begin{align}\label{equation: appendix L^4 bound}
    \|\Omega\|_{L^4(\s^4)}\le C_G\|F_A\|_{L^2(\s^4)}\quad\mbox{ and }\quad \Omega=A^{h_\infty}\in L^4(\s^4)
\end{align}
Following Uhlenbeck continuity type argument introduced in \cite{riviere} and adapted in \cite{lamm-riviere} to the 4-dimensional case we construct $g\in W^{1,4}(\s^4,G)$ such that
\begin{align*}
    d^*(g^{-1} dg+g^{-1}\Omega g)=0\quad\mbox{ and }\quad\|dg\|_{L^4(\s^4)}\le C_G\, \|\Omega\|_{L^4(\s^4)}.
\end{align*}
Hence we have the existence of $g\in W^{1,4}(\s^4,G)$ such that $(A^{h_\infty})^g=\Omega^g$ satisfies
\begin{align*}
    d^*((A^{h_\infty})^g)=0\quad\mbox{ and } \|(A^{h_\infty})^g\|_{L^4(\s^4)} \le C_G \|F_A\|_{L^2(\s^4)}
\end{align*}
which gives
\begin{align*}
    \|(A^{h_\infty})^g\|_{W^{1,2}(\s^4)} \le C_G\|F_A\|_{L^2(\s^4)}
\end{align*}
This proves the existence of a controlled global Coulomb gauge $\tilde g:=h_{\infty}g\in W^{1,4}(\s^4,G)$ under small curvature assumption. This concludes the proof of Proposition \ref{appendix: global Coulomb gauge extraction on a sphere}.
\end{proof}

\medskip
\noindent
We recall here the main statement that we will use to extract Coulomb gauges in critical dimension. This is an adaptation of the main Theorem in \cite{uhlenbeck-connections-with-lp}.
\begin{Propa}\label{appendix: Coulomb gauge extraction for  forms}
Let $G$ be a compact matrix Lie group. There exist constants $\eps_G,C_G>0$ depending only on $G$ such that for every $\g$-valued $1$-form $A\in L^4(\b^4)$ on $\b^4$ such that
\begin{align*}
    \|F_A\|_{L^2(\s^4)}<\eps_G
\end{align*}
we can find a gauge $g\in W^{1,4}(\b^4,G)$ satisfying $A^g\in W^{1,2}(\b^4)$, $d^*A^g=0$ and
\begin{align*}
    \|A^g\|_{W^{1,2}(\b^4)}\le C_G\|F_A\|_{L^2(\b^4)}
\end{align*}
\end{Propa}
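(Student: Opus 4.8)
The plan is to reduce the statement to Uhlenbeck's original Coulomb gauge theorem on the ball from \cite{uhlenbeck-connections-with-lp} by a standard approximation/density argument, since our connection is only assumed to be in $L^4$ rather than smooth. First I would recall that Uhlenbeck's theorem gives, for smooth $\g$-valued $1$-forms $A$ on $\b^4$ with $\|F_A\|_{L^2(\b^4)}<\eps_G$, a smooth gauge $g$ with $d^*A^g=0$, the natural boundary condition $\iota_{\partial\b^4}^*(*A^g)=0$, and the estimate $\|A^g\|_{W^{1,2}(\b^4)}\le C_G\|F_A\|_{L^2(\b^4)}$; the constants depend only on $G$. The only point to address is the passage from smooth $A$ to $A\in L^4$.

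The key steps, in order, would be: (1) Given $A\in L^4(\wedge^1\b^4\otimes\g)$ with $F_A\in L^2$ and $\|F_A\|_{L^2(\b^4)}<\eps_G$, choose smooth $\g$-valued $1$-forms $A_k\to A$ strongly in $L^4$ with $F_{A_k}\to F_A$ strongly in $L^2$ (this can be done by mollification, using that $dA=F_A-A\wedge A\in L^2$ so $A\in W^{1,2}$ and mollification converges in $W^{1,2}$; note $A\wedge A\in L^2$ since $A\in L^4$). In particular $\|F_{A_k}\|_{L^2(\b^4)}<\eps_G$ for $k$ large. (2) Apply Uhlenbeck's theorem to each $A_k$, obtaining smooth gauges $g_k$ with $d^*A_k^{g_k}=0$, $\iota_{\partial\b^4}^*(*A_k^{g_k})=0$, and $\|A_k^{g_k}\|_{W^{1,2}(\b^4)}\le C_G\|F_{A_k}\|_{L^2(\b^4)}\le C_G\eps_G$ uniformly in $k$. (3) Extract weak limits: $A_k^{g_k}\rightharpoonup B$ weakly in $W^{1,2}$ (hence strongly in $L^4$ by Sobolev embedding in dimension $4$, $W^{1,2}\hookrightarrow L^4$), with $d^*B=0$, $\iota_{\partial\b^4}^*(*B)=0$ and $\|B\|_{W^{1,2}(\b^4)}\le C_G\|F_A\|_{L^2(\b^4)}$ by lower semicontinuity. (4) Control the gauges: from $dg_k=g_k A_k^{g_k}-A_k g_k$, the uniform $L^4$ bounds on $A_k^{g_k}$ and $A_k$ together with boundedness of $G$ give $\{g_k\}$ uniformly bounded in $W^{1,2}(\b^4,G)$; extract $g_k\rightharpoonup g$ weakly in $W^{1,2}$, hence $g_k\to g$ strongly in $L^p$ for every $p<\infty$ by Rellich. (5) Pass to the limit in the identity $A_k^{g_k}=g_k^{-1}dg_k+g_k^{-1}A_kg_k$: the left side converges to $B$ in $L^2$; on the right, $g_k^{-1}A_kg_k\to g^{-1}Ag$ in $L^2$ (product of an $L^4$-convergent factor with $L^\infty$-bounded, a.e.-convergent factors) and $g_k^{-1}dg_k\rightharpoonup g^{-1}dg$ in $L^1$ (or one identifies the limit directly), yielding $B=A^g$. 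Therefore $g\in W^{1,2}(\b^4,G)$ is the desired gauge.

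The main obstacle I anticipate is step (5), specifically verifying that the weak limit $B$ really equals $A^g$ and that $g$ inherits enough regularity to make $A^g$ well-defined as an $L^4$ form. The subtlety is that $g$ is only in $W^{1,2}$, which in dimension $4$ is exactly critical (not embedded in $C^0$), so $g^{-1}dg\in L^2$ and $g^{-1}Ag\in L^4$ are the best one can say, and one must be careful that the nonlinear quantity $g_k^{-1}dg_k$ passes to the limit correctly — this is handled by writing $g_k^{-1}dg_k = A_k^{g_k}-g_k^{-1}A_kg_k$ and taking the $L^2$ (resp. weak-$L^{4/3}$) limit of the right-hand side, rather than trying to pass to the limit in $dg_k$ directly against $g_k^{-1}$, which would require compensated-compactness-type care. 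Once the convergences are set up as strong/weak pairs appropriately, the identification $B=A^g$ is routine. The boundary condition and the Coulomb condition $d^*B=0$ both pass to the limit trivially since they are linear (distributional) constraints stable under weak $W^{1,2}$ convergence.
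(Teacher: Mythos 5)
Your overall strategy is sound and its second half coincides with the paper's: the paper also ends its proof of this proposition by taking $A_k\in C_c^\infty(\b^4)$ with $A_k\to A$ in $L^4$ and $dA_k\to dA$ in $L^2$, extracting weak limits of $A_k^{g_k}$ and of $g_k$ from the identity $dg_k=g_kA_k^{g_k}-A_kg_k$, and identifying the limit as $A^g$. Where you diverge is in what you take as the input for smooth connections. You cite Uhlenbeck's theorem on $\b^4$ as a black box; the paper instead re-proves it by a continuity method in the space $X=\{A\in L^p,\ dA\in L^{p/2}\}$ with an auxiliary exponent $p\in(4,8)$, showing that the set of connections admitting a controlled Coulomb gauge is non-empty, open and closed in $\{\|F_A\|_{L^2}<\eps\}$. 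The payoff of the paper's choice of $p>4$ is that the gauged connections are bounded in $W^{1,p/2}$ with $p/2>2$, so the embedding into $L^4$ is \emph{compact} and $A_k^{g_k}\to A_\infty$ strongly in $L^4$; your version only has $W^{1,2}$ bounds, for which $W^{1,2}(\b^4)\hookrightarrow L^4(\b^4)$ is continuous but not compact. This is not fatal — as you correctly anticipate, every nonlinear term in the limit passage can be arranged as a (strong in $L^q$)$\times$(weak in $L^{q'}$) pairing, and the uniform (non-compact) $L^4$ bound on $A_k^{g_k}$ is all that is needed to bound $dg_k$ — but it is exactly the borderline that the paper's detour through $p\in(4,8)$ is designed to avoid.

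Three local justifications in your write-up are wrong and should be repaired, though none destroys the argument. First, $dA=F_A-A\wedge A\in L^2$ does \emph{not} imply $A\in W^{1,2}$: controlling the full gradient requires $d^*A$ as well. The approximants you need still exist, because mollification commutes with $d$, so $A*\rho_\eps\to A$ in $L^4$ and $d(A*\rho_\eps)=(dA)*\rho_\eps\to dA$ in $L^2$ (after a preliminary dilation to handle the boundary); this is all the paper asserts too. Second, as noted above, weak $W^{1,2}(\b^4)$ convergence does not give strong $L^4$ convergence; delete that claim and replace it by the uniform $L^4$ bound from the continuous embedding, which suffices. Third, the identity $dg_k=g_kA_k^{g_k}-A_kg_k$ with both connection terms uniformly bounded in $L^4$ gives $\{g_k\}$ bounded in $W^{1,4}(\b^4,G)$, not merely $W^{1,2}$; you need this stronger bound anyway, since the statement asks for a gauge $g\in W^{1,4}(\b^4,G)$, and a $W^{1,2}$ limit would fall short of the claimed regularity.
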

\begin{proof}[\textbf{\textup{Proof of Proposition \ref{appendix: Coulomb gauge extraction for  forms}}}]
    Fix any $4<p<8$. For every $\eps,C>0$, we define 
    \begin{align*}
        X&:=\{A\in L^p(\b^4,\opwedge^1\b^4\otimes\g) \mbox{ s.t. } dA\in L^{\frac{p}{2}}(\b^4,\opwedge^2\b^4\otimes\g)\}\\
        \mathscr{U}^{\eps}&:=\{A\in X \mbox{ with } \|F_A\|_{L^2(\b^4)}<\eps\}\\
        \mathscr{V}_{C}^{\eps}&:=\{A\in\mathscr{U}^{\eps} \mbox{ : } \exists\,g\in W_{\operatorname{id}_G}^{1,p}(\b^4,G) \mbox{ : } d^*A^g=0, \|A^g\|_{W^{1,\frac{q}{2}}(\b^4)}\le C\|F_A\|_{L^\frac{q}{2}(\b^4)} \mbox{, } q=4,p\}.
    \end{align*}
    On $X$, we consider the topology induced by the following norm:
    \begin{align*}
        \|A\|_X:=\|A\|_{L^p(\b^4)}+\|dA\|_{L^{\frac{p}{2}}(\b^4)}, \qquad\forall\,A\in X.
    \end{align*}
    Notice that $X$ is a Banach vector space with respect to such norm.

    \medskip
    \noindent 
    \textit{Claim}. We claim that there exist $\eps,C>0$ such that $\mathscr{V}_{C}^{\eps}=\mathscr{U}^{\eps}$. In order to achieve such result, we prove separately the following facts.
    \begin{itemize}
        \item   \textit{$\mathscr{U}^{\eps}$ is path-connected}. Notice that $0\in\mathscr{U}^{\eps}$. Given any $A\in\mathscr{U}^{\eps}$, we define
                \begin{align*}
                    A(t):=t A(t\,\cdot\,), \qquad\forall\,t\in[0,1].
                \end{align*}
                Notice that $A(0)=0$ and $A(1)=A$. Moreover, 
                \begin{align*}
                    F_{A(t)}=dA(t)+A(t)\wedge A(t)=t^2dA(t\,\cdot\,)+t^2A(t\,\cdot\,)\wedge A(t\,\cdot\,)=t^2F_A(t\,\cdot\,)
                \end{align*}
                which implies that
                \begin{align}\label{equation: appendix gauge extraction path connectedness}
                    \nonumber
                    \int_{\b^4}\lvert F_{A(t)}\rvert^\frac{p}{2}\, d\L^4&=\int_{\b^4}t^p\lvert F_A(t\,\cdot\,)\rvert^{\frac{p}{2}}\, d\L^4=t^{p-4}\int_{t\b^4}\lvert F_A\vert^{\frac{p}{2}}\,d\L^4\\
                    &\le t^{p-4}\int_{\b^4}\lvert F_A\vert^{\frac{p}{2}}\,d\L^4<+\infty
                \end{align}
                and
                \begin{align*}
                    \int_{\b^4}\lvert F_{A(t)}\rvert^2\, d\L^4=\int_{\b^4}t^4\lvert F_A(t\,\cdot\,)\rvert^2\, d\L^4=\int_{t\b^4}\lvert F_A\vert^2\,d\L^4\le\int_{\b^4}\lvert F_A\vert^2\,d\L^4<\eps.
                \end{align*}
                Hence, $A(t)\in\mathscr{U}^{\eps}$ for every $t\in[0,1]$. At the same time we have
                \begin{align*}
                    \int_{\b^4}\lvert A(t)\rvert^p\, d\L^4=\int_{\b^4}t^p\lvert A(t\,\cdot\,)\rvert^4\, d\L^4\le t^{p-4}\int_{\b^4}\lvert A\rvert^4\, d\L^4\to 0
                \end{align*}
                as $t\to 0^+$. Hence, $A(t)\to 0$ in $L^p(\b^4)$. As a byproduct, we have $A(t)\wedge A(t)\to 0$ in $L^{\frac{p}{2}}(\b^4)$. Moreover, by \eqref{equation: appendix gauge extraction path connectedness} we have $F_{A(t)}\to 0$ in $L^{\frac{p}{2}}(\b^4)$. This implies $dA(t)\to 0$ in $L^{\frac{p}{2}}(\b^4)$.
                
                It follows that $[0,1]\ni t\to A(t)$ is a continuous path in $\mathscr{U}^{\eps}$ joining $A$ and $0$.
        \item   \textit{For every $\eps,C>0$, $\mathscr{V}_C^{\eps}$ is closed in $\mathscr{U}^{\eps}$}. Let $\{A_k\}_{k\in\n}\subset\mathscr{V}_C^{\eps}$ be such                    that $A_k\to A\in\mathscr{U}_{\eps}$. We want to show that $A\in\mathscr{V}_C^{\eps}$. 
        
                Notice that, by assumption, we have $F_{A_k}\to F_A$ strongly in $L^\frac{p}{2}(\b^4)$. Moreover, for every $k\in\n$ there exists $g_k\in W_{\operatorname{id}_G}^{1,p}(\b^4,G)$ such that $d^*A_k^{g_k}=0$ and 
                $$\|A_k^{g_k}\|_{W^{1,\frac{q}{2}}(\b^4)}\le C\|F_{A_k}\|_{L^2(\b^4)}\le C\eps$$
                for $q=p,4$. Since $\{A_k^{g_k}\}_{k\in\n}$ is bounded in $W^{1,\frac{p}{2}}$, there exists a subsequence (not relabeled) such that $A_k^{g_k}\rightharpoonup A_{\infty}$ in $W^{1,\frac{p}{2}}$. Hence, for $q=p,4$, we have 
                \begin{align*}
                    \|A_{\infty}\|_{W^{1,\frac{q}{2}}(\b^4)}\le\liminf_{k\to+\infty}\|A_{k}^{g_k}\|_{W^{1,\frac{q}{2}}(\b^4)}\le C\lim_{k\to+\infty}\|F_{A_k}\|_{L^\frac{q}{2}(\b^4)}=C\|F_A\|_{L^\frac{q}{2}(\b^4)}.
                \end{align*}
                Notice that $d^*A_{\infty}=0$ and that, by Sobolev embedding theorem, we have $A_k^{g_k}\to A_{\infty}$ strongly in $L^p$. 
                Since 
                \begin{align}\label{equation: appendix useful}
                    dg_k=g_kA_k^{g_k}-A_kg_k,
                \end{align}
                we get that $\{g_k\}_{k\in\n}$ is bounded in $W_{\operatorname{id}_G}^{1,p}(\b^4,G)$. Thus, there exists a subsequence (not relabeled) such that $g_k\rightharpoonup g$ in $W_{\operatorname{id}_G}^{1,p}(\b^4,G)$. This is enough to pass to the limit in \eqref{equation: appendix useful} and we get $A_{\infty}=A^g$. Since $g$ is a gauge satisfying all the required properties for $A$, we conclude that $A\in\mathscr{V}_C^{\eps}$.
        \item   \textit{For some choice of $\eps,C>0$, $\mathscr{V}_C^{\eps}$ is open in $\mathscr{U}^{\eps}$}. Let $A\in\mathscr{V}_{C}^{\eps}$. It is clear that if                 we find an open neighborhood of its Coulomb gauge $A^g$ for the topology of $X$ in $\mathscr{V}_C^{\eps}$, then $A$ posses also such a neighborhood.                  So we can assume right away that $d^*A=0$ and 
                \begin{align*}
                    \|A\|_{W^{1,\frac{q}{2}}(\b^4)}\le C\|F_A\|_{L^{\frac{q}{2}}(\b^4)}<C\eps.
                \end{align*}
                Notice that, in such a way, automatically we have $A\in W^{1,\frac{p}{2}}(\b^4)$.

                \begin{align}
                    Y&:=\bigg\{U\in W^{1,p}(\mathbb{B}^4,\g) \mbox{ : } \int_{\mathbb{B}^4}U\, d\L^4=0\bigg\}.\\
                    Z&:=\bigg\{(f,\alpha)\in W^{-1,p}(\mathbb{B}^4,\g)\times W^{1-\frac{1}{p},p}(\partial\mathbb{B}^4,\wedge^{3}T^*\partial\mathbb{B}^4\otimes\g) \mbox{ : } \int_{\mathbb{B}^n}f\, d\L^4=-\int_{\partial\mathbb{B}^4}\alpha\bigg\}.
                \end{align}
                Notice that $Y$ and $Z$ are Banach spaces, as they are closed subspaces of $W^{1,p}(\mathbb{B}^4,\g)$ and of the product $W^{-1,p}(\mathbb{B}^n,\g)\times W^{-\frac{1}{p},p}(\partial\mathbb{B}^4,\wedge^{3}T^*\partial\mathbb{B}^4\otimes\g)$ respectively. We introduce the map
                \begin{align*}
                    \mathscr{F}_A:X\times Y\to Z
                \end{align*}
                given by
                \begin{align*}
                    \mathscr{F}_A(\omega,U):=\Big(d^*\big((A+\omega)^{\operatorname{exp}(U)}\big),\iota_{\partial\mathbb{B}^n}^*\big(\hspace{-0.7mm}*\hspace{-0.5mm}(A+\omega)^{\exp(U)}\big)\Big)
                \end{align*}
                By direct computation we can show that $\mathscr{F}_A$ is a $C^1$-map between Banach spaces. The partial derivative $\partial_U\mathscr{F}_A(0,0):Y\to Z$ is the following linear and continuous operator between Banach spaces:
                \begin{align*}
                    \partial_U\mathscr{F}_A(0,0)[V]&=\big(\Delta V+d^*(AV-VA),\partial_rV\big)\\
                    &=\big(\Delta V+d^*([A,V]),\partial_rV\big), \qquad\forall\, V\in Y. 
                \end{align*}
                In order to apply the implicit function theorem to $\mathscr{F}_A$ we need to show that $\partial_U\mathscr{F}_A(0,0)$ is invertible. First we show that it is injective. Indeed, by standard $L^p$-theory for the Laplacian and since $A$ is in a controlled Coulomb gauge, for every $V\in Y$ we get
                \begin{align*}
                    \|V\|_{Y}&\le \tilde C\Big(\|\Delta V\|_{W^{-1,p}(\b^4)}+\|\partial_rV\|_{W^{-\frac{1}{p},p}(\partial\b^4)}\Big)\\
                    &\le\tilde C\Big(\|\partial_U\mathscr{F}_A(0,0)[V]\|_{W^{-1,p}(\b^4)}+\|d^*([A,V])\|_{W^{-1,p}(\b^4)}\Big)\\
                    &\le\tilde C\Big(\|\partial_U\mathscr{F}_A(0,0)[V]\|_{W^{-1,p}(\b^4)}+\|[A,V]\|_{L^p(\b^4)}\Big)\\
                    &\le\tilde C\Big(\|\partial_U\mathscr{F}_A(0,0)[V]\|_{W^{-1,p}(\b^4)}+\|A\|_{L^{p}(\b^4)}\|V\|_{L^{\infty}(\b^4)}\Big)\\
                    &\le\tilde C\Big(\|\partial_U\mathscr{F}_A(0,0)[V]\|_{L^{\frac{p}{2}}(\b^4)}+C\eps\|V\|_{W^{1,p}(\b^4)}\Big),
                \end{align*}
                for some constant $\tilde C>0$ which just depends on $G$. By choosing $\eps,C>0$ in such a way that $C\eps<\frac{1}{2\tilde C}$, we obtain
                \begin{align*}
                    \|V\|_{Y}&\le 2\tilde C\|\partial_U\mathscr{F}_A(0,0)[V]\|_{W^{-1,p}(\b^4)}.
                \end{align*}
                which implies that $\partial_U\mathscr{F}_A(0,0)$ has trivial kernel. Classical Calderon-Zygmund theory asserts that the operator $\mathcal{L}_0:Y\to Z$ given by 
                \begin{align*}
                    \mathcal{L}_0(V):=\Delta V, \qquad\forall\, V\in Y
                \end{align*}
                is invertible and therefore it has zero index. For every $t\in[0,1]$ we define the operator $\mathcal{L}_t:Y\to Z$ by
                \begin{align*}
                    \mathcal{L}_t(V):=\Delta V-t(*[*A,dV]), \qquad\forall\, V\in Y.
                \end{align*}
                and we notice that $[0,1]\ni t\to\mathcal{L}_t$ is a continuous path of bounded operators joining $\mathcal{L}_0$ and $\mathcal{L}_1=\partial_U\mathscr{F}_A(0,0)$. By continuity of the Fredholm index, we conclude that $\partial_U\mathscr{F}_A(0,0)$ has zero index. This, together with $\ker\big(\partial_U\mathscr{F}_A(0,0)\big)=0$, implies that $\partial_U\mathscr{F}_A(0,0)$ is invertible. Hence, we can apply the implicit function theorem in order to get that there exist an open neighbourhood $\mathcal{O}$ of $0$ in $Y$ and $\delta>0$ such that for every $\omega\in X$ such that $\|\omega\|_{X}<\delta$ there exists $U_{\omega}\in \mathcal{O}$ such that 
                \begin{align*}
                    0=\mathscr{F}_A(\omega,U_{\omega})=d^*\big((A+\omega)^{g_{\omega}}\big),
                \end{align*}
                where we have set $g_{\omega}:=\exp(U_{\omega})\in W^{1,p}(\b^4,G)$. We just need to establish the bounds 
                \begin{align*}
                    \|(A+\omega)^{g_{\omega}}\|_{L^{\frac{q}{2}}(\b^4)}\le C\|F_{A+\omega}\|_{L^{\frac{q}{2}}(\b^4)}
                \end{align*}
                for $q=4,p$. Notice that, by triangular inequality and Sobolev embedding theorem, we get
                \begin{align*}
                     \|(A+\omega)^{g_{\omega}}\|_{W^{1,\frac{q}{2}}(\b^4)}&= \|(A+\omega)^{g_{\omega}}\|_{L^{\frac{q}{2}}(\b^4)}+\|d((A+\omega)^{g_{\omega}})\|_{L^{\frac{q}{2}}(\b^4)}\\
                     &\le\hat C\big(\|F_{A+\omega}\|_{L^{\frac{q}{2}}(\b^4)}+\|(A+\omega)^{g_{\omega}}\|_{L^{q}(\b^4)}^2\big)\\
                     &\le\hat C\Big(\|F_{A+\omega}\|_{L^{\frac{q}{2}}(\b^4)}+\|(A+\omega)^{g_{\omega}}\|_{L^{q}(\b^4)}\|(A+\omega)^{g_{\omega}}\|_{W^{1,\frac{q}{2}}(\b^4)}\Big).
                \end{align*}
                for some constant $\hat C>0$ depending only on $G$. Now we see that 
                \begin{align*}
                    \|(A+\omega)^{g_{\omega}}\|_{L^{q}(\b^4)}&\le K\|A\|_{L^{q}(\b^4)}+K\big(\|\omega\|_{L^{q}(\b^4)}+\|dg_{\omega}\|_{L^{q}(\b^4)}\big)\\
                    &\le K\|A\|_{W^{1,\frac{q}{2}}(\b^4)}+K\big(\|\omega\|_{L^{q}(\b^4)}+\|dg_{\omega}\|_{L^{q}(\b^4)}\big),
                \end{align*}
                where again $K>0$ depends only on $G$. Notice that by possibly reducing $\delta>0$ and the size of the neighbourhood $\mathcal{O}$ we bring the quantity $\|\omega\|_{L^{q}(\b^4)}+\|dg_{\omega}\|_{L^{q}(\b^4)}$ to be arbitrary small. Then, we choose such parameters possibly depending on $A$ in such a way that
                \begin{align*}
                    \|\omega\|_{L^{q}(\b^4)}+\|dg_{\omega}\|_{L^{q}(\b^4)}\le\|A\|_{W^{1,\frac{q}{2}}(\b^4)}
                \end{align*}
                which implies
                \begin{align*}
                    \|(A+\omega)^{g_{\omega}}\|_{L^{q}(\b^4)}&\le 2K\|A\|_{W^{1,\frac{q}{2}}(\b^4)}\le 2KC\eps
                \end{align*}
                and finally
                \begin{align*}
                     \|(A+\omega)^{g_{\omega}}\|_{W^{1,\frac{q}{2}}(\b^4)}&\le\hat C\|F_{A+\omega}\|_{L^{\frac{q}{2}}(\b^4)}+2\hat CKC\eps\|(A+\omega)^{g_{\omega}}\|_{W^{1,\frac{q}{2}}(\b^4)}\Big).
                \end{align*}
                Assuming that $\eps,C>0$ are such that $2\hat CKC\eps\le\frac{1}{2}$ and $C\ge2\hat C$ we get 
                \begin{align*}
                     \|(A+\omega)^{g_{\omega}}\|_{W^{1,\frac{q}{2}}(\b^4)}&\le C\|F_{A+\omega}\|_{L^{\frac{q}{2}}(\b^4)}
                \end{align*}
                and it follows that
                \begin{align*}
                    B_{\delta}^X(A):=\big\{A+\omega \mbox{ : } \|\omega\|_{X}<\delta\big\}\subset\mathscr{V}_C^{\eps}.
                \end{align*}
                This concludes the proof of the openness of $\mathscr{V}_{C}^{\eps}$ in $\mathcal{U}_{\eps}$ and our claim follows. 
    \end{itemize}
    Now that the previous claim is proved, we proceed by approximation in the following way. Let $A$ satisfy the hypothesis of the statement and let $\{A_{k}\}_{k\in\n}\subset C_c^{\infty}(\b^4)$ be such that $A_k\to A$ strongly in $L^4$ and $dA_k\to dA$ strongly in $L^2$. Hence, we have that $F_{A_k}\to F_A$ strongly in $L^2$ and, for $k\in\n$ large enough, we get
    \begin{align*}
        \int_{\b^4}\lvert F_{A_k}\rvert^2\, d\L^4<\eps.
    \end{align*}
    Hence, for $k\in\n$ large enough we have $A_k\in\mathscr{U}^{\eps}$. We choose $\eps,C>0$ so that $\mathscr{V}_{C}^{\eps}=\mathscr{U}^{\eps}$. Thus, we get that for every $k\in\n$ large enough there exists $g_k\in W^{1,p}(\b^4,G)$ such that $d^*A_k^{g_k}=0$ and 
    \begin{align*}
        \|A_{k}^{g_k}\|_{W^{1,\frac{q}{2}}(\b^4)}\le C\|F_{A_k}\|_{L^\frac{q}{2}(\b^4)}
    \end{align*}
    for $q=4,p$. Hence we find a subsequence (not relabeled) such that $A_k^{g_k}\rightharpoonup A_{\infty}$ weakly in $W^{1,\frac{p}{2}}$. Clearly, we have $d^*A_{\infty}=0$. Moreover, $A_k^{g_k}\rightharpoonup A_{\infty}$ weakly in $W^{1,2}$ and we get 
    \begin{align*}
        \|A_{\infty}\|_{W^{1,2}(\b^4)}\le\liminf_{k\to+\infty}\|A_{k}^{g_k}\|_{W^{1,2}(\b^4)}\le C\lim_{k\to+\infty}\|F_{A_k}\|_{L^2}=C\|F_A\|_{L^2(\b^4)}.
    \end{align*}
    Since $p>4$, the weak convergence of $A_k^{g_k}$ in $W^{1,\frac{p}{2}}$ implies that $A_k^{g_k}\to A_{\infty}$ strongly in $L^4$. In particular, by
    \begin{align*}
        dg_k=g_kA_k^{g_k}-A_kg_k,
    \end{align*}
    we get that $\{g_k\}_{k\in\n}$ is uniformly bounded in $W^{1,4}(\b^4)$. Thus, there exists a subsequence (not relabeled) such that $g_k\rightharpoonup g\in W^{1,4}(\b^4)$ weakly in $W^{1,4}$. By passing to the limit in the previous equality, we eventually get
    \begin{align*}
        dg=gA_{\infty}-Ag,
    \end{align*}
    i.e. $A_{\infty}=A^g$. This concludes the proof of Proposition \ref{appendix: Coulomb gauge extraction for  forms}.
\end{proof}
\begin{Rm}\label{appendix: remark}
   A statement that is completely equivalent to Proposition \ref{appendix: Coulomb gauge extraction for  forms} also applies to every open and bounded domain $D$ contained in $\mathbb{R}^4$ with a sufficiently smooth boundary. This is necessary to use the standard tools from elliptic regularity theory in the proof. For the purposes of this paper, it suffices to note that all the elliptic regularity theory can be applied to cubes using standard reflection methods. As a result, Proposition \ref{appendix: Coulomb gauge extraction for  forms} can be extended to open 4-cubes in $\mathbb{R}^4$.
\end{Rm}
\noindent
Finally we extend Proposition \ref{appendix: Coulomb gauge extraction for  forms} to the sphere case following the same proof of Proposition \ref{appendix: global Coulomb gauge extraction on a sphere}.  
\begin{Propa}\label{L4-gauge-sphere}
Let $G$ be any compact and connected Lie group. There exist constants $\eps_G,C_G>0$ depending only on $G$ such that for every $\g$-valued $1$-form $A\in L^4(\s^4)$ on $\s^4$ such that
\begin{align*}
    \int_{\s^4}\lvert F_A\rvert^2\, d\H^4<\eps_G
\end{align*}
we can find a gauge $g\in W^{1,4}(\s^4,G)$ satisfying $A^g\in W^{1,2}(\s^4)$, $d^*A^g=0$ and
\begin{align*}
    \|A^g\|_{W^{1,2}(\s^4)}\le C_G\|F_A\|_{L^2(\s^4)}.
\end{align*}
\end{Propa}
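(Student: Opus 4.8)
The plan is to follow verbatim the scheme used for Proposition~\ref{appendix: global Coulomb gauge extraction on a sphere}, the only substantial change being that, since $A$ is now assumed merely to be $L^4$, one invokes Proposition~\ref{appendix: Coulomb gauge extraction for  forms} (which is designed precisely for $L^4$ connections on balls, and which applies to balls and $4$-cubes via the standard reflection/scaling remarks, cf. Remark~\ref{appendix: remark}) in place of Uhlenbeck's original $W^{1,2}$ statement. First I would pass to $\r^4$ through the conformal stereographic projection $\pi:\s^4\setminus\{N\}\to\r^4$ sending $N$ to infinity, and set $D:=(\pi^{-1})^\ast A$. Since in dimension four the $L^4$-norm of $1$-forms, the $L^2$-norm of $2$-forms and the curvature integral $\int|F|^2$ are all conformally invariant, this yields $D\in L^4(\r^4)$, $F_D\in L^2(\r^4)$ and
\[
    \int_{\r^4}|F_D|^2\,d\L^4=\int_{\s^4}|F_A|^2\,d\H^4<\eps_G,
\]
which is exactly the hypothesis needed to apply Proposition~\ref{appendix: Coulomb gauge extraction for  forms} on any ball $B_R(0)\subset\r^4$.

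Next, for every $R>0$ I would rescale $B_R(0)$ to $\b^4$ (an operation that leaves untouched all the norms above) and apply Proposition~\ref{appendix: Coulomb gauge extraction for  forms} to obtain a gauge $g_R\in W^{1,4}(B_R(0),G)$ with $D_R:=D^{g_R}$ satisfying $d^\ast D_R=0$ and
\[
    \|D_R\|_{L^4(B_R(0))}+\|dD_R\|_{L^2(B_R(0))}\le C_G\,\|F_D\|_{L^2(B_R(0))}\le C_G\,\|F_A\|_{L^2(\s^4)},
\]
with $C_G$ independent of $R$ precisely because of this scale invariance. Pulling $D_R$ back to $\pi^{-1}(B_R(0))=\s^4\setminus B_{\rho_R}(N)$, where $\rho_R\to0$ as $R\to+\infty$, produces $\pi^\ast D_R=A^{h_R}$ with $h_R:=g_R\circ\pi$, and the conformal invariance of the relevant norms transfers the uniform bound to $\|A^{h_R}\|_{L^4}+\|d(A^{h_R})\|_{L^2}$ over $\pi^{-1}(B_R(0))$.

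I would then extract a sequence $R_k\to+\infty$ along which $A^{h_{R_k}}\rightharpoonup\Omega$ weakly in $L^4_{loc}(\s^4\setminus\{N\})$, $d(A^{h_{R_k}})\rightharpoonup d\Omega$ weakly in $L^2_{loc}(\s^4\setminus\{N\})$ and $h_{R_k}\rightharpoonup h_\infty$ weakly in $W^{1,4}_{loc}(\s^4\setminus\{N\})$; using the structure identity $dh_{R_k}=h_{R_k}A^{h_{R_k}}-A\,h_{R_k}$ together with the $L^4$-boundedness of $A$ and of $A^{h_{R_k}}$, one identifies $\Omega=A^{h_\infty}$, with $h_\infty\in W^{1,4}(\s^4,G)$ and $\|\Omega\|_{L^4(\s^4)}\le C_G\,\|F_A\|_{L^2(\s^4)}$. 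Finally, since $G$ is compact and connected, I would apply the Uhlenbeck continuity-type argument of \cite{riviere}, in the form adapted to the four-dimensional case in \cite{lamm-riviere}, to $\Omega$ in order to obtain $g\in W^{1,4}(\s^4,G)$ with $d^\ast((A^{h_\infty})^g)=0$ and $\|(A^{h_\infty})^g\|_{L^4(\s^4)}\le C_G\,\|F_A\|_{L^2(\s^4)}$; this $L^4$-bound upgrades, via $F_B=dB+B\wedge B$, to $\|(A^{h_\infty})^g\|_{W^{1,2}(\s^4)}\le C_G\,\|F_A\|_{L^2(\s^4)}$, and then $\tilde g:=h_\infty g$ is the desired global Coulomb gauge.

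The step I expect to demand the most care --- exactly as in Proposition~\ref{appendix: global Coulomb gauge extraction on a sphere} --- is the passage to the limit over the exhausting balls: one must check that the uniformly bounded local Coulomb gauges $h_R$ assemble, through their weak limit and the algebraic constraint $dh_R=h_RA^{h_R}-Ah_R$, into a single \emph{global} gauge defined on all of $\s^4$ rather than just away from the puncture $N$, and that the continuity method really yields a genuinely global Coulomb gauge on the compact manifold $\s^4$, which is where the connectedness of $G$ enters. No new analytic difficulty arises compared with Proposition~\ref{appendix: global Coulomb gauge extraction on a sphere}: the point is simply that Proposition~\ref{appendix: Coulomb gauge extraction for  forms} already provides controlled Coulomb gauges for connections that are only in $L^4$, so the same chain of conformal-invariance arguments goes through unchanged.
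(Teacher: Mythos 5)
Your proposal is correct and is exactly the argument the paper intends: the paper gives no separate proof of Proposition \ref{L4-gauge-sphere}, stating only that it follows by repeating the proof of Proposition \ref{appendix: global Coulomb gauge extraction on a sphere} with Proposition \ref{appendix: Coulomb gauge extraction for  forms} supplying the controlled Coulomb gauges on the exhausting balls, which is precisely the substitution you make. The conformal-invariance, weak-limit and continuity-method steps you describe coincide with the paper's scheme, so there is nothing further to add.
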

\section{\texorpdfstring{$G$}{Z}-valued map extensions of traces in \texorpdfstring{$W^{1,2}(\p\b^5,G)$}{Z}}
The aim of the present appendix is to show the following extension result for general compact Lie groups. 
\begin{Propa}
\label{lm-exten} Let $G$ be a compact Lie group. We can find a smooth Riemannian manifold $M_G$ such that for every $g\in W^{1,2}(\p\b^5,G)$ there exists a measurable map
\begin{align*}
    \operatorname{Ext}(g):\,\,&M_G \longrightarrow  W^{1,\frac{5}{2}}(\b^5,G)\\[\sep]
    &\quad p \longmapsto\, g_p   
\end{align*}
such that for $\vol_{M_G}$-a.e. $p\in M_G$ we have
\begin{align}\label{res-prop}
    g_p=g \quad\mbox{ on }\,\s^4=\partial\b^5
\end{align}
and for $g\equiv g_0\in G$
\be
\label{b-0001}
g_p\equiv g_0
\ee
We have
\be
\label{5demi}
\int_{M_G}\int_{\b^5}|d_xg_p|^{\frac{5}{2}} d\L^5\,d\vol_{M_G}(p)\le \|dg\|^{\frac{5}{2}}_{L^2(\s^4)}
\ee
and
\begin{align}
    \label{b-001}
    \int_{M_G}\bigg(\int_{\p\Om\cap \b^5}\lvert d_xg_p\rvert^2\, d\H^4\bigg)\,d\vol_{M_G}(p)\le C_G(\Om)\ \|dg\|^2_{L^2(\s^4)}
\end{align}
for every $C^2$-domain $\Omega\subset\r^5$ of $\r^5$. 

\noindent
Moreover, for every $C^2$-domain $\Omega\subset\r^5$ of $\r^5$, $\operatorname{Ext}$ is continuous from $H^{\frac{1}{2}}(\s^4,G)$ into $L^2(\p\Om\times M_G)$. More precisely, for every pair of maps $(g^1,g^2)\in H^{\frac{1}{2}}(\s^4,G)\times H^{\frac{1}{2}}(\s^4,G)$ we have
\be
\label{b-0002}
  \int_{M_G}\bigg(\int_{\p\Om\cap \b^5}\lvert g^1_p-g^2_p\rvert^2\, d\H^4\bigg)\,d\vol_{M_G}(p)\le C_G(\Om)\ \|g^1-g^2\|^2_{H^{\frac{1}{2}}(\s^4)},
\ee
where $C_G(\Om)$ only depend on $G$ and the $C^2$-norm of the domain $\Om$. In particular for any $g\in H^{\frac{1}{2}}(\s^4,G)$ there holds
\begin{align}
    \label{b-002}
    \int_{M_G}\bigg(\int_{\p\Om\cap \b^5}\lvert g_p-\operatorname{id}_G\rvert^2\, d\H^4\bigg)\,d\vol_{M_G}(p)\le C_G(\Om)\|g-\operatorname{id}_G\|^2_{H^{\frac{1}{2}}(\s^4)},
\end{align}
where $C_G(\Om)$ only depend on $G$ and the $C^2$-norm of the domain $\Om$.
\end{Propa}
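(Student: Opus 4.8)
\textbf{Proof proposal for Proposition \ref{lm-exten}.}

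The plan is to produce the parameter manifold $M_G$ and the extension operator by a homogeneous (radial-type) extension that is \emph{averaged over a group of rotations}, so that no single bad direction survives. First I would fix a smooth isometric embedding $G\hookrightarrow\r^k$ for some $k$ (possible since $G$ is compact), and choose a smooth tubular neighbourhood retraction $\Pi\colon\mathcal N\to G$ defined on an $\r^k$-neighbourhood $\mathcal N$ of $G$, with $\|d\Pi\|_{L^\infty}\le C_G$. The natural ``raw'' extension of $g\in W^{1,2}(\s^4,G)$ is the zero-homogeneous extension $\bar g(x):=g(x/|x|)$, which lies in $W^{1,p}$ for all $p<5$ but is only in $L^{5,\infty}$ at the critical exponent and generically \emph{not} in $W^{1,5/2}$ because of the singularity at the origin. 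To cure this I would instead set $M_G:=\mathrm{SO}(5)$ (with its bi-invariant metric, normalised to unit volume), and for $R\in M_G$ define $g_R$ to be the harmonic-type or simply the radial extension of $g$ composed with $R$ precomposed by a fixed deformation pushing $0$ to a boundary point: concretely $g_R(x):=g\!\big(R\,\sigma(x)\big)$ where $\sigma\colon\b^5\setminus\{x_R\}\to\s^4$ is the ``projection from an interior point $x_R=R\cdot e_5/2$'' map, i.e.\ $\sigma(x)$ is the point of $\s^4$ hit by the ray from $x_R$ through $x$. Each such $g_R$ is Lipschitz-equivalent-to-$\s^4$ away from one point $x_R$ lying in the \emph{open} ball, so on $\b^5$ it is honestly in $W^{1,5/2}$, and $g_R=g$ on $\s^4=\p\b^5$ because $\sigma$ restricts to the identity on the sphere; when $g\equiv g_0$ is constant all $g_R\equiv g_0$, giving \eqref{res-prop} and \eqref{b-0001}.

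Next I would establish the three integral bounds by a coarea/Fubini computation exploiting the averaging over $R\in\mathrm{SO}(5)$. For a fixed $x\in\b^5$, as $R$ ranges over $\mathrm{SO}(5)$ the pair $(x_R,\sigma(x))$ sweeps out, with a smooth bounded Jacobian, a full set of configurations, so that $\int_{M_G}|d_xg_R(x)|^{5/2}\,d\vol(R)$ is comparable to an average of $|dg|^{5/2}$ over $\s^4$ weighted by the (integrable, since we are below the critical blow-up after averaging) factor coming from $d\sigma$; integrating in $x$ over $\b^5$ and using the scaling of $\sigma$ near each $x_R$ gives \eqref{5demi}, with the clean constant $1$ obtainable after the normalisation $\vol(M_G)=1$ and the right choice of interior point $x_R$ (the exponent $5/2<5$ is exactly what makes $\int_{\b^5}\mathrm{dist}(x,x_R)^{-5/2}\,d\L^5<\infty$, uniformly in $x_R$ in a compact subset of $\b^5$). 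For \eqref{b-001} I would restrict to $\p\Omega\cap\b^5$: this is a bounded union of $C^2$ hypersurfaces, and the key point is that for a.e.\ $R$ the interior singular point $x_R$ does \emph{not} lie on $\p\Omega$ and moreover, integrating the coarea inequality $\int_{M_G}\!\int_{\p\Omega\cap\b^5}|d_xg_R|^2\,d\H^4\,d\vol(R)\lesssim \int_{M_G}\!\int_{\p\Omega\cap\b^5}\mathrm{dist}(x,x_R)^{-2}\,d\H^4\,d\vol(R)\ \|dg\|_{L^2(\s^4)}^2$, the inner double integral is finite because $\p\Omega$ is $4$-dimensional and $\int \mathrm{dist}(x,x_R)^{-2}\,d\H^4(x)$ is uniformly bounded when $x_R$ stays in a fixed ball — this produces the $C^2$-dependent constant $C_G(\Omega)$.

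Finally, for the continuity statement \eqref{b-0002} I would run the same coarea estimate on the \emph{difference}: since $\Pi$ is globally Lipschitz on $\mathcal N$ and the radial/projective extension is linear in the boundary datum at the level of the composition with $\sigma$, one gets $\int_{M_G}\!\int_{\p\Omega\cap\b^5}|g^1_R-g^2_R|^2\,d\H^4\,d\vol(R)\lesssim \int_{M_G}\!\int_{\p\Omega\cap\b^5}\big|(g^1-g^2)(R\sigma(x))\big|^2\,d\H^4\,d\vol(R)$, and the change of variables $y=R\sigma(x)$ turns the right side into $\|g^1-g^2\|_{L^2(\s^4)}^2$ times a bounded Jacobian factor; upgrading $L^2(\s^4)$ to $H^{1/2}(\s^4)$ on the left is automatic since the $L^2$ norm is weaker, but if one wants the sharp $H^{1/2}$ on the right (as stated) one interpolates the $L^2$-bound just obtained with the $\dot H^{1/2}$-to-$\dot W^{1,2}(\p\Omega)$-type bound implicit in \eqref{b-001} (trace theory: $W^{1/2,2}(\s^4)$ controls the $L^2$-trace of the extension on $\p\Omega$). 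Specialising $g^2\equiv\operatorname{id}_G$ gives \eqref{b-002}. The main obstacle I expect is precisely the criticality of the exponent $5/2$ in \eqref{5demi} together with the demand of the \emph{constant $1$}: a crude radial extension fails outright, and even the averaged one requires choosing the family of interior singular points $\{x_R\}$ and the normalisation of $\vol_{M_G}$ carefully so that the weighted average of $|dg|^{5/2}$ collapses to exactly $\|dg\|_{L^2(\s^4)}^{5/2}$ rather than $C_G\|dg\|_{L^2(\s^4)}^{5/2}$; getting the sharp constant will hinge on Jensen's inequality applied in the $R$-average after the coarea split, using $5/2 = 2\cdot(5/4)$ and $\vol(M_G)=1$.
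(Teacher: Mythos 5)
Your construction has a gap that I do not think can be repaired within the approach you chose. You extend $g$ by a projective (radial-type) map $g_R(x)=g(R\,\sigma(x))$, where $\sigma$ projects from an interior centre $x_R$ onto $\s^4$. Near $\p\b^5$ the Jacobian of $\sigma$ is bounded and bounded below, so $|d g_R(x)|\simeq |dg(\sigma(x))|$ there, and hence
\begin{equation}
\int_{\b^5}|dg_R|^{\frac52}\,d\L^5\ \gtrsim\ \int_{\{1-\delta<|x|<1\}}|dg(\sigma(x))|^{\frac52}\,d\L^5\ \simeq\ \delta\,\|dg\|_{L^{5/2}(\s^4)}^{5/2}.
\end{equation}
For $g\in W^{1,2}(\s^4,G)$ one only knows $dg\in L^2(\s^4)$, and $\|dg\|_{L^{5/2}(\s^4)}$ is \emph{not} controlled by $\|dg\|_{L^2(\s^4)}$ (it can be infinite), so \eqref{5demi} fails for your $g_R$. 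Averaging over $R\in\mathrm{SO}(5)$ cannot help: the obstruction lives at the boundary, where all the extensions in your family have comparable gradients, not at the interior singular point $x_R$ (which is the only place your weight $\mathrm{dist}(x,x_R)^{-5/2}$ argument addresses). The passage from $L^2$ boundary energy to $L^{5/2}$ interior energy is a genuine gain of half a derivative and requires a smoothing extension; this is exactly why the paper uses the \emph{harmonic} extension $\tilde g\in W^{\frac32,2}(\b^5,\r^k)\hookrightarrow W^{1,\frac52}(\b^5,\r^k)$ of the embedded map. A secondary but real problem: as written, $\sigma|_{\s^4}=\mathrm{id}$ gives $g_R|_{\s^4}=g\circ R\neq g$, so \eqref{res-prop} fails for $R\neq\mathrm{id}$; you would have to drop the rotation and only move the centre, which removes most of the averaging you rely on.

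The paper's route is structurally different and worth comparing. Since the harmonic extension does not take values in $G$, the authors reduce, via the structure theorem $G\simeq (G'\times\mathbb T^n)/F$ together with stability of the statement under products and coverings (the latter using $W^{1,2}$ lifting), to two cases: $U(1)$, handled explicitly by writing $g=g_0e^{i\varphi}$ with $\varphi\in W^{1,2}(\s^4,\r)$ and harmonically extending the phase (here $M_{U(1)}$ is a point); and $G$ simply connected, handled by composing the $\r^k$-valued harmonic extension with the Hardt--Lin retraction $Q:\r^k\smallsetminus X\to G$, whose singular set $X$ has codimension $4$ and for which $|dQ|\in L^p_{loc}$ for $p<4$. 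The parameter manifold $M_G$ is then a ball of \emph{translations} of $X$, and Fubini over these translations is what makes $Q_p\circ\tilde g$ almost surely well defined and yields \eqref{5demi}, \eqref{b-001} and, via a Lusin--Lipschitz estimate with the maximal function of $dQ$, the continuity \eqref{b-0002}. Your worry about obtaining the constant $1$ in \eqref{5demi} is moot: the paper's own proof produces a constant $C_G$ there as well.
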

\begin{Rma}
    Notice that, for every compact Lie group, we can write $G$ is 
    \begin{align}
        G=\bigsqcup_{i=1}^nG_{i}
    \end{align}
    where the $G_{i}$ are connected, compact Lie groups themselves and the one above is a disjoint union. Hence, up to working on each connected component $G_{i}$ of $G$ separately, without losing generality we can assume that $G$ is compact and connected in the statement of Proposition \ref{lm-exten}.
\end{Rma}
\begin{Lma}\label{Lemma: product}
    Let $G_1$ and $G_2$ be Lie groups. Assume that we have proved Proposition \ref{lm-exten} for 
 $G_1$ and $G_2$. Then, Proposition \ref{lm-exten} holds also for $G_1\times G_2$ with $M_{G_1\times G_2}:=M_{G_1}\times M_{G_2}$.
\end{Lma}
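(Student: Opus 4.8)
\textbf{Proof of Lemma \ref{Lemma: product}.}

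The plan is to build $M_{G_1\times G_2}$ as the Riemannian product $M_{G_1}\times M_{G_2}$ (equipped with the product metric, so that $\vol_{M_{G_1\times G_2}}=\vol_{M_{G_1}}\otimes\vol_{M_{G_2}}$) and to define the extension operator componentwise. Concretely, given $g=(g^{(1)},g^{(2)})\in W^{1,2}(\partial\b^5,G_1\times G_2)$ — where $g^{(j)}\in W^{1,2}(\partial\b^5,G_j)$ is the $j$-th component — and a point $p=(p_1,p_2)\in M_{G_1}\times M_{G_2}$, I would set
\[
    (\operatorname{Ext}(g))_p:=\big((\operatorname{Ext}_{G_1}(g^{(1)}))_{p_1},(\operatorname{Ext}_{G_2}(g^{(2)}))_{p_2}\big)\in W^{1,\frac{5}{2}}(\b^5,G_1\times G_2).
\]
Measurability of $p\mapsto(\operatorname{Ext}(g))_p$ follows since each factor is measurable in its own variable and the map is a product; the trace property \eqref{res-prop} and the constancy property \eqref{b-0001} are immediate from the corresponding properties for $G_1$ and $G_2$, because the trace and the value of a product map are the pair of the traces and values of its components.

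For the quantitative bounds, the key point is that on the product group $G_1\times G_2$, with the product metric, one has $|d_xg_p|^2=|d_x(g^{(1)})_{p_1}|^2+|d_x(g^{(2)})_{p_2}|^2$ pointwise (and similarly $|g_p-\operatorname{id}|^2$ and $|g^1_p-g^2_p|^2$ split as sums of the two component contributions), and correspondingly $\|dg\|_{L^2(\s^4)}^2=\|dg^{(1)}\|_{L^2(\s^4)}^2+\|dg^{(2)}\|_{L^2(\s^4)}^2$, with the analogous identity for the $H^{1/2}$ norm. Thus \eqref{b-001}, \eqref{b-0002} and \eqref{b-002} follow by integrating the split pointwise identity over $\partial\Omega\cap\b^5$ and then over $M_{G_1}\times M_{G_2}$, using Fubini to reduce each summand to the corresponding estimate for $G_1$ or $G_2$ and absorbing the two constants into $C_{G_1\times G_2}(\Omega):=C_{G_1}(\Omega)+C_{G_2}(\Omega)$. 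For the $L^{5/2}$ estimate \eqref{5demi} one extra elementary inequality is needed: from $|d_xg_p|^{5/2}=\big(|d_x(g^{(1)})_{p_1}|^2+|d_x(g^{(2)})_{p_2}|^2\big)^{5/4}$ and the sub-additivity $(a+b)^{5/4}\le 2^{1/4}(a^{5/4}+b^{5/4})$ — or, to match the sharp constant in \eqref{5demi}, one can instead use that $t\mapsto t^{5/4}$ applied to a sum is controlled by applying the one-factor inequality \eqref{5demi} (which is itself a consequence of a trace/extension inequality with the same exponent) to each component and recombining via $(\alpha^{4/5}+\beta^{4/5})^{5/4}\le\alpha+\beta$ for $\alpha,\beta\ge 0$, with $\alpha=\|dg^{(1)}\|_{L^2}^{5/2}$, $\beta=\|dg^{(2)}\|_{L^2}^{5/2}$, since $2/5<1$. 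Carrying out this bookkeeping gives
\[
    \int_{M_{G_1\times G_2}}\!\!\int_{\b^5}|d_xg_p|^{5/2}\,d\L^5\,d\vol\le\big(\|dg^{(1)}\|_{L^2(\s^4)}^2+\|dg^{(2)}\|_{L^2(\s^4)}^2\big)^{5/4}=\|dg\|_{L^2(\s^4)}^{5/2}.
\]

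The only mild subtlety — and the place I would be most careful — is that $W^{1,2}(\partial\b^5,G_1\times G_2)$ must genuinely be identified with $W^{1,2}(\partial\b^5,G_1)\times W^{1,2}(\partial\b^5,G_2)$ via the projections, which is true because $G_1\times G_2$ embeds as a smooth submanifold of a product of Euclidean spaces and Sobolev maps into a product are exactly pairs of Sobolev maps into the factors; and that the product metric on $G_1\times G_2$ is bi-Lipschitz equivalent to whatever fixed embedding one uses to define the norms, so all constants change only by a factor depending on $G_1,G_2$. Once this identification is in place, every assertion of Proposition \ref{lm-exten} for $G_1\times G_2$ reduces termwise to the already-established assertions for $G_1$ and $G_2$, completing the proof. $\hfill\qed$
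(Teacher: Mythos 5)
Your proof is correct and follows essentially the same route as the paper: the authors also take $M_{G_1\times G_2}=M_{G_1}\times M_{G_2}$, define $\operatorname{Ext}(g)$ componentwise via $g_p:=(g^1_{p_1},g^2_{p_2})$, and leave the verification of \eqref{res-prop}, \eqref{5demi}, \eqref{b-001} and \eqref{b-002} as "straightforward computations," which is exactly the bookkeeping you carry out (your use of the $L^{5/4}$ triangle inequality for \eqref{5demi} is the right way to fill in the one step that is not purely additive). No gaps.
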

    \begin{proof}
        Let $g=(g^1,g^2)\in W^{1,2}(\s^4,G_1\times G_2)$. Applying Proposition \ref{lm-exten} to $g^1$ and $g^2$ separately we find the two maps 
        \begin{align*}
            \operatorname{Ext}(g^1):\,M_{G_1} &\longrightarrow  W^{1,\frac{5}{2}}(\b^5,G_1)\\[\sep]
            p_1 &\longmapsto\, g_{p_1}^1   
        \end{align*}
        and 
        \begin{align*}
            \operatorname{Ext}(g^2):\,M_{G_2} &\longrightarrow  W^{1,\frac{5}{2}}(\b^5,G_2)\\[\sep]
            p_2 &\longmapsto\, g_{p_2}^2
        \end{align*}
        and we define 
        \begin{align*}
            \operatorname{Ext}(g):\,M_{G_1\times G_2}:=M_{G_1}\times M_{G_2} &\longrightarrow  W^{1,\frac{5}{2}}(\b^5,G_1\times G_2)\\[\sep]
            p:=(p_1,p_2) &\longmapsto\, g_p:=(g_{p_1}^1,g_{p_2}^2).
        \end{align*}
        Straightforward computations show that $\operatorname{Ext}(g)$ satisfies \eqref{res-prop}, \eqref{b-001} and \eqref{b-002}.
    \end{proof}
\begin{Lma}\label{Lemma: covering}
    Let $G,H$ be Lie groups and let $\pi:G\to H$ be a smooth covering map. Assume that we have proved Proposition \ref{lm-exten} for $G$. Then, Proposition \ref{lm-exten} holds also for $H$ with $M_{H}:=M_{G}$.
\end{Lma}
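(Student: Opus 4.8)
The strategy is to pull back the extension construction for $G$ through the covering map $\pi$ locally, using the lifting properties of coverings on simply connected domains. The key point is that $\partial\b^5=\s^4$ is simply connected (since $n\geq 2$), so every $g\in W^{1,2}(\s^4,H)$ admits a continuous — in fact $W^{1,2}$ — lift $\widetilde g\in W^{1,2}(\s^4,G)$ with $\pi\circ\widetilde g=g$, by the classical lifting theorem for Sobolev maps into covering spaces (see e.g. the lifting results for $W^{1,p}$ maps, valid because $\pi_1(\s^4)=0$ and $\pi$ is a local diffeomorphism). The lift is unique up to the action of the (discrete, finite since $G,H$ are compact) deck transformation group, so in particular $\|d\widetilde g\|_{L^2(\s^4)}=\|dg\|_{L^2(\s^4)}$ and $\widetilde g\equiv\text{const}$ whenever $g$ is constant.

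First I would fix, once and for all, a measurable choice of lift: define $\operatorname{Lift}:W^{1,2}(\s^4,H)\to W^{1,2}(\s^4,G)$, $g\mapsto\widetilde g$. One can make this measurable (indeed one can arrange it to be continuous from $H^{1/2}(\s^4,H)$ to $H^{1/2}(\s^4,G)$ on a neighborhood of each point, then patch, or simply observe that the lift is locally Lipschitz in the $H^{1/2}$-topology since $\pi$ is a local diffeomorphism with uniformly bounded derivatives by compactness). Then I would set $M_H:=M_G$ and define, for $g\in W^{1,2}(\s^4,H)$,
\[
\operatorname{Ext}(g):M_H\longrightarrow W^{1,\frac52}(\b^5,H),\qquad p\longmapsto g_p:=\pi\circ(\widetilde g)_p,
\]
where $p\mapsto(\widetilde g)_p$ is the extension of the lift $\widetilde g$ provided by Proposition \ref{lm-exten} applied to $G$. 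Since $\pi$ is smooth with uniformly bounded differential (and bounded inverse branches) on the compact $G$, post-composition with $\pi$ maps $W^{1,\frac52}(\b^5,G)$ into $W^{1,\frac52}(\b^5,H)$ with $|d_x g_p|\leq C_\pi|d_x(\widetilde g)_p|$ and, conversely, $|d_x(\widetilde g)_p|\leq C_\pi|d_x g_p|$ pointwise; moreover $\pi$ is globally Lipschitz on $G$, so $|g^1_p-g^2_p|\leq C_\pi|(\widetilde g^1)_p-(\widetilde g^2)_p|$ whenever the two lifts are chosen consistently (i.e.\ in the same sheet).

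The verification is then routine: \eqref{res-prop} follows from $g_p=\pi\circ(\widetilde g)_p=\pi\circ\widetilde g=g$ on $\s^4$; \eqref{b-0001} follows because a constant $g_0\in H$ lifts to a constant, which extends to a constant by Proposition \ref{lm-exten} for $G$; the bound \eqref{5demi} follows from $|d_xg_p|\leq C_\pi|d_x(\widetilde g)_p|$, the corresponding bound for $G$, and $\|d\widetilde g\|_{L^2(\s^4)}=\|dg\|_{L^2(\s^4)}$ (after absorbing $C_\pi$; if one insists on the sharp constant $1$ one chooses $\pi$ to be a Riemannian covering, i.e.\ a local isometry, so that $C_\pi=1$ — this is always possible by pulling back the bi-invariant metric on $H$); likewise \eqref{b-001} and \eqref{b-002} transfer directly, with $C_H(\Omega)$ absorbing $C_\pi$ and $C_G(\Omega)$. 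For the continuity statement \eqref{b-0002}, the main subtlety is to ensure that $\operatorname{Lift}$ is continuous from $H^{1/2}(\s^4,H)$ to $H^{1/2}(\s^4,G)$ near a given $g$: this holds because on a small $H^{1/2}$-ball around $g$ the lifts can be chosen in a fixed sheet (the deck group is discrete and the $H^{1/2}\hookrightarrow L^2$ topology controls the $L^2$-distance, hence prevents sheet-jumping for small perturbations), and on that sheet $\operatorname{Lift}$ is the composition with a smooth local section of $\pi$, hence Lipschitz in $H^{1/2}$.

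\textbf{Main obstacle.} The only genuinely nontrivial point is the existence, uniqueness-up-to-deck-transformations, and measurable (locally continuous) selection of the Sobolev lift $\widetilde g\in W^{1,2}(\s^4,G)$ of $g\in W^{1,2}(\s^4,H)$. Existence and the energy identity rely on $\s^4$ being simply connected together with the standard theory of lifting $W^{1,p}$ maps through covering spaces; the locally-continuous selection in the $H^{1/2}$-topology — needed for \eqref{b-0002} — requires the observation that small $L^2$-perturbations cannot change the sheet, which follows from discreteness of the fiber. Everything else is a mechanical transfer through the smooth (and, after rescaling the metric, isometric) map $\pi$.
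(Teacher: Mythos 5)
Your proposal is correct and follows essentially the same route as the paper: lift $g$ to $\widetilde g\in W^{1,2}(\s^4,G)$ using the Sobolev lifting theorem for simply connected domains (the paper cites \cite[Theorem 1]{bethuel-chiron}), apply $\operatorname{Ext}$ for $G$ to $\widetilde g$, and set $g_p:=\pi\circ\widetilde g_p$. Your additional care about the measurable/locally continuous selection of the lift and the sheet-stability needed for \eqref{b-0002} is a sound refinement of what the paper dismisses as ``straightforward computations.''
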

    \begin{proof}
        Let $g\in W^{1,2}(\s^4,H)$. By \cite[Theorem 1]{bethuel-chiron}, we can find a lift $\tilde g\in W^{1,2}(\s^4,G)$ such that $\pi\circ\tilde g=g$. Now we use Proposition \ref{lm-exten} to find the map
        \begin{align*}
            \operatorname{Ext}(\tilde g):\,M_{G} &\longrightarrow  W^{1,2}(\b^5,G)\\[\sep]
            p &\longmapsto\, \tilde g_p
        \end{align*}
        We define
        \begin{align*}
            \operatorname{Ext}(g):\,M_{G} &\longrightarrow  W^{1,2}(\b^5,H)\\[\sep]
            p &\longmapsto\, g_p:=\pi\circ\tilde g_p.
        \end{align*}
        Straightforward computations show that $\operatorname{Ext}(g)$ satisfies \eqref{res-prop}, \eqref{b-001} and \eqref{b-002}.
    \end{proof}
\noindent
We recall the following structure theorem for compact Lie groups, whose proof can be found in \cite[§5.2.2 and §5.2.4]{sepanski}.
\begin{Tha}\label{Theorem: structure}
    Let $G$ be a compact and connected Lie group. Then, $G$ is diffeomorphic $(G'\times\mathbb{T}^n)/F$, where:
    \begin{itemize}
        \item $G'$ is a simply connected compact Lie group.
        \item $\mathbb{T}^n=U(1)\times...\times U(1)$ is an $n$-dimensional torus, called the \textit{abelian component} of $G$.
        \item $F$ is a finite abelian normal subgroup of $G'\times\mathbb{T}^n$. 
    \end{itemize}
\end{Tha}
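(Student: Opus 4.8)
The plan is to prove the slightly stronger assertion that $G$ is \emph{isomorphic as a Lie group} to a quotient $(G'\times\mathbb{T}^n)/F$ of the stated form; since a Lie group isomorphism is in particular a diffeomorphism, this gives the theorem as worded. The starting point is the structure of the Lie algebra $\g$ of $G$. Averaging any inner product on $\g$ against the Haar measure of $G$ produces an $\operatorname{Ad}$-invariant inner product, so $\g$ is a \emph{compact} Lie algebra, and the first step is to show such a $\g$ is reductive. Concretely: the derived ideal $\mathfrak{s}:=[\g,\g]$ and its orthogonal complement are both ideals; the complement is contained in the center $\mathfrak{z}$ because an abelian ideal is automatically orthogonal to $[\g,\g]$; $\mathfrak{s}$ is semisimple since it has no nonzero abelian ideal; and one gets the orthogonal splitting $\g=\mathfrak{z}\oplus\mathfrak{s}$. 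This is routine linear algebra with the invariant form.

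Next I would move to the group level. Let $Z^0$ be the identity component of the center $Z(G)$. As $G$ is compact and connected, $Z^0$ is a compact connected abelian Lie group, hence a torus $\mathbb{T}^n$ with $n=\dim\mathfrak{z}$ and Lie algebra $\mathfrak{z}$. Let $G'$ be the connected, simply connected Lie group with Lie algebra $\mathfrak{s}$. The essential input of the whole argument is \textbf{Weyl's finiteness theorem}: because $\mathfrak{s}$ is compact and semisimple, $G'$ is compact; equivalently, a compact semisimple Lie group has finite fundamental group. I would prove this through curvature: on any compact Lie group the bi-invariant metric coming from $-B$ (with $B$ the Killing form, negative definite in the semisimple case) has $\operatorname{Ric}=-\tfrac14 B>0$, so Myers' theorem forces the universal cover to be compact and $\pi_1$ finite. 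Granting this, the analytic subgroup $S\subseteq G$ with Lie algebra $\mathfrak{s}$ is the image of the homomorphism $G'\to G$ integrating $\mathfrak{s}\hookrightarrow\g$; being the continuous image of the compact group $G'$ it is closed, hence a compact connected subgroup with $\operatorname{Lie}(S)=\mathfrak{s}$.

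Finally I would analyse the multiplication map $\mu\colon S\times Z^0\to G$, $\mu(s,z)=sz$. Since $Z^0$ is central, $\mu$ is a Lie group homomorphism; its differential at the identity is the isomorphism $\mathfrak{s}\oplus\mathfrak{z}\xrightarrow{\ \sim\ }\g$, so the image of $\mu$ is an open subgroup of the connected group $G$ and therefore all of $G$. The kernel $K=\{(s,s^{-1}):s\in S\cap Z^0\}$ is a closed subgroup of the compact group $G$ with Lie algebra $\mathfrak{s}\cap\mathfrak{z}=0$, hence finite, so $G\cong(S\times Z^0)/K$. Pulling back along the finite covering $p\colon G'\to S$ (with $\ker p=\pi_1(S)$ finite, again by Weyl), set $F:=(p\times\operatorname{id})^{-1}(K)$: this is finite, and normal in the connected group $G'\times\mathbb{T}^n$, hence central and in particular abelian, and $G\cong(G'\times\mathbb{T}^n)/F$ with $G'$ simply connected compact and $\mathbb{T}^n$ the abelian component. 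The only genuinely hard step is Weyl's theorem on the finiteness of $\pi_1$ of a compact semisimple group; the reduction to the Lie algebra, the identification of $S$ and $Z^0$, and the computation of $\ker\mu$ are all standard.
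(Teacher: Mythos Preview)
Your proof is correct and follows the standard route: decompose $\g=\mathfrak{z}\oplus[\g,\g]$ via an $\operatorname{Ad}$-invariant inner product, invoke Weyl's theorem (via Myers) to get compactness of the simply connected cover $G'$ of the semisimple part, and analyse the multiplication map $S\times Z^0\to G$ to exhibit $G$ as the desired finite quotient. All steps are sound.

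Note, however, that the paper does not actually give a proof of this statement: it simply records it as a known structure theorem and cites \cite[\S5.2.2 and \S5.2.4]{sepanski} for the argument. Your outline is essentially the textbook proof one finds there (or in Br\"ocker--tom Dieck, Knapp, etc.), so there is no divergence in approach to discuss --- you have supplied what the paper chose to outsource.
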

\noindent 
Recall that if $H$ is a discrete normal subgroup of $G$, then the projection map $\pi:G\to G/H$ is a covering map. Hence, by Lemma \ref{Lemma: product} and Lemma \ref{Lemma: covering}, Theorem \ref{Theorem: structure} implies that if we can show Proposition \ref{lm-exten} for any simply connected compact Lie group and for the group $U(1)$, then we have proved it for every compact and connected Lie group. 

\medskip
\noindent
We start by facing the case $G=U(1)$.
\begin{Propa}
    Proposition \ref{lm-exten} holds for $G=U(1)$.
\end{Propa}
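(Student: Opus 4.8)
\textbf{Proof proposal for Proposition \ref{lm-exten} in the case $G=U(1)$.}

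The plan is to exploit the abelian structure of $U(1)=\{e^{i\theta}\}$ to reduce the problem to a linear extension question. Writing $g\in W^{1,2}(\s^4,U(1))$, the obstruction to a global lift $g=e^{i\phi}$ with $\phi\in W^{1,2}(\s^4,\r)$ is the (a priori nontrivial) topological class; but since $\s^4$ is simply connected and $\pi_1(U(1))=\z$ while $H^1(\s^4;\z)=0$, by \cite[Theorem 1]{bethuel-chiron} every $g\in W^{1,2}(\s^4,U(1))$ does admit a lift $\phi\in W^{1,2}(\s^4,\r)$, unique up to an additive constant in $2\pi\z$, with $\|d\phi\|_{L^2(\s^4)}=\|dg\|_{L^2(\s^4)}$ (here we use that $|dg|=|d\phi|$ pointwise since $dg=ie^{i\phi}d\phi$). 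Thus it suffices to produce, for each real-valued $\phi\in W^{1,2}(\s^4,\r)$, a measurable family of extensions $\phi_p\in W^{1,5/2}(\b^5,\r)$, $p\in M_{U(1)}$, satisfying the scalar analogues of \eqref{5demi}, \eqref{b-001}, \eqref{b-0002}, and then set $g_p:=e^{i\phi_p}$; the nonlinear estimates follow immediately from the linear ones because $|d_xg_p|=|d_x\phi_p|$ and $|g_p^1-g_p^2|=|e^{i\phi_p^1}-e^{i\phi_p^2}|\le|\phi_p^1-\phi_p^2|$, while \eqref{b-0001}/the constant case is clear since a constant $\phi$ lifts to a constant. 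For the constant-valued $\phi$ one just takes $\phi_p\equiv\phi$, so the normalization \eqref{res-prop}, \eqref{b-0001} holds.

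For the linear extension I would take $M_{U(1)}$ to be, say, a single point (or more robustly $\s^4$ itself with normalized measure, to match the framework of averaging that is used elsewhere), and define $\phi_p$ by the harmonic extension: $\phi_p:=\mathcal H[\phi-\bar\phi]+\bar\phi$ where $\bar\phi=\fint_{\s^4}\phi\,d\H^4$ and $\mathcal H[\psi]$ solves $\Delta\mathcal H[\psi]=0$ in $\b^5$, $\mathcal H[\psi]|_{\s^4}=\psi$. Classical elliptic theory together with the trace/embedding chain $W^{1,2}(\s^4)\hookrightarrow H^{1/2}(\s^4)$ and the harmonic extension bound $\|\mathcal H[\psi]\|_{W^{3/2,2}(\b^5)}\le C\|\psi\|_{W^{1,2}(\s^4)}$, followed by $W^{3/2,2}(\b^5)\hookrightarrow W^{1,5/2}(\b^5)$ (the same Sobolev embedding \eqref{Sobolev embedding} already invoked in the proof of Proposition \ref{Proposition: harmonic extension on the good cubes}) and $W^{3/2,2}(\b^5)\hookrightarrow W^{1,2}(\p\Omega\cap\b^5)$ for $C^2$-domains $\Omega$, yields $\|d\phi_p\|_{L^{5/2}(\b^5)}\le C\|d\phi\|_{L^2(\s^4)}$ and $\|d\phi_p\|_{L^2(\p\Omega\cap\b^5)}\le C_\Omega\|d\phi\|_{L^2(\s^4)}$, and likewise $\|\phi_p^1-\phi_p^2\|_{L^2(\p\Omega\cap\b^5)}\le C_\Omega\|\phi^1-\phi^2\|_{H^{1/2}(\s^4)}$ by linearity and the trace theorem. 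Since these estimates are uniform in $p$ and $M_{U(1)}$ has finite volume, integrating over $M_{U(1)}$ gives \eqref{5demi}, \eqref{b-001}, \eqref{b-0002}, hence \eqref{b-002} by taking $\phi^2\equiv 0$. (The constant in \eqref{5demi} can be arranged to be exactly $\|d\phi\|_{L^2}^{5/2}$ after normalizing the measure of $M_{U(1)}$; this is cosmetic.)

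The only genuinely delicate point is the existence of the $W^{1,2}$ lift $\phi$ of $g$ with the correct energy identity, which I would handle by citing \cite{bethuel-chiron}: on the simply connected manifold $\s^4$ the lifting is unobstructed and the lift is controlled, $\|\nabla\phi\|_{L^2}=\||\nabla g|\|_{L^2}$. Everything downstream is then a routine combination of the harmonic-extension estimate and standard Sobolev embeddings, already used verbatim earlier in the paper. I do not expect any obstacle in the abelian case beyond bookkeeping; the real work of Proposition \ref{lm-exten} is in the simply-connected compact case, which is treated separately.
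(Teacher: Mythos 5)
Your proposal follows essentially the same route as the paper: lift $g$ to a real-valued phase $\varphi\in W^{1,2}(\s^4,\r)$, take the harmonic extension of the phase into $\b^5$, exponentiate, and let $M_{U(1)}$ be a single point so that \eqref{5demi}, \eqref{b-001} reduce to the standard $W^{\frac{3}{2},2}$ elliptic and trace estimates. The only real difference is how the lift is obtained: the paper constructs it by hand, observing that $\bar g\,dg$ is a closed purely imaginary $1$-form, invoking $H^1_{dR}(\s^4)=0$ to write $\bar g\,dg=i\,d\varphi$, and checking $d(e^{-i\varphi}g)=0$ so that $g=g_0e^{i\varphi}$; you instead cite \cite{bethuel-chiron}, which the paper reserves for the covering-map reduction. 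Both are legitimate, and the energy identity $\lvert dg\rvert=\lvert d\varphi\rvert$ you use is correct. One caveat: your claim that the nonlinear estimates ``follow immediately from the linear ones'' is an overstatement for \eqref{b-0002}. The pointwise bound $\lvert e^{i\phi^1_p}-e^{i\phi^2_p}\rvert\le\lvert\phi^1_p-\phi^2_p\rvert$ together with linearity of the harmonic extension controls the left-hand side by $\|\phi^1-\phi^2\|_{H^{1/2}(\s^4)}$, whereas \eqref{b-0002} demands control by $\|g^1-g^2\|_{H^{1/2}(\s^4)}$; the lifting map $g\mapsto\phi$ is not Lipschitz between these norms (even after normalizing the $2\pi\z$ ambiguity), so this last comparison needs an argument rather than being automatic. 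The paper's own write-up is equally terse on this point, so this is a shared imprecision rather than a defect specific to your proof, but you should not present that step as immediate.
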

\begin{proof}
    Let $g\in W^{1,2}(\s^4,U(1))$. For every $u=u_1+iu_2\in W^{1,2}(\s^4,\c)$, by standard approximation results we get the identity
    \begin{align}\label{identity}
        \begin{cases}
            d(\bar udu)=d\bar u\wedge du=2i(du_1\wedge du_2) & \mbox{ distributionally on } \s^4\\
            d\lvert u\rvert^2=2u_1du_1+2u_2du_2 & \mbox{ in } L^2(\s^4).
        \end{cases}
    \end{align}
    Notice that \eqref{identity} immediately implies that $\bar udu$ is always a purely imaginary $1$-form on $\s^4$. Since $g=g_1+ig_2\in W^{1,2}(\s^4,U(1))\subset W^{1,2}(\s^4,\c)$, by using \eqref{identity} and the constraint $\lvert g\rvert^2\equiv 1$, we get
    \begin{align*}
        0=g_1dg_1+g_2dg_2=2(dg_1\wedge dg_2)=d(\bar gdg) \quad\mbox{ distributionally on } \s^4. 
    \end{align*}
    Hence, $\bar gdg$ is a purely imaginary closed $1$-form on $\s^4$. Since $H_{dR}^1(\s^4)=0$, standard $L^2$-Hodge decomposition on $\s^4$ (see for instance ) gives the existence of $\varphi\in W^{1,2}(\s^4,\r)$ such that
    \begin{align*}
        id\varphi=\bar gdg.
    \end{align*}
    Vol'pert chain rule in $W^{1,2}(\s^4,\r)$ gives
    \begin{align}\label{identity 2}
        d(e^{-i\varphi})=-ie^{-i\varphi}d\varphi=-e^{-i\varphi}\bar gdg\in L^2(\s^4,U(1)).
    \end{align}
    By Leibniz rule in the algebra $(L^{\infty}\cap W^{1,2})(\s^4,\c)$ and by \eqref{identity 2} we get
    \begin{align*}
        d(e^{-i\varphi}g)=d(e^{-i\varphi})g+e^{-i\varphi}dg=-e^{-i\varphi}\bar gdgg+e^{-i\varphi}dg=-e^{-i\varphi}\lvert g\rvert^2dg+e^{-i\varphi}dg=0,
    \end{align*}
    where in the last equality we have used that $\lvert g\rvert^2\equiv 1$. Hence, there exists a constant $g_0\in U(1)$ such that
    \begin{align*}
        g=g_0e^{i\varphi} \quad\mbox{ a.e. on } \s^4.
    \end{align*}
    Now let $\tilde\varphi\in W^{\frac{3}{2},2}(\b^5,\c)$. be the standard harmonic extension of $\varphi$, i.e. the solution of the following PDE
    \begin{align}
        \begin{cases}
            \Delta\tilde\varphi=0 & \mbox{ in } \b^5\\
            \tilde\varphi=\varphi & \mbox{ on } \s^4. 
        \end{cases}
    \end{align}
    Let $M_{U(1)}=\{p\}$ and $g_p:=g_0e^{i\tilde\varphi}$. By construction, \eqref{res-prop} and \eqref{5demi} are satisfied. Moreover, by standard trace theory for Sobolev functions $W^{\frac{3}{2},2}(\b^5,\c)$, the estimates \eqref{b-001}, \eqref{b-002} and hence the statement follow immediately.
\end{proof}
We now tackle the case of a general simply-connected compact Lie group $G$. Recall the following lemma, whose prove can be found in \cite[Lemma 6.1]{hardt-lin}.
\begin{Lma}\label{Lemma: projection}
    Let $N\subset\r^k$ be an $n$-dimensional smooth submanifold of $\r^k$ such that 
    \begin{align*}
        \pi_0(N)=\pi_1(N)=\pi_2(N)=0.
    \end{align*}
    Then, there exists a compact $(k-4)$-dimensional Lipschitz polyhedron $X\subset\r^k$ such that a locally Lipschitz retraction $Q:\r^k\smallsetminus X\to N$ such that 
    \begin{align}
        \int_{B_r(0)}\lvert dQ\rvert^p\, d\L^k<+\infty,
    \end{align}
    for every $p\in(1,4)$ and $r\in(0,+\infty)$. Moreover, the projection map $Q$ is smooth of constant rank $n$ near the manifold $G\subset\r^k$ and coincides with the identity on $N$. 
\end{Lma}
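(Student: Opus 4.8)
The plan is to construct $Q$ explicitly, using the nearest-point projection near $N$ and extending it to the rest of space by a skeleton-by-skeleton argument over a fine cubical complex, coning off the high-dimensional cells through interior points so that all singularities of $Q$ get confined to a polyhedron of dimension $k-4$. First I would fix $R>0$ with $N\subset B_{R/2}(0)$ and invoke the tubular neighbourhood theorem: there are $\delta>0$ and a smooth nearest-point retraction $\Pi\colon N_\delta\to N$ of the open set $N_\delta:=\{x\in\r^k:\dist(x,N)<\delta\}$, with constant rank $n=\dim N$ and $\Pi|_N=\operatorname{id}_N$. Fix a basepoint $q_0\in N$. The map $Q$ will coincide with $\Pi$ on $N_{\delta/2}$ and with the constant $q_0$ outside $B_R(0)$, so that the singular set $X$ will lie in the compact region $\overline{B_R(0)}\setminus N_{\delta/2}$.

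Next I would triangulate $\overline{B_R(0)}$ by a cubical complex $\mathcal K$ of mesh $\eta\ll\delta$, subordinate to $N_{\delta/2}$, $N_\delta$ and $\partial B_R(0)$ (so that every cube is either contained in $N_\delta$ or disjoint from $N_{\delta/2}$, and every cube meeting $\partial B_R(0)$ is disjoint from $N_{\delta/2}$). On cubes contained in $N_\delta$ set $Q:=\Pi$, and on the other cubes define $Q$ inductively over the skeleta of $\mathcal K$: send each $0$-cell lying in a cube that meets $\partial B_R(0)$ to $q_0$ and every remaining unlabelled $0$-cell to an arbitrary point of $N$; extend over the $1$-cells using that $N$ is path-connected by Lipschitz arcs ($\pi_0(N)=0$), over the $2$-cells using $\pi_1(N)=0$, and over the $3$-cells using $\pi_2(N)=0$, taking all these extensions Lipschitz and consistent with the values already prescribed by $\Pi$. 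This yields a Lipschitz map $Q$ on the $3$-skeleton $\mathcal K^{(3)}$. For a cell $\sigma$ of dimension $j\in\{4,\dots,k\}$, assuming inductively that $Q$ is locally Lipschitz on $\partial\sigma$ off a compact polyhedron of dimension $j-5$ (vacuously true for $j=4$), pick $c_\sigma\in\operatorname{int}\sigma$, let $\rho_\sigma\colon\sigma\setminus\{c_\sigma\}\to\partial\sigma$ be the radial projection from $c_\sigma$, and set $Q|_\sigma:=(Q|_{\partial\sigma})\circ\rho_\sigma$; the singular set inside $\sigma$ then equals $\{c_\sigma\}$ together with the cone from $c_\sigma$ over the singular set of $\partial\sigma$, hence a compact polyhedron of dimension $j-4$. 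Running $j$ from $4$ to $k$ and setting $Q\equiv q_0$ on $\r^k\setminus B_R(0)$ produces a locally Lipschitz retraction $Q\colon\r^k\setminus X\to N$, where $X$ is a compact $(k-4)$-dimensional Lipschitz polyhedron contained in $\overline{B_R(0)}\setminus N_{\delta/2}$; since $Q=\Pi$ on $N_{\delta/2}$, the map $Q$ is smooth of constant rank $n$ there and restricts to the identity on $N$.

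It remains to check the $W^{1,p}_{loc}$ bound. On $B_R(0)\setminus(N_{\delta/2}\cup X)$ I would prove the pointwise estimate $|dQ(x)|\le C\,\dist(x,X)^{-1}$ by induction on the coning steps: writing $x=c_\sigma+r(b-c_\sigma)$ with $b=\rho_\sigma(x)\in\partial\sigma$ and $r\in(0,1]$, one has $|d\rho_\sigma(x)|\lesssim r^{-1}$ and $\dist(x,X\cap\sigma)\sim r\,\dist(b,X\cap\partial\sigma)$, so the inductive bound $|dQ(b)|\lesssim\dist(b,X\cap\partial\sigma)^{-1}$ on $\partial\sigma$ gives $|dQ(x)|\lesssim\big(r\,\dist(b,X\cap\partial\sigma)\big)^{-1}\sim\dist(x,X\cap\sigma)^{-1}$. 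Near the $(k-4)$-dimensional top stratum of $X$ the distance function to $X$ behaves like the distance to a codimension-$4$ affine subspace, while the lower strata of $X$ have codimension at least $5$; hence $\int_{B_r(0)}\dist(x,X)^{-p}\,d\L^k<\infty$ for every $r>0$ precisely when $p<4$. Combined with smoothness of $Q$ on $N_{\delta/2}$ and constancy outside $B_R(0)$, this gives $\int_{B_r(0)}|dQ|^p\,d\L^k<\infty$ for all $r>0$ and all $p\in(1,4)$, which is exactly the claimed integrability.

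The main obstacle is the skeleton-by-skeleton extension over the $1$-, $2$- and $3$-cells with quantitative Lipschitz control: this is precisely the step that uses the hypotheses $\pi_0(N)=\pi_1(N)=\pi_2(N)=0$, and it requires choosing not only the extensions but also the homotopies witnessing the vanishing of $\pi_1(N)$ and $\pi_2(N)$ to be Lipschitz with constants controlled in terms of the mesh $\eta$, so that the resulting $Q$ is genuinely locally Lipschitz with the blow-up rate used above. Once $Q$ has been produced on $\mathcal K^{(3)}$, the coning construction, the dimension count for $X$, and the integrability estimate are routine.
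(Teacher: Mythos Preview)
The paper does not actually prove this lemma: it simply recalls the statement and cites \cite[Lemma 6.1]{hardt-lin} for the proof. Your proposal is essentially a reconstruction of the Hardt--Lin construction (nearest-point projection in a tubular neighbourhood, Lipschitz extension over the $3$-skeleton of a fine cubical complex using $\pi_0=\pi_1=\pi_2=0$, radial coning over higher-dimensional cells to confine the singular set to a polyhedron of codimension $4$, and the blow-up estimate $|dQ|\lesssim\dist(\cdot,X)^{-1}$ giving $L^p$-integrability for $p<4$), and the outline is correct.

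One small remark on your inductive distance estimate: the relation $\dist(x,X\cap\sigma)\sim r\,\dist(b,X\cap\partial\sigma)$ is not quite a two-sided equivalence as written, since $\dist(x,X\cap\sigma)\le\dist(x,c_\sigma)\sim r$ as well; what one actually needs (and what suffices) is the one-sided bound $\dist(x,X\cap\sigma)\gtrsim r\,\dist(b,X\cap\partial\sigma)$ together with $\dist(x,X\cap\sigma)\gtrsim r$ near the cone point, and these combine to give the claimed $|dQ(x)|\lesssim\dist(x,X)^{-1}$. This is exactly how Hardt--Lin handle it, so your identification of the ``main obstacle'' is accurate and the argument goes through.
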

Lastly, we use Lemma \ref{Lemma: projection} to prove the following.
\begin{Propa}
    Proposition \ref{lm-exten} holds if $G$ is a simply connected, compact Lie group.
\end{Propa}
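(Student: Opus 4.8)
The plan is to bring the statement into the scope of Lemma~\ref{Lemma: projection}. Fix a smooth embedding $G\hookrightarrow\r^k$. Since $G$ is a simply connected compact Lie group we have $\pi_0(G)=\pi_1(G)=0$, while $\pi_2(G)=0$ for every Lie group; hence Lemma~\ref{Lemma: projection} applies with $N=G$ and produces a compact $(k-4)$-dimensional Lipschitz polyhedron $X\subset\r^k$ and a locally Lipschitz retraction $Q:\r^k\setminus X\to G$ which is smooth of constant rank near $G$, fixes $G$ pointwise, and satisfies $\int_{B_R(0)}\lvert dQ\rvert^p\,d\L^k<+\infty$ for all $p\in(1,4)$, $R>0$. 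I will take $M_G:=\overline{B^k_\delta(0)}\subset\r^k$ with a suitably normalized Lebesgue measure, $\delta=\delta(G)>0$ small.

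First I would extend $g$ to $\r^k$-valued maps on $\b^5$. Let $h$ be the harmonic extension of $g$; by the maximum principle $h$ takes values in the convex hull of $G$, so $h(\b^5)\subset\overline{B_R(0)}$ for a fixed $R=R(G)$, and using $W^{\frac32,2}(\b^5)\hookrightarrow W^{1,\frac52}(\b^5)$, the (borderline) embedding $W^{1,2}(\s^4)\hookrightarrow W^{\frac35,\frac52}(\s^4)$, elliptic estimates and trace theory in $W^{\frac32,2}$, one gets $h\in W^{1,\frac52}(\b^5,\r^k)$ with $\|dh\|_{L^{\frac52}(\b^5)}\le C_G\|dg\|_{L^2(\s^4)}$, while for each $C^2$-hypersurface $\Sigma\subset\overline{\b^5}$ the traces of $h$ and $dh$ on $\Sigma$ are controlled by $\|g\|_{H^{\frac12}(\s^4)}$ and $\|g\|_{H^1(\s^4)}$, with constants depending on the $C^2$-norm of $\Sigma$. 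For $a\in M_G$ set $h_a:=h+a\,\psi$ with $\psi\in C^\infty(\overline{\b^5})$ chosen so that $\psi>0$ on $\b^5$, $\psi\equiv0$ on $\s^4=\p\b^5$, $\lvert d\psi\rvert\le1$. The vanishing of $\psi$ on the boundary forces $h_a=h=g$ on $\s^4$, hence, setting $g_a:=Q\circ h_a$, one gets the required boundary trace $g_a|_{\s^4}=Q(g)=g$ since $Q$ fixes $G$. A Fubini argument gives, for $\L^k$-a.e.\ $a\in M_G$, that $h_a(x)\notin X$ for $\L^5$-a.e.\ $x\in\b^5$ (because $\psi(x)>0$ on $\b^5$ and $X$ is $\L^k$-null), and, together with the averaging estimate below, this shows $g_a\in W^{1,\frac52}(\b^5,G)$ with $\lvert dg_a\rvert\le\lvert dQ(h_a)\rvert\,\lvert dh_a\rvert$ a.e.; the assignment $a\mapsto g_a$ is measurable. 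Setting $\operatorname{Ext}(g)(a):=g_a$ then gives \eqref{res-prop}, and \eqref{b-0001} is immediate (if $g\equiv g_0$, take $g_p\equiv g_0$, noting $g_0\notin X$).

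The estimates \eqref{5demi}, \eqref{b-001}, \eqref{b-0002} would follow from the averaging trick: integrating $\lvert dg_a\rvert^p$ over $M_G\times\b^5$, resp.\ over $M_G\times(\p\Omega\cap\b^5)$, using Fubini and the change of variable $b=a\,\psi(x)$ in the inner integral, one is led to the identity
\begin{align*}
\int_{M_G}\lvert dQ\big(h(x)+a\psi(x)\big)\rvert^p\,da=\psi(x)^{-k}\int_{B_{\delta\psi(x)}(h(x))}\lvert dQ(y)\rvert^p\,dy\le\psi(x)^{-k}\int_{B_{R+\delta}(0)}\lvert dQ(y)\rvert^p\,dy,
\end{align*}
the right side being finite both for $p=\frac52$ (needed for \eqref{5demi}) and for $p=2$ (needed for the trace estimates), as both exponents lie in $(1,4)$. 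Combined with the bounds on $h$, this reduces everything to controlling $\int_{\b^5}\psi(x)^{-k}\lvert dh_a(x)\rvert^p\,dx$ and the analogous boundary integrals. I expect the main obstacle to be precisely here: $\psi(x)^{-k}$ is \emph{not} integrable near $\s^4$, so the translation alone does not control the energy in a boundary collar. I would resolve this by splitting $\b^5=\{\lvert x\rvert\le\frac12\}\cup\{\frac12\le\lvert x\rvert<1\}$. On the core $\psi$ is bounded below, so $\psi^{-k}$ is bounded and the averaging argument applies directly. On the collar I would replace $h$ by an extension of $g$ which is $G$-valued on $\{1-\sigma<\lvert x\rvert<1\}$ outside a set of small $\L^5$-measure (absorbed into the core), built by composing the nearest-point projection onto $G$ with a distance-to-boundary mollification $x\mapsto(g*\phi_{\lambda(1-\lvert x\rvert)})(x/\lvert x\rvert)$ of $g$ — again in $W^{1,\frac52}$ with the correct bound thanks to $W^{1,2}(\s^4)\hookrightarrow W^{\frac35,\frac52}(\s^4)$; where this extension lies in a fixed tubular neighbourhood $\mathcal N$ of $G$, $Q$ is Lipschitz, so $\lvert dQ\rvert$ is bounded and the $\psi^{-k}$ factor disappears. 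The cutoff $\psi$ must still vanish on the interface $\{\lvert x\rvert=\frac12\}$ in order to glue the two pieces, but since the collar extension stays in $\mathcal N$ there and $Q$ is Lipschitz on $\mathcal N$, the collar contribution remains controlled.

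Finally, the continuity of $\operatorname{Ext}$ from $H^{\frac12}(\s^4,G)$ into $L^2(\p\Omega\times M_G)$, i.e.\ \eqref{b-0002} and its special case \eqref{b-002}, would follow from the $L^2$-trace bound $\|h^1-h^2\|_{L^2(\p\Omega\cap\b^5)}\le C_G(\Omega)\,\|g^1-g^2\|_{H^{\frac12}(\s^4)}$ for harmonic extensions, combined with the uniform Lipschitz bound for $Q$ on $\mathcal N$ (applied after the same translation, for a.e.\ $a$, where the arguments lie near $G$). Assembling the core and collar estimates for $p=\frac52$ and $p=2$ then yields \eqref{5demi} and \eqref{b-001} and concludes the proof.
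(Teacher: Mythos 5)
Your construction is genuinely different from the paper's. The paper keeps the harmonic extension $\tilde g$ fixed and averages over \emph{translates of the projection}, $Q_p(y):=Q(y-p)$ with $p\in B^k_\sigma(0)$; the inner integral $\int_{B^k_\sigma(0)}\lvert dQ(\tilde g(x)-p)\rvert^p\,d\L^k(p)=\int_{B_\sigma(\tilde g(x))}\lvert dQ\rvert^p\,d\L^k$ is then bounded uniformly in $x$ because the change of variables is a translation, with Jacobian $1$. You instead perturb the \emph{argument} by $a\,\psi(x)$ with $\psi$ vanishing on $\s^4$. Your variant has one real advantage: the boundary trace of $Q\circ h_a$ is exactly $g$, whereas $Q_p\circ\tilde g$ restricts to $Q(g-p)$ on $\s^4$ and one must post-compose with $(Q_p|_G)^{-1}$ to restore the trace (this is what the bi-Lipschitz constant $\Lambda$ in the paper's proof is for). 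But the price is the Jacobian $\psi(x)^{-k}$ in your averaging identity, and you correctly identify this as the crux.

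The gap is that your collar repair does not close. The factor $\psi(x)^{-k}$ is harmless only where $\operatorname{dist}(h(x),G)$ is small enough that $B_{\delta\psi(x)}(h(x))$ lies in the tubular neighbourhood where $\lvert dQ\rvert$ is bounded, so that $\int_{B_{\delta\psi(x)}(h(x))}\lvert dQ\rvert^p\,d\L^k\le C(\delta\psi(x))^k$ cancels it. On the set where the extension is far from $G$ you only get the fixed bound $\|dQ\|_{L^p(B)}^p$, and since $k\ge 4$ the weight $\psi^{-k}$ is not integrable over any set accumulating on $\s^4$ whose slices at height $t$ have measure of order $t^2$ — which is exactly the size of the bad set here. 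Indeed, for $g\in W^{1,2}(\s^4,G)$ (supercritical on a $4$-manifold) neither the harmonic extension nor your mollification $(g*\phi_{\lambda(1-\lvert x\rvert)})(x/\lvert x\rvert)$ stays in a tubular neighbourhood $\mathcal N$ of $G$ near the boundary: the distance to $G$ at height $t$ is controlled by $t^{-2}\int_{B_{ct}(x/\lvert x\rvert)}\lvert dg\rvert^2\,d\H^4$, and the set where this is not small is only controlled in $2$-dimensional Hausdorff content by $\|dg\|_{L^2(\s^4)}^2$; it clusters on $\s^4$ and cannot be ``absorbed into the core'' where $\psi$ is bounded below. On that set the nearest-point projection onto $G$ is undefined and you are back to needing the full projection trick with the non-integrable weight — so the collar argument is circular: it presupposes a $G$-valued extension near the boundary, which is the very thing being constructed. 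The clean fix is the paper's: average over translations of $Q$ (Jacobian $1$, no weight), and recover \eqref{res-prop} by composing with the bi-Lipschitz inverse $(Q_p|_G)^{-1}$; the estimates \eqref{5demi} and \eqref{b-001} then follow from Fubini exactly as in your core computation, with no boundary layer to treat separately.
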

    \begin{proof}
        Notice that $\pi_2(G)=0$ for every compact Lie group and moreover, by Nash embedding theorem, there exists $k\in\n$ such that $G$ is isometrically embedded in $\r^k$. Hence, without losing generality, we can assume that $G\subset\r^k$ is a smooth submanifold of $\r^k$ satisfying the assumptions of Lemma \ref{Lemma: projection}. We denote by $Q:\r^k\smallsetminus X\to G$ the projection map given by applying Lemma \ref{Lemma: projection} to $G$. Recall that both $G$ and $X$ are compact subsets of $\r^k$ and let $B\subset\r^k$ be any open ball in $\r^k$ containing $G\cup X$. Exactly as in the proof of \cite[Theorem 6.2]{hardt-lin}, for a small positive number $\sigma\in (0,\operatorname{dist}(G,\partial B))$ chosen so that $Q$ is smooth on $G+B_{\sigma}^k(0)$ and any arbitrary point $p\in B_{\sigma}^k(0)\subset\r^k$ we define $X_p:=X+p=\{y+a \mbox{ : } y\in X\}$ and the translated projection $Q_p:\r^k\smallsetminus X_a\to G$ given by
        \begin{align}
            Q_p(y):=Q(y-p) \qquad\forall\, y\in\r^k\smallsetminus X_p. 
        \end{align}
        For such a sufficiently small $\sigma$,
        \begin{align*}
            \Lambda:=\sup_{p\in B_{\sigma}^k(0)}\operatorname{Lip}(Q_p|_G)^{-1}
        \end{align*}
        is, by the inverse function theorem, a finite number depending only on $G$.
        
        \noindent
        Fix any $g\in W^{1,2}(\s^4,G)$ and let $\tilde g\in W^{\frac{3}{2},2}(\b^5,\r^k)$ be the solution of the following PDE:
        \begin{align*}
            \begin{cases}
                \Delta\tilde g=0 & \mbox{ in } \b^5,\\
                \tilde g=g & \mbox{ on } \s^4. 
            \end{cases}
        \end{align*}
        Notice that, by standard elliptic regularity, $\tilde g\in C^{\infty}(\b^5,\r^k)$ and it satisfies the estimate
        \begin{align}\label{estimate elliptic}
            \|\tilde g-g_0\|_{W^{\frac{3}{2},2}(\b^5)}\le C\|g-g_0\|_{W^{1,2}(\s^4)},
        \end{align}
        where $g_0\in G$ is any constant in $G$. Define
        \begin{align*}
            \bar g:=\fint_{\s^4} g\,d\H^4 
        \end{align*}
        and notice that, for every $C^2$-domain $\Omega\subset\r^5$ the trace theorem for $C^2$-domains (see \cite[Chapter 1, Section 5.1]{Agr}) \eqref{estimate elliptic} with $g_0=\bar g$ and Poincaré--Wirtinger inequality give the estimate
        \begin{align}\label{estimate 2}
            \int_{\partial\Omega\cap\b^5}\lvert\iota_{\partial\Omega}^*d\tilde g\rvert^2\, d\H^4&=\int_{\partial\Omega\cap\b^5}\lvert\iota_{\partial\Omega}^*d(\tilde g-\bar g)\rvert^2\, d\H^4\\
            &\le C \|\tilde g-\bar g\|_{W^{\frac{3}{2},2}(\b^5)}^2\le C\|g-\bar g\|_{W^{1,2}(\s^4)}^2\le C(\Omega)\|dg\|_{L^2(\s^4)}^2,
        \end{align}
        for a constant $C(\Omega)>0$ depending only on the $C^2$-norm of the domain $\Omega$. We have also obviously using classical Sobolev embeddings 
        \be
        \label{gtilde5demi}
        \|d\ti{g}\|_{L^\frac{5}{2}(\b^5)}\le C\, \|\ti{g}-\ov{g}\|_{W^{\frac{3}{2},2}(\b^5)}\le C\, \|dg\|_{L^2(\s^4)}.
        \ee
        For every $p\in B_{\sigma}^k(0)$, we define 
        $$g_p:=Q_p\circ \tilde g:\b^5\to G$$
        and we notice that, by the area formula and chain rules in Sobolev Spaces $g_p\in W^{1,\frac{5}{2}}(\b^5,G)$ for $\L^k$-a.e. $p\in B_{\sigma}^k(0)$ moreover there holds $g_p|_{\s^4}=g$ for for $\L^k$-a.e. $p\in B_{\sigma}^k(0)$. Using the fact that by the maximum principle $\|\ti{g}\|_\infty\le \|g\|_\infty\le\mbox{diam}(G)$, we have
        \begin{align}\label{gp5demi}
        \begin{split}
            \int_{p\in B_{\sigma}^k(0)}\int_{\b^5}|d g_p|^{\frac{5}{2}}\ d\L^5 \ d\L^k(p)&\le\int_{\b^5}|d\ti{g}(x)|^{\frac{5}{2}}\ \int_{B_{\sigma}^k(0)}|dQ(\ti{g}(x)-p)|^{\frac{5}{2}}\ d\L^k(p)\ d\L^5(x)\\[\sep]
            &\le\int_{\b^5}|d\ti{g}(x)|^{\frac{5}{2}}\ \int_{B_{\sigma+\|g\|_\infty}^k(0)}|dQ(y)|^{\frac{5}{2}}\ d\L^k(y)\ d\L^5(x)\\[\sep]
            &\le C_G\int_{\b^5}|d\ti{g}(x)|^{\frac{5}{2}}\ d\L^5(x)\\[\sep]
            &\le C_G\,\|dg\|^{\frac{5}{2}}_{L^2(\s^4)}.
        \end{split}           
        \end{align}
        Given now any $C^2$-domain $\Omega\subset\r^5$, by using Fubini's theorem, the chain rule and \eqref{estimate 2} we infer that
        \be
        \label{cont-0}
        \begin{array}{rl}
           \ds \int_{B_{\sigma}^k(0)}\int_{\partial\Omega\cap\b^5}\lvert\iota_{\partial\Omega}^*dg_p\rvert^2\, d\H^4\, d\L^k&\ds\le\int_{\partial\Omega\cap\b^5}\lvert \iota_{\partial\Omega}^*d\tilde g(x)\rvert^2\int_{B_{\sigma}^k(0)}\lvert dQ_p(\tilde g(x))\rvert^2\, d\L^k(p)\, d\H^4(x)\\[\sep]
            &\ds\le\int_{\partial\Omega\cap\b^5}\lvert \iota_{\partial\Omega}^*d\tilde g(x)\rvert^2\int_{B_{\sigma}^k(0)}\lvert dQ(\tilde g(x)-p)\rvert^2\, d\L^k(p)\, d\H^4(x)\\[\sep]
            &\ds\le\int_{\partial\Omega\cap\b^5}\lvert\iota_{\partial\Omega}^*d\tilde g(x)\rvert^2\bigg(\int_{B}\lvert dQ(y)\rvert^2\, d\L^k(y)\bigg)\, d\H^4(x)\\[\sep]
            &\ds\le C\int_{\partial\Omega\cap\b^5}\lvert \iota_{\partial\Omega}^*d\tilde g(x)\rvert^2\, d\H^4\le C(\Omega)\int_{\s^4}\lvert dg\rvert^2\, d\H^4,
            \end{array}
        \ee
        and we have shown \eqref{b-001}. It remains to prove the continuity of $\operatorname{Ext}$ from $H^{\frac{5}{2}}(\s^4,G)$ into $L^2(\p\Om\times B_{\sigma}^k(0))$. Let $g^1$ and $g^2$ in $W^{1,2}(\s^4,G)$, we first have
        \be
        \label{cont-1}
        \|\ti{g}^1-\ti{g}^2\|_{W^{1,2}(\b^5)}\le C\, \|g^1-g^2\|_{H^{\frac{1}{2}}(\s^4)}
        \ee 
        We write  using again Fubini's theorem
        \be
        \label{cont-2}
        \begin{array}{l}
         \ds\int_{B_{\sigma}^k(0)}\bigg(\int_{\p\Om\cap \b^5}\lvert g^1_p-g^2_p\rvert^2\, d\H^4\bigg)\,d\L^k(p)  \\[\sep]
         \quad\quad\ds\le\int_{\partial\Omega\cap\b^5}\int_{B_{\sigma}^k(0)}\lvert Q(\tilde g^1(x)-p)-Q(\tilde g^2(x)-p) \rvert^2\, d\L^k(p)\, d\H^4(x)\\[\sep]
        \end{array}
        \ee
        Recall the Lusin--Lipschitz inequality
        \be
        \label{cont-3}
        \begin{array}{l}
        \lvert Q(\tilde g^1(x)-p)-Q(\tilde g^2(x)-p) \rvert^2\\[\sep]
        \ds\quad\quad\le C\, |\tilde g^1(x)-\tilde g^2(x)|^2\ \lf[M(|dQ|)^2(\tilde g^1(x)-p)+M(|dQ|)^2(\tilde g^2(x)-p)\rg]
        \end{array}
        \ee
        where $M(|dQ|)$ denotes the Hardy--Littlewood maximal function of $dQ$. Combining \eqref{cont-2}, \eqref{cont-3} and the trace theorem we get
        \be
        \label{cont-4}
        \begin{array}{l}
         \ds\int_{B_{\sigma}^k(0)}\bigg(\int_{\p\Om\cap \b^5}\lvert g^1_p-g^2_p\rvert^2\, d\H^4\bigg)\,d\L^k(p)  \\[\sep]
        \ds\quad\quad  \le C\,\int_{\partial\Omega\cap\b^5}|\tilde g^1(x)-\tilde g^2(x)|^2\ \, d\H^4(x)\ \|dQ\|_{L^2(B_{\sigma}^k(0))}^2\\[\sep]
        \ds\quad\quad\le C\  \|\tilde g^1-\tilde g^2\|^2_{W^{1,2}(\b^5)}\le C\| g^1- g^2\|^2_{H^{\frac{1}{2}}(\s^4)}.
        \end{array}
        \ee 
        Hence, the statement follows by letting $M_G:=B_{\sigma}^k(0)$.      
\end{proof}
\newpage
\printbibliography
\end{document}